\documentclass[onefignum,onetabnum]{siamart251216}
\usepackage[authoryear, round]{natbib}
\usepackage[dvipsnames]{xcolor}
\usepackage{nicematrix}
\usepackage{amsmath,amssymb,amsfonts}
\usepackage{bbm}
\usepackage{booktabs}
\usepackage{systeme}
\usepackage{soul,todonotes}
\usepackage{pgfplots}
\usepackage{listings}
\usepackage{nth}
\usepackage{courier}
\usepackage{nicefrac}
\usepackage{graphicx}
\usepackage{mathrsfs}
\usepackage{mathtools}
\usepackage{algorithmic}
\usepackage{float}
\usepackage{wrapfig}
\usepackage{subcaption}
\usepackage[section]{placeins}
\usepackage{ragged2e}
\usepackage{eurosym}
\usepackage{indentfirst}
\usepackage{pifont}
\usepackage{enumitem}
\makeatletter
\newcommand{\myitem}[1]{%
	\item[#1]\protected@edef\@currentlabel{#1}%
}
\makeatother

\DeclareMathOperator*{\argmin}{argmin}
\DeclareMathOperator{\VCdim}{VCdim}
\DeclareMathOperator{\VC}{VC}
\DeclareMathOperator{\Pdim}{Pdim}
\DeclareMathOperator{\format}{format}
\DeclareMathOperator{\ReQU}{ReQU}
\renewcommand{\thesection}{\arabic{section}}
\newcommand{\ind}{\mathbbm{1}}
\newcommand{\sgn}{\operatorname{sign}}
\newcommand{\R}{\mathbb{R}}
\newcommand{\N}{\mathbb{N}}

\newcommand{\Func}{\mathbb{F}}
\newcommand{\F}{\mathcal{F}}
\newcommand{\size}{\mathcal{S}}
\newcommand{\partition}{\mathcal{P}}
\newcommand{\bigO}{\mathcal{O}}
\newcommand{\Exp}{\mathbb{E}}
\newcommand{\bW}{\mathbf{W}}
\newcommand{\bb}{\mathbf{b}}
\newcommand{\bx}{\mathbf{x}}
\newcommand{\bm}{\mathbf{m}}
\newcommand{\bnu}{\boldsymbol{\nu}}
\newcommand{\btheta}{\boldsymbol{\theta}}
\newcommand{\paOm}{\partial \Omega}

\newcommand\numberthis{\addtocounter{equation}{1}\tag{\theequation}}

\DeclareMathOperator*{\esssup}{ess\,sup}

\newtheorem{remark}{Remark}[section]

\headers{Boundary-Adapted PINNs for Elliptic Dirichlet Problems}{N. Tepakbong, J. Fan, X. Zhou, and D.-X. Zhou}

\title{Boundary-Adapted PINNs for Elliptic Dirichlet Problems: $H^2(\Omega)$ A Priori Error Bounds with Application to Mean Escape Time Computation\thanks{\textbf{Funding:} N. Tepakbong is supported by the Hong Kong PhD Fellowship Scheme, which also funded a research visit to The University of Sydney where part of this work was carried out. J. Fan is partially supported by the Research Grants Council of Hong Kong [Project No. HKBU12302923 and HKBU12303024], and Guangdong and Hong Kong Universities “1+1+1” Joint Research Collaboration Scheme 2025A0505000007. X. Zhou is supported by the General Research Funds from the Research Grants Council of the Hong Kong Special Administrative Region, China (Project No. 11308323, 11304525).}}

\author{
	Nathanael Tepakbong \thanks{Department of Data Science, City University of Hong Kong (\email{ntepakbo-c@my.cityu.edu.hk}).}
	\and
	Jun Fan \thanks{Department of Mathematics, Hong Kong Baptist University(\email{junfan@hkbu.edu.hk}).}
	\and
	Xiang Zhou \thanks{Department of Mathematics, City University of Hong Kong(\email{xiang.zhou@cityu.edu.hk}).}
	\and
	Ding-Xuan Zhou\thanks{School of Mathematics and Statistics, The University of Sydney(\email{dingxuan.zhou@sydney.edu.au}).}
}
\begin{document}
	\maketitle
	\begin{abstract}
		\noindent Motivated by the numerical computation of the Mean Escape Time (MET) $\tau:\Omega\to\mathbb{R}$ of a stochastic process from a bounded domain $\Omega\subseteq\mathbb{R}^d$, we study elliptic Dirichlet boundary value problems (BVPs) using boundary-enforced Physics-Informed Neural Networks (PINNs), in which the Dirichlet condition is imposed exactly by multiplying the network output with a predefined distance-to-boundary approximation $\rho$. Combining approximation-theoretic and statistical-learning arguments for Rectified Quadratic Unit (ReQU) and hyperbolic tangent (tanh) networks, we derive a priori error bounds that make explicit the dependence on $\rho$. In particular, we show that exact boundary enforcement alone is not enough for $H^2(\Omega)$ error bounds, and that a sufficient and essentially necessary condition is for $\rho$ to be a smooth distance approximation \textit{normalized to first order}, of the kind constructed in \citep{sukumar2022exact}. We thereby identify this subclass of \textit{boundary-adapted} PINNs as the appropriate neural network ansatz for solving Dirichlet BVPs. Numerical experiments support the theory, showing that appropriate choices of $\rho$ improve accuracy and convergence, while poorly chosen distance functions can substantially degrade the solution. Our proof also yields new VC-dimension bounds for hypothesis spaces of higher-order derivatives of ReQU and tanh networks, together with new approximation bounds for shallow ReQU networks in higher-order Sobolev norms, all of which are of important independent interest.
	\end{abstract}
	
	\begin{keywords}
		physics-informed neural networks, mean escape time, Dirichlet boundary value problems, statistical learning theory, approximation theory.
	\end{keywords}
	
	\begin{MSCcodes}
		68Q32, 68T07, 65N12, 41A30, 41A28.
	\end{MSCcodes}
	
	\section{Introduction}
	\subsection{The mean escape time problem}
	The mean escape time problem (also known as \textit{mean first passage time} problem, or \textit{mean exit time} problem) is a specific instance in the broader class of \textit{first passage problems}, for which the task is to quantify the time it takes for a stochastic process $(X_t)_{t\ge 0}$ evolving in a domain $\Omega\subseteq\R^d$ to cross a certain barrier or reach a certain set, given information about its dynamics and its initial position or distribution. First passage problems have been the subject of extensive theoretical and applied research over the years, due to their importance in a plethora of scientific fields, such as, e.g., chemistry, where they're used to model the inverse reaction rate, engineering, where they serve to estimate the reliability of certain system configurations, biology, where they model the firing of a synapse or ecology, where they model population dynamics of species in a given environment. We refer the reader to \citep{Kampen,holcman2015stochastic, schuss2007narrow, lancaster1972stochastic, redner2001guide} and references within for a more complete overview of the first passage problems literature and its broad range of applications.  
	
	Due to the intrinsically stochastic nature of first passage problems, one can in practice only compute either the distribution of the first passage time, or its moments. The Mean Escape Time (MET) problem precisely consists in computing the first moment of this first passage time, where the set of interest is given by the boundary $\paOm$ of a bounded domain $\Omega\subseteq\R^d$. More formally, for all $x\in\Omega$, we define $\tau(x)$ as the expected value of the first time $t>0$ for which the process $(X_t)_{t\ge0}$ hits the boundary $\paOm$, where $(X_t)_{t\ge0}$ solves the Stochastic Differential Equation (SDE):
	\[
	dX_t = b(X_t,t)dt + \varsigma(X_t,t)dW_t,\quad X_0=x,
	\]
	where the coefficients $\varsigma$ and $b$ are assumed to be known, and $W$ is a standard Brownian motion in $\R^d$.  
	
	Outside of the simplest dynamics $\varsigma, b$, and geometries $\Omega$, closed-form expressions of $\tau$ are not tractable and one has to resort to numerical methods. This proves to be a very challenging and computationally intensive task in general, as the main options are either to rely on Monte Carlo sampling, which requires a large number of long time trajectories for each initial state, or to rely on expert-guided solution ansatzes (e.g. WKB approximation in small noise limit), which require a lot of a priori knowledge while lacking generality and accuracy. In addition, it is important to note that for many SDEs of interest, such as the ones which describe the transitions from different metastable states in molecular dynamics, the crossing of the boundary $\paOm$ typically constitutes a \textit{rare event}, which informally means that $\tau(x)\gg0$ for all $x$ away from the boundary \citep{freidlin2012random, schutte2023overcoming,Dupuis2019REbook,DSZ2015}. This makes both classic Monte-Carlo, and ansatz-based estimation of the MET extremely difficult, even in small dimensions.  
	
	A potential solution to work around these issues is to use classical results from probability theory and recast the MET problem as an elliptic PDE. It is indeed known---provided that the coefficients $b$ and $\varsigma$ are sufficiently regular---that the time-dependent density of $X_t$, solves the Fokker-Planck equation. From there, one can algebraically derive that $\tau$ is the solution of a second-order elliptic PDE with homogenous Dirichlet boundary conditions \citep{pavliotis2014stochastic, schuss2007narrow}. This alternative formulation of the MET as a PDE solution is attractive, as it allows to bypass the aforementioned challenges by use of numerical PDE solvers. However the sharp boundary layer of $\tau$ near $\paOm$ typically makes those numerical solutions hard to compute, and to scale to higher dimensions. Physics-Informed Neural Networks (PINNs), with their added flexibility, and scalability thus naturally appear as a natural candidate to bypass these issues. We will thus investigate in this work, mostly from a theoretical perspective, whether PINNs can indeed constitute viable solutions to solve Mean Escape Time problems, by providing a complete a priori error analysis for PINN solutions of general second-order elliptic problems.
	
	\subsection{Physics-informed neural networks}
	
	Physics-Informed Neural Networks (PINNs) have emerged as a powerful mesh-free paradigm for approximating PDE solutions by embedding the equations directly into the neural network training process \citep{raissi2019physics}. In this approach, a neural network is trained to minimize the PDE residual evaluated at discrete collocation points within the domain, supplemented by terms enforcing boundary and initial conditions. This residual-based formulation enables PINNs to efficiently solve a myriad of scientific problems, particularly in high-dimensional or irregular domains where traditional methods like finite elements struggle due to meshing complexities \citep{cuomo2022scientific}. Closely related are variational methods, such as the deep Ritz method, which instead minimize a discretized energy functional derived from the PDE's weak form \citep{yu2018deep}.  
	
	A critical aspect of PINNs is the enforcement of boundary conditions, which can significantly influence solution accuracy and physical meaningfulness \citep{Chen2020-CMS}. Common strategies include ``soft'' enforcement, where boundary discrepancies are penalized in the loss function, and ``hard" enforcement, which constructs the neural network output to satisfy conditions exactly through multiplicative or additive ansatzes \citep{sukumar2022exact}. While computationally simpler, soft methods may suffer from loss imbalance, leading to poor boundary adherence unless penalty weights have gone through an expensive hyperparameter tuning process \citep{krishnapriyan2021characterizing}. While hard enforcement mitigates these issues by ensuring exact satisfaction, it may also lead to reduced expressivity of the neural network and instabilities during training if the multiplicative anzatz has been poorly chosen.  
	
	\subsection{Our contributions}
	
	Motivated by the mean escape time problem, which features sharp boundary layers that are typically difficult to learn using soft constraints, we study ``boundary-enforced PINNs" for solving second-order elliptic PDEs. Specifically, we consider functions of the form $x \mapsto \rho(x) \cdot u_{\boldsymbol{\theta}}(x)$, where $\rho$ vanishes on $\partial \Omega$ and $u_{\btheta}$ is a \textit{fully connected} (also referred to as \textit{feedforward}) neural network (FNN) parameterized by $\btheta$. Our main contributions can be summarized as follows:
	
	\begin{itemize}
		\item By a classical estimate from elliptic PDE theory \eqref{eqn:strong_convexity_lower_bound}, we observe that adding an $L^2(\partial \Omega)$ penalty to the PDE residual is \textit{inconsistent} with obtaining $H^2(\Omega)$ a priori error estimates. Instead, we show in Proposition~\ref{prop:quotient_regularity} that the only way to guarantee such $H^2(\Omega)$ a priori estimates for general PINNs is to multiply the neural network ansatz by a \textit{smooth distance approximation normalized to the first order} (See Definition~\ref{def:order_m_normalization}). We refer to PINNs constructed this way as \emph{boundary-adapted PINNs}.
		
		\item In Theorem~\ref{thm:approx_requ}, we derive new $W^{2,\infty}(\Omega)$ approximation error bounds for shallow neural networks using the rectified quadratic unit (ReQU) activation. This extends in a non-trivial way previous $L^\infty(\Omega)$ approximation error bounds for shallow ReLU and ReLU$^k$ networks obtained using the theory of variation spaces and harmonic analysis on spheres \citep{bach2017breaking, yang2025optimal}.
		
		\item We prove in Theorem~\ref{thm:vcdim_bounds} new bounds on the VC dimension of up to second-order derivatives of ReQU and tanh fully connected neural networks. These bounds enable us to apply an oracle inequality from \citep{lei2025solving}, which in turn allows error control without imposing restrictive bounds on the magnitude of the network weights. This makes our theoretical results more relevant for practical PINN implementations.
		
		\item Based on the above results, we derive in Theorem~\ref{thm:main} a complete a priori error analysis for boundary-adapted PINN solutions of second-order elliptic PDEs using either ReQU or tanh activations. Our analysis incorporates both approximation and statistical errors and demonstrates near-minimax optimal convergence rates in terms of the number of collocation points. To the best of our knowledge, this is the first rigorous error bound for PINNs that explicitly accounts for the enforcement of boundary conditions.
		
		\item We illustrate our theoretical findings with numerical experiments highlighting the critical role of the choice of $\rho$ when solving mean escape time problems. Our results confirm that poorly chosen distance-approximation ansatzes and soft penalty approaches yield significantly worse solutions than boundary-adapted PINNs, in agreement with empirical observations in \citep{berrone2023enforcing}.
	\end{itemize}
	
	We however stress that our error analysis relies on the empirical risk minimization framework, and therefore does not account for the optimization error introduced by training. Addressing that issue would require a separate analysis of PINN trainability, which remains a central challenge in the literature \citep{krishnapriyan2021characterizing, rathore2024challenges} and lies beyond the scope of the present work.
	
	\subsection{Related works}
	
	\subsubsection*{Numerical solutions of mean escape time problems}
	
	Classical approaches to learn First Passage Times moments and distributions have traditionally involved either Monte Carlo methods with importance sampling schemes to speed up convergence \citep{heidelberger1996accelerating, cerou2007adaptive, drugowitsch2016fast, xu2017estimates,Dupuis2019REbook, nakayama2019efficient}. Another popular approach is the use of so-called \textit{asymptotic perturbative methods}, to approximate the solution in small noise limit  (typically the matched asymptotic expansion with WKB ansatz  with  a ``singular" term near the boundary layer, and a ``regular'' term away from it), which often leads to series expansion solutions with recursive formulas for the coefficients solvable in closed-form \citep{siegert1951first, assaf2010extinction, walter2021first, simpson2021mean,carr2022mean}.
		
	Alternatively, one may exploit the PDE formulation of the first passage problem and derive numerical solutions either by spectral methods or by directly solving the PDE, using a parametrization which allows for faster convergence \citep{voss2008fast, boehm2022efficient} when available. We note however, that despite this PDE formulation, the use of Physics-Informed Neural Networks to solve for first passage times has not received much attention in the literature, at the exception of a few earlier works \citep{nagel2022studying, qian2024neural}, which successfully compute first passage time distributions for reliability engineering problems.	
	
	\subsubsection*{Theoretical guarantees for physics-informed machine learning}
	
	While a number of works, such as \citep{shin2020convergence, lu2021learning, jiao2022rate, shin2023error}, have provided some theoretical guarantees for PINN solutions of elliptic problems, our understanding of these algorithms remains far from complete. Indeed, besides the challenging issue of training, for which only few results are known \citep{rathore2024challenges, luo2024two, jiao2025drm}, it appears that the influence of ``hard constraints" on PINN solutions is not yet covered by existing analyses.  
	
	Indeed, in \citep{shin2020convergence, lu2021machine, shin2023error, luo2024two}, the authors assume either explicitly or implicitly that the FNNs in their hypothesis spaces satisfy exactly the Dirichlet boundary conditions, which is of course not valid in most cases. To carry out the error analysis without having to rely on this assumption, other authors have considered either replacing the $L^2(\Omega)$ penalty on the loss function with a Sobolev penalty of order sufficiently large to ensure consistency \citep{wu2023convergence, jiang2024generalization, bonito2025convergence}, or simply getting a priori error estimates in a weaker norm, such as $H^{1/2}(\Omega)$ \citep{jiao2022rate}. We are thus the first to address, in this work, the issue of computing $H^2(\Omega)$ error bounds when the FNNs in the hypothesis space has been multiplied with a ``hard-constraint" function, enforcing the Dirichlet boundary condition exactly.	
	
	\subsection{Notations}
	We now introduce the main notational conventions that we will be using in this paper.
	
	\subsubsection*{Multi-index notation}
	
	We denote by $\N:=\{0,1,2,\ldots\}$ the set of natural numbers. For a positive $d\in \N$, we call any element of $\alpha\in\N^d$ a multi-index. We define $\|\alpha\|_1 := \sum_{i=1}^{d} \alpha_i$, $\alpha! = \prod_{i=1}^d \alpha_i!$ and for all $x\in \R^d$, $x^\alpha := \prod_{i=1}^d x_i^{\alpha_i} $. For $\alpha,\beta\in\N^d$, we denote the partial order $\alpha\le\beta\iff \alpha_i\le\beta_i$ for all $1\le i\le d$. We further denote the binomial coefficient:
	\[\binom{\beta}{\alpha}:=\prod_{i=1}^d \binom{\beta_i}{\alpha_i}.\]
	And likewise, for $k\in \N$, we denote the multinomial coefficient:
	\[\binom{k}{\alpha}:=\frac{k!}{\alpha!}.\]
	Lastly, for a domain $\Omega\subseteq\R^d$, and a function $f:\Omega\to \R$, we denote by
	\[\partial^\alpha f:= \frac{\partial^{\|\alpha\|_1}f}{\partial x_1^{\alpha_1} \cdots \partial x_d^{\alpha_d}}\]
	its derivative of order $\alpha$, understood in either the classical, weak, or distributional sense.
	
	\subsubsection*{Sobolev and Lebesgue spaces}
	
	For positive $d\in\N$, $1\le p \le\infty$ and $\Omega\subseteq\R^d$, we denote by $L^p(\Omega)$ the usual Lebesgue space of equivalence classes of functions equipped with the usual norm:
	\[\|f\|_{L^p(\Omega)} :=\begin{cases}
		 \left(\int_\Omega |f(x)|^p \ dx\right)^{1/p}, \ \ &1\le p<\infty\\
		\esssup_{x\in\Omega} |f(x)|, \ \ &p=\infty.
	\end{cases}\]
	
	Likewise, for $r\in\N$, we define the Sobolev space $W^{r,p}(\Omega)$ in the standard way:
	\[W^{r,p}(\Omega):=\left\{f \in L^p(\Omega):\partial^\alpha f \in L^p(\Omega) \text{ for all } \alpha\in\N^d:\|\alpha\|_1\le r\right\}.\]
	
	\subsubsection*{Norms}
	
	For a vector $x\in\R^d$ and $1\le p\le \infty$, we define its $\ell_p$ norm in the standard way:
	\[\|x\|_p\ :=\begin{cases}
		\left(\sum_{i=1}^d |x_i|^p\right)^{1/p}, \ \ &1\le p<\infty\\
	\max_{1\le i \le d} |x_i|, \ \ &p=\infty.
	\end{cases}\]
	
	For $r\in\N$, and $\Omega\subseteq\R^d$ open, we choose the following definition of the $W^{r,p}(\Omega)$ norm:
	\[\|f\|_{W^{r,p}(\Omega)} := \begin{cases}
		\left(\sum_{\alpha\in\N^d:\|\alpha\|_1\le r} \|\partial^\alpha f\|_{L^p(\Omega)}\right)^{1/p}, \ \ 1\le p<\infty\\
		\max_{\alpha\in\N^d:\|\alpha\|_1\le r} \|\partial^\alpha f\|_{L^\infty(\Omega)}, \ \ p=\infty.
	\end{cases}\]
	
	It is well known that $W^{r,p}(\Omega)$ equipped with this norm is a complete normed space. In the case where $p=2$, $W^{r,2}(\Omega)$ is also a Hilbert space and we will denote it $H^r(\Omega)$ instead.
	
	\section{Problem setup}
	
	We start by formally introducing the mean escape time (MET) problem, as it will serve as a concrete example to illustrate our theory. We stress however that the results we will show apply to all second-order elliptic PDEs satisfying appropriate regularity conditions.  
	
	Let $d\ge2$ be an integer, and for all $x\in\R^d$, denote by $X_t^x$ the unique --- up to standard regularity conditions --- solution of the stochastic differential equation \citep{pavliotis2014stochastic}
	\begin{equation}
		\label{eqn:stoch_process}
		dX_t^x = b(X_t^x) dt + \varsigma(X_t^x) dW_t,\quad X_0 = x
	\end{equation}
	
	where $(W_t)$ is a standard Brownian motion in $\R^d$, while $b:\R^d\to\R^d$ and $\varsigma:\R^d\to\R^{d\times d}$ are respectively called the \textit{drift} and \textit{diffusion} coefficients of the stochastic process $X_t^x$.
	
	Given a bounded, open domain $\Omega\subset\R^d$ with smooth boundary $\paOm$, we define for all $x\in\Omega$ the \textit{first passage time}, or \textit{first exit time} as the time when $X_t^x $ first exits $\Omega$:
	\[\tau_\Omega^x := \inf\{t\ge 0 : X_t^x\not\in\Omega\}.\]
	We then define the \textit{mean escape time} (also known as mean exit time or mean first passage time) as the expected valued of $\tau_\Omega^x$:
	\begin{equation}
		\label{eqn:met_definition}
		\tau(x):=\Exp\tau_\Omega^x = \Exp\left[\inf\{t\ge 0 : X_t^x\not\in\Omega\}\right].
	\end{equation} 
	
	\subsection{An elliptic boundary value problem for the mean escape time}
	
	It is well-known that $\tau$ solves the following boundary value problem:
	
	\begin{theorem}[Result 7.1 in \citep{pavliotis2014stochastic}]
		\label{thm:met_bvp}
		The mean exit time is given by the solution of the boundary value problem
		\begin{equation}
			\label{eqn:met_bvp}
			\begin{cases}
				\mathscr{L}\tau(x) &= - 1, \quad x\in\Omega\\
				\tau(x) &= 0, \quad x\in\paOm
			\end{cases}
		\end{equation}
		where $\mathscr{L}$ is the generator of the stochastic process \eqref{eqn:stoch_process}, whose expression is given by
		\begin{equation}
			\label{eqn:generator}
			\mathscr{L} = \sum_{i=1}^d b_i(x)\frac{\partial}{\partial x_i} + \frac12 \sum_{i,j=1}^d \Sigma_{ij}(x)\frac{\partial^2}{\partial x_i \partial x_j},
		\end{equation} 
		where $\Sigma(x) := \varsigma(x)\varsigma(x)^T$ is the diffusion matrix.
	\end{theorem}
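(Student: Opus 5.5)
The plan is the classical Dynkin/Itô argument, run in two directions. First, assume the boundary value problem \eqref{eqn:met_bvp} has a classical solution $u\in C^2(\Omega)\cap C(\overline\Omega)$. Under the standing regularity assumptions ($b,\varsigma$ Lipschitz, $\Omega$ bounded with smooth boundary, $\Sigma$ uniformly elliptic on $\overline\Omega$), Schauder theory gives such a $u$, and in fact $u\in C^2(\overline\Omega)$.

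The goal is then to show $u(x)=\Exp\tau_\Omega^x$ for every $x\in\Omega$. Fix $x$ and a horizon $T>0$, and set $\sigma_T:=\tau_\Omega^x\wedge T$. Itô's formula applied to $u(X_t^x)$ up to the bounded stopping time $\sigma_T$ gives
\[
u(X_{\sigma_T}^x)=u(x)+\int_0^{\sigma_T}\mathscr{L}u(X_s^x)\,ds+\int_0^{\sigma_T}\nabla u(X_s^x)^T\varsigma(X_s^x)\,dW_s .
\]
Since $\nabla u$ and $\varsigma$ are bounded on $\overline\Omega$, the stochastic integral stopped at $\sigma_T$ is a true martingale, so its expectation vanishes. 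Using $\mathscr{L}u=-1$ we obtain
\[
\Exp\, u(X_{\sigma_T}^x)=u(x)-\Exp\,\sigma_T .
\]
If $u$ is only $C^2$ in the interior, we first run the argument on an exhausting sequence of smooth subdomains $\Omega_n\uparrow\Omega$ and then pass to the limit.

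The key step is to show $\Exp\tau_\Omega^x<\infty$, or at least that $\tau_\Omega^x<\infty$ almost surely. From the identity above, $\Exp\sigma_T\le u(x)+\|u\|_{L^\infty(\Omega)}\le 2\|u\|_{L^\infty(\Omega)}$ for every $T$. Monotone convergence as $T\to\infty$ then yields $\Exp\tau_\Omega^x\le 2\|u\|_\infty<\infty$, so in particular $\tau_\Omega^x<\infty$ almost surely. This sidesteps a separate proof of finite exit time, although one could alternatively use the test function $e^{\lambda x_1}$ together with uniform ellipticity in some direction.

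With finiteness in hand, let $T\to\infty$. We have $X_{\sigma_T}^x\to X_{\tau}^x\in\partial\Omega$ almost surely by path continuity, and $u$ is bounded and continuous on $\overline\Omega$. Dominated convergence therefore gives $\Exp\, u(X_{\sigma_T}^x)\to \Exp\, u(X_\tau^x)=0$ by the boundary condition, while monotone convergence gives $\Exp\sigma_T\to\Exp\tau_\Omega^x$. Together these yield $u(x)=\Exp\tau_\Omega^x=\tau(x)$.

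Uniqueness of the solution of \eqref{eqn:met_bvp} follows from the maximum principle for $\mathscr{L}$, or directly from this representation. Hence $\tau$ is \emph{the} solution, as claimed.

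The main obstacle is the existence and regularity of $u$ up to the boundary: it is needed to justify Itô's formula and the martingale property, and to know that $u$ is continuous on $\overline\Omega$ so the boundary value is attained in the limit. My plan here is to invoke standard Schauder estimates (Gilbarg–Trudinger) under uniform ellipticity and smoothness of $\partial\Omega$, and to treat the degenerate case via the interior approximation $\Omega_n\uparrow\Omega$. I expect this regularity input, rather than the stochastic calculus, to be the delicate point.
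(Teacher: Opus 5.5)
Your verification argument is correct, but it is not the route the paper relies on. The paper does not prove this statement itself; it cites Pavliotis, and the introduction indicates the derivation used there, which works on the PDE side.

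\textbf{The cited route.} The survival probability $S(x,t)=\mathbb{P}(\tau_\Omega^x>t)$ solves the backward Kolmogorov equation $\partial_t S=\mathscr{L}S$ in $\Omega$, with $S(\cdot,0)=1$ and $S=0$ on $\paOm$. Equivalently, the density of $X_t$ solves the Fokker--Planck equation with an absorbing boundary. One then writes $\tau(x)=\int_0^\infty S(x,t)\,dt$ and computes, formally, $\mathscr{L}\tau=\int_0^\infty\partial_t S\,dt=-1$. As presented there this derivation is formal. It presupposes a transition density with the correct boundary behaviour, decay of $S(\cdot,t)$ as $t\to\infty$, and the interchange of $\mathscr{L}$ with the time integral.

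\textbf{Your route.} You start from the classical solution given by Schauder theory. This is the same Gilbarg--Trudinger result the paper invokes shortly after the statement, under Assumptions A.1--A.3. You then identify that solution with $\mathbb{E}\tau_\Omega^x$ via Dynkin's formula on $\tau_\Omega^x\wedge T$. The same identity yields $\mathbb{E}\tau_\Omega^x\le 2\|u\|_\infty$, so finiteness of the exit time comes for free. The argument is rigorous and needs no information about transition densities.

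\textbf{What each buys.} The survival-probability route gives, in exchange, the full law of $\tau_\Omega^x$ as the solution of a parabolic problem. That extends directly to higher moments and to the exit-time distribution, which the paper lists as future work.

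One small correction. The exhaustion $\Omega_n\uparrow\Omega$ only handles a solution that is $C^2$ in the interior and continuous up to the boundary. It does not treat a degenerate $\Sigma$, where a classical solution may fail to exist and $\mathbb{E}\tau_\Omega^x$ may be infinite (e.g.\ $\Sigma\equiv0$, $b\equiv0$). Under uniform ellipticity this case never arises, so your proof is unaffected.
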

	
	Let $r\ge 3$ be a natural number and $0\le\kappa\le1$, which we will respectively refer to as the \textit{smoothness order} (or \textit{exponent}) and the \textit{Hölder exponent}. We will assume that the operator $\mathscr L$ and the domain $\Omega$ satisfy the following assumptions:
	\begin{enumerate}[label=\textbf{A.\arabic*}]
		\item \label{eqn:assumption1_smooth_boundary} $\Omega$ is a $C^{r+2,\kappa}$ domain, in the sense that $\paOm$ is $C^{r+2,\kappa}$.
		\item \label{eqn:assumption2_strict_elliptic} The diffusion matrix $\Sigma$ is \textit{strictly} (or \textit{uniformly}) \textit{elliptic}: there exists $\lambda>0$ such that
		\[\xi^T\Sigma(x)\xi \ge \lambda \|\xi\|_2^2, \ \text{ for all } x\in\Omega \,\text{ and } \xi\in\R^d,\]
		\item \label{eqn:assumption3_regular_coeffs} The coefficients $(b_j)_{1\le j \le d}, (\Sigma_{ij})_{1\le i,j\le d}$ of $\mathscr{L}$ all belong to $C^{r,\kappa}(\bar{\Omega})$.
	\end{enumerate}
	
	We refer the reader to standard references such as \citep{gilbarg1977elliptic} for the definitions of $C^{r+2,\kappa}$ domains and $C^{r,\kappa}(\bar{\Omega})$ functions. Under Assumptions \eqref{eqn:assumption1_smooth_boundary} --- \eqref{eqn:assumption3_regular_coeffs}, we have from classical elliptic PDE theory that the MET $\tau$ is well-defined as the unique classical solution of \eqref{eqn:met_bvp}.
	
	\begin{theorem}[{\citep[Theorems 6.14 and 6.19]{gilbarg1977elliptic}}]
		If Assumptions \eqref{eqn:assumption1_smooth_boundary} --- \eqref{eqn:assumption3_regular_coeffs} hold, then the boundary value problem \eqref{eqn:met_bvp} has a unique solution $\tau$ lying in $C^{r+2,\kappa}(\bar{\Omega})$.
	\end{theorem}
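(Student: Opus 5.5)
The plan is a Schauder argument, since this is the statement of Gilbarg--Trudinger Theorems 6.14 and 6.19 applied to $\mathscr L\tau=-1$, $\tau|_{\paOm}=0$. Throughout I take $0<\kappa<1$: the Schauder theory is only valid for strictly fractional Hölder exponents. I would proceed in three steps: existence of a $C^{2,\kappa}$ solution, bootstrapping its regularity up to $C^{r+2,\kappa}$, and uniqueness.

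\textbf{Existence.} First observe that $\mathscr L$ has no zeroth-order term, so the sign condition $c\le 0$ of Theorem 6.14 holds trivially with $c\equiv 0$. By Assumption \eqref{eqn:assumption2_strict_elliptic}, the principal part $\frac12\Sigma$ is uniformly elliptic with constant $\lambda/2$. By Assumption \eqref{eqn:assumption3_regular_coeffs} with $r\ge 3$, all coefficients lie in $C^{0,\kappa}(\bar\Omega)$. The right-hand side $f\equiv -1$ is smooth, and the boundary datum $\varphi\equiv 0$ lies in $C^{2,\kappa}(\bar\Omega)$. Assumption \eqref{eqn:assumption1_smooth_boundary} gives a $C^{2,\kappa}$ boundary. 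Theorem 6.14 then yields a unique solution $\tau\in C^{2,\kappa}(\bar\Omega)$.

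\textbf{Higher regularity.} Next I would apply Theorem 6.19 to upgrade to $C^{r+2,\kappa}(\bar\Omega)$. Its hypotheses are a $C^{k+2,\kappa}$ domain, boundary data in $C^{k+2,\kappa}(\bar\Omega)$, and coefficients and $f$ in $C^{k,\kappa}(\bar\Omega)$. Taking $k=r$, all of these are furnished by Assumptions \eqref{eqn:assumption1_smooth_boundary} and \eqref{eqn:assumption3_regular_coeffs}, together with the smoothness of $f$ and of $\varphi=0$. Internally, the argument is the usual one:
\begin{itemize}
\item interior regularity comes from differentiating the equation and using difference quotients;
\item near the boundary, one flattens $\paOm$ by a $C^{r+2,\kappa}$ chart, so that the transformed coefficients retain $C^{r,\kappa}$ regularity;
\item tangential derivatives are then controlled by the half-space Schauder estimate;
\item the normal second derivative is recovered by solving the equation for $\partial_{nn}\tau$, which is possible because uniform ellipticity gives $\Sigma_{nn}\ge\lambda>0$.
\end{itemize}
Iterating this $r$ times gives the claimed regularity.

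\textbf{Uniqueness.} Uniqueness among classical solutions follows from the weak maximum principle (Gilbarg--Trudinger Theorem 3.1). The difference $w$ of two solutions satisfies $\mathscr Lw=0$ in $\Omega$ and $w=0$ on $\paOm$. Applying the maximum principle to $\pm w$, which again uses $c=0$ and ellipticity, gives $w\equiv 0$.

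\textbf{Main obstacle.} Being careful about the exponent conventions is the main point, namely matching the regularity order of the domain, the coefficients and the data to Theorem 6.19. The only genuinely delicate input is the boundary Schauder estimate in flattened coordinates, which I take from the cited theorems rather than reprove. It also explains the restriction to $0<\kappa<1$. At $\kappa=0$ or $\kappa=1$, the $C^{r+2,\kappa}$ conclusion can fail: Schauder estimates break down in $C^{k}$ and $C^{k,1}$. So those endpoints would have to be treated separately or excluded.
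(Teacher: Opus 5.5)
Your proposal is correct and matches the paper, which gives no argument of its own. It simply cites Gilbarg--Trudinger Theorem~6.14 (existence and uniqueness in $C^{2,\kappa}(\bar\Omega)$, applicable since $c\equiv 0$) and Theorem~6.19 with $k=r$ (regularity up to $C^{r+2,\kappa}(\bar\Omega)$), exactly as you do. Your restriction to $0<\kappa<1$ is a legitimate correction rather than a weakness: the paper allows $0\le\kappa\le 1$, but the cited Schauder theorems require a strictly fractional exponent, and the $C^{r+2,\kappa}$ conclusion can indeed fail at $\kappa\in\{0,1\}$.
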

	
	Since $\Omega$ is bounded, we will assume for convenience in all of the following that $\Omega$ is contained the closed unit ball $\mathbb B^d:=\{x\in\R^d:\|x\|_2\le1\}\subset[0,1]^d$, which can be done without loss of generality by scaling and translation of the coordinate system.
	
	\begin{remark}
		Once again, we point out that our results still hold if \eqref{eqn:met_bvp} and \eqref{eqn:generator} are respectively replaced by
		\[\mathscr{L}\tau(x) = f(x), \ (x\in\Omega), \quad \tau(x) = g(x)\ (x\in\paOm) ,\]
		and
		\[\mathscr{L} = c(x) + \sum_{i=1}^d b_i(x)\frac{\partial}{\partial x_i} + \sum_{i,j=1}^d \Sigma_{ij}(x)\frac{\partial^2}{\partial x_i \partial x_j},\]
		where $f:\Omega\to\R, g:\bar \Omega\to\R,$ and $c:\Omega\to\R$ are functions with sufficient regularity.
	\end{remark}

	To fix ideas, let's consider a simple, but representative example: an Ornstein-Uhlenbeck process in the unit ball of $\R^d$ with constant drift and noise coefficients. In this case, $X_t$ solves the SDE 
	\begin{equation}
		\label{eqn:OU_example}
		dX_t= -\theta X_t dt + \sqrt{2\varepsilon}dW_t. 
	\end{equation}
	In the small $\varepsilon$ regime, the theory of singular perturbations gives a limit-behaviour for the MET, namely $\tau_\varepsilon(x)\sim_{\varepsilon\to 0^+} C\exp\left(\theta\frac{1-\|x\|_2^2}{\varepsilon^2}\right)$ for all $x$ at least $\sqrt{\varepsilon}$ away from $\paOm$ \citep{verhulst2005methods, freidlin2012random}. This small-noise regime, which can be interpreted as the dynamics of a particle trapped in a potential well with very high energy barrier, is of high interest, e.g., in computational chemistry. However, this sharp growth of the solution over such a thin boundary layer implies that PINNs are most likely to struggle in satisfying the Dirichlet conditions, if they are not explicitly enforced.
	
	\begin{figure}[h!]
		\centering
		\includegraphics[width=0.45\textwidth]{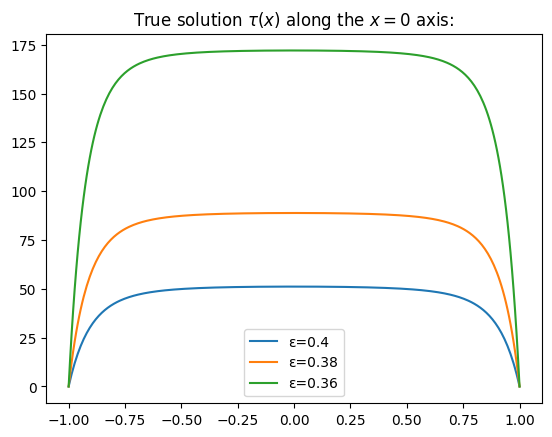}
		\caption{Projection on the $\{(x,y):x=0\}$ axis of the MET for the SDE \eqref{eqn:OU_example} in the 2D unit ball, with $\theta=1$ and varying $\varepsilon>0$. The closed-form expression for this MET has been derived in \citep{kersting2023mean}.}% As we can see, there is sharp growth for solutions away from the boundary, and the solution profile rises exponentially fast in $1/\varepsilon$.}
	\end{figure}
	
	\subsection{Physics-informed neural networks and boundary conditions}
	
	An approach to solve boundary value problems (BVPs) such as \eqref{eqn:met_bvp} consists in using Physics-Informed Neural Networks (PINNs). Popularized in \citep{raissi2019physics}, the idea is to use Neural Networks as solution ansatz to the BVP, and minimize the PDE residual by using gradient-based algorithms.	More specifically, for a PDE such as \eqref{eqn:met_bvp}, the residual is given by
	\[\mathcal{R}(u) = \frac{1}{|\Omega|}\int_\Omega |\mathscr{L}u (x) + 1 |^2\ dx + \frac{1}{\sigma(\paOm)}\int_{\paOm} u(x)^2\ dx,\]
	where $|\cdot|$ and $\sigma(\cdot)$ respectively denote the Lebesgue measure in $\R^d$ and the $d-1$ dimensional Hausdorff measure. The PINN approach in this setting consists in sampling $n$ independent realizations of the uniform distribution respectively on $\Omega$ and $\paOm$, that is, $x_1,\ldots,x_n\in\Omega$, and $ x_{n+1},\ldots,x_{2n}\in\paOm$ and minimizing the empirical residual:
	\[\hat \theta\in\argmin_{\theta\in\Theta} \frac1n\sum_{i=1}^n (\mathscr{L}u_\theta(x_i) + 1)^2 + \frac\lambda n \sum_{i=1}^n u_\theta(x_{n+i})^2,\]
	where $u_\theta$ is a neural network parametrized by $\theta\in\Theta$, and $\lambda>0$ is an optional hyperparameter which regulates the strength of the boundary constraint.  
	
	\subsubsection{Inadequacy of standard PINN loss for $H^2(\Omega)$ error bounds}
	
	Classical learning theory gives us tools to control the excess residual $\mathcal{R}(\hat{u}) - \mathcal{R}(\tau)$ (which, in this context, is referred to as \textit{excess risk}), where $\hat u \equiv u_{\hat{\theta}}$ is a minimizer of the empirical residual over a set $\mathcal U\subset H^2(\Omega)$, by decomposing the residual into an approximation error term and a statistical error term. By itself, however, residual control does not imply that $\hat u$ is close to the target $\tau$ in any function norm. To obtain such a conclusion, one needs an estimate relating $\mathcal{R}(u) - \mathcal{R}(\tau)$ to a norm of $u-\tau$ for all $u\in\mathcal U$. For the mean escape time problem, moreover, the discussion above shows that not every norm is equally informative. In the small-noise regime of \eqref{eqn:OU_example} for instance, the solution is nearly constant throughout most of $\Omega$ and varies sharply only in a thin boundary layer near $\partial\Omega$. An approximation may therefore agree well with $\tau$ in the bulk while still failing to resolve the narrow region where the Dirichlet condition and the sharp variation of the solution are concentrated. This already suggests a need to enforce the boundary condition explicitly. It also shows that an $L^2(\Omega)$ error bound is too weak for our purposes, since such a norm may remain small even when the boundary layer is poorly captured. The issue is therefore not merely to obtain an error bound, but to obtain one in a norm strong enough to detect the sharp variation of the solution near $\partial\Omega$. In what follows, we therefore take $H^2(\Omega)$ as the target norm, since in our setting it is the strongest norm naturally controlled by the residual.
	
	The next question is therefore whether minimizing the residual actually forces the PINN approximation toward $\tau$ in this stronger norm. That is, whether we can guarantee an estimate of the form $\|u - \tau\|_{H^2(\Omega)}^2 \lesssim \mathcal{R}(u) - \mathcal{R}(\tau)$ for all $u\in\mathcal U$. Likewise, because the natural way to control the approximation error $\inf_{u \in \mathcal U}\mathcal{R}(u) - \mathcal{R}(\tau)$ is through the expressivity of our neural network space, the reverse estimate $\mathcal{R}(u) - \mathcal{R}(\tau) \lesssim \|u - \tau\|_{H^2(\Omega)}^2$ is equally important. We are therefore naturally led to ask whether the residual $\mathcal{R}$ satisfies a strong-convexity estimate of the form
	\begin{equation}
		\label{eqn:strong_convexity_general}
		\|{u} - \tau\|_{H^2(\Omega)}^2 \lesssim \mathcal{R}({u}) - \mathcal{R}(\tau) \lesssim \|{u} - \tau\|_{H^2(\Omega)}^2,
	\end{equation}
	uniformly over $\mathcal U$. Such estimates are closely tied to elliptic a priori bounds and are non-trivial in general. In our setting, the upper bound in \eqref{eqn:strong_convexity_general} is not hard to obtain: indeed, since $\tau$ solves the BVP \eqref{eqn:met_bvp}, we have that $\mathcal{R}(\tau)=0$, and thus get for any $u\in H^2(\Omega)$:
	\begin{align*}
		\mathcal{R}(u) - \mathcal{R}(\tau) &=  \frac{1}{|\Omega|}\int_\Omega |\mathscr{L}u (x) + 1 |^2\ dx + \frac{1}{\sigma(\paOm)}\int_{\paOm} u(x)^2\  dx\\
		&=   \frac{1}{|\Omega|}\int_\Omega |\mathscr{L}u (x) - \mathscr{L}\tau(x) |^2\ dx + \frac{1}{\sigma(\paOm)}\int_{\paOm} (u(x)-\tau(x))^2\  dx\\
		&= \frac{1}{|\Omega|}\|\mathscr{L}(u-\tau)\|_{L^2(\Omega)}^2 + \frac{1}{\sigma(\paOm)} \|u-\tau\|_{L^2(\paOm)}^2\\
		&\lesssim \|u-\tau\|_{H^2(\Omega)}^2 + \|u-\tau\|_{L^2(\paOm)}^2 \lesssim \|u-\tau\|_{H^2(\Omega)}^2,
	\end{align*}
	
	where we used the regularity of the coefficients of $\mathscr{L}$ and the Sobolev trace theorem to get the last two inequalities. Obtaining the matching lower bound in \eqref{eqn:strong_convexity_general}, however, is a much more delicate matter, which is addressed by the following classical theorem:
	
	\begin{theorem}[Adapted from Theorem 15.2 in \citep{agmon1959estimates}]
	\label{thm:nirenberg_estimates}
	Let $\mathscr{L}$ be a uniformly elliptic operator of order $2$ whose coefficients are in $C(\bar\Omega)$, and suppose that $\paOm$ is of class $C^2$. If $u$ is a solution of the boundary value problem
	\begin{equation}
		\label{eqn:abstract_bvp}
		\begin{cases}
			\mathscr{L} u &= f \quad \text{ in } \Omega\\
			u &= g \quad \text{ on } \paOm
		\end{cases}
	\end{equation}
	such that $\|u\|_{H^2(\Omega)}$, $\|f\|_{L^2(\Omega)}$, and $\|g\|_{H^{3/2}(\paOm)}$ are all finite, then we have the estimate
	\[\|u\|_{H^2(\Omega)} \lesssim \|f\|_{L^2(\Omega)} + \|g\|_{H^{3/2}(\paOm)} + \|u\|_{L^2(\Omega)}. \]
	Furthermore, if the BVP \eqref{eqn:abstract_bvp} has a unique solution in $H^2(\Omega)$, the additional $\|u\|_{L^2(\Omega)}$ term can be omitted, leading to the estimate:
	\begin{equation*}
		\|u\|_{H^2(\Omega)} \lesssim \|f\|_{L^2(\Omega)} + \|g\|_{H^{3/2}(\paOm)}.
	\end{equation*}
	\end{theorem}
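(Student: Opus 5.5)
This is the classical Agmon--Douglis--Nirenberg $L^2$ estimate, and I would prove it the standard way: localize, freeze coefficients, reduce to constant-coefficient model problems, and patch together.

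\textbf{Interior and model-problem estimates.} First consider a constant-coefficient, purely second-order elliptic operator $\mathscr{L}_0=\sum_{ij}a_{ij}\partial_{ij}$ acting on compactly supported $H^2$ functions. Use the Fourier transform together with uniform ellipticity, $\sum a_{ij}\xi_i\xi_j\ge\lambda|\xi|^2$. This gives $\|D^2 v\|_{L^2}\lesssim\|\mathscr{L}_0 v\|_{L^2}$ on $\R^d$.

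On the half-space $\R^d_+$ with $v=g$ on $\{x_d=0\}$, proceed in three steps:
\begin{itemize}
\item subtract an $H^2$ extension of $g$, with norm controlled by $\|g\|_{H^{3/2}}$ (the trace theorem is onto);
\item apply the partial Fourier transform in the tangential variables;
\item solve the resulting ODE in $x_d$ explicitly, or equivalently estimate tangential derivatives by difference quotients and recover $\partial_{dd}v$ from the equation using $a_{dd}\ge\lambda$.
\end{itemize}
This yields $\|v\|_{H^2(\R^d_+)}\lesssim\|\mathscr{L}_0v\|_{L^2}+\|g\|_{H^{3/2}}+\|v\|_{L^2}$.

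\textbf{Localization.} Cover $\bar\Omega$ by finitely many small balls. In the boundary balls, flatten $\paOm$ with $C^2$ charts; these preserve $H^2$ and ellipticity, and their derivatives contribute only lower-order terms. Take a partition of unity $\{\chi_k\}$ and apply the model estimates to $\chi_k u$. Then $\mathscr{L}(\chi_k u)=\chi_k f+[\mathscr{L},\chi_k]u$, and the commutator involves only $u$ and $\nabla u$.

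Freezing coefficients at the ball centre gives
\[(\mathscr{L}-\mathscr{L}_0)(\chi_ku)=\textstyle\sum(a_{ij}(x)-a_{ij}(x_k))\partial_{ij}(\chi_ku)+\text{l.o.t.}\]
By continuity of the $a_{ij}$ on $\bar\Omega$, its $L^2$ norm is at most $\epsilon\|\chi_ku\|_{H^2}$ plus lower-order terms, once the balls are small enough. Absorb the $\epsilon$ term into the left-hand side. The $H^1$ terms are handled by the interpolation inequality $\|u\|_{H^1}\le\delta\|u\|_{H^2}+C_\delta\|u\|_{L^2}$, which again allows absorption. Summing over $k$ gives the first inequality.

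\textbf{Removing $\|u\|_{L^2}$ under uniqueness.} Use a compactness--contradiction argument. Suppose $\|u_n\|_{H^2}=1$ while $\|\mathscr{L}u_n\|_{L^2}+\|u_n\|_{H^{3/2}(\paOm)}\to0$. By Rellich, a subsequence converges in $L^2$. The first estimate, applied to differences $u_n-u_m$, shows the sequence is Cauchy in $H^2$, so it converges in $H^2$ to some $u$ with $\mathscr{L}u=0$, $u|_{\paOm}=0$, and $\|u\|_{H^2}=1$. Uniqueness (applied to the homogeneous problem, i.e.\ the difference of two solutions) forces $u=0$, a contradiction.

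The main obstacle is the half-space boundary estimate: controlling normal second derivatives and handling the $H^{3/2}$ boundary datum sharply. I would first try tangential difference quotients combined with solving the equation for $\partial_{dd}v$. This avoids explicit ODE symbol computations and only needs continuity of the coefficients after freezing.
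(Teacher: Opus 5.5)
Your proposal is correct. The paper gives no proof of this statement: it is quoted from Agmon--Douglis--Nirenberg, so the meaningful comparison is with their argument.

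Your overall structure is the same as theirs:
\begin{itemize}
\item a partition of unity and $C^2$ flattening of the boundary;
\item freezing the continuous leading coefficients on small balls;
\item absorbing the $\epsilon\|\chi_k u\|_{H^2}$ error, and handling the commutator and lower-order terms by interpolation.
\end{itemize}
Where you differ is the half-space model estimate. ADN obtain it from explicit Poisson kernels and Calder\'on--Zygmund bounds for singular integrals. That machinery is what lets them cover $L^p$ for every $1<p<\infty$, operators of arbitrary order, and general boundary operators satisfying the complementing condition. Your argument subtracts an $H^2$ extension of the boundary datum, controls tangential second derivatives by difference quotients, and recovers $\partial_{dd}v$ from the equation using $a_{dd}\ge\lambda$. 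This is the elementary $p=2$, second-order Dirichlet special case. It is exactly what this statement requires and avoids all kernel computations.

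The second part is also the standard argument, as in the proof of Lemma 9.17 in Gilbarg--Trudinger. You drop $\|u\|_{L^2(\Omega)}$ by taking a normalized contradicting sequence, applying Rellich compactness, applying the first estimate to differences to get a Cauchy sequence in $H^2$, and using triviality of the homogeneous problem. You also correctly note that uniqueness for a single data pair already forces uniqueness for the homogeneous problem.

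The one consequence to keep in mind is that the resulting constant is non-constructive. This is harmless here, since the paper only uses it through the implicit constant $C_{11}$.
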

	 
	From Theorem \ref{thm:nirenberg_estimates}, provided that conditions such as our Assumptions \eqref{eqn:assumption1_smooth_boundary}---\eqref{eqn:assumption3_regular_coeffs} hold, we get the estimate
	\begin{equation}
		\label{eqn:strong_convexity_lower_bound}
		\| u - \tau\|_{H^2(\Omega)} \lesssim \|\mathscr{L}( u - \tau)\|_{L^2(\Omega)} + \| u - \tau\|_{H^{3/2}(\paOm)}, \ \ \text{ for all } u\in H^2(\Omega).
	\end{equation}
	Estimate \eqref{eqn:strong_convexity_lower_bound} shows that an $H^2(\Omega)$ error bound cannot in general be recovered from a residual that only penalizes the boundary trace in $L^2(\partial\Omega)$. One could in principle replace this term by an $H^s(\partial\Omega)$ penalty with $s \ge 3/2$, but this is difficult to implement in practice, as the accurate computation of higher-order (including fractional) Sobolev norms on arbitrary manifolds tends to be challenging. Moreover, incorporating such norms directly into neural network training can significantly increase computational complexity and implementation difficulty, especially when relying on automatic differentiation. We however note that a methodology to compute fractional Sobolev norms on manifolds based on eigenvalues of the Laplace-Beltrami operator has recently been proposed in \citep{zhou2026simultaneous}, which the authors have successfully applied to train physics-informed convolutional neural networks. We likewise note that it is still possible to get a priori error estimates using an $L^2(\paOm)$ penalty, but this typically results in estimates in weaker norms, which may fail to capture the full complexity of the PDE solution \citep{jiao2022rate, bonito2025convergence}.
	
	These difficulties suggest a different strategy: instead of penalizing the homogeneous Dirichlet condition, we enforce it directly in the ansatz, for instance by multiplying the network output by a smooth approximation of the distance-to-boundary function. This conforming approach goes back to \citep{lagaris1998artificial, lagaris2000neural} and was further developed in \citep{berg2018unified, sukumar2022exact}. This is the approach we will consider in this work.
	
	\subsubsection{``Hard-enforcement" of boundary conditions in PINN architectures}
		
	In the physics-informed machine learning literature, it is common to take the PDE-solving ansatz as a feedforward neural network (FNN) with a sufficiently smooth activation function \citep{yu2018deep,raissi2019physics}. This leads to hypothesis spaces of the following form:
	
	\begin{definition}[Fully connected neural networks]
		\label{def:fcnn_hypothesis_space}
		Let $\sigma : \R\to\R$ be an activation function, $L\in\N^+$, $\mathbf{m}:=\{m_i\}_{1\le i \le L}\subset \N^+ $, $m_0 =d$, and $B,M>0$. We denote by $\Sigma^\sigma_{\mathbf{m}_{1:L}}(B)$ the class of FNN defined on $\R^d$:
		\begin{equation}
			\label{eqn:fcnn_general}
			\begin{split}
			\Sigma^\sigma_{\mathbf{m}_{1:L}}(B) :=\left\{\right.&\left.\R^d\ni x\mapsto c\cdot h_L(x): h_0(x)=x, h_{i+1}(x)=\sigma(A_i h_i(x) + b_i),A_i\in\R^{m_i\times m_{i-1}}, \right.\\
			&\left.b_i\in \R^{m_i}, c\in\R^{m_L}, \|A_i\|_{\infty,\infty}\le B, \|b_i\|_\infty \le B, i=1,\ldots,L\right\}
			\end{split}
		\end{equation}
		where $\sigma$ is applied entry-wise, and $\|b\|_{\infty}$, $\|A\|_{\infty,\infty}$ both denote the entry-wise maximum absolute value of respectively $b \in \R^m_i$ and $A\in\R^{m_i\times m_{i-1}} $. If we further require that $\|c\|_\infty\le M $, we will denote the class as $\Sigma^\sigma_{\mathbf{m}_{1:L},M}(B) $, following notation from \citep{he2023expressivity, siegel2024sharp}. Lastly, we will denote $\Sigma^\sigma_{\mathbf{m}_{1:L}}(\infty)$ the space of FNNs with no norm restrictions on the parameters. 
	\end{definition}
	
	Although the hypothesis space $\Sigma^\sigma_{\mathbf{m}_{1:L},M}(B)$ generally enjoys a lot of desirable properties regarding expressiveness and complexity, the functions in this space do not, in general, satisfy the homogeneous Dirichlet boundary condition \eqref{eqn:met_bvp}. As we discussed in the previous subsection, this implies that a control of the $H^2(\Omega)$ a priori error with this hypothesis space is not possible. To circumvent this issue in the error analysis, authors have either worked under the assumption of exact satisfaction of boundary conditions \citep{lu2021machine}, which is not realistic, or considered PDEs on boundary-less submanifolds of $\R^d$, such as the sphere \citep{lei2025solving}. To make up for these limitations, we propose to exactly enforce the boundary conditions by multiplying the outputs of the FNN hypothesis space by \textit{smooth distance approximations}:
	
	\begin{definition}[Smooth distance approximation]
		\label{def:smooth_distance}
		Given $\rho : \bar{\Omega}\to\R$ and $r\in\N^+$, we say that $\rho$ is an $r$-smooth approximate distance function if (i) $\rho\in C^r(\bar{\Omega})$, (ii) $\rho(x)=0$ for all $x\in\paOm$, and $\rho(x)>0$ for all $x\in\Omega$. We will also call such a function $\rho$ an $r$-smooth distance approximation.
	\end{definition}
	
	Given such a distance approximation, we can then define the hypothesis space of \textit{boundary-enforced} neural networks:
	
	\begin{definition}[Boundary-enforced FNNs]
		\label{def:boundary_pinns_hyp_space}
		Let $r\ge 2$, $\rho:\bar{\Omega}\to\R$ be a $r$-smooth distance approximation, $\sigma : \R\to\R$ be an activation function, $L\in\N^+$, $\mathbf{m}:=\{m_i\}_{1\le i \le L}\subset \N^+ $, and $Q,B,M>0$. We denote by $\mathcal F(\rho, L, \mathbf{m},\sigma, Q, B, M)$ the hypothesis space of functions defined on $\Omega$ by
		\begin{equation}
			\label{eqn:boundary_fcnn}
			\F(\rho, L, \mathbf{m},\sigma, Q, B, M) = \left\{\Omega\ni x\mapsto \rho(x)f(x): f\in  \Sigma^\sigma_{\mathbf{m}_{1:L}, M}(B)\text{ and } \max_{\|\alpha\|_1 \leq 2} \|\partial^\alpha f\|_{L^\infty} \le Q\right\}.
		\end{equation}
		As in Definition \ref{def:fcnn_hypothesis_space}, we may let $B=\infty$ or $M=\infty$ to signify that the weights are not bounded.
	\end{definition}
	
	As we will see later, we in fact do not need to impose restrictions on the FNN weights' norms to carry our main error analysis. Our approximation theorems will however still be stated with sufficient parameter weight bounds for the sake of comparison with similar results in the literature. One element we will however need is a uniform bound $Q$ on the $W^{2,\infty}$ norm of functions in the hypothesis space \eqref{eqn:boundary_fcnn}, whose practical implementation could be challenging. Fortunately, for the activation functions considered in this work, there is a straightforward way to guarantee such control, as we will discuss in Remark~\ref{rem:practical_q_bound} further down.
	
	\begin{remark}
		It is straightforward to modify Definition~\ref{def:boundary_pinns_hyp_space} to handle boundary conditions of the form $\tau(x)=g(x)$ for $x\in\partial\Omega$, with $g$ prescribed on $\partial\Omega$: instead of the ansatz $\rho f$, one uses $G+\rho f$, where $G$ is any sufficiently regular extension of $g$ from $\paOm$ to $\overline{\Omega}$. All of our results thus extend straightforwardly to that case.
	\end{remark}
	
	Assuming the activation function $\sigma$ is sufficiently regular, it follows immediately that all ``boundary-enforced" PINNs satisfy the homogeneous Dirichlet boundary condition. Consequently, the strong convexity estimate \eqref{eqn:strong_convexity_general} holds, which is necessary for our error analysis to carry through. However, multiplying by $\rho$ introduces additional challenges from a learning theory perspective: indeed, to control the approximation error of the hypothesis spaces defined in \eqref{eqn:boundary_fcnn}, we must control the Sobolev norm of the function $f - \tau/\rho$\footnote{Indeed, this is simply due to the fact that $\|\rho f - \tau\|_{W^{2,\infty}}\le \|\rho\|_{W^{2,\infty}}\cdot\|f - \tau/\rho\|_{W^{2,\infty}}$, which follows from Leibniz rule in Sobolev spaces.}, where $f$ lies in the FNN space $\Sigma^\sigma_{\mathbf{m}_{1:L}}(B)$. Since Definition \ref{def:smooth_distance} alone does not govern the interaction between $\rho$ and $\tau$, it is unclear whether the quotient $\tau(x)/\rho(x)$ is even well-defined for all $x \in \paOm$, let alone whether $\tau/\rho$ possesses any Sobolev regularity. As it turns out, for $\rho$ to preserve smoothness of $\tau/\rho$ up to $\paOm$, we need it to be a smooth \textit{normalized} distance approximation, as we now introduce.
	
	\subsubsection{Smooth normalized distance approximations}
	
	To bypass the limitations of smooth distance approximations from Definition~\ref{def:smooth_distance}, we now introduce the better behaved class of smooth \textit{normalized} distance approximations \citep{shapiro2007semi, sukumar2022exact}. For a function $u$ defined in a neighborhood of $\paOm$, $q\in\paOm$, $\bnu(q)$ the associated inward unit normal, and any $k\in \N$, we write $\partial_{\bnu}^k u(q) := \left.\frac{d^k}{dt^k} u\bigl(q+t\,\bnu(q)\bigr)\right|_{t=0}$, whenever the derivative exists.
	
	\begin{definition}[Smooth normalized distance approximation]
		\label{def:order_m_normalization}
		Let $r\in\N^+$ and $m\in\{1,\dots,r\}$. A function $\rho:\bar\Omega\to\R$ is called an $r$-smooth approximate distance function normalized to order $m$ if it is an $r$-smooth distance approximation in the sense of Definition~\ref{def:smooth_distance} which furthermore satisfies:
		\[
		\partial_{\bnu} \rho(q)=1,\text{ and }\ \partial_{\bnu}^k \rho(q)=0\ \text{ for all } k=2,\dots,m \text{ and } q\in\paOm.
		\]
	\end{definition}
	
	The following Proposition, perhaps surprisingly, guarantees that if $\rho$ is an $(r+2)$-smooth distance approximation normalized to any order $m\ge1$, then the quotient $\tau/\rho$ is well-defined and $C^r$-smooth up to the boundary\footnote{This assumption is in fact \textit{necessary} for the quotient $\tau/\rho$
		to extend continuously to $\overline{\Omega}$. Indeed, if
		$\partial_\nu \rho(x_0)=0$ at some $x_0\in\partial\Omega$, then
		$\tau(x_0+t\nu)=\partial_\nu\tau(x_0)\,t+o(t)$, whereas
		$\rho(x_0+t\nu)=o(t)$, so $\tau/\rho$ is unbounded near $x_0$.}:
	
	\begin{proposition}
		\label{prop:quotient_regularity}
		If Assumptions \eqref{eqn:assumption1_smooth_boundary}, \eqref{eqn:assumption2_strict_elliptic}, and \eqref{eqn:assumption3_regular_coeffs} hold such that $\tau$ is well-defined as the unique solution of \eqref{eqn:met_bvp}, and if $\rho:\bar{\Omega}\to\mathbb{R}$ is an $(r+2)$-smooth distance approximation normalized to the first order as per Definition \ref{def:order_m_normalization}, then $\tau/\rho \in C^r(\bar{\Omega})$.
	\end{proposition}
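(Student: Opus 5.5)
We reduce the claim to a Hadamard-type division lemma near the boundary, since away from $\partial\Omega$ there is nothing to prove. On any compact $K\subset\Omega$ we have $\rho\ge c_K>0$ and $\rho\in C^{r+2}$, so $\tau/\rho\in C^{r+2}(K)$. The real issue is regularity up to $\partial\Omega$. We therefore work in a tubular neighbourhood $U_\delta=\{x\in\bar\Omega:\operatorname{dist}(x,\partial\Omega)<\delta\}$. Because $\partial\Omega$ is $C^{r+2,\kappa}$ by \eqref{eqn:assumption1_smooth_boundary}, for small $\delta$ the map $\Phi(q,t)=q+t\,\bnu(q)$ is a diffeomorphism from $\partial\Omega\times[0,\delta)$ onto $U_\delta$. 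Since $\bnu$ is only $C^{r+1}$, $\Phi$ is of class $C^{r+1}$. This loss of one derivative will have to be tracked.

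\textbf{Step 1: the division lemma.} The basic tool is the integral representation: if $g\in C^{k}$ on $U_\delta$ and $g=0$ on $\partial\Omega$, then $g=t\cdot h$ with $h\in C^{k-1}$. In the coordinates $(q,t)$ this reads
\[
g(\Phi(q,t))=t\int_0^1 \partial_t (g\circ\Phi)(q,st)\,ds .
\]
Writing $\tilde\tau=\tau\circ\Phi$ and $\tilde\rho=\rho\circ\Phi$, both vanish at $t=0$, so
\[
\tilde\tau=t\,A,\qquad \tilde\rho=t\,B,\qquad A(q,t)=\int_0^1\partial_t\tilde\tau(q,st)\,ds,\quad B(q,t)=\int_0^1\partial_t\tilde\rho(q,st)\,ds .
\]
At $t=0$ we get $B(q,0)=\partial_{\bnu}\rho(q)=1$ by the first-order normalization in Definition~\ref{def:order_m_normalization}. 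By continuity, after shrinking $\delta$ we have $B\ge 1/2$ on $U_\delta$. Combining this with compactness of $\bar\Omega\setminus U_{\delta/2}$, where $\rho>0$ is bounded below, we obtain $\tau/\rho=A/B$ with a denominator bounded away from zero. Note that the footnote already shows that normalization is exactly what rules out a blow-up of this quotient.

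\textbf{Step 2: regularity bookkeeping, the main obstacle.} The naive count gives $\tilde\tau,\tilde\rho\in C^{r+1}$ (limited by $\Phi$), hence $A,B\in C^{r}$ and $A/B\in C^r$ near the boundary in $(q,t)$-coordinates. Pulling back by $\Phi^{-1}\in C^{r+1}$ then yields $\tau/\rho\in C^r(U_\delta)$, which suffices. So the loss of one derivative from $\bnu$ and one from the division still leaves $C^r$, given $\tau\in C^{r+2,\kappa}(\bar\Omega)$ from the Gilbarg--Trudinger result and $\rho\in C^{r+2}(\bar\Omega)$.

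The delicate points are two. First, we must check that differentiating the parameter integral $\int_0^1\partial_t\tilde g(q,st)\,ds$ up to order $r$ in $(q,t)$ is legitimate. This follows by dominated convergence, since all derivatives of $\partial_t\tilde g$ up to order $r$ are continuous on the compact set $\partial\Omega\times[0,\delta]$, each $t$-derivative contributing a bounded factor $s^j$. Second, we must be careful that $\partial\Omega$ is only a manifold. We handle this with finitely many $C^{r+2}$ boundary charts and a partition of unity.

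If the $C^{r+1}$ regularity of $\Phi$ were insufficient, the fallback is to flatten the boundary locally with $C^{r+2}$ charts $\psi$ instead of using the normal map. We would then divide by the flattened coordinate $y_d$, whose zero set is $\partial\Omega$. In these charts $\tau\circ\psi^{-1}$ and $\rho\circ\psi^{-1}$ are $C^{r+2}$, so the quotients are $C^{r+1}$. Their ratio has denominator $\partial_{y_d}(\rho\circ\psi^{-1})$ at $y_d=0$, which equals $\partial_{\bnu}\rho$ times a nonvanishing Jacobian factor and is therefore nonzero. This route even gives $C^{r+1}$, and in particular the claimed $C^r(\bar\Omega)$.
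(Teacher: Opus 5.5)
Your proposal is correct and follows essentially the same route as the paper: normal coordinates $\Phi(q,t)=q+t\,\bnu(q)$, the Hadamard integral representation $\int_0^1\partial_t(\cdot)(q,st)\,ds$ applied to both $\tau$ and $\rho$, the bound $B\ge 1/2$ near the boundary coming from $\partial_{\bnu}\rho=1$, and the same derivative count $C^{r+2}\to C^{r+1}\to C^r$. The only cosmetic difference is that the paper pulls each factor back first, writing $\tau=f_1 d_{\paOm}$, $\rho=f_2 d_{\paOm}$ and $\tau/\rho=f_1/f_2$, whereas you form $A/B$ in $(q,t)$-coordinates. Your chart-flattening fallback is also valid, and it actually gives the slightly sharper conclusion $\tau/\rho\in C^{r+1}(\bar\Omega)$, because $C^{r+2}$ graph charts avoid the derivative lost to $\bnu\in C^{r+1}$.
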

	
	While Proposition \ref{prop:quotient_regularity}, whose proof is given in Appendix~\ref{sec:quotient_regularity}, gives us a sufficient criterion for appropriate choices of boundary-enforcing ansatzes, the existence of such nicely behaved function is not immediately clear, nor is a computationally tractable way to compute such functions. Fortunately, for an extensive range of geometries $\Omega$, efficient algorithms are available to construct such normalized distance approximations. For completeness, we give in Algorithm~\ref{alg:shapiro-normalization} one such construction based off of \citep{shapiro2007semi}, and refer the reader to \citep{shapiro2007semi, sukumar2022exact} and references within for a more comprehensive discussion of this matter.
	
	\begin{algorithm}[t]
		\caption{Construction of a smooth normalized distance to order $m$.}
		\label{alg:shapiro-normalization}
		\begin{algorithmic}[1]
			\REQUIRE A function $\phi\in C^{r+m}(\bar\Omega)$ with $\phi(x)=0$ for all $x\in\paOm$, $\phi(x)>0$ for all $x\in\Omega$, and $\nabla\phi(q)\neq 0$ for all $q\in\paOm$. A tubular neighborhood $U_\delta$ of $\paOm$ with nearest-point projection $\pi:U_\delta\to\paOm$. A cutoff $\chi\in C^{r+m}(\bar\Omega)$ such that $0\le \chi\le 1$, $\chi\equiv 1$ on a smaller tubular neighborhood $U_{\delta/2}$, and $\operatorname{supp}\chi\subset U_\delta$.
			\STATE Define
			\[
			\rho_1(x) \gets \frac{\phi(x)}{\sqrt{\phi(x)^2+\|\nabla\phi(x)\|^2}}, \text{ for } x\in\bar\Omega.
			\]
			\FOR{$k=2,\dots,m$}
			\STATE Compute $a_k(q)\gets \partial_{\bnu}^k \rho_{k-1}(q)$ for $q\in\paOm$.
			\STATE Extend $a_k$ to $U_\delta$ by $\tilde a_k(x)\gets a_k(\pi(x))$.
			\STATE Update
			\[
			\rho_k(x)\gets \rho_{k-1}(x)-\frac{1}{k!}\chi(x)\rho_1(x)^k\tilde a_k(x), \text{ for } x\in U_\delta,
			\]
			and set $\rho_k\gets \rho_{k-1}$ on $\bar\Omega\setminus U_\delta$.
			\ENDFOR
			\STATE \textbf{return} $\rho_m$.
		\end{algorithmic}
	\end{algorithm}
	
	The below Proposition, whose proof we also defer to Appendix~\ref{sec:quotient_regularity}, confirms that the output of Algorithm~\ref{alg:shapiro-normalization} is indeed a smooth normalized distance function as per Definition~\ref{def:order_m_normalization}.
	
	\begin{proposition}
		\label{prop:shapiro-normalization}
		Let $r\in\N^+$ and $m\in\{1,\dots,r\}$. Under the assumptions of Algorithm~\ref{alg:shapiro-normalization}, there exists a constant $\delta_0>0$ such that, whenever Algorithm~\ref{alg:shapiro-normalization} is applied with $0<\delta<\delta_0$, its output $\rho_m$ is an $r$-smooth approximate distance function normalized to order $m$ in the sense of Definition~\ref{def:order_m_normalization}.
	\end{proposition}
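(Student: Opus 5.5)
We must show that the output $\rho_m$ of Algorithm~\ref{alg:shapiro-normalization} is $C^r(\bar\Omega)$, vanishes exactly on $\paOm$, is positive in $\Omega$, and satisfies the normal-derivative conditions of Definition~\ref{def:order_m_normalization}. The plan is an induction on $k$, working throughout in the tubular neighborhood $U_\delta$. There we use normal coordinates $x=q+t\bnu(q)$ with $q=\pi(x)\in\paOm$ and $t\in[0,\delta)$; these are $C^{r+1}$ for small $\delta$ since $\paOm$ is smooth, and this fixes $\delta_0$ (the reach of $\paOm$).

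\textbf{Base case $k=1$.} Since $\nabla\phi\neq0$ on $\paOm$ and $\phi\in C^{r+m}$, the denominator $\sqrt{\phi^2+\|\nabla\phi\|^2}$ is strictly positive on $\bar\Omega$. If it vanished somewhere, then $\phi=0$ there, so the point would be on $\paOm$, where $\nabla\phi\ne0$. Hence $\rho_1\in C^{r+m-1}(\bar\Omega)$, $\rho_1=0$ exactly on $\paOm$, and $\rho_1>0$ in $\Omega$. Because $\phi>0$ inside and $\phi=0$ on $\paOm$, $\nabla\phi(q)$ is parallel to the inward normal, so $\nabla\phi(q)=\|\nabla\phi(q)\|\bnu(q)$. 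Differentiating along $t\mapsto q+t\bnu$ at $t=0$, where $\phi=0$, gives $\partial_{\bnu}\rho_1(q)=\partial_{\bnu}\phi(q)/\|\nabla\phi(q)\|=1$.

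\textbf{Inductive step.} Suppose $\rho_{k-1}\in C^{r+m-k+1}$ is normalized to order $k-1$. Then $a_k=\partial^k_{\bnu}\rho_{k-1}|_{\paOm}$ is $C^{r+m-2k+1}$ on $\paOm$. Taking $\tilde a_k=a_k\circ\pi$, which is constant along normals, we have along each normal ray
\[
\frac{d^j}{dt^j}\Bigl[\rho_1^k\tilde a_k\Bigr]_{t=0}=a_k(q)\,\frac{d^j}{dt^j}\bigl[\rho_1(q+t\bnu)^k\bigr]_{t=0}.
\]
Since $\rho_1(q+t\bnu)=t+O(t^2)$, the factor $\rho_1^k$ vanishes to order $k$ and its $k$-th derivative equals $k!$. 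Also $\chi\equiv1$ near $\paOm$. So subtracting $\frac1{k!}\chi\rho_1^k\tilde a_k$ leaves the derivatives of orders $1,\dots,k-1$ unchanged and sets the $k$-th to $a_k-a_k=0$. This gives normalization to order $k$; for $k=2,\dots,m$ the target value is $0$, and the first derivative stays $1$.

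\textbf{Smoothness bookkeeping, the main obstacle.} Each step loses derivatives: $\partial_{\bnu}^k$ costs $k$, and $\pi$ is only $C^{r+1}$ if $\paOm\in C^{r+2}$. A naive count gives roughly $r+m-\sum k$, which is too lossy. I would first sharpen this by noting that $a_k$ only needs tangential regularity, and that the multiplication by $\rho_1^k$ recovers regularity: $\rho_1^k\tilde a_k$ behaves like $t^k a_k(q)$, so normal derivatives up to order $k$ fall on the power. Failing that, I would assume enough smoothness of $\phi$, $\chi$ and $\paOm$ to absorb the loss, since the algorithm's hypotheses are stated loosely. In particular, I would interpret $\phi\in C^{r+m}$ as sufficient by computing $a_k$ directly as a combination of derivatives of $\phi$ of order at most $k+1\le m+1$ along the normal.

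\textbf{Positivity.} $\rho_m=0$ on $\paOm$ since $\rho_1=0$ there. For positivity in $\Omega$, use $\rho_m(q+t\bnu)=t+O(t^{2})$ uniformly in $q$ (by compactness of $\paOm$) on $U_\delta$: shrinking $\delta_0$ so the remainder is below $t/2$ gives $\rho_m>0$ on $U_\delta\cap\Omega$. Outside $U_\delta$ we have $\rho_m=\rho_1>0$. This determines the final $\delta_0$.
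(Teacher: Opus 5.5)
Your architecture matches the paper's: a base case for $\rho_1$, cancellation of the $k$-th normal derivative because $\tilde a_k$ is constant along normal rays and $\rho_1(q+t\bnu)^k=t^k+O(t^{k+1})$, and positivity by comparison with $\rho_1$. Your base case is also more careful than the paper's one-line treatment.

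\textbf{The gap: $C^r$ regularity of $\rho_m$.} This is part of the claim, and you leave it open. Your induction $\rho_{k-1}\in C^{r+m-k+1}\Rightarrow a_k\in C^{r+m-2k+1}$ loses derivatives cumulatively. Your first remedy cannot work: multiplying by $\rho_1^k$ gains nothing tangentially, because $\rho_1^k$ is a positive $C^r$ function on $U_\delta\cap\Omega$ and $\tilde a_k=a_k\circ\pi$ has exactly the tangential regularity of $a_k$. Adding smoothness hypotheses changes the statement. The missing step is the paper's. Since $\chi\equiv1$ near $\paOm$ and each $\tilde a_j$ is constant along normal rays, for small $t\ge 0$
\[
\rho_{k-1}(q+t\bnu(q))=\rho_1(q+t\bnu(q))-\sum_{j=2}^{k-1}\frac{1}{j!}\,\rho_1(q+t\bnu(q))^j\,a_j(q).
\]
Hence $a_k=\partial_{\bnu}^k\rho_1-\sum_{j=2}^{k-1}\frac{1}{j!}\partial_{\bnu}^k(\rho_1^j)\,a_j$ on $\paOm$. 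By induction, each $a_k$ is a polynomial in the boundary traces $\partial_{\bnu}^i\rho_1|_{\paOm}$ with $i\le k\le m$, so losses do not accumulate.

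Your closing sentence points in this direction, but with your count (derivatives of $\phi$ up to order $k+1$) you would only get $a_m\in C^{r-1}(\paOm)$. The sharp count uses $\phi|_{\paOm}=0$:
\begin{itemize}
\item In $\partial_{\bnu}^k\bigl(\phi\,(\phi^2+\|\nabla\phi\|^2)^{-1/2}\bigr)(q)$, the only term carrying $k$ derivatives of the second factor is multiplied by $\phi(q)=0$.
\item For $j\ge 2$, $\partial_{\bnu}^k(\rho_1^j)(q)$ involves only $\partial_{\bnu}^i\rho_1(q)$ with $i\le k-1$.
\end{itemize}
So only derivatives of $\phi$ of order at most $k$ enter. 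This gives $a_k\in C^{r+m-k}(\paOm)\subseteq C^r(\paOm)$, then $\tilde a_k\in C^r(U_\delta)$, and finally $\rho_m\in C^r(\bar\Omega)$.

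\textbf{Positivity needs one more observation.} You invoke $\rho_m(q+t\bnu)=t+O(t^2)$ ``uniformly by compactness'' and then shrink $\delta_0$. But $\rho_m$ depends on $\delta$ through $\chi$, whose derivatives are unconstrained (only $0\le\chi\le 1$ is assumed). A Taylor remainder for $\rho_m$ therefore has a $\delta$-dependent constant, and shrinking $\delta$ becomes circular. What makes it work, as in the paper, is that the $a_k$ are determined by the boundary jet of $\rho_1$ and so are independent of $\delta$. Set $B:=\sum_{k=2}^m\|a_k\|_{L^\infty(\paOm)}/k!$ and take $\delta$ small enough that $\sup_{U_\delta}\rho_1\le 1$. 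Then $0\le\chi\le1$ alone gives $\rho_m\ge\rho_1-B\rho_1^2$ on $U_\delta\cap\Omega$. Choosing $\delta$ with $B\sup_{U_\delta}\rho_1\le\frac12$ yields $\rho_m\ge\frac12\rho_1>0$ there, while $\rho_m=\rho_1>0$ on $\Omega\setminus U_\delta$.
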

	
	\begin{remark}
		\label{rem:shapiro-practical_extension}
		Algorithm~\ref{alg:shapiro-normalization} should mainly be read as a conceptual construction. For normalization orders $m\ge 2$, a direct implementation requires, beyond just the set $\Omega$ and a defining function $\phi$, a tubular neighborhood, a nearest-point projection $\pi$, a cutoff $\chi$, and the boundary normal derivatives $\partial_{\bnu}^k\rho_{k-1}$, which are typically not available in closed form and must be constructed numerically. In the present work however, this is not really an issue as we only require first-order normalization. In that case the construction reduces to $\rho_1(x):=\phi(x)/\sqrt{\phi(x)^2+\|\nabla\phi(x)\|^2}$, so only $\phi$ is needed.   
		
		When $\Omega$ is already given by $\Omega=\{x:\phi(x)\ge 0\}$, this is immediate, as our numerical experiments in Section~\ref{sec:numerical_experiments} illustrate. If such a function is not directly available, one may build an implicit representation using, e.g., level-set, R-functions, or mean value potential techniques \citep{osher2004level, shapiro2007semi, sukumar2022exact}. We do not pursue here the numerical realization of the higher-order corrections.
	\end{remark}
	
	\subsubsection{Boundary-adapted PINNs}
	
	In light of Proposition~\ref{prop:quotient_regularity}, we will refer to functions of the form \eqref{eqn:boundary_fcnn}, where $\rho$ is an $(r+2)$-smooth distance approximation normalized to the first order, as ``boundary-adapted FNNs":
	
	\begin{definition}[Boundary-adapted FNNs]
		\label{def:boundary_adapted_fnns}
		Let $r\ge 2$, $\sigma : \R\to\R$ be an activation function, $L\in\N^+$, $\mathbf{m}:=\{m_i\}_{1\le i \le L}\subset \N^+ $, and $Q,B,M>0$. If $\rho:\bar{\Omega}\to\R$ is an $(r+2)$-smooth distance approximation normalized to the first order in the sense of Definition~\ref{def:order_m_normalization}, then we refer to the hypothesis space $\F(\rho, L, \mathbf{m},\sigma, Q, B, M)$ as defined in Definition~\ref{def:boundary_pinns_hyp_space} as a space of boundary-adapted FNNs (or, equivalently, boundary-adapted PINNs).
	\end{definition}
	
	The advantage of boundary-adapted PINNs lies in the fact that they satisfy the Dirichlet boundary conditions exactly while preserving the smoothness of the approximation target $\tau/\rho$. In particular, the $C^r(\bar{\Omega})$ regularity of $\tau/\rho$ implies that it can be extended to a $W^{r,\infty}(\R^d)$ function, which will be helpful for theoretical analysis. The below result can be found, e.g. in \citep[Theorem 7.25]{gilbarg1977elliptic}:
	
	\begin{lemma}[Extension lemma]
		\label{lemma:extension_lemma}
		Under the assumptions of Proposition \ref{prop:quotient_regularity}, there exist a function $u\in W^{r,\infty}(\R^d)$ and a constant $C_{\mathrm{ext}}>0$ which depends solely on $r, d$ and $\Omega$, such that
		\begin{itemize}
			\item $u=\tau/\rho$ in $\bar\Omega$,
			\item $u$ is supported on a compact subset of $\R^d$,
			\item \begin{equation}
				\label{eqn:met_extended_estimate}
				\|u\|_{W^{r,\infty}(\R^d)} \le C_{\mathrm{ext}} \|\tau/\rho\|_{W^{r,\infty}(\bar\Omega)}.
			\end{equation}
		\end{itemize}
	\end{lemma}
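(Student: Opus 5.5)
The plan is to first obtain regularity of the target $\tau/\rho$ up to the boundary, and then apply a standard Sobolev/Whitney-type extension for $C^r$ functions on a $C^r$ domain, finishing with a cutoff to get compact support. By Proposition~\ref{prop:quotient_regularity}, under the stated assumptions $w:=\tau/\rho\in C^r(\bar\Omega)$. All classical derivatives $\partial^\alpha w$ with $\|\alpha\|_1\le r$ are therefore continuous on the compact set $\bar\Omega$, hence bounded, and $\|w\|_{W^{r,\infty}(\bar\Omega)}=\max_{\|\alpha\|_1\le r}\sup_{\bar\Omega}|\partial^\alpha w|<\infty$. Also, by \eqref{eqn:assumption1_smooth_boundary}, $\paOm$ is $C^{r+2,\kappa}$, in particular $C^r$, so the local boundary-flattening charts are $C^r$ diffeomorphisms with $C^r$ inverses.

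\emph{Localization.} Cover $\paOm$ by finitely many open sets $U_1,\dots,U_N$. On each $U_j$ there is a $C^r$ diffeomorphism $\Psi_j$ mapping $U_j\cap\Omega$ into the half-space $\{y_d>0\}$ and $U_j\cap\paOm$ into $\{y_d=0\}$. Add an interior open set $U_0\Subset\Omega$ so that $\bar\Omega\subset\bigcup_{j=0}^N U_j$. Take a smooth partition of unity $\{\eta_j\}$ subordinate to this cover. Then $w=\sum_j\eta_j w$, and it suffices to extend each piece. The piece $\eta_0w$ extends by zero.

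\emph{Local extension.} For $j\ge1$, set $v_j:=(\eta_jw)\circ\Psi_j^{-1}$. This function is $C^r$ on the closed half-space near the origin and compactly supported. I will use the higher-order reflection
\[
Ev_j(y',y_d):=\sum_{i=1}^{r+1}c_i\,v_j\bigl(y',-y_d/i\bigr),\qquad y_d<0,
\]
with $Ev_j=v_j$ for $y_d\ge0$. The coefficients $c_i$ solve the Vandermonde system $\sum_{i=1}^{r+1}c_i(-1/i)^k=1$ for $k=0,\dots,r$. With this choice, all derivatives of order $\le r$ of the two one-sided definitions agree on $\{y_d=0\}$, so $Ev_j\in C^r(\R^d)$. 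Moreover $\sup|\partial^\alpha Ev_j|\le C(r)\sup|\partial^\alpha v_j|$. Pulling back by $\Psi_j$ and multiplying by a cutoff $\tilde\eta_j\in C_c^\infty(U_j)$ with $\tilde\eta_j=1$ on $\operatorname{supp}\eta_j$ gives a compactly supported $C^r$ function on $\R^d$ that agrees with $\eta_jw$ on $\bar\Omega$. By the chain and Leibniz rules, its $W^{r,\infty}$ norm is bounded by a constant, depending on $r,d$ and the $C^r$ norms of $\Psi_j^{\pm1}$, $\eta_j$, $\tilde\eta_j$ (hence only on $r,d,\Omega$), times $\|w\|_{W^{r,\infty}(\bar\Omega)}$.

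\emph{Conclusion.} Summing the finitely many pieces yields $u\in C^r_c(\R^d)$ with $u=w$ on $\bar\Omega$ and $\|u\|_{W^{r,\infty}(\R^d)}\le C_{\mathrm{ext}}\|w\|_{W^{r,\infty}(\bar\Omega)}$. A compactly supported $C^r$ function has weak derivatives equal to its classical ones, so $u\in W^{r,\infty}(\R^d)$ with the same norm, which gives \eqref{eqn:met_extended_estimate}.

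\emph{Main obstacle.} The main point to check is the matching of derivatives across $\{y_d=0\}$ in the reflection step, i.e.\ that the Vandermonde conditions indeed give $C^r$ (not merely piecewise $C^r$) regularity. Differentiating $k$ times in $y_d$ produces the factors $(-1/i)^k$, and tangential derivatives commute with the reflection, so continuity of every mixed derivative of order $\le r$ follows from the $k=0,\dots,r$ conditions. A secondary point is ensuring that the constants depend only on $r,d,\Omega$. This holds because the charts and the partition of unity are fixed once $\Omega$ is fixed, independently of $\tau$ and $\rho$.
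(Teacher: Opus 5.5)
Your proposal is correct and takes the same route as the paper. The paper obtains $\tau/\rho\in C^r(\bar\Omega)$ from Proposition~\ref{prop:quotient_regularity} and then simply cites the standard extension theorem \citep[Theorem 7.25]{gilbarg1977elliptic} without giving a proof; your localization, boundary-flattening and higher-order reflection argument is the standard proof of that cited result, written out directly in the $C^r$/sup-norm setting, which has the minor advantage of not depending on a $W^{k,p}$ extension statement at the endpoint $p=\infty$.
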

	
	In this work, we will consider FNNs using either the Rectified Quadratic Unit (ReQU), or the hyperbolic tangent (tanh) as activation functions. These choices ensure that the FNN hypothesis spaces defined in \eqref{eqn:boundary_fcnn} are contained in the Sobolev space $W^{2,\infty}(\Omega)\subset H^2(\Omega)$, and represent two of the most widely used activation functions in scientific machine learning. Concretely, this means we consider the activation functions defined as
	\begin{equation}
		\label{eqn:activation_fn}
		\sigma:x\mapsto 
		\begin{cases*}
			x^2, \ \ x> 0\\
			0, \ \ x\le 0
		\end{cases*}, \quad \text{or } \sigma:x\mapsto \tanh(x) := \frac{e^x - e^{-x}}{e^x + e^{-x}},
	\end{equation}
	and will provide an error analysis for each choice.
	
	\begin{remark}[Practical enforcement of Sobolev norm bound]
		\label{rem:practical_q_bound}
		For the activations in \eqref{eqn:activation_fn}, the bound $Q$ in Definitions~\ref{def:boundary_pinns_hyp_space} and \ref{def:boundary_adapted_fnns} can be enforced indirectly by penalizing the trainable parameters. Indeed, if $J_\lambda(\theta)=J(\theta)+\lambda\|\theta\|_2^2$ with $\lambda>0$ and $J(\theta)\ge 0$, then every fixed sublevel set of $J_\lambda$ yields a uniform bound on $\|\theta\|_2$. For a fixed architecture and bounded domain $\Omega$, this immediately gives a uniform $W^{2,\infty}(\Omega)$ bound on the network, and therefore also on the boundary-adapted ansatz \eqref{eqn:boundary_fcnn}, since $\rho$ is fixed and smooth, as can be checked by directly examining the expression of the neural networks' derivatives. One may also refer to \citep{doumeche2025pinn} for further discussion on the theoretical benefits of such regularization in the context of PINNs.
	\end{remark}
	
	With these definitions established, we define the residual of a function $u\in H^2(\Omega)$ as
	
	\begin{equation}
		\label{eqn:residual_true}
		\mathcal{R}(u) = \frac{1}{|\Omega|}\int_\Omega |\mathscr{L}u(x) + 1|^2 dx.
	\end{equation}
	
	And likewise, its empirical counterpart as
	
	\begin{equation}
		\label{eqn:residual_empirical}
		\mathcal{R}_n(u) = \frac1n \sum_{i=1}^n |\mathscr{L}u(x_i) + 1|^2,
	\end{equation}
	
	where the $x_1,\ldots,x_n$ are independent samples drawn uniformly on $\Omega$.
	
	\subsection{Excess risk decomposition}
	
	Let $\mathcal{F}\subset H^2(\Omega)$ be a hypothesis space of functions and denote by
	\[u_n \in \arg\min_{u\in\mathcal F}\mathcal R_n(u), \ \ u_{\mathcal{F}} \in \arg\min_{u\in\mathcal F}\mathcal R(u),\]
	the minimizers of the empirical and population risk respectively. Recall that our goal is to control the error $\|u_n - \tau\|_{H^2(\Omega)} $, which, as justified in the previous sections, can be controlled by $\mathcal{R}(u_n)-\mathcal{R}(\tau)$ up to a constant factor independent of $n$. To that end, we proceed by decomposing the error as follows:
	\begin{align*}
		\mathcal{R}(u_n)-\mathcal{R}(\tau)
		&=\big[\mathcal R(u_n) - \mathcal R_n(u_n)\big]
		+\big[\mathcal R_n(u_n) - \mathcal R_n(u_{\mathcal F})\big]
		+\big[\mathcal R_n(u_{\mathcal F}) - \mathcal R(u_{\mathcal F})\big] \\
		&+\big[\mathcal R(u_{\mathcal F}) - \mathcal R(\tau)\big] \\
		&\le \big[\mathcal R(u_n) - \mathcal R_n(u_n)\big]
		+\big[\mathcal R_n(u_{\mathcal F}) - \mathcal R(u_{\mathcal F})\big]
		+\big[\mathcal R(u_{\mathcal F}) - \mathcal R(\tau)\big].
	\end{align*}
	Where we used the minimizing property of $u_n$. After adding and substracting $\mathcal{R}(\tau) -  \mathcal{R}_n(\tau)$ to the above inequality and rearranging, we get the decomposition:
	\begin{equation}
		\label{eqn:excess_risk_decomposition}
		\begin{split}
			\mathcal{R}(u_n)-\mathcal{R}(\tau) &\le \big[\mathcal R(u_{\mathcal F}) - \mathcal R(\tau)\big]\\
			&+\big[\mathcal R_n(u_{\mathcal F}) - \mathcal R_n(\tau) - \mathcal R(u_{\mathcal F}) + \mathcal R(\tau)\big]\\
			&+\big[\mathcal R(u_n) - \mathcal R(\tau) - \mathcal R_n(u_n) + \mathcal R_n(\tau)\big].
		\end{split}
	\end{equation}
	The residual is thus controlled by the three summands on the right-hand side of \eqref{eqn:excess_risk_decomposition}. The first one is referred to as the \textit{approximation error}, while the last one is referred to as the \textit{statistical error}. While the second summand can be bounded by means of classical concentration inequalities, the two other terms require a detailed analysis, which we provide in the next section.

	\section{Main results}
	
	\subsection{Approximation error bound}
	
	We first give a lemma which bounds the Sobolev distance between $\rho f$ and $g$ in terms of the Sobolev distance between $f$ and $g/\rho$. The proof is given in Appendix~\ref{app:approximation_error}.
	
	\begin{lemma}
		\label{lemma:product_estimate}
		Let $f,g\in W^{r,\infty}(\Omega)$, $\rho \in C^r(\Omega)$ be such that $\rho f\in W^{r,\infty}(\Omega)$ and $g/\rho \in W^{r,\infty}(\Omega)$. The following estimate holds
		\begin{equation}
			\label{eqn:product_estimate}
			\|\rho f - g\|_{W^{r,\infty}(\Omega)} \le C_{\mathrm{mult}}\left\|f-\frac{g}{\rho}\right\|_{W^{r,\infty}(\Omega)},
		\end{equation}
		where
		\[C_{\mathrm{mult}} = \max_{\substack{\alpha\in\N^d\\ \|\alpha\|_1\le r,\beta\le\alpha}}\!\! \binom{\alpha}{\beta}\cdot \sum_{k=0}^r\binom{k+d-1}{k}(k+1)^{d^2}\cdot\|\rho\|_{W^{r,\infty}(\Omega)}. \]
	\end{lemma}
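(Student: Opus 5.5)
The plan is to write $\rho f - g = \rho\,(f - g/\rho)$ and set $h := f - g/\rho \in W^{r,\infty}(\Omega)$. This reduces the estimate to a product bound $\|\rho h\|_{W^{r,\infty}(\Omega)} \le C_{\mathrm{mult}}\|h\|_{W^{r,\infty}(\Omega)}$. The identity $\rho\cdot(g/\rho) = g$ holds pointwise in $\Omega$, since $\rho>0$ there. It also holds in the weak sense, because both sides lie in $W^{r,\infty}(\Omega)$ and agree almost everywhere.

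For each multi-index $\alpha$ with $\|\alpha\|_1\le r$, I would apply the Leibniz rule in Sobolev spaces. This is valid because $\rho\in C^r$ is bounded with bounded derivatives on $\Omega$ (implicitly via $\|\rho\|_{W^{r,\infty}}<\infty$) and $h\in W^{r,\infty}$. It gives
\[
\partial^\alpha(\rho h) = \sum_{\beta\le\alpha}\binom{\alpha}{\beta}\,\partial^{\alpha-\beta}\rho\,\partial^\beta h .
\]
Taking $L^\infty$ norms then yields
\[
\|\partial^\alpha(\rho h)\|_{L^\infty} \le \Big(\max_{\beta\le\alpha}\binom{\alpha}{\beta}\Big)\cdot \#\{\beta:\beta\le\alpha\}\cdot \|\rho\|_{W^{r,\infty}}\|h\|_{W^{r,\infty}}.
\]
Maximizing over $\alpha$ gives the claim with the constant
\[
\max_{\alpha,\beta}\binom{\alpha}{\beta}\cdot \max_{\|\alpha\|_1\le r}\#\{\beta\le\alpha\}\cdot\|\rho\|_{W^{r,\infty}}.
\]

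It remains to check that this counting factor is dominated by $\sum_{k=0}^r\binom{k+d-1}{k}(k+1)^{d^2}$. The number of $\beta\le\alpha$ is $\prod_i(\alpha_i+1)\le (\|\alpha\|_1+1)^d \le (r+1)^{d}$. This in turn is at most $(r+1)^{d^2}$, which is the $k=r$ term of the sum since $\binom{r+d-1}{r}\ge1$. The author's constant therefore appears to be a generous, possibly cruder, count: for instance, summing over all $\alpha$ of each order $k$, of which there are $\binom{k+d-1}{k}$, rather than maximizing. Either way our bound implies theirs, since all terms are nonnegative.

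The only mildly delicate step is justifying the weak Leibniz rule with $\rho$ merely $C^r(\Omega)$ rather than $C^r(\bar\Omega)$. I would handle it by testing against $C_c^\infty(\Omega)$ functions, where only local smoothness of $\rho$ matters, and interpreting $\|\rho\|_{W^{r,\infty}}$ as finite (otherwise the bound is trivial).
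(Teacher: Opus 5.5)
Your proposal is correct and is essentially the paper's proof: both factor $\rho f - g = \rho\,(f - g/\rho)$, apply the Leibniz rule, and bound each term by $\max\binom{\alpha}{\beta}\,\|\rho\|_{W^{r,\infty}(\Omega)}\,\|f-g/\rho\|_{W^{r,\infty}(\Omega)}$ times a count of multi-indices. Your count $\#\{\beta\le\alpha\}=\prod_i(\alpha_i+1)\le (r+1)^d$ is sharper than the paper's, which sums over all $\alpha$ of each order instead of maximizing, and, as you note, it still implies the stated constant $C_{\mathrm{mult}}$.
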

	
	We now introduce an upper bound for the norm of the operator $\mathscr{L}:H^2(\Omega)\to L^2(\Omega)$ in terms of its coefficients, which will be necessary to control the approximation error term in \eqref{eqn:excess_risk_decomposition}:
	
	\begin{lemma}
		\label{lemma:approximation_error_estimate}
		For any $u\in H^2(\Omega)$, we have the estimate
		\[\|\mathscr{L}u\|_{L^2(\Omega)}^2 \le C_{\mathscr{L}}\|u\|_{H^2(\Omega)}^2,\]
		where 
		\[C_{\mathscr{L}} = \max\left\{2d\cdot\max_{1\le i \le d}\|b_i\|_{L^\infty(\Omega)}^2, \frac{d^2}{2}\cdot\max_{1\le i,j \le d}\|\Sigma_{ij}\|_{L^\infty(\Omega)}^2 \right\}.\]
	\end{lemma}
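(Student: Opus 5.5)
Write $\mathscr{L}u = B u + S u$, where $Bu := \sum_{i=1}^d b_i\,\partial_i u$ is the drift part and $Su := \frac12\sum_{i,j=1}^d \Sigma_{ij}\,\partial_{ij} u$ is the diffusion part. By the elementary inequality $(a+b)^2\le 2a^2+2b^2$, applied pointwise and then integrated over $\Omega$,
\[
\|\mathscr{L}u\|_{L^2(\Omega)}^2 \le 2\|Bu\|_{L^2(\Omega)}^2 + 2\|Su\|_{L^2(\Omega)}^2 .
\]
We bound each piece separately against the corresponding part of the $H^2(\Omega)$ norm. Recall that $\|u\|_{H^2(\Omega)}^2$ is the sum of $\|\partial^\alpha u\|_{L^2(\Omega)}^2$ over $\|\alpha\|_1\le 2$.

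For the drift term, Cauchy--Schwarz in $\R^d$ gives pointwise
\[
\Bigl|\sum_{i=1}^d b_i\partial_i u\Bigr|^2 \le d\sum_{i=1}^d |b_i|^2|\partial_i u|^2 \le d\max_i\|b_i\|_{L^\infty(\Omega)}^2\sum_{i=1}^d|\partial_i u|^2 .
\]
Integrating yields $2\|Bu\|_{L^2(\Omega)}^2\le 2d\max_i\|b_i\|_{L^\infty}^2\sum_{\|\alpha\|_1=1}\|\partial^\alpha u\|_{L^2}^2$.

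For the diffusion term, Cauchy--Schwarz over the $d^2$ index pairs gives pointwise
\[
\Bigl|\tfrac12\sum_{i,j}\Sigma_{ij}\partial_{ij}u\Bigr|^2 \le \tfrac{d^2}{4}\max_{i,j}\|\Sigma_{ij}\|_{L^\infty}^2\sum_{i,j}|\partial_{ij}u|^2 .
\]
The only care needed is in converting $\sum_{i,j}$ over ordered pairs into a sum over multi-indices with $\|\alpha\|_1=2$. Each mixed derivative ($i\neq j$) appears twice in $\sum_{i,j}$ but only once in the $H^2$ norm, so $\sum_{i,j}\|\partial_{ij}u\|_{L^2}^2\le 2\sum_{\|\alpha\|_1=2}\|\partial^\alpha u\|_{L^2}^2$. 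Combining this with the outer factor $2$ gives
\[
2\|Su\|_{L^2(\Omega)}^2\le 2\cdot 2\cdot\tfrac{d^2}{4}\max_{i,j}\|\Sigma_{ij}\|_{L^\infty}^2\sum_{\|\alpha\|_1=2}\|\partial^\alpha u\|_{L^2}^2 = d^2\max_{i,j}\|\Sigma_{ij}\|_{L^\infty}^2\sum_{\|\alpha\|_1=2}\|\partial^\alpha u\|_{L^2}^2 .
\]

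This bookkeeping is the one step where the constant can shift. The bound above gives $d^2$ rather than the stated $\frac{d^2}{2}$. To recover $\frac{d^2}{2}$, use the symmetry of $\Sigma$ and group the terms as $\sum_i\Sigma_{ii}\partial_{ii}u + 2\sum_{i<j}\Sigma_{ij}\partial_{ij}u$ before applying Cauchy--Schwarz, so that the $\frac12$ prefactor absorbs the double counting. Even if the constant comes out as $d^2$, only the structure of $C_{\mathscr{L}}$ matters downstream.

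Finally, the first-order and second-order groups of derivatives are disjoint parts of the $H^2(\Omega)$ norm. Adding the two bounds and dominating both coefficients by their maximum gives
\[
\|\mathscr{L}u\|_{L^2(\Omega)}^2\le C_{\mathscr{L}}\sum_{\|\alpha\|_1\le 2}\|\partial^\alpha u\|_{L^2(\Omega)}^2 = C_{\mathscr{L}}\|u\|_{H^2(\Omega)}^2 .
\]
The zeroth-order term of the norm is unused, which only loosens the bound.
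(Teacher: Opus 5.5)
Your core computation (the split $(a+b)^2\le 2a^2+2b^2$, Cauchy--Schwarz over the $d$ drift terms and the $d^2$ ordered diffusion terms, then sup-norm bounds on the coefficients) is exactly the paper's proof. The bound you actually establish, $\|\mathscr{L}u\|_{L^2(\Omega)}^2\le\max\{2d\max_i\|b_i\|_{L^\infty}^2,\ d^2\max_{i,j}\|\Sigma_{ij}\|_{L^\infty}^2\}\,\|u\|_{H^2(\Omega)}^2$, is correct. You are also right that passing from $\sum_{i,j}\|\partial_{ij}u\|_{L^2}^2$ to the multi-index sum costs a factor up to $2$. The paper's one-line proof skips this silently. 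That is legitimate only if the second-order part of $\|u\|_{H^2(\Omega)}^2$ is read as $\sum_{i,j}\|\partial_{ij}u\|_{L^2}^2$ (the Frobenius norm of the Hessian), not as the multi-index sum from the paper's notation section.

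The step that fails is your proposed repair. After grouping by symmetry, $\frac12\sum_{i,j}\Sigma_{ij}\partial_{ij}u=\frac12\sum_i\Sigma_{ii}\partial_{ii}u+\sum_{i<j}\Sigma_{ij}\partial_{ij}u$. The off-diagonal coefficients are now $\Sigma_{ij}$, not $\frac12\Sigma_{ij}$, so the prefactor $\frac12$ has been spent on the factor $2$ and nothing is absorbed. Cauchy--Schwarz over the $\frac{d(d+1)}{2}$ grouped terms then gives at best the factor $\frac d4+\binom{d}{2}=\frac{2d^2-d}{4}$ for the diffusion part. After the outer factor $2$ this becomes $(d^2-\frac d2)\max_{i,j}\|\Sigma_{ij}\|_{L^\infty}^2$, which still exceeds $\frac{d^2}{2}\max_{i,j}\|\Sigma_{ij}\|_{L^\infty}^2$ for $d\ge 2$.

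No other repair can work either, because with the multi-index norm the stated constant is false. Take $d=2$, $b=(2^{-1/2},2^{-1/2})$, and $\Sigma_{ij}\equiv 1$; for uniform ellipticity use $\Sigma_{12}=\Sigma_{21}=1-\delta$ with small $\delta>0$. Then $C_{\mathscr{L}}=2$. Let $u(x)=e^{2(x_1+x_2)}$ on any bounded $\Omega$. Then $\mathscr{L}u=(2\sqrt2+8)u$ and $\|u\|_{H^2(\Omega)}^2=(1+8+48)\|u\|_{L^2(\Omega)}^2$. Hence $\|\mathscr{L}u\|_{L^2(\Omega)}^2\approx 117.3\,\|u\|_{L^2(\Omega)}^2>114\,\|u\|_{L^2(\Omega)}^2=C_{\mathscr{L}}\|u\|_{H^2(\Omega)}^2$.

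So drop the repair and state the lemma with your constant $d^2$ (or $d^2-\frac d2$). As you note, $C_{\mathscr{L}}$ enters $C_{\ReQU}$ and $C_{\tanh}$ only multiplicatively, so nothing downstream changes.
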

	
	\begin{proof}
		By direct computation:
		\begin{align*}
			\|\mathscr{L}u\|_{L^2(\Omega)}^2 &= \left\|\sum_{i=1}^d b_i \frac{\partial u}{\partial x_i} + \frac12\sum_{i,j=1}^d \Sigma_{ij} \frac{\partial^2 u}{\partial x_i\partial x_j}\right\|_{L^2(\Omega)}^2\\
			&\le 2d\sum_{i=1}^d \left\|b_i \frac{\partial u}{\partial x_i}\right\|_{L^2(\Omega)}^2 + \frac{d^2}{2}\sum_{i,j=1}^d \left\|\Sigma_{ij} \frac{\partial^2 u}{\partial x_i\partial x_j}\right\|_{L^2(\Omega)}^2,
		\end{align*}
		and we conclude by bounding the coefficient functions with their $L^\infty(\Omega)$ norms in each $\|\cdot\|{L^2(\Omega)}$ term.
	\end{proof}
	
	We can now state the approximation error bounds of smooth functions using respectively the ReQU and tanh activations. We begin by the former, for which we give a proof in Appendix~\ref{app:approximation_error}:
	
	\begin{theorem}
		\label{thm:approx_requ}
		Let $r,N\ge 2$ be two integers, and let $u\in W^{r,\infty}(\R^d)$. Take the hypothesis space of ReQU FNNs as $ \F_{NN}:=\Sigma^\sigma_{\mathbf{m}_{1:L},M}(B)$
		with
		\begin{itemize}
			\item $\sigma:x\mapsto \max\{0,x\}^2$,
			\item $L = 1$,
			\item $\|\mathbf{m}\|_\infty = N^d$,
			\item $B=B_0$,
			\item $M = M_N(u)$, where $M_N(u) \le
			C_{d,r}\,\bigl(1+M_\ast(N)\bigr)\,\|u\|_{W^{r,\infty}(\R^d)}$ with
			\[
			M_\ast(N)
			=
			\begin{cases}
				N^{\frac{d+5-2r}{2}}, & r < (d+5)/2,\\
				\sqrt{\log N}, & r=(d+5)/2 \text{ and } r\not\in 2\N\\
				1, & r=(d+5)/2 \text{ and } r\in 2\N\\
				1, & r > (d+5)/2.
			\end{cases}
			\]
		\end{itemize}
		where $B_0>0$ and $C_{d,r}>0$ depend only on $d$ and $r$. Then there exists a constant $M_{\ReQU}>0$, also dependent on $r$ and $d$ only, such that
		\begin{equation}
			\label{eqn:approx_requ_1}
			\inf_{f_{NN}\in\F_{NN}}
			\|f_{NN} - u\|_{W^{2,\infty}(\mathbb B^d)}
			\le
			M_{\ReQU}\, \frac{\log(N)^{1/2}}{N^{\gamma(r)}}\,\|u\|_{W^{r,\infty}(\R^d)},
		\end{equation}
		where
		\[
		\gamma(r) = \min\left\{r-2,\frac{d+1}{2}\right\}.
		\]
	\end{theorem}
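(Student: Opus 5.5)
We prove Theorem~\ref{thm:approx_requ} through the variation-space theory of shallow ReLU$^k$ networks (here $k=2$), following the strategy of \citep{bach2017breaking, yang2025optimal}. We then upgrade their $L^\infty$ estimates to $W^{2,\infty}$ estimates by exploiting the fact that each derivative of a ReQU neuron is again a ReLU$^{k'}$ neuron of lower order.

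\emph{Step 1: integral representation.} Since $u\in W^{r,\infty}(\R^d)$ has compact support, we write it, on $\mathbb B^d$, as an integral over the sphere $\mathbb S^d$ of ReQU ridge functions:
\[
u(x)=\int_{\mathbb S^d}\sigma(\omega\cdot x+b)\,g(\omega,b)\,d\mu(\omega,b)
\]
up to a polynomial of degree $\le 2$, which is itself exactly representable by $\bigO(d^2)$ ReQU neurons since $x^2=\sigma(x)+\sigma(-x)$. To build $g$, we lift $x\mapsto(x,1)/\sqrt{2}$ and work with the degree-$2$ homogeneous kernel $\sigma(\langle z,\cdot\rangle)$ on $\mathbb S^d$. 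Its spherical-harmonic (Funk--Hecke) eigenvalues decay like $\ell^{-(d+5)/2}$. Dividing the harmonic coefficients of (an extension of) $u$ by these eigenvalues gives $g\in L^2(\mathbb S^d)$, or more generally a bounded $g$ when $r>(d+5)/2$. When $r\le(d+5)/2$, we first smooth $u$ by truncating its harmonic expansion at degree $\ell\sim N$. The truncation error in $W^{2,\infty}$ is $N^{-(r-2)}\|u\|_{W^{r,\infty}}$, while the coefficient norm grows like $N^{(d+5-2r)/2}$, with an extra $\sqrt{\log N}$ in the borderline odd case coming from a harmonic sum. This is exactly the quantity $M_\ast(N)$ that bounds the outer weights $M$.

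\emph{Step 2: discretization in $W^{2,\infty}$.} We approximate the integral by an $n$-term sum with $n=N^d$ neurons, with bounded inner weights $B_0$ because $(\omega,b)$ ranges over the sphere. For the $L^\infty$ part we invoke the stratified sampling / discrepancy argument of \citep{yang2025optimal}, which yields the rate $n^{-1/2-3/(2d)}$ (equivalently $N^{-(d+3)/2}$) for ReLU$^2$ dictionaries. For derivatives of order $j=1,2$, we use that $\partial^\alpha\sigma(\omega\cdot x+b)=c\,\omega^\alpha\,\sigma_{2-j}(\omega\cdot x+b)$, where $\sigma_{2-j}$ is ReLU$^{2-j}$. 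We apply the same quadrature to these lower-order dictionaries, whose rate is $n^{-1/2-(2(2-j)+1)/(2d)}$. The worst case is $j=2$: a Heaviside dictionary, giving rate $n^{-1/2-1/(2d)}=N^{-(d+1)/2}$, up to a $\sqrt{\log N}$ factor from the chaining/union bound over $x\in\mathbb B^d$. The crucial point is that a single discretization must work simultaneously for all derivatives. We therefore use one stratified partition of $\mathbb S^d$ into $n$ cells of diameter $\sim n^{-1/d}$, and in each cell we choose the point by a common random draw. A Bernstein bound combined with a union bound over a net of $\mathbb B^d$ and over the $\bigO(d^2)$ multi-indices then controls all orders at once. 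The cost of this simultaneity is the $\log(N)^{1/2}$ factor.

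\emph{Step 3: combine.} Adding the truncation error $N^{-(r-2)}$ and the discretization error $N^{-(d+1)/2}$ gives the rate $N^{-\gamma(r)}$ with $\gamma(r)=\min\{r-2,(d+1)/2\}$. All constants scale with $\|u\|_{W^{r,\infty}}$ through $M_N(u)$.

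The main obstacle is Step~2 for second derivatives. The Heaviside functions $\ind\{\omega\cdot x+b>0\}$ are discontinuous in $(\omega,b)$, so the Lipschitz-in-parameters argument used for $L^\infty$ fails. We would first try to handle this with a VC/discrepancy bound for half-spaces within each cell: only the cells whose hyperplanes cut near $x$ contribute variance, about $n^{1-1/d}$ of them. This yields the $N^{-(d+1)/2}$ rate, but careful bookkeeping of the $g$-weights is required.
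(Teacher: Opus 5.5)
Your Steps 1 and 3 follow the paper's argument. The paper lifts $u|_{\mathbb B^d}$ to the sphere via $\Phi(x)=(x^\top,1)^\top/\sqrt{1+\|x\|_2^2}$ and the weight $1+\|x\|_2^2$, using $2$-homogeneity of $\sigma$. It then filters with $L_m$ to get a $C^2$ error $m^{-(r-2)}$ (Proposition~\ref{prop:cq_smooth_approx}), and bounds the variation norm by inverting $\hat\sigma(n)\asymp n^{-(d+5)/2}$ on odd $n$ (Proposition~\ref{prop:variation_of_gm}). With $m\asymp N$ this gives exactly your $M_\ast(N)$. One point in Step 1 needs fixing. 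Since $\hat\sigma(n)=0$ for every even $n\ge 4$, you can only divide by the Funk--Hecke coefficients if the spherical extension has no even harmonics of degree $\ge 4$. Subtracting a quadratic polynomial does not achieve this. The paper instead chooses the extension to be odd (Proposition~\ref{prop:transfer_variation_ball_sphere}(ii)), which is possible because the cap and its antipode are disjoint.

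The genuine gap is Step 2, which you leave open and which, as sketched, would not give \eqref{eqn:approx_requ_1}. The paper does not build this discretization by hand. Lemma~\ref{lemma:sampling_Wrinfty} invokes Theorem~3 of \citep{siegel2025optimal} (dictionary $\mathbb P_2^d$, $k=m=2$). For every $f$ with $\gamma(f)\le M$ and an arbitrary representing measure, it gives a single width-$n$ network with $\|f-f_n\|_{W^{2,\infty}(\mathbb B^d)}\le C_d M n^{-1/2-1/(2d)}$, covering all derivatives at once and with no logarithm. Your substitute has two concrete defects.

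\emph{First}, equal-diameter cells with one random atom each give summands of size $\lesssim M/n$ and cut-cell variance $\lesssim M^2 n^{-1-1/d}$ only if the representing density is bounded in sup norm by a multiple of $M$. Step 1 controls only $\|\phi_m\|_{L^2}$, hence the total variation. A measure of total variation $M$ can concentrate its mass on the cells cut by $\{v:(x^\top,1)\cdot v=0\}$, and then the variance bound fails. You would need either a separate $L^\infty$ bound for this specific density, or the equal-mass, low-crossing-number partition and chaining machinery that Siegel's theorem encapsulates.

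\emph{Second}, even granting your Bernstein-plus-union-bound estimate, the logarithms are wrong in the critical case $r=(d+5)/2$ odd. There $M\asymp\sqrt{\log N}$ already, and your union bound adds a second $\sqrt{\log N}$. The result is $\log N\cdot N^{-(d+1)/2}$ rather than $\log(N)^{1/2}N^{-(d+1)/2}$. Retuning the truncation degree $m$ cannot help: keeping $m^{-(d+1)/2}$ below $\log(N)^{1/2}N^{-(d+1)/2}$ forces $\log m\asymp\log N$. In the paper, the factor $\log(N)^{1/2}$ comes solely from $M\asymp\sqrt{\log N}$ in that regime, precisely because the sampling step is log-free.

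To close the argument, replace your Step 2 by that log-free $W^{2,\infty}$ sampling result, then balance $M$ against $N$ as in $M_\ast(N)$.
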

	
	Theorem~\ref{thm:approx_requ} extends to the $W^{2,\infty}(\Omega)$ norm the $L^\infty(\Omega)$ shallow ReLU$^k$ approximation results of \citep{yang2025optimal}. In particular, our bound exhibits the same \textit{saturation phenomenon}, where the exponent in \eqref{eqn:approx_requ_1} does not improve once $r>(d+5)/2$. In fact, virtually all available approximation bounds for shallow ReLU$^k$ networks exhibit this phenomenon, and to the best of our knowledge, the question of whether this saturation is intrinsic to shallow ReLU$^k$ networks or is instead an artifact due to the limitations of variation space-based arguments remains open \citep{mao2026approximation}. This suggests that shallow ReQU networks are not best suited to solve PDEs with highly smooth solutions in low dimension. Although the full saturation-free approximation result could be recovered by increasing depth \citep{guhring2021approximation, he2023expressivity}, this is not advisable as it would come at the cost of significantly more unstable training \citep{wang2024piratenets}.
	
	 We also have an analogous approximation error bound for FNNs with tanh activation function, due to \citep{deryck2021approximation}.
	
	\begin{theorem}[Adapted from \citep{deryck2021approximation}, Theorem 5.1]
		\label{thm:approx_tanh}
		Let $r\ge 2$, $N\ge 3d/2$ be two integers, and let $u\in C^r([0,1]^d)$.Take the hypothesis space of tanh FNNs as $\F_{NN} = \Sigma^\sigma_{\mathbf{m}_{1:L,M}}( B)$,
		with
		\begin{itemize}
			\item $\sigma:x\mapsto\tanh(x)$
			\item $L = 2$,
			\item $\|\mathbf{m}\|_\infty \le \max\{3(ed)^{r-1}\lceil r/2\rceil, 15(5N)^d\lceil(d+2)/2\rceil\}$,
			\item $B = M \lesssim C_\rho^{-r/2}N^{d(d+r^2+4)/2}(r(r+2))^{3r(r+2)} $,
			\item $C_\rho = 2^7\cdot 3^6 \cdot \max_{0\le \ell \le 2} \{(3d/2)^{r-\ell}/(r-\ell)!\} $.
		\end{itemize}
		We have the approximation error bound:
		\begin{equation}
			\label{eqn:approx_tanh_1}
			\inf_{f_{NN}\in\mathcal F_{NN}} \|f_{NN} - u\|_{W^{2,\infty}([0,1]^d)} \le  \log^2\left(\beta N^{r+d+2} \right) \frac{M_{\tanh}}{N^{(r-2)}}\|u\|_{W^{r,\infty}(\R^d)}, 
		\end{equation}
		where
		\[\beta := \frac{2^{d+3}\sqrt{d}\max\left\{1,\|u\|_{W^{r,\infty}([0,1]^d)}^{1/2}\right\}}{\min\{1,\sqrt{C_\rho}\}},\quad M_{\tanh} := 3^d C_\rho.\]
	\end{theorem}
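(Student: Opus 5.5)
This result is essentially \citep[Theorem 5.1]{deryck2021approximation}, so the plan is to invoke that theorem and adjust its hypotheses and norms rather than re-derive the construction. De Ryck, Lanthaler and Mishra build a two-hidden-layer tanh network of width as in the statement. It approximates any $u\in C^r([0,1]^d)$ in every $W^{k,\infty}([0,1]^d)$ norm, $0\le k\le r-1$, with error of the form
\[
\frac{C(k,r,d)\,\log^{k}(\beta N^{r+d+2})}{N^{r-k}}\,\|u\|_{W^{r,\infty}},
\]
and with weights bounded as in the statement. We specialise to $k=2$, which is admissible because $r\ge 2$; for $r=2$ the bound is read as a boundedness statement, with the convention that the rate exponent is $r-2$. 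This gives the factor $\log^2(\beta N^{r+d+2})N^{-(r-2)}$.

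The argument then has three routine steps. First, match constants: their error constant is a product of factors of the type $3^d C_\rho$, with $C_\rho$ defined through the maximum over $\ell\le 2$ of $(3d/2)^{r-\ell}/(r-\ell)!$. Taking the maximum over $\ell=0,1,2$ makes the bound uniform in the derivative order, which yields $M_{\tanh}$. Second, translate their norm convention for $W^{2,\infty}$ (a maximum over $|\alpha|\le 2$) into the paper's definition, which coincides for $p=\infty$. Third, replace $\|u\|_{W^{r,\infty}([0,1]^d)}$ by $\|u\|_{W^{r,\infty}(\R^d)}$. This is trivial for the extended function of Lemma~\ref{lemma:extension_lemma}, whose restriction to $[0,1]^d$ has no larger norm; for a general $u\in C^r([0,1]^d)$, the right-hand side is understood through such an extension.

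The only delicate point is bookkeeping the parameter bound $B=M$. In their construction, the weights blow up polynomially in $N$ through the tanh approximations of monomials and of the partition of unity. I would trace their explicit weight estimates, namely the $N^{d(d+r^2+4)/2}$ factor coming from the scaling of the approximate partition of unity together with the $(r(r+2))^{3r(r+2)}$ factor from the monomial approximations, and absorb the $C_\rho^{-r/2}$ dependence. The aim is to check that this is consistent with $\beta$, which involves $\max\{1,\|u\|^{1/2}\}$ and $\min\{1,\sqrt{C_\rho}\}$ as a normalisation. If the exact exponents disagree, one only needs some polynomial bound in $N$, since the subsequent analysis does not use bounded weights.
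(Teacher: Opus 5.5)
Your plan is correct and matches the paper, which gives no argument of its own and simply specializes \citep[Theorem 5.1]{deryck2021approximation} to derivative order $k=2$ and smoothness $s=r$, with the same constant-matching and extension of $u$ beyond $[0,1]^d$ that you describe. The one loose end is $r=2$, which lies outside the range $k\le r-1$ you quote, but no convention is needed there: the zero network ($c=0$) already satisfies the bound, because $M_{\tanh}\log^2(\beta N^{d+4})\ge 1$ and $\|u\|_{W^{2,\infty}([0,1]^d)}\le\|u\|_{W^{2,\infty}(\R^d)}$.
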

	
	From Theorems \ref{thm:approx_requ} and \ref{thm:approx_tanh}, together with the lemmas preceding them, we deduce as a corollary the following approximation error bound for our boundary-adapted Neural Networks hypothesis space.
	
	\begin{corollary}
		\label{cor:approx_error}
		Let $r\ge 3$ and $N\ge 2$ be two integers. Assume that Assumptions \eqref{eqn:assumption1_smooth_boundary}, \eqref{eqn:assumption2_strict_elliptic}, \eqref{eqn:assumption3_regular_coeffs} hold, and that $\rho:\bar{\Omega}\to\R$ is an $(r+2)$-smooth distance approximation. Denote by $u\in W^{r,\infty}(\R^d)$ an extension of $\tau/\rho \in C^r(\bar\Omega)$ to $\R^d$, and denote the hypothesis space of boundary-adapted FNNs as $\mathcal F = \mathcal{F}(\rho, L, \mathbf{m}, \sigma, Q, B, M)$.
		\begin{itemize}
			\item If $\sigma$ is the ReQU activation, the parameters $L, \mathbf{m}, M=M(u)$ and $B$ are chosen as in Theorem \ref{thm:approx_requ}, and
			\begin{equation}
				\label{eqn:qbound_requ} Q\ge\Big(1+{M_{\ReQU}}\Big)\|\tau/\rho\|_{W^{r,\infty}(\R^d)},
			\end{equation}
			then the approximation error of the hypothesis space $\F$ satisfies the inequality:
			\begin{equation}
				\label{eqn:approx_requ_2}
				\mathcal{R}(h_{\F}) - \mathcal{R}(\tau)
				\le
				C_{\ReQU} \,\|\tau/\rho\|_{W^{r,\infty}(\Omega)}^2\,\frac{\log N}{N^{2\gamma(r)}},
			\end{equation}
			where  $\gamma(r)$ is defined in Theorem~\ref{thm:approx_requ}, $h_{\F} \in \argmin_{h\in\F} \mathcal{R}(h)$, and
			\[C_{\ReQU} :=  C_{\mathscr{L}} C_{\mathrm{ext}}^2 C_{\mathrm{mult}}^2 (d+2)^2 M_{\ReQU}^2.\]
			\item If $\sigma$ is the tanh activation, the parameters $L, \mathbf{m}, M$ and $B$ are chosen as in Theorem \ref{thm:approx_tanh}, and
			\begin{equation}
				\label{eqn:qbound_tanh} Q\ge\Big(1+{M_{\tanh}}\Big)\|\tau/\rho\|_{W^{r,\infty}(\R^d)},
			\end{equation}
			then the approximation error of the hypothesis space $\F$ satisfies the inequality:
			\begin{equation}
				\label{eqn:approx_tanh_2}
				\mathcal{R}(h_{\F}) - \mathcal{R}(\tau) \le C_{\tanh} \|\tau/\rho\|^2_{W^{r,\infty}(\Omega)}\frac{\log^4\left(\beta N^{r+d+2}\right)}{N^{2(r- 2)}},
			\end{equation}
			where $h_{\F} \in \argmin_{h\in\F} \mathcal{R}(h)$, $\beta$ is as defined in Theorem \ref{thm:approx_tanh}, and
			\[C_{\tanh} := C_{\mathscr{L}}C_{\mathrm{ext}}^2C_{\mathrm{mult}}^2 (d+2)^2 M_{\tanh}^2.\]
		\end{itemize}
	\end{corollary}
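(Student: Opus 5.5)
The plan is to chain the earlier results: approximate the extension $u$ of $\tau/\rho$ by a network $f_{NN}$, transfer the error to $\rho f_{NN}-\tau$ via Lemma~\ref{lemma:product_estimate}, and pass from $H^2$ to the residual via Lemma~\ref{lemma:approximation_error_estimate}. We read the corollary as requiring $\rho$ normalized to first order (as in Definition~\ref{def:boundary_adapted_fnns}). Then Proposition~\ref{prop:quotient_regularity} gives $\tau/\rho\in C^r(\bar\Omega)$, and Lemma~\ref{lemma:extension_lemma} provides $u\in W^{r,\infty}(\R^d)$ with $u=\tau/\rho$ on $\bar\Omega$ and $\|u\|_{W^{r,\infty}(\R^d)}\le C_{\mathrm{ext}}\|\tau/\rho\|_{W^{r,\infty}(\Omega)}$.

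\textbf{Approximation and membership in $\F$.} Fix $N$. For ReQU, apply Theorem~\ref{thm:approx_requ} to $u$, and for tanh apply Theorem~\ref{thm:approx_tanh} to $u$ restricted to $[0,1]^d$. This yields $f_{NN}$ in the prescribed FNN class with
\[
\|f_{NN}-u\|_{W^{2,\infty}(\Omega)}\le \varepsilon_N\,\|u\|_{W^{r,\infty}(\R^d)},
\]
where $\varepsilon_N$ is the rate $M_{\ReQU}\log(N)^{1/2}N^{-\gamma(r)}$ or $M_{\tanh}\log^2(\beta N^{r+d+2})N^{-(r-2)}$. Since $\Omega\subset\mathbb B^d\subset[0,1]^d$, restricting the norm to $\Omega$ only decreases it. If the infimum is not attained, take a near-minimizer, which affects only constants.

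To check $\rho f_{NN}\in\F$, we need $\max_{\|\alpha\|_1\le 2}\|\partial^\alpha f_{NN}\|_{L^\infty}\le Q$. The triangle inequality gives $\|f_{NN}\|_{W^{2,\infty}}\le (1+\varepsilon_N)\|u\|_{W^{r,\infty}}$. We then use $\varepsilon_N\le M_{\ReQU}$ (respectively $M_{\tanh}$), assuming without loss of generality that the rate factor is at most $1$; otherwise the bound is trivial after enlarging constants. This is exactly the condition \eqref{eqn:qbound_requ} or \eqref{eqn:qbound_tanh}, read with the norm of the extension as written there.

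\textbf{Transfer to the residual.} Because $\tau$ solves \eqref{eqn:met_bvp}, $\mathcal R(\tau)=0$, so $\mathcal R(h)-\mathcal R(\tau)=|\Omega|^{-1}\|\mathscr L(h-\tau)\|_{L^2(\Omega)}^2$. By minimality of $h_\F$,
\[
\mathcal R(h_\F)-\mathcal R(\tau)\le |\Omega|^{-1}\|\mathscr L(\rho f_{NN}-\tau)\|_{L^2(\Omega)}^2\le |\Omega|^{-1}C_{\mathscr L}\|\rho f_{NN}-\tau\|_{H^2(\Omega)}^2
\]
by Lemma~\ref{lemma:approximation_error_estimate}. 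We then bound the $H^2$ norm by the $W^{2,\infty}$ norm. There are at most $(d+1)^2\le(d+2)^2$ multi-indices of order at most $2$, and each $L^2$ norm is at most $|\Omega|^{1/2}$ times the $L^\infty$ norm. The $|\Omega|$ factors cancel, using $|\Omega|\le 1$ if needed, and this produces the $(d+2)^2$ factor.

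Next apply Lemma~\ref{lemma:product_estimate} with order $2$ in place of $r$, $f=f_{NN}$ and $g=\tau$:
\[
\|\rho f_{NN}-\tau\|_{W^{2,\infty}}\le C_{\mathrm{mult}}\|f_{NN}-u\|_{W^{2,\infty}(\Omega)}\le C_{\mathrm{mult}}\,\varepsilon_N\, C_{\mathrm{ext}}\|\tau/\rho\|_{W^{r,\infty}(\Omega)}.
\]
Squaring and collecting constants gives \eqref{eqn:approx_requ_2} and \eqref{eqn:approx_tanh_2}. Note that $\log(N)^{1/2}$ squared becomes $\log N$, and $\log^2(\cdot)$ squared becomes $\log^4(\cdot)$.

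\textbf{Main obstacle.} The main point needing care is the bookkeeping in the $Q$-membership step, together with the $H^2$-to-$W^{2,\infty}$ counting that produces $(d+2)^2$. The structural step on which everything rests is the regularity of $\tau/\rho$, which is exactly why first-order normalization of $\rho$ is needed.
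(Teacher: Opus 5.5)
Your proposal is correct and follows essentially the same route as the paper: extend $\tau/\rho$ via Lemma~\ref{lemma:extension_lemma}, approximate it with Theorem~\ref{thm:approx_requ} or Theorem~\ref{thm:approx_tanh}, transfer the error to $\rho f_{NN}-\tau$ via Lemma~\ref{lemma:product_estimate} and the $H^2$--$W^{2,\infty}$ comparison, bound the residual with Lemma~\ref{lemma:approximation_error_estimate}, and check the $Q$-constraint by the triangle inequality. Your ordering is slightly cleaner than the paper's: you verify that the approximating network itself satisfies the $Q$-bound, and hence lies in $\F$, before comparing it with the minimizer $h_\F$, whereas the paper performs that check on $h_\F$ after the fact.
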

	
	\begin{proof}[Proof of Corollary \ref{cor:approx_error}]
		We only give the proof for the ReQU activation, as the one for tanh is identical. Let $h\in \mathcal{F}$. By definition, there exists $\hat{u} \in \Sigma^\sigma_{\mathbf{m}_{1:L}, M}$ such that $h = \rho \cdot \hat{u}$. Since $\rho \cdot \hat{u} =\tau=0$ on $\partial \Omega$, we have
		\begin{align*}
			\mathcal{R}(\rho\cdot \hat{u}) - \mathcal{R}(\tau) = \frac{1}{|\Omega|} \|\mathscr{L}(\rho\cdot \hat{u} - \tau)\|_{L^2(\Omega)}^2 &\le \frac{C_{\mathscr{L}}}{|\Omega|} \|\rho\cdot \hat{u} - \tau\|_{H^2(\Omega)}^2\\
			&\le C_{\mathscr{L}}C_{\mathrm{mult}}^2 (d+2)^2 \|\hat{u} - \tau/\rho\|_{W^{2,\infty}(\Omega)}^2\\
			&\le   C_{\mathscr{L}}C_{\mathrm{mult}}^2 (d+2)^2 \|\hat{u} - u\|_{W^{2,\infty}(\Omega)}^2,
		\end{align*}
		where $u\in W^{r,\infty}(\R^d)$ is the extension of $\tau/\rho$, which is guaranteed to exist by Lemma \ref{lemma:extension_lemma}, and we used Lemmas \ref{lemma:approximation_error_estimate}, \ref{lemma:product_estimate}, together with the fact that $\|f\|_{H^2(\Omega)}^2\le (d+2)^2\|f\|_{W^{2,\infty}(\Omega)}^2$.  
		
		By applying Theorem \ref{thm:approx_requ}, it follows that by taking the infimum over $\mathcal{F}$, we have
		\[
		\mathcal{R}(h_{\mathcal{F}}) - \mathcal{R}(\tau)
		\le
		C_{\ReQU}\,\|u\|_{W^{r,\infty}(\R^d)}^2\,\frac{\log N}{N^{2\gamma(r)}},
		\]
		where
		\[C_{\ReQU} :=  C_{\mathscr{L}} C_{\mathrm{ext}}^2 C_{\mathrm{mult}}^2 (d+2)^2 M_{\ReQU}^2.\]
		All that remains is to check that the $W^{2,\infty}$ norm of $h_{\F}$ is bounded by $(1 + {M_{\ReQU}})\|\tau/\rho\|_{W^{r,\infty}(\R^d)}$. To this end, simply observe that by the triangle inequality
		\begin{align*}
		\|h_{\F}\|_{W^{2,\infty}(\Omega)}	
		&\le \|\tau/\rho\|_{W^{2,\infty}(\Omega)} + \|h_{\F} - \tau/\rho\|_{W^{2,\infty}(\Omega)}\\
		&\le \|\tau/\rho\|_{W^{r,\infty}(\R^d)}(1 + {M_{\ReQU}}).
		\end{align*}
		This shows the desired inequality and, since the case $\sigma\equiv\tanh$ can be proved the exact same way, completes the proof.
	\end{proof}
	
	\subsection{Statistical error bound}
	
	In this section, we focus on the second and third summands in inequality \eqref{eqn:excess_risk_decomposition}, which are error terms incurred by approximating the PINN risk with a random sample. We compute new bounds on VC dimensions of hypothesis spaces of higher order derivatives of FNNs, which together with tools from empirical process theory literature allow us to bound these two terms.

	\subsubsection*{VC dimension and pseudodimension bounds for higher derivatives of FNNs}
	An important element we need to obtain non-asymptotic excess risk bounds by means of oracle inequalities is an estimation of the Vapnik-Chervonenkis dimension (VC dimension) of the hypothesis space of interest. We start by recalling its definition, together with the closely related concept of \textit{pseudo-dimension} \citep{bartlett1999neural}. 
	\begin{definition}[VC-dimension and pseudo-dimension]
		Given a function class $\mathcal{G}: \mathcal{X} \rightarrow \mathbb{R}$ and a set $X = \{x_i\}_{i=1}^m$ of $m$ points in the input space $\mathcal{X}$, let $\sgn(\mathcal{G}) = \{\sgn(g): g \in \mathcal{G}\}$ be the set of binary functions $\mathcal{X} \to \{0,1\}$ induced by $\mathcal{G}$, where we denote $\sgn(t):=\mathbf{1}_{\{t>0\}}$. If $\sgn(\mathcal{G})$ can compute all dichotomies of $X$, that is, $|\{(\sgn(g(x)))_{x\in X} : g \in \mathcal{G}\}| = 2^m$, we say that $\mathcal{G}$ shatters $X$. The Vapnik-Chervonenkis dimension (or VC-dimension) of $\mathcal{G}$ is defined as the size of the largest shattered subset of $\mathcal{X}$, and denoted by $\text{VCDim}(\mathcal{G})$.  
		
		Moreover, we say that $X$ is pseudo-shattered by $\mathcal{G}$, if there are real numbers $r_1, r_2, \dots, r_m$, such that for each $v \in \{0,1\}^m$, there exists a function $g_v \in \mathcal{G}$ with $\sgn(g_v(x_i) - r_i) = v_i$ for $1 \leq i \leq m$. The pseudo-dimension of $\mathcal{G}$, denoted by $\text{PDim}(\mathcal{G})$, is defined as the maximum cardinality of a subset $X$ of $\mathcal{X}$ that is pseudo-shattered by $\mathcal{G}$.
	\end{definition}
	
	For a hypothesis space $\F$ of functions defined on $\R^d$ and for a multi-index $\alpha\in\N^d$, we denote by $\partial^\alpha\F:=\{\partial^\alpha f: f\in\F\}$ the space of $\alpha$-th order partial derivatives, understood in the weak sense if necessary. We have the following VC dimension bounds.
	
	\begin{theorem}
		\label{thm:vcdim_bounds}
		Let $r\ge2$, $\rho:\bar{\Omega}\to\R$ be a $C^r(\bar{\Omega})$ function, and denote by $\F\equiv\F(\rho, L, \bm, \sigma, \infty, \infty, \infty)$ the FNN hypothesis space as defined in \eqref{eqn:boundary_fcnn} with activation $\sigma$ and unrestricted parameter magnitudes. Denote by $\size\equiv\size(\F) = \sum_{\ell-1}^L m_{\ell}(m_{\ell-1}+1)$ the number of parameters in $\F$.
		\begin{itemize}
			\item If $\sigma$ is the ReQU activation, then we have
			\begin{equation}
				\label{eqn:vcdim_requ}
				\sup_{\substack{\alpha\in\N^d\\ \|\alpha\|_1\le 2}} \VCdim(\partial^\alpha\F) = \bigO\left(L^2\size \log_2\left[L\|\bm\|_\infty\log_2\|\bm\|_\infty\right]\right).
			\end{equation} 
			\item If $\sigma$ is the tanh activation, then we have
			\begin{equation}
				\label{eqn:vcdim_tanh}
				\sup_{\substack{\alpha\in\N^d\\ \|\alpha\|_1\le r}} \VCdim(\partial^\alpha\F) = \bigO\left(L^2\|\bm\|_\infty^2\size^2\right). 
			\end{equation}
		\end{itemize}
	\end{theorem}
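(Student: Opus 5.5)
The plan is to reduce both bounds to classical parameter-counting arguments. The key observation is that for every fixed input $x$, the map $\btheta\mapsto\partial^\alpha(\rho f_{\btheta})(x)$ has the same algebraic structure as a network output. By the Leibniz rule,
\[
\partial^\alpha(\rho f_{\btheta})(x)=\sum_{\beta\le\alpha}\binom{\alpha}{\beta}\,\partial^{\alpha-\beta}\rho(x)\,\partial^{\beta}f_{\btheta}(x).
\]
Once the sample points $x_1,\ldots,x_m$ are fixed, the coefficients $\partial^{\alpha-\beta}\rho(x_i)$ are just real constants. Since $\rho\in C^r(\bar\Omega)$, they are well defined for $\|\alpha\|_1\le r$. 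So shattering $\{x_i\}$ by $\sgn(\partial^\alpha\F)$ amounts to counting the sign patterns of the finitely many functions $\btheta\mapsto \sum_\beta c_{i,\beta}\,\partial^\beta f_{\btheta}(x_i)$, $i=1,\dots,m$, on the parameter space $\R^{\size}$. The function $\rho$ adds no parameters and no nonlinearity, so it drops out of the complexity count. It remains to understand $\partial^\beta f_{\btheta}(x)$ as a function of $\btheta$ for $\|\beta\|_1\le 2$ (ReQU) or $\|\beta\|_1\le r$ (tanh).

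\textbf{ReQU case.} I will use forward-mode differentiation to build, layer by layer, an augmented computation that propagates $h_\ell$, $\nabla h_\ell$ and $\nabla^2 h_\ell$:
\[
\partial_j\sigma(z)=2\,\mathrm{ReLU}(z)\,\partial_j z,\qquad \partial_{jk}\sigma(z)=2\,\ind_{\{z>0\}}\,\partial_j z\,\partial_k z+2\,\mathrm{ReLU}(z)\,\partial_{jk}z,
\]
where $z=A_\ell h_{\ell-1}+b_\ell$. All these quantities depend on $\btheta$ only through the signs of the same $\|\bm\|_\infty$ pre-activations per layer. On each cell of the sign partition at layers $1,\dots,\ell$, every quantity is a polynomial in $\btheta$ of degree at most $2^{\ell}$ (up to a factor $O(\ell)$ from the products with derivatives).

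I will then run the region-counting argument of Bartlett--Harvey--Liaw--Mehrabian. At layer $\ell$, refine the partition by the signs of $m\,m_\ell$ polynomials of degree $D_\ell\lesssim 2^\ell$ in at most $\size$ variables. By Warren's bound this multiplies the number of cells by at most $2\bigl(2e\,m\,m_\ell D_\ell/\size\bigr)^{\size}$. Finally, the output sign adds one more polynomial per point. Taking the product over $\ell$ gives
\[
2^m\le\prod_{\ell\le L+1}2\bigl(2e\,m\,\|\bm\|_\infty 2^{\ell}/\size\bigr)^{\size}.
\]
Taking logarithms, $\log D_\ell=O(\ell)$ summed over $\ell$ produces the $L^2\size$ factor, and solving for $m$ yields the claimed $\bigO\bigl(L^2\size\log_2[L\|\bm\|_\infty\log_2\|\bm\|_\infty]\bigr)$.

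\textbf{Tanh case.} Here I will use $\tanh'=1-\tanh^2$, so that every derivative $\partial^\beta f_{\btheta}(x)$ with $\|\beta\|_1\le r$ is a polynomial, of degree depending only on $r$ and $L$, in the parameters and the $k=\sum_\ell m_\ell\le L\|\bm\|_\infty$ hidden-unit values $\tanh(z_{\ell,j})$. These values form a Pfaffian chain of length $k$ with bounded degree, since $\tanh(z)=1-2/(e^{2z}+1)$ and the exponentials chain through the pre-activations. The Karpinski--Macintyre bound for Pfaffian/exponential formulas then gives $\VCdim=\bigO(\size^2k^2)=\bigO(L^2\|\bm\|_\infty^2\size^2)$. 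The degree dependence on $r$ and the constant-coefficient $\rho$-combination only affect lower-order terms and are absorbed in the $\bigO$.

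I expect the main obstacle to be the ReQU bookkeeping: the augmented derivative network must not introduce new breakpoints beyond the original pre-activation signs, and the polynomial degrees must grow only like $2^\ell$ (not faster) when products $\partial_j z\,\partial_k z$ appear. This growth rate is exactly what yields $L^2$ rather than a worse depth dependence. I would first verify this by induction on $\ell$ for the tuple $(h_\ell,\nabla h_\ell,\nabla^2h_\ell)$, with degrees bounded by $(2^\ell,2^\ell,2^\ell)$ times constants.
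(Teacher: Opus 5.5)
Your proposal is correct and follows essentially the same route as the paper. At fixed sample points, the Leibniz rule shows that $\rho$ only contributes constant coefficients, which is the paper's lemma on invariance under multiplication by a parameter-independent factor. For ReQU you then use a Bartlett--Harvey--Liaw--Mehrabian layerwise sign partition with Warren's bound, and for tanh a Pfaffian-chain argument via Khovanskii/Gabrielov and Karpinski--Macintyre, exactly as the paper does.

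The differences are cosmetic. You propagate $(h_\ell,\nabla h_\ell,\nabla^2 h_\ell)$ in forward mode, whereas the paper writes the derivatives as explicit (sums of) products. Your remark that the tanh chain has ``bounded degree'' is loose, since its degree is really $O(L)$, but this is harmless: the degree only enters the final bound through lower-order logarithmic terms.
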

	
	The VC dimension bounds provided by Theorem~\ref{thm:vcdim_bounds} are, to the best of our knowledge, new in the literature. Bounds for hypothesis spaces of (higher-order) derivatives of ReLU or ReLU-ReLU$^k$ networks have appeared in previous works \citep{yang2023nearly,lei2025solving}, based on a seminal result of \citet{bartlett2019nearly}. These analyses however do not extend to our setup of ReQU FNNs multiplied by a smooth function. Furthermore, the tanh activation not being a piecewise polynomial, controlling VC dimensions of higher derivatives of tanh networks requires a fundamentally different approach. We give a complete proof of Theorem~\ref{thm:vcdim_bounds} in Appendix~\ref{sec:vcdim_bounds}.
	
	\begin{remark}
		Note that in the VC dimension bounds above, $\rho$ is not required to be a distance approximation, but merely a $C^r(\bar{\Omega})$ function. In particular, applying Theorem \ref{thm:vcdim_bounds} with $\rho\equiv1$ yields VC dimension bounds for up to second order derivatives of FNNs with ReQU activation, and for \emph{any order} derivatives of FNNs with tanh activation function.
	\end{remark}
	
	We now recall the following bound on the pseudodimension in relation to the VC dimension:
	
	\begin{lemma}[\citep{bartlett1999neural}, Theorem 14.1]
		\label{lemma:pseudodim_vcdim}
		For a class of neural networks $\mathcal{G}$ with a fixed architecture and fixed activation functions, it holds,
		\begin{equation}
			\label{eqn:pseudodim}
			\Pdim(\mathcal{G}) \leq \VCdim(\mathcal{G}_0),
		\end{equation}
		where 
		\[
		\mathcal{G}_0:=\Big\{\left[g_0:\R^d\times\R\to \R; (x,y)\mapsto g(x) - y\right] : g\in\mathcal{G} \Big\}
		\]
		is the set of functions defined on $\R^{d+1}$ by ``thresholding" the outputs of the original neural network hypothesis space. 
	\end{lemma}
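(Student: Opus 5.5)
The plan is to show that pseudo-shattering by $\mathcal{G}$ is \emph{exactly} ordinary shattering by $\mathcal{G}_0$ of a suitably lifted point set in $\R^{d+1}$. The inequality \eqref{eqn:pseudodim} then follows by comparing the maximal cardinalities in the two definitions. No property of neural networks is used; the fixed-architecture hypothesis only ensures that $\mathcal{G}_0$ is again a well-defined class of real-valued functions on $\R^{d+1}$, namely the networks of $\mathcal{G}$ with one extra input coordinate entering the output affinely.

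First, I will take any finite set $X=\{x_1,\dots,x_m\}\subset\R^d$ that is pseudo-shattered by $\mathcal{G}$. The definition provides witnesses $r_1,\dots,r_m\in\R$ such that, for every $v\in\{0,1\}^m$, some $g_v\in\mathcal{G}$ satisfies $\sgn(g_v(x_i)-r_i)=v_i$ for all $i$. Next, I define the lifted points $z_i:=(x_i,r_i)\in\R^{d+1}$. These are pairwise distinct, because the $x_i$ are distinct, so $Z:=\{z_1,\dots,z_m\}$ has cardinality $m$. For each $v$, the function $(g_v)_0(x,y)=g_v(x)-y$ belongs to $\mathcal{G}_0$ and satisfies
\[
\sgn\bigl((g_v)_0(z_i)\bigr)=\sgn\bigl(g_v(x_i)-r_i\bigr)=v_i,\qquad 1\le i\le m .
\]
Hence $\bigl|\{(\sgn(h(z_i)))_{i=1}^m : h\in\mathcal{G}_0\}\bigr|=2^m$, that is, $\mathcal{G}_0$ shatters $Z$, and therefore $\VCdim(\mathcal{G}_0)\ge m$.

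Finally, I will take the supremum over all pseudo-shattered sets $X$. This gives $\Pdim(\mathcal{G})\le\VCdim(\mathcal{G}_0)$, including the case where both sides are infinite.

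There is essentially no analytic obstacle; the only points needing care are bookkeeping. One must use the same convention $\sgn(t)=\mathbf{1}_{\{t>0\}}$ in both definitions, which the paper does, and check that the lifted points are distinct so that the cardinality is preserved. The converse inequality also holds: if $\mathcal{G}_0$ shatters points $(x_i,y_i)$ with distinct $x_i$, then $\mathcal{G}$ pseudo-shatters $\{x_i\}$ with thresholds $r_i=y_i$. Points sharing the same $x$ cannot all be shattered in that way, so the two quantities in fact coincide. Only the direction stated in the lemma is needed here.
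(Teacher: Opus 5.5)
Your proof is correct, and it is essentially the argument the paper relies on. The paper gives no proof beyond the citation to \citep{bartlett1999neural}, but Remark~\ref{rmk:pseudodim} notes that this proof uses only the definitions and in fact yields $\Pdim(\mathcal H)=\VCdim(\mathcal H_0)$ for any real-valued class $\mathcal H$. That is exactly your lifting $z_i=(x_i,r_i)$, together with your converse observation that points of a set shattered by $\mathcal{G}_0$ must have distinct first coordinates.
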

	
	\begin{remark}
		\label{rmk:pseudodim}
		The proof of Lemma~\ref{lemma:pseudodim_vcdim} uses only the definitions
		of VC-dimension and pseudo-dimension and does not rely on any specific
		properties of neural networks. In fact, for any real-valued function class
		$\mathcal H$ one has
		\[
		\Pdim(\mathcal H)=\VCdim(\mathcal H_0),
		\text{ where }\,
		\mathcal H_0:=\{(x,y)\mapsto h(x)-y : h\in\mathcal H\}.
		\]
		Hence inequality \eqref{eqn:pseudodim} holds for arbitrary classes
		$\mathcal H$ of real-valued functions as well, and not only for neural networks.
	\end{remark}
	
	By applying Remark~\ref{rmk:pseudodim} to
	$\mathcal H:=\partial^\alpha\F$, we obtain the following lemma:
	\begin{lemma}[Pseudo-dimension bounds from VC-dimension bounds]
		\label{lem:pdim_from_vcdim}
		Let $\sgn(t):=\mathbf{1}_{\{t>0\}}$ and $\sigma$ be either the $\tanh$ or $\ReQU$ activation. Then for every
		multi-index $\alpha$ with $\|\alpha\|_1\le2$,
		\[
		\Pdim(\partial^\alpha\F)
		=
		\VCdim\!\big((\partial^\alpha\F)_0\big)
		=
		\VCdim\!\big(\sigma\circ(\partial^\alpha\F)_0\big),
		\]
		where $(\partial^\alpha\F)_0:=\{(x,y)\mapsto \partial^\alpha f(x)-y: f\in\F\}$.
		Moreover, for any such $\alpha$,
		\[
		\VCdim\!\big(\sigma\circ(\partial^\alpha\F)_0\big)
		\;\lesssim\; \sup_{\|\beta\|_1\le2}\VCdim(\partial^\beta\F),
		\]
		with an absolute implicit constant which may depend on $\sigma$ only. In particular, Theorem~\ref{thm:vcdim_bounds} implies the following pseudo-dimension bounds:
		\begin{equation}
			\label{eqn:pseudodim_2}
			\sup_{\substack{\alpha\in\N^d\\ \|\alpha\|_1\le 2}} \Pdim(\partial^\alpha\F)
			=
			\begin{cases}
				\bigO\left(
				L^2\size \log_2\bigl[L\|\bm\|_\infty\log_2\|\bm\|_\infty\bigr]
				\right),
				& \sigma=\ReQU,\\
				\bigO\!\left(
				L^2\|\bm\|_\infty^2\size^2
				\right),
				& \sigma=\tanh.
			\end{cases}
		\end{equation}
	\end{lemma}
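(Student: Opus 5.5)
The first equality $\Pdim(\partial^\alpha\F)=\VCdim((\partial^\alpha\F)_0)$ is Remark~\ref{rmk:pseudodim} applied to $\mathcal H=\partial^\alpha\F$, so it needs no further work. For the second equality I will use only the sign convention $\sgn(t)=\mathbf 1_{\{t>0\}}$. For ReQU we have $\sigma(t)=\max\{0,t\}^2>0$ if and only if $t>0$. For tanh, strict monotonicity and $\tanh(0)=0$ give $\tanh(t)>0$ if and only if $t>0$. Hence $\sgn(\sigma(h(x,y)))=\sgn(h(x,y))$ pointwise for every $h\in(\partial^\alpha\F)_0$. The sets of dichotomies realized on any finite set $\{(x_i,y_i)\}$ by $\sgn(\sigma\circ(\partial^\alpha\F)_0)$ and by $\sgn((\partial^\alpha\F)_0)$ therefore coincide, and the two VC dimensions are equal.

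The substantive claim is the bound $\VCdim((\partial^\alpha\F)_0)\lesssim\sup_{\|\beta\|_1\le 2}\VCdim(\partial^\beta\F)$. I do not expect to deduce it abstractly from the VC dimension of $\partial^\beta\F$: a class and its thresholded version can in general have different VC dimensions. Instead I will show that the bound proved in Theorem~\ref{thm:vcdim_bounds} holds verbatim for the thresholded class. The proof of Theorem~\ref{thm:vcdim_bounds}, in the style of \citet{bartlett2019nearly}, bounds the growth function as follows. It fixes $m$ input points $x_1,\dots,x_m$ and counts the sign patterns $(\sgn(\partial^\beta f_\theta(x_i)))_{i\le m}$ as the parameter vector $\theta\in\R^{\size}$ varies. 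For ReQU this uses the piecewise-polynomial structure in $\theta$, with controlled degree and number of pieces, together with Warren's theorem. For tanh it uses the Pfaffian-chain structure with controlled chain length and degree, together with Khovanskii-type bounds. The resulting VC bound is the largest $m$ for which this count reaches $2^m$.

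Now fix points $(x_i,y_i)$ and consider $\theta\mapsto\partial^\alpha f_\theta(x_i)-y_i$. Since $x_i$ and $y_i$ are fixed while $\theta$ varies, this differs from $\theta\mapsto\partial^\alpha f_\theta(x_i)$ only by an additive constant. Adding a constant leaves unchanged the polynomial degrees on each piece and the partition of parameter space into pieces in the ReQU case. In the tanh case it leaves unchanged the Pfaffian chain, its length and its degree, and raises the degree of the final polynomial in the chain variables by at most zero. Every quantity entering the counting argument is therefore identical. The growth function of $(\partial^\alpha\F)_0$ on $m$ points is thus bounded by the same expression as that of $\partial^\alpha\F$, which yields exactly the orders in \eqref{eqn:vcdim_requ} and \eqref{eqn:vcdim_tanh}. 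Combined with the two equalities above, this gives \eqref{eqn:pseudodim_2}.

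The main obstacle is that this step relies on the internal structure of the proof of Theorem~\ref{thm:vcdim_bounds} rather than on its statement alone. I would make it rigorous by isolating in that proof a lemma of the form ``for any fixed points $x_1,\dots,x_m$ and any constants $c_1,\dots,c_m$, the number of sign patterns of $(\partial^\beta f_\theta(x_i)-c_i)_{i\le m}$ is at most $\Pi(m)$.'' That proof would be written with the constants $c_i$ carried along from the start, since Warren's and Khovanskii's bounds are insensitive to them. With this lemma in hand, both Theorem~\ref{thm:vcdim_bounds} (taking $c_i=0$) and the present Lemma (taking $c_i=y_i$) follow at once, with implicit constants depending only on $\sigma$.
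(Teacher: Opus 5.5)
Your proposal is correct and follows the paper's proof. The first two equalities come from Remark~\ref{rmk:pseudodim} and from $\sigma(t)>0\iff t>0$, and the comparison is obtained by re-running the counting argument behind Theorem~\ref{thm:vcdim_bounds}. Your handling of that last step (a fixed shift by $-y_i$ leaves the parameter-space partition, the polynomial degrees and the Pfaffian format unchanged) is slightly sharper than the paper's ``compose once with $(z,y)\mapsto\sigma(z-y)$'' remark; like the paper, it bounds $\VCdim\big((\partial^\alpha\F)_0\big)$ by the upper-bound expressions of Theorem~\ref{thm:vcdim_bounds} rather than by the literal $\sup_{\|\beta\|_1\le2}\VCdim(\partial^\beta\F)$, which is all that \eqref{eqn:pseudodim_2} requires.
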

	
	\begin{proof}
		The first equality is Remark~\ref{rmk:pseudodim} applied to $\mathcal H=\partial^\alpha\F$.
		For both $\sigma\equiv\tanh$ and $\sigma\equiv\ReQU$, we have $\sigma(t)>0$ if and only if $t>0$, hence
		$\sgn(\partial^\alpha f(x)-y)=\sgn(\sigma(\partial^\alpha f(x)-y))$ holds for all pairs $(x,y)$, which gives the second equality. The final comparison follows by inspecting the proof of Theorem~\ref{thm:vcdim_bounds}: adjoining one extra input variable $y$ and composing once with the fixed map $(z,y)\mapsto \sigma(z-y)$ changes the underlying computation description by at most an absolute constant, so the same VC-dimension upper bounds apply up to a factor which may depend on $\sigma$ only.
	\end{proof}
	
	\subsubsection{An oracle inequality for the statistical error}
	
	The importance of these complexity measure bounds is highlighted in the following oracle inequality, which directly translates bounds on VC and pseudo-dimension into high-probability error bounds on the statistical error.
	
	\begin{theorem}
		\label{thm:zhou_oracle}
		Let $r\ge2$, and assume that Assumptions \eqref{eqn:assumption1_smooth_boundary}, \eqref{eqn:assumption2_strict_elliptic}, \eqref{eqn:assumption3_regular_coeffs} hold, so that $\tau$ is well-defined as the unique solution of \eqref{eqn:met_bvp}. Let $\rho\in C^2(\bar{\Omega})$ satisfy $\rho=0$ on $\partial\Omega$, and denote by $\F\equiv\F(\rho, L, \bm, \sigma, Q, B, M)$ the FNN hypothesis space as defined in \eqref{eqn:boundary_fcnn}. Define
		\begin{equation}
		\label{eqn:qdef}
		Q_0:= \max\Big(4Q\|\rho\|_{W^{2,\infty}},\  \|\tau\|_{W^{2,\infty}}\Big).
		\end{equation}
		For all $t > 0$, $n \geq \max_{\|\alpha\|_1 \leq 2} \Pdim(\partial^\alpha \F) \vee \frac{8e^2Q_0^2}{C_{11}+3}$, and $u_n\in\argmin_{u\in\F} \mathcal{R}_n(u)$, we have with probability at least $1 - \exp(-t)$
		\begin{equation}
			\label{eqn:oracle_inequality}
			\mathcal{R}(u_n) - \mathcal{R}(\tau) \leq \mathcal{R}(u_{\mathcal{F}}) - \mathcal{R}(\tau) + \frac{\VC_\F}{n} \log n + \frac{t}{n} + \frac{\exp(-t)}{\log(n/\log n)}.
		\end{equation}
		Here, $u_\F\in\argmin_{u\in\F} \mathcal{R}(u)$, $C_{11} > 0$ is a constant satisfying the estimate \eqref{eqn:strong_convexity_lower_bound}:
		\[\left\|u - \tau\right\|_{H^2(\Omega)}^2 \leq C_{11} \left\|\mathscr{L}(u - \tau)\right\|_{L^2(\Omega)}^2,\]
		and we used the notation $ \VC_\F := \max_{ \|\alpha\|_1\le 2} \VCdim(\partial^\alpha\F)$.
	\end{theorem}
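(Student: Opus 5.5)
This is a localized, variance-adapted oracle inequality of the kind proved in \citep{lei2025solving}. I would not rederive it from scratch. Instead I would check that our setting satisfies the hypotheses of their abstract result and then transfer it.

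\textbf{Step 1: reformulate as a regression problem.} Set $\ell_u(x):=|\mathscr{L}u(x)+1|^2$. Since $\mathscr{L}\tau=-1$, we have $\ell_u-\ell_\tau=|\mathscr{L}(u-\tau)|^2$. The excess risk is therefore $\mathcal{R}(u)-\mathcal{R}(\tau)=\Exp[g_u]$ with $g_u:=|\mathscr{L}(u-\tau)(X)|^2\ge 0$ and $X$ uniform on $\Omega$.

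\textbf{Step 2: uniform boundedness.} For $u=\rho f\in\F$, the Leibniz rule gives $\|u\|_{W^{2,\infty}}\le 4Q\|\rho\|_{W^{2,\infty}}$. Hence $\|u-\tau\|_{W^{2,\infty}}\le 2Q_0$ and $\|g_u\|_\infty\lesssim Q_0^2$, with constants depending on the coefficients of $\mathscr{L}$. This is where the definition \eqref{eqn:qdef} of $Q_0$ enters.

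\textbf{Step 3: variance--expectation (Bernstein) condition.} I need $\Exp[g_u^2]\le \|g_u\|_\infty \Exp[g_u]\lesssim Q_0^2\,\Exp[g_u]$. This holds trivially because $g_u$ is nonnegative and bounded.

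The role of $C_{11}$ is different. Every $u\in\F$ vanishes on $\paOm$ because $\rho=0$ there, so estimate \eqref{eqn:strong_convexity_lower_bound} applies with zero boundary term. It gives $\|u-\tau\|_{H^2}^2\le C_{11}\|\mathscr{L}(u-\tau)\|_{L^2}^2$. This lets us convert $L^2$-type covering of the derivative classes into control of the localized function class $\{g_u:\Exp g_u\le\delta\}$ in the peeling argument. The lower bound on $n$ in terms of $Q_0^2/(C_{11}+3)$ is the condition under which the smallest peeling radius exceeds the threshold required there.

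\textbf{Step 4: complexity.} The class $\{\mathscr{L}(u-\tau)\}$ is a fixed linear combination, with $C^{r}$ coefficients $b_i,\Sigma_{ij}$, of the classes $\partial^\alpha\F$ for $\|\alpha\|_1\le2$. I would bound its $L^\infty$ or empirical covering numbers by the product of the covering numbers of the $\partial^\alpha\F$. Each of these is bounded by Haussler's bound in terms of $\Pdim(\partial^\alpha\F)$, which by Lemma~\ref{lem:pdim_from_vcdim} is controlled by $\VC_\F$.

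The result is $\log\mathcal N(\epsilon)\lesssim \VC_\F\log(Q_0/\epsilon)$. The assumption $n\ge\max_\alpha\Pdim(\partial^\alpha\F)$ is exactly what keeps Haussler's bound in its useful regime.

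\textbf{Step 5: conclude.} I would feed these inputs into the ratio-type (peeling) concentration inequality of \citep{lei2025solving}, in the spirit of Theorem 11.4 of Gy\"orfi et al. Combined with the decomposition \eqref{eqn:excess_risk_decomposition}, this gives the fast rate $\VC_\F\log n/n+t/n$. The residual term $\exp(-t)/\log(n/\log n)$ comes from integrating the tail over the finitely many peeling shells, whose number is of order $\log(n/\log n)$.

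\textbf{Main obstacle.} The delicate point is Step 4. The coefficients multiplying $\partial^\alpha f$ come from both $\rho$ and the operator. The covering numbers of linear combinations of derivative classes must therefore be reduced to the individual pseudo-dimensions without losing more than a constant factor inside $\VC_\F$. My first attempt would be to absorb the smooth multipliers into the constant via sup-norm covers: multiplication by a bounded fixed function is Lipschitz in $L^\infty$.
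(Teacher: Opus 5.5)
Your proposal takes essentially the same route as the paper. The paper's proof is simply a transfer of the localized-complexity and peeling argument behind Theorem~4 of \citep{lei2025solving}, justified by noting that the residual loss is a special case of theirs and that their argument uses only generic properties of the hypothesis class: a uniform $W^{2,\infty}$ bound, vanishing trace so that the $C_{11}$ estimate applies, and pseudo-dimension bounds for the derivative classes. That is precisely the checklist of your Steps~1--4.

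One caution for Step~4: do not use global $L^\infty(\Omega)$ covers, which are in general infinite here, since second derivatives of ReQU networks are piecewise constant with jump locations depending continuously on the biases. Use empirical covers on the sample instead, for example via \citep[Theorem~12.2]{bartlett1999neural}, which is the natural source of the condition $n\ge\Pdim(\partial^\alpha\F)$; Haussler's bound carries no such condition. Your explanations of the $n$-threshold and of the $\exp(-t)/\log(n/\log n)$ term are guesses that the transfer does not need.
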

	
	\begin{proof}
		The result follows from following the localized Rademacher-complexity analysis and peeling argument introduced in \citep{lei2025solving} to prove their Theorem 4, which is the analogue of Theorem~\ref{thm:zhou_oracle} in our setting. Indeed, the loss function we use is a special case of theirs, and one can check that their argument is agnostic to the exact form of the neural network hypothesis space.
	\end{proof}
	
	Theorems~\ref{thm:zhou_oracle}, \ref{thm:approx_requ} and \ref{thm:approx_tanh} combined together give us an explicit upper bound for the excess risk, from which we can establish our a priori error bounds.

	\subsection{Main result: a priori error estimates for PINN estimators of the mean escape time}
	
	We are now ready to state and prove our main theorem, which is a non-asymptotic a priori error estimate for numerical solution of the Mean Escape Time learned with boundary-adapted PINNs.
	
	\begin{theorem}
		\label{thm:main}
		Let $r\ge3$, and assume that Assumptions \eqref{eqn:assumption1_smooth_boundary}, \eqref{eqn:assumption2_strict_elliptic}, \eqref{eqn:assumption3_regular_coeffs} hold, so that $\tau$ is well-defined as the unique solution of \eqref{eqn:met_bvp}. Let $\rho:\bar{\Omega}\to\R$ be an $(r+2)$-smooth distance approximation, and denote by $\F_n\equiv\F(\rho, L, \bm_n, \sigma, Q, \infty, \infty)$ the FNN hypothesis space as defined in \eqref{eqn:boundary_fcnn} with activation $\sigma$ and no restrictions on the weights and parameters magnitudes. Let $Q_0$ be defined as in Equation \eqref{eqn:qdef}. For all $n\in\N$, let $u_n\in\argmin_{u\in\F_n} \mathcal{R}_n(u)$ denote a minimizer of the empirical residual \eqref{eqn:residual_empirical}. The following holds:
		\begin{itemize}
			\item If $\sigma$ is the ReQU activation, and the parameters $L,\bm_n, Q$ are chosen as
			\[L = 1, \quad  \|\mathbf{m}_n\|_\infty = r^d n^{\frac{d}{2(d + \gamma(r))}}, \quad Q = (1+{M_{\ReQU}})\|\tau/\rho\|_{W^{r,\infty}(\Omega)},\] 
			where
			\[\gamma(r) = \min\left\{r-2,\frac{d+1}{2}\right\}, 
			\]
			then for all $n\ge N_{\ReQU} \vee \frac{8e^2Q_0^2}{C_{11}+3}$, we have with probability greater than $1-\exp(-n^{d/(d+\gamma(r))})$ that
			\[
			\begin{cases}
				\mathcal{R}(u_n)-\mathcal{R}(\tau)
				&\le c_1\, n^{-\frac{\gamma(r)}{d+\gamma(r)}}\log^2(n\log n), \text{ and}\\
				\|u_n - \tau\|_{H^2(\Omega)}
				&\le c_2\, n^{-\frac{\gamma(r)}{2(d+\gamma(r))}}\log(n\log n),
			\end{cases}
			\]
			where $N_{\ReQU}$, $c_1,c_2>0$ are constants which depend only on $\mathscr{L},\tau, \rho, r,d$ and $\Omega$.
			\item If $\sigma$ is the tanh activation, and the parameters $L,\bm_n, Q$ are chosen as
			\[L =2, \quad  \|\mathbf{m}_n\|_\infty = 3d5^{d+1}n^{\frac{d}{2(3d + r-2)}}, \quad Q = (1+{M_{\tanh}})\|\tau/\rho\|_{W^{r,\infty}(\Omega)}, \] 
			then for all $n\ge N_{\tanh}\vee \frac{8e^2Q_0^2}{C_{11}+3}$, we have with probability greater than $1-\exp(-n^{d/(3d+r-2)})$ that
			\[
			\begin{cases}
				\mathcal{R}(u_n)-\mathcal{R}(\tau) &\le \kappa_1 n^{-\frac{r-2}{3d+r-2}}\log^4(n\log n), \text{ and}\\
				\|u_n - \tau\|_{H^2(\Omega)} &\le \kappa_2 n^{-\frac{r-2}{2(3d+r-2)}}\log^2(n\log n),
			\end{cases}\]
			where $N_{\tanh}$, $\kappa_1,\kappa_2>0$ are constants which depend only on $\mathscr{L},\tau, \rho, r,d$ and $\Omega$.
		\end{itemize}
	\end{theorem}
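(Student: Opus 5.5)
The plan is to combine the oracle inequality of Theorem~\ref{thm:zhou_oracle} with the approximation bounds of Corollary~\ref{cor:approx_error} and the complexity bounds of Theorem~\ref{thm:vcdim_bounds} and Lemma~\ref{lem:pdim_from_vcdim}. We then choose the width $N$ so that approximation and statistical errors balance, and finally convert the excess risk into an $H^2$ bound. Throughout, $\rho$ is normalized to first order, as implicit in the boundary-adapted setting. By Proposition~\ref{prop:quotient_regularity} and Lemma~\ref{lemma:extension_lemma}, $\tau/\rho$ then has an extension $u\in W^{r,\infty}(\R^d)$, so the corollary applies.

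\textbf{Step 1 (the oracle inequality).} Fix the activation and apply Theorem~\ref{thm:zhou_oracle} with $t=n^{d/(d+\gamma(r))}$ for ReQU, respectively $t=n^{d/(3d+r-2)}$ for tanh. The hypothesis space $\F_n$ has unbounded weights and $Q$ chosen as in the statement. It contains the approximants of Corollary~\ref{cor:approx_error}, since removing the bounds $B,M$ only enlarges the space and the $Q$ constraint is satisfied by \eqref{eqn:qbound_requ}/\eqref{eqn:qbound_tanh}. The approximation term is therefore bounded by $C\,\log N/N^{2\gamma(r)}$ for ReQU, respectively $C\log^4(\beta N^{r+d+2})/N^{2(r-2)}$ for tanh. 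Here $N$ is related to the width by $\|\bm_n\|_\infty=N^d$ for ReQU, and $\|\bm_n\|_\infty\asymp N^d$ for tanh. The remaining terms are $t/n$ and $e^{-t}/\log(n/\log n)$; the latter is negligible. The requirement $n\ge\max_\alpha\Pdim(\partial^\alpha\F_n)$ is absorbed into the threshold $N_{\ReQU}$ or $N_{\tanh}$, because the pseudo-dimension grows strictly sublinearly in $n$ for the chosen widths.

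\textbf{Step 2 (complexity and balancing).} For ReQU with $L=1$ we have $\size\asymp d\,N^d$, so \eqref{eqn:vcdim_requ} gives $\VC_{\F_n}\lesssim N^d\log N$. The statistical term is then $\lesssim N^d\log N\log n/n$. Taking $N^d\asymp n^{d/(d+\gamma)}\cdot$(the constant in the width) yields approximation $\asymp n^{-2\gamma/(d+\gamma)}$, but we only need the weaker common rate. I would pick $N$ so that $N^{d}\approx n^{d/(2(d+\gamma))}$, matching the stated width. One then checks that the statistical term is $n^{-1+d/(2(d+\gamma))}\log^2 n$ and that $t/n=n^{-\gamma/(d+\gamma)}$, while the approximation term is $n^{-\gamma/(d+\gamma)}\log n$. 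All three are bounded by $c_1n^{-\gamma/(d+\gamma)}\log^2(n\log n)$, after checking exponents: $-1+d/(2(d+\gamma))\le -\gamma/(d+\gamma)$ holds iff $d/2\le d$. For tanh, \eqref{eqn:vcdim_tanh} with $L=2$ gives $\VC\lesssim\|\bm\|_\infty^2\size^2\lesssim\|\bm\|_\infty^6$. With $\|\bm_n\|_\infty\asymp n^{d/(2(3d+r-2))}$, i.e.\ $N\asymp n^{1/(2(3d+r-2))}$, the statistical term is $\lesssim n^{-1+3d/(3d+r-2)}\log n=n^{-(r-2)/(3d+r-2)}\log n$. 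The approximation term is $N^{-2(r-2)}\log^4\lesssim n^{-(r-2)/(3d+r-2)}\log^4 n$, since $\log(\beta N^{r+d+2})\lesssim\log n$. Finally, $t/n$ has the same rate.

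\textbf{Step 3 ($H^2$ bound).} Every $u_n\in\F_n$ vanishes on $\paOm$ and $\mathcal R(\tau)=0$. The estimate \eqref{eqn:strong_convexity_lower_bound}, with the boundary term zero, gives $\|u_n-\tau\|_{H^2}^2\le C_{11}\|\mathscr L(u_n-\tau)\|_{L^2}^2=C_{11}|\Omega|(\mathcal R(u_n)-\mathcal R(\tau))$. Taking square roots gives the stated rates with $c_2=\sqrt{C_{11}|\Omega|c_1}$, and likewise $\kappa_2=\sqrt{C_{11}|\Omega|\kappa_1}$.

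\textbf{Main obstacle.} I expect the main obstacle to be the bookkeeping in Step 2. One has to verify that the chosen widths simultaneously make every term of \eqref{eqn:oracle_inequality} at most the target rate. One also has to check that $n$ dominates the pseudo-dimension, which requires tracking logarithmic factors and the $d$-dependence of $\size$ versus $\|\bm\|_\infty$. In addition, the tanh width constraint of Theorem~\ref{thm:approx_tanh} must be compatible with $3d5^{d+1}n^{d/(2(3d+r-2))}$ once $n$ is large, which fixes $N_{\tanh}$.
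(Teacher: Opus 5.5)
Your proposal is correct and follows the paper's proof essentially step for step: the approximation bound of Corollary~\ref{cor:approx_error}, the complexity bounds of Theorem~\ref{thm:vcdim_bounds} and Lemma~\ref{lem:pdim_from_vcdim}, the oracle inequality of Theorem~\ref{thm:zhou_oracle} with $t=n^{d/(d+\gamma(r))}$ (resp.\ $t=n^{d/(3d+r-2)}$), and then \eqref{eqn:strong_convexity_lower_bound} with vanishing boundary term to pass to $H^2(\Omega)$. The differences are only bookkeeping: you use the sharper count $\size\asymp d\,\|\bm_n\|_\infty$ for the shallow ReQU network, so your statistical term $n^{-1+d/(2(d+\gamma(r)))}\log^2 n$ lies strictly below the target rate, whereas the paper's cruder bound $\size\lesssim\|\bm_n\|_\infty^2$ makes it exactly balanced; and you state explicitly the first-order normalization of $\rho$, which the paper uses only implicitly through Lemma~\ref{lemma:extension_lemma}.
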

	
	\begin{proof}
		We give a detailed proof for the ReQU case, and we sketch at the end how the argument needs to be modified to handle the tanh activation.  
		Let $a>0$ be an exponent to be chosen later. By applying the ReQU version of Corollary~\ref{cor:approx_error} with $N\equiv n^a$, we find that for $\F\equiv\F(\rho,L,\bm,\sigma,Q,\infty, \infty)$ with $\sigma\equiv\ReQU$ and
		\[L = 1, \quad \|\mathbf{m}\|_\infty = r^dn^{ad}, \quad Q = (1+{M_{\ReQU}})\|\tau/\rho\|_{W^{r,\infty}(\Omega)},\]
		we have for $u_\F\in\argmin_{u\in\F} \mathcal{R}(u)$ the approximation error bound
		\[
		\mathcal{R}(u_\F) - \mathcal{R}(\tau)
		\le
		C_{\ReQU} \,\|\tau/\rho\|_{W^{r,\infty}(\Omega)}^2\frac{\log n}{n^{2a\gamma(r)}}=: C_1\frac{\log n}{n^{2a\gamma(r)}}.
		\]
		On the other hand, the number of parameters of $\F$ satisfies
		\[
		\size:=\sum_{\ell = 1}^L m_\ell(m_{\ell-1}+1) \le (\|\bm\|_\infty + \|\bm\|_\infty^2)L = \bigO\left(\|\bm\|_\infty^2L\right).
		\]
		Hence, by Theorem~\ref{thm:vcdim_bounds}, the VC dimension of $\F$ and its higher order derivatives satisfies:
		\begin{align*}
			\VC_\F := \max_{\|\alpha\|_1\le 2}\VCdim(\partial^\alpha \F) &\le \bigO\left(L^3 \|\bm\|_\infty^2 \log_2[L\|\bm\|_\infty\log_2\|\bm\|_\infty]\right)\\
			&\le C_2\, n^{2ad}\log(n\log n).
		\end{align*}
		For our choice of $L$ and $\|\bm\|_\infty$, we have by equation \eqref{eqn:pseudodim_2} that $\max_{\|\alpha\|_1\le2}\Pdim(\partial^\alpha\F)\le Cn^{2ad}\log(n\log n)$. Furthermore, for $a<1/(2d)$, we also have $Cn^{2ad}\log(n\log n)\le n$ for all $n\ge N_{\ReQU}$. Therefore the error decomposition \eqref{eqn:excess_risk_decomposition} together with the oracle inequality from Theorem~\ref{thm:zhou_oracle} give that, for all $n\ge N_{\ReQU}\vee \frac{8e^2Q_0^2}{C_{11}+3}$ and for all $t>0$, we have with probability at least $1-\exp(-t)$:
		\[
		\mathcal{R}(u_n)-\mathcal{R}(\tau) \le \frac{3C_1}{n^{2a\gamma(r)}}\log(n) + \frac{t}{n} + C_2\, n^{2ad-1}\log^2(n\log n) + \frac{\exp(-t)}{\log(n/\log n)}.
		\]
		We thus pick $a = \bigl(2(\gamma(r)+d)\bigr)^{-1}$ and $t=n^b$ with $b = d/(d+\gamma(r))$ to obtain the inequality
		\[
		\mathcal{R}(u_n)-\mathcal{R}(\tau)
		\le
		(3C_1 + C_2  + 2)\,n^{-\frac{\gamma(r)}{d+\gamma(r)}}\log^2(n\log n).
		\]
		Finally, we recover the desired $H^2$ a priori error estimate by applying inequality \eqref{eqn:strong_convexity_lower_bound}, which, letting $C_3:=3C_1 + C_2  + 2 $ yields:
		\[
		\|u_n - \tau\|_{H^2(\Omega)}
		\le
		\sqrt{C_{11}C_3}\, n^{-\frac{\gamma(r)}{2(d+\gamma(r))}}\log(n\log n).
		\]
		This concludes the proof for the $\sigma \equiv \ReQU$ case. The analogous bound for the $\sigma\equiv\tanh$ case is obtained the same way, taking into account that in this case the approximation error bound \eqref{eqn:approx_tanh_2} involves an additional logarithmic term, and likewise, the VC dimension bound \eqref{eqn:vcdim_tanh} scales differently.
	\end{proof}
	
	Theorem~\ref{thm:main} yields non-asymptotic high-probability $H^2(\Omega)$ error bounds for boundary-adapted PINN estimators of the mean escape time. In the tanh case, the rate improves with the regularity $r$ and becomes close to $n^{-1/2}$, up to logarithmic factors, when $r\gg d$. In the shallow ReQU case, however, the exponent is governed by $(r-2)\wedge(d+1)/2$, so the gain from additional smoothness saturates for large $r$. This saturation comes from our shallow ReQU approximation bound in Theorem~\ref{thm:approx_requ}, and could therefore be overcome by increasing depth. This would however not be advisable in practice as it would make training notoriously more unstable. For tanh networks, our bound is qualitatively similar to fast-rate results for deep ReLU$^3$ and ReLU-ReLU$^k$ such as \citep{lu2021machine, jiao2022rate, lei2025solving}, but likewise, the exponent is slightly degraded due to the comparatively larger VC dimension of tanh networks, as shown in Theorem~\ref{thm:vcdim_bounds}. We highlight again that, as our proof reveals, Theorem~\ref{thm:main} can be extended to any elliptic PDE with Dirichlet boundary conditions on a regular domain with $C^{r+2}(\bar{\Omega})$ solution.
	
	\section{Numerical experiments}
	\label{sec:numerical_experiments}
	
	We now illustrate our theoretical results by considering two mean escape time problems: a two-dimensional Ornstein--Uhlenbeck process in a ball, and a random particle in a double-well potential field. As per Theorem~\ref{thm:met_bvp}, finding the MET for these processes is equivalent to solving PDEs of the form
	\[
	\mathscr{L}\tau(x) = -1 \quad \text{for } x \in \Omega,
	\qquad
	\tau(x) = 0 \quad \text{for } x \in \partial\Omega,
	\]
	with $\mathscr{L}$ a second-order elliptic operator whose coefficients depend on the stochastic process under consideration.
	
	In all experiments we approximate $\tau$ by neural network ansatz functions $f_{\btheta}\colon \Omega \to \mathbb{R}$, which are then trained using different PINN formulations. We consider the boundary-adapted PINN as defined in this paper, the standard strong-form PINN with an explicit boundary penalty term as introduced in \citep{raissi2019physics}, and a variational PINN (VPINN) based on a weak formulation with Nitsche-type boundary terms \citep{ming2021deep}. In addition, we introduce two modified variants of the boundary-adapted architecture, obtained by multiplying the neural network by $\rho^{3/2}$ and $\rho^{1/2}$ instead of $\rho$, where $\rho$ denotes an $(r+2)$-smooth distance approximation normalized to first order as per Definition \ref{def:order_m_normalization}. We refer to these as ``over-enforced'' and ``under-enforced'' PINNs. These variants still satisfy the homogeneous Dirichlet boundary condition and are included to assess the importance of the ``distance-like" behaviour of $\rho$ near $\paOm$. Before reporting the numerical results, we briefly describe these architectures and the associated training methodology.
	
	\subsection{Training losses and evaluation metrics}
	
	All methods share the same fully connected feedforward network $N_{\btheta}$, and differ only in how the boundary condition is enforced and in the resulting training losses.
	
	\noindent\textbf{PDE residual, boundary, and data-fidelity losses.}
	For all formulations that use a strong-form residual, we define the empirical PDE loss
	\begin{equation}
		\label{eqn:Jpde}
		\hat{J}_{\mathrm{pde}}(\btheta)
		:= \frac{1}{n} \sum_{i=1}^n
		\bigl| \mathscr{L}\bigl(f_{\btheta}\bigr)(x_i) + 1 \bigr|^2,
		\quad x_i \in \Omega.
	\end{equation}
	The homogeneous Dirichlet boundary condition $\tau = 0$ is enforced in the standard PINN by the empirical $L^2(\partial\Omega)$ boundary loss
	\begin{equation}
		\label{eqn:Jbc-simple}
		\hat{J}_{\mathrm{bc}}(\btheta)
		:= \frac{1}{n_{\mathrm{bc}}}
		\sum_{k=1}^{n_{\mathrm{bc}}}
		\bigl| f_{\btheta}(x_k^{\mathrm{bc}}) \bigr|^2,
		\quad x_k^{\mathrm{bc}} \in \partial\Omega.
	\end{equation}
	To tame the typically badly-behaved loss-landscape of the objective \eqref{eqn:Jpde} and avoid convergence to spurious local minima, we follow \citep{tepakbong2026taming} and enhance the standard residual loss with a data-fidelity loss based on Monte Carlo approximations of $\tau$ at interior points\footnote{For each data point $x_j^{\mathrm{data}}$, we approximate $\tau(x_j^{\mathrm{data}})$ by Monte Carlo: we generate $N_{\mathrm{paths}}$ independent Euler--Maruyama trajectories of the process $(X_t)_{t\ge 0}$ started at $x_j^{\mathrm{data}}$ and estimate $\tau(x_j^{\mathrm{data}})$ as the empirical mean of their first exit times.}: for a small number $n_{\rm{data}}$ of uniformly sampled $\{x_j^{\mathrm{data}}\}_{j=1}^{n_{\mathrm{data}}} \subset \Omega$, and precomputed estimates $\hat\tau(x_j^{\mathrm{data}})$, we set
	\begin{equation}
		\label{eqn:Jdata}
		\hat{J}_{\mathrm{data}}(\btheta)
		:= \frac{1}{n_{\mathrm{data}}}
		\sum_{j=1}^{n_{\mathrm{data}}}
		\bigl| f_{\btheta}(x_j^{\mathrm{data}}) - \hat\tau(x_j^{\mathrm{data}}) \bigr|^2.
	\end{equation}
	
	\noindent\textbf{Boundary-adapted, over-enforced, and under-enforced PINNs.}
	The boundary-adapted architecture incorporates the zero Dirichlet boundary condition into the ansatz via the smooth distance approximation $\rho$, which vanishes on $\partial\Omega$ and is strictly positive in $\Omega$.  
	Writing $N_{\btheta}$ for a standard feedforward network, we define
	\begin{equation}
		f_{\btheta}^{\mathrm{adapted}}(x) := \rho(x)\,N_{\btheta}(x),\quad
		f_{\btheta}^{\mathrm{over}}(x) := \rho(x)^{3/2} N_{\btheta}(x),\quad
		f_{\btheta}^{\mathrm{under}}(x) := \rho(x)^{1/2} N_{\btheta}(x),
	\end{equation}
	which we refer to as the boundary-adapted, over-enforced, and under-enforced PINNs, respectively.  
	In all three cases the boundary condition is satisfied by construction, so only the PDE residual and data-fidelity losses are used, obtained from \eqref{eqn:Jpde}–\eqref{eqn:Jdata} by replacing $f_{\btheta}$ with the corresponding $f_{\btheta}^{\mathrm{arch}}$.
	
	\noindent\textbf{Standard PINN and variational PINN.}
	The standard PINN uses the strong-form PDE residual \eqref{eqn:Jpde}, the boundary loss \eqref{eqn:Jbc-simple}, and the data-fidelity loss \eqref{eqn:Jdata}.  
	
	For the VPINN we instead adopt a deep Nitsche formulation \citep{ming2021deep}. We fix in advance a finite collection of smooth test functions $\{\phi_k\}_{k=1}^{n_{\mathrm{test}}} \subset H^1(\Omega)$. Let $a_{\mathrm{N}}(\cdot,\cdot)$ and $l(\cdot)$ denote the Nitsche bilinear and linear forms obtained from $\mathscr{L}$ by integration by parts together with Nitsche-type boundary terms. In particular, the last term in $a_{\mathrm{N}}$ corresponds to the weak imposition of the Dirichlet condition. The exact solution $u$ then satisfies
	\begin{equation*}
		a_{\mathrm{N}}(u,v) = l(v)\quad\text{for all }v\in H^1(\Omega).
	\end{equation*}
	Given a neural approximation $f_{\btheta}$, we evaluate, for each test function $\phi_k$,
	\begin{equation*}
		R_k(f_{\btheta}) := a_{\mathrm{N}}(f_{\btheta},\phi_k) - l(\phi_k),
		\quad k=1,\dots,n_{\mathrm{test}},
	\end{equation*}
	where all domain and boundary integrals in $a_{\mathrm{N}}$ and $l$ are approximated by Monte Carlo quadrature. We decompose $R_k(f_{\btheta})$ into its interior-plus-flux contribution and its Nitsche boundary-penalty contribution and define the associated empirical losses as the average squared residuals of these two parts, denoted (with a mild abuse of notation) by $\hat{J}_{\mathrm{pde}}(\btheta)$ and $\hat{J}_{\mathrm{bc}}(\btheta)$, respectively. The VPINN also uses the same data-fidelity loss \eqref{eqn:Jdata}, but does not employ the pointwise boundary loss \eqref{eqn:Jbc-simple}.
	
	\noindent\textbf{Adaptive weighting.}
	Rather than fixing the loss weights by hand, we learn them adaptively following the uncertainty-based multi-task weighting proposed in \citep{kendall2018multi}, which we have found to consistently improve the performance of all our models. For a given architecture, let $\{\hat{J}_i(\btheta)\}_{i \in \mathcal{I}}$ denote its active loss components (PDE, boundary, data). We introduce one trainable positive parameter $\lambda_i$ per component and define
	\begin{equation}
		\label{eqn:J-adaptive}
		\hat{J}(\btheta,\{\lambda_i\}_{i\in\mathcal{I}})
		:= \sum_{i\in\mathcal{I}} \frac{1}{2\lambda_i^2}\,\hat{J}_i(\btheta)
		\;+\; \sum_{i\in\mathcal{I}} \log \lambda_i.
	\end{equation}
	In particular, the standard PINN and the VPINN use three adaptive weights $\lambda_{\mathrm{pde}},\lambda_{\mathrm{bc}},\lambda_{\mathrm{data}}$ (for the VPINN, $\lambda_{\mathrm{bc}}$ is associated with the Nitsche boundary term), while the boundary-adapted, over-enforced, and under-enforced PINNs use two weights $\lambda_{\mathrm{pde}},\lambda_{\mathrm{data}}$. All network and weighting parameters are learned jointly by minimizing the objective \eqref{eqn:J-adaptive}.
	
	\noindent\textbf{Error metrics.}
	In addition to the training losses $\hat{J}_i$, we evaluate each trained model using relative $L^2$, $H^1$, and $H^2$ errors with respect to the exact MET solution $\tau$. For a given approximation $f_{\btheta}$ and $X \in \{L^2,H^1,H^2\}$, we define
	\begin{equation}
		\label{eqn:rel-errors}
		X_{\mathrm{rel}}\equiv X_{\mathrm{rel}}(f_{\btheta})
		:= \frac{\|f_{\btheta} - \tau\|_{X(\Omega)}}{\|\tau\|_{X(\Omega)}}.
	\end{equation}
	
	\subsection{Ornstein-Uhlenbeck process in a ball}
	
	The first mean escape time problem we consider is that of a two-dimensional Ornstein-Uhlenbeck process in a domain $\Omega := \{x \in \mathbb{R}^2 : \|x\| < R\}$. In this case, the process $(X_t)_{t \ge 0}$ initiated at $x\in \Omega$ solves the SDE
	\[
	dX_t = -\theta X_t\,dt + \varsigma\,dW_t, \quad X_0 = x,
	\]
	where we set $R=2,\theta=3,\varsigma^2=3$ as fixed parameters. The infinitesimal generator of this process is given by
	\[
	\mathscr{L}u(x) = -\theta\,x \cdot \nabla u(x) + \frac{\varsigma^2}{2}\,\Delta u(x).
	\]
	
	All PINN variants in this example share the same fully connected feedforward network with $\tanh$ activations, two hidden layers, and $30$ neurons per hidden layer. For the boundary-adapted, over-enforced, and under-enforced architectures we use the smooth distance approximation
	\[
	\rho(x) = \frac{R^2 - \|x\|^2}{2R},
	\]
	which one can verify satisfies the conditions in Definition \ref{def:order_m_normalization}. The strong-form residual and boundary terms are approximated using $n_{\mathrm{pde}} = 8192$ interior collocation points and $n_{\mathrm{bc}} = 2048$ boundary collocation points.  
	
	For the variational PINN we additionally fix $n_{\mathrm{test}} = 50$ randomly generated test functions of the form $
	\phi_k(x) = \exp\!\left(-\frac{\|x - c_k\|^2}{2 s_{\text{test}}^2}\right)$,
	with centers $c_k$ sampled uniformly in $\Omega$, and fixed length scale $s_{\text{test}} = R/5$. Lastly, the data-fidelity loss uses $n_{\mathrm{data}} = 256$ interior data points with Monte Carlo estimates of $\tau(x_j^{\mathrm{data}})$.  
	
	All models are trained for 10000 epochs of Adam on the same decaying schedule, followed by L-BFGS with the same parameters and stopping criteria. The reported error metrics are computed with respect to the exact mean escape time $\tau$, whose exact expression has been derived in \citep{kersting2023mean}.

	\begin{figure}[htbp]
		\centering
		\includegraphics[width=0.45\textwidth]{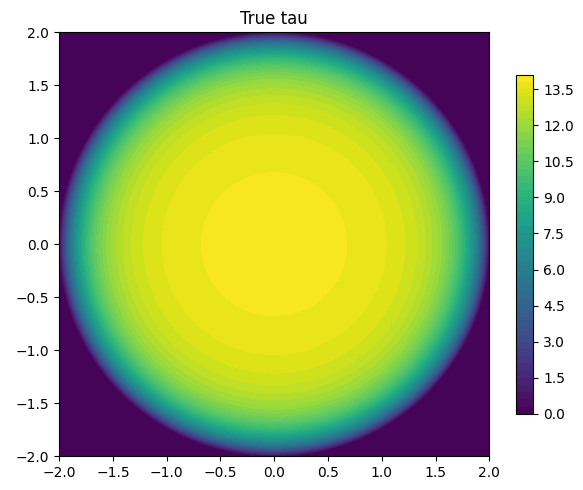}
		\caption{Reference solution $\tau$ for the 2D Ornstein--Uhlenbeck problem.}
		\label{fig:ou2d_true}
	\end{figure}
	
	% =========================
	% OU_2D: methods (adapted, simple, variational)
	% =========================
	\begin{figure}[htbp]
		\centering
		
		% Adjust the global scale factor here if needed
		\resizebox{0.85\textwidth}{!}{%
			\begin{minipage}{\textwidth}
				\centering
				
				% --- Row a: Adapted ---
				\begin{subfigure}{0.48\textwidth}
					\centering
					\includegraphics[width=\linewidth]{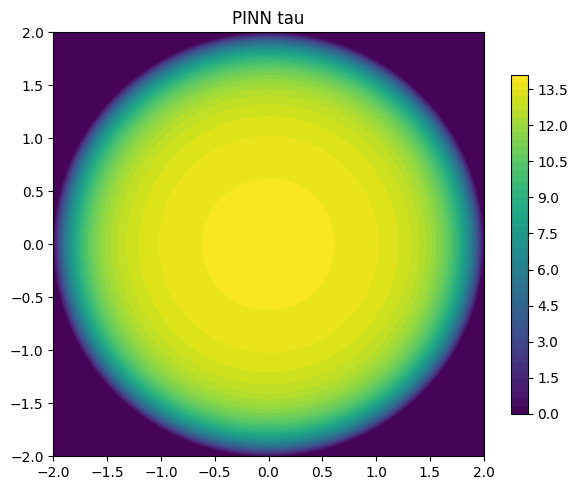}
					\subcaption*{a.1) Adapted: prediction}
				\end{subfigure}
				\begin{subfigure}{0.48\textwidth}
					\centering
					\includegraphics[width=\linewidth]{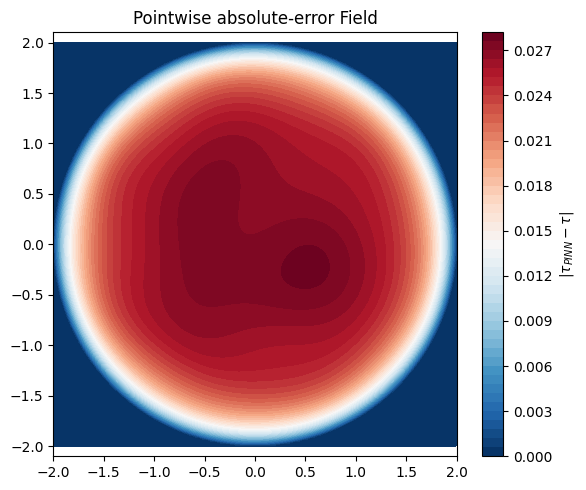}
					\subcaption*{a.2) Adapted: error}
				\end{subfigure}
				
				\vspace{0.5em}
				
				% --- Row b: Simple ---
				\begin{subfigure}{0.48\textwidth}
					\centering
					\includegraphics[width=\linewidth]{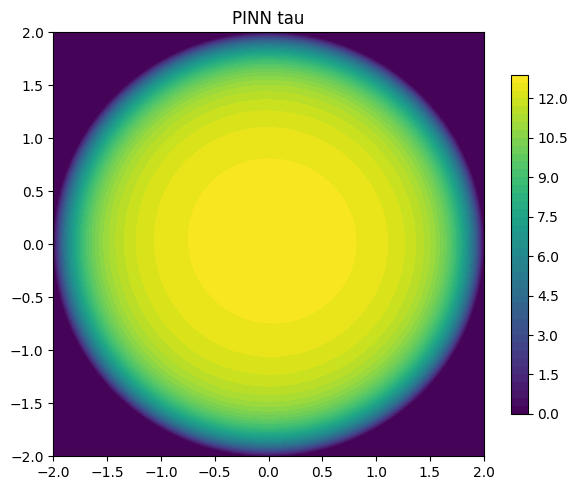}
					\subcaption*{b.1) Standard: prediction}
				\end{subfigure}
				\begin{subfigure}{0.48\textwidth}
					\centering
					\includegraphics[width=\linewidth]{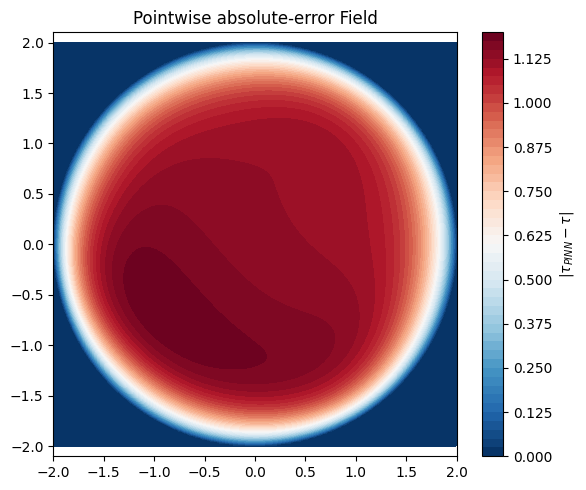}
					\subcaption*{b.2) Standard: error}
				\end{subfigure}
				
				\vspace{0.5em}
				
				% --- Row c: Variational ---
				\begin{subfigure}{0.48\textwidth}
					\centering
					\includegraphics[width=\linewidth]{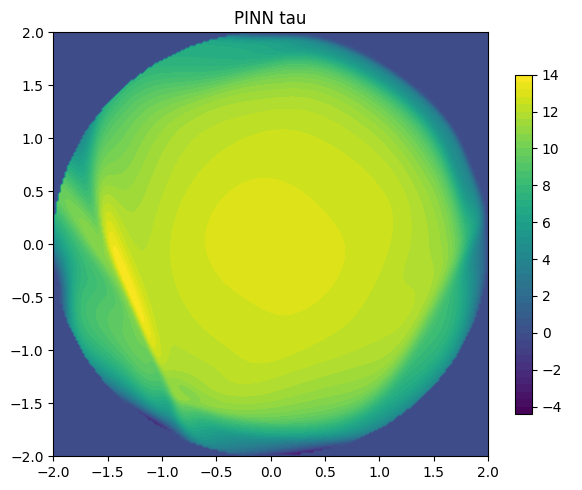}
					\subcaption*{c.1) Variational: prediction}
				\end{subfigure}
				\begin{subfigure}{0.48\textwidth}
					\centering
					\includegraphics[width=\linewidth]{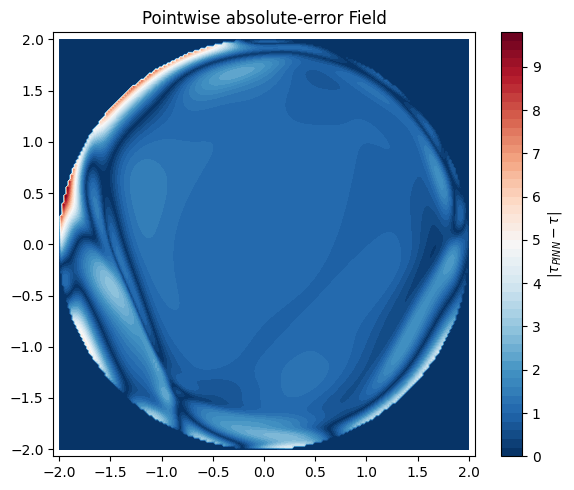}
					\subcaption*{c.2) Variational: error}
				\end{subfigure}
				
			\end{minipage}%
		}
		
		\caption{Predicted solutions (left column) and pointwise error fields (right column)
			for the adapted, standard, and variational PINNs on the 2D Ornstein-Uhlenbeck
			MET problem, using the reference solution $\tau$ in Figure~\ref{fig:ou2d_true}.}
		\label{fig:ou2d_methods_abc}
	\end{figure}
	
	% =========================
	% OU_2D: methods (over, under)
	% =========================
	\begin{figure}[htbp]
		\centering
		
		% Adjust the global scale factor here if needed
		\resizebox{0.85\textwidth}{!}{%
			\begin{minipage}{\textwidth}
				\centering
				
				% --- Row d: Over ---
				\begin{subfigure}{0.48\textwidth}
					\centering
					\includegraphics[width=\linewidth]{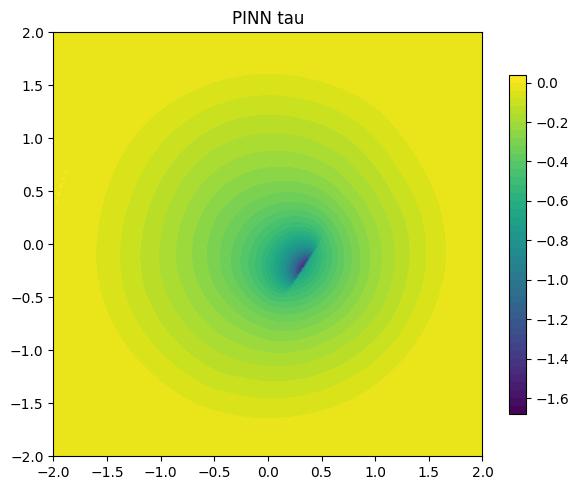}
					\subcaption*{d.1) Over-enforced: prediction}
				\end{subfigure}
				\begin{subfigure}{0.48\textwidth}
					\centering
					\includegraphics[width=\linewidth]{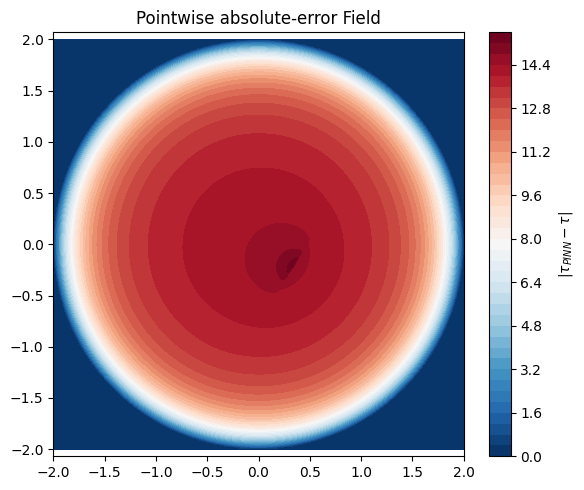}
					\subcaption*{d.2) Over-enforced: error}
				\end{subfigure}
				
				\vspace{0.5em}
				
				% --- Row e: Under ---
				\begin{subfigure}{0.48\textwidth}
					\centering
					\includegraphics[width=\linewidth]{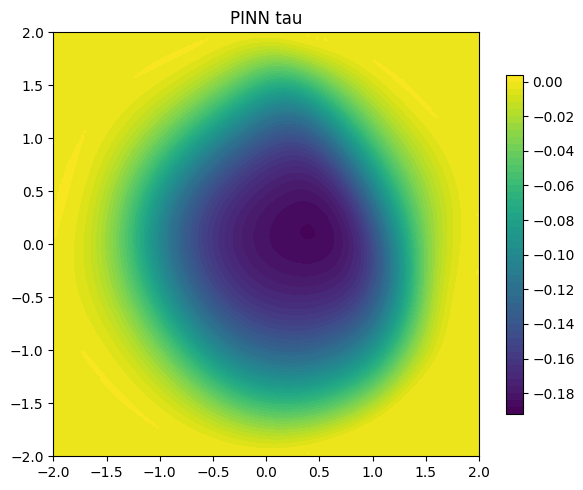}
					\subcaption*{e.1) Under-enforced: prediction}
				\end{subfigure}
				\begin{subfigure}{0.48\textwidth}
					\centering
					\includegraphics[width=\linewidth]{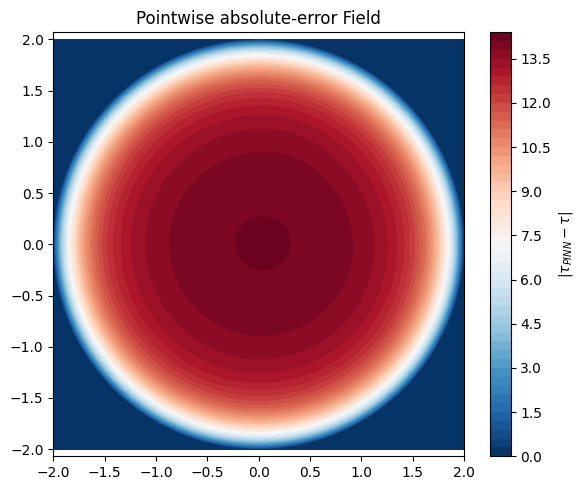}
					\subcaption*{e.2) Under-enforced: error}
				\end{subfigure}
				
			\end{minipage}%
		}
		
		\caption{Predicted solutions (left column) and pointwise error fields (right column)
			for the over- and under-enforced PINNs on the 2D Ornstein-Uhlenbeck MET
			problem, using the reference solution $\tau$ in Figure~\ref{fig:ou2d_true}.}
		\label{fig:ou2d_methods_de}
	\end{figure}
	
	\begin{table}[htbp]
		\centering
		%\footnotesize
		\resizebox{\textwidth}{!}{\begin{tabular}{lccc}
			\toprule
			Method & $L^2_{\rm rel} (\|\tau\|_{L^2} = 1.155 \times 10^{1})$ & $H^1_{\rm rel} (\|\tau\|_{H^1} = 2.266 \times 10^{1})$ & $H^2_{\rm rel} (\|\tau\|_{H^2} = 1.841 \times 10^{2})$ \\
			\midrule
			Standard & $4.887 \times 10^{-2}\pm3.872 \times 10^{-2}$ & $4.994 \times 10^{-2}\pm3.926 \times 10^{-2}$ & $5.041 \times 10^{-2}\pm3.964 \times 10^{-2}$ \\
			Variational & $1.169 \times 10^{-1}\pm5.697 \times 10^{-2}$ & $6.491 \times 10^{-1}\pm6.106 \times 10^{-1}$ & $3.269\pm4.211$ \\
			Over-enforced & $1.013\pm5.774 \times 10^{-4}$ & $1.006\pm0.000$ & $1.001\pm5.774 \times 10^{-4}$ \\
			Under-enforced & $1.009\pm1.732 \times 10^{-3}$ & $1.005\pm5.774 \times 10^{-4}$ & $1.000\pm0.000$ \\
			\textbf{Adapted} & $\mathbf{2.006 \times 10^{-3}\pm3.014 \times 10^{-4}}$ & $\mathbf{1.998 \times 10^{-3}\pm2.981 \times 10^{-4}}$ & $\mathbf{1.990 \times 10^{-3}\pm2.946 \times 10^{-4}}$ \\
			\bottomrule
		\end{tabular}}
		\caption{Error metrics (mean $\pm$ standard deviation over 4 runs) for different PINN variants solving the Ornstein-Uhlenbeck MET problem.}
	\end{table}
	
	\begin{table}[htbp]
		\centering
		\resizebox{\textwidth}{!}{\begin{tabular}{lccc}
			\toprule
			Method & PDE loss & Data loss & BC loss \\
			\midrule
			Standard & $5.229 \times 10^{-1}\pm6.802 \times 10^{-1}$ & $\mathbf{6.221 \times 10^{-1}\pm8.254 \times 10^{-2}}$ & $2.170 \times 10^{-1}\pm3.759 \times 10^{-1}$ \\
			Variational & $8.532 \times 10^{-4}\pm1.397 \times 10^{-3}$ & $2.239\pm8.159 \times 10^{-1}$ & $4.920\pm8.522$ \\
			Over-enforced & $5.450 \times 10^{-2}\pm4.157 \times 10^{-2}$ & $1.254 \times 10^{2}\pm2.623$ & \textemdash \\
			Under-enforced & $4.332 \times 10^{-1}\pm9.777 \times 10^{-2}$ & $1.254 \times 10^{2}\pm1.401$ & \textemdash \\
			\textbf{Adapted} & $\mathbf{5.639 \times 10^{-5}\pm3.821 \times 10^{-6}}$ & $6.829 \times 10^{-1}\pm1.580 \times 10^{-2}$ & \textemdash \\
			\bottomrule
		\end{tabular}}
		\caption{Training losses (mean $\pm$ standard deviation over 4 runs) for different PINN variants solving the Ornstein-Uhlenbeck MET problem.}
	\end{table}

	\subsection{Particle in a double-well potential field}

	The second mean escape time problem we consider is that of a two-dimensional diffusion in a double-well potential on the elliptical domain $\Omega := \{(x,y)\in\mathbb{R}^2 : (x/a)^2 + (y/b)^2 < 1\}$. In this case, the process $(X_t)_{t\ge 0}$ initiated at $x\in\Omega$ solves the SDE
	\[
	dX_t = b(X_t)\,dt + \varsigma(X_t)\,dW_t, \quad X_0 = x,
	\]
	in which the drift is given by the negative gradient of the double-well potential
	\[
	\Phi(x,y) = A\Bigl(1-\exp\Bigl(-\tfrac{(x-1)^2+(y-0.5)^2}{\nu^2}\Bigr)\Bigr)
	\Bigl(1-\exp\Bigl(-\tfrac{(x+1)^2+(y+0.5)^2}{\nu^2}\Bigr)\Bigr),
	\]
	where we set $a=2$, $b=1.5$, $A=5$, $\nu=0.1$, and $\epsilon=1.2$ as fixed parameters. We thus have $b(x,y) = -\nabla\Phi(x,y)$, and the diffusion matrix is chosen constant, $\varsigma = \operatorname{diag}(\sqrt{2\epsilon},\sqrt{\epsilon})$. The infinitesimal generator of this process is given by
	\[
	\mathscr{L}u(x,y)
	= \epsilon\,u_{xx}(x,y) + \tfrac{\epsilon}{2}\,u_{yy}(x,y)
	+ b_1(x,y)\,u_x(x,y) + b_2(x,y)\,u_y(x,y).
	\]
	
	Again, all PINN variants in this example share the same fully connected feedforward network with $\tanh$ activations, three hidden layers, and $30$ neurons per hidden layer. For the boundary-adapted, over-enforced, and under-enforced architectures we use the smooth distance approximation
	\[
	\rho(x,y) = \frac{\omega(x,y)}{\sqrt{\omega(x,y)^2 + \|\nabla\omega(x,y)\|^2}},
	\qquad
	\omega(x,y) := 1 - (x/a)^2 - (y/b)^2,
	\]
	which one can verify satisfies the conditions in Definition~\ref{def:order_m_normalization}. The strong-form residual and boundary terms are approximated using $n_{\mathrm{pde}} = 8192$ interior collocation points and $n_{\mathrm{bc}} = 2048$ boundary collocation points.  
	
	For the variational PINN we additionally fix $n_{\mathrm{test}} = 50$ randomly generated test functions of the form $\phi_k(x,y) = \exp\!\bigl(-\|(x,y) - c_k\|^2 / (2 s_{\text{test}}^2)\bigr)$, with centers $c_k$ sampled uniformly in $\Omega$, and fixed length scale $s_{\text{test}} = \pi ab/10$. Lastly, the data-fidelity loss uses $n_{\mathrm{data}} = 256$ interior data points with Monte Carlo estimates of $\tau(x_j^{\mathrm{data}})$.  
	
	All models are trained for $5{,}000$ epochs of Adam on the same decaying schedule, followed by L-BFGS with the same parameters and stopping criteria. The reported error metrics are computed with respect to a reference solution $\tau_{\mathrm{ref}}$, obtained with the finite element method (FEM).
	
	\begin{figure}[htbp]
		\centering
		\includegraphics[width=0.45\textwidth]{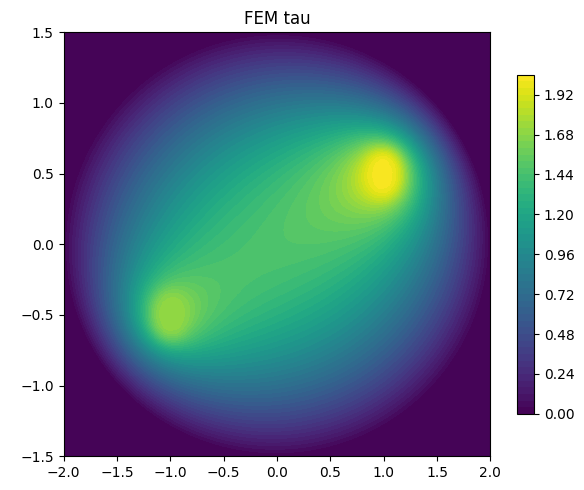}
		\caption{FEM solution $\tau_{\mathrm{ref}}$ for the 2D double-well MET problem.}
		\label{fig:dw2d_true}
	\end{figure}
	
	% =========================
	% Double-well 2D: methods (adapted, simple, variational)
	% =========================
	\begin{figure}[htbp]
		\centering
		
		\resizebox{0.85\textwidth}{!}{%
			\begin{minipage}{\textwidth}
				\centering
				
				% --- Row a: Adapted ---
				\begin{subfigure}{0.48\textwidth}
					\centering
					\includegraphics[width=\linewidth]{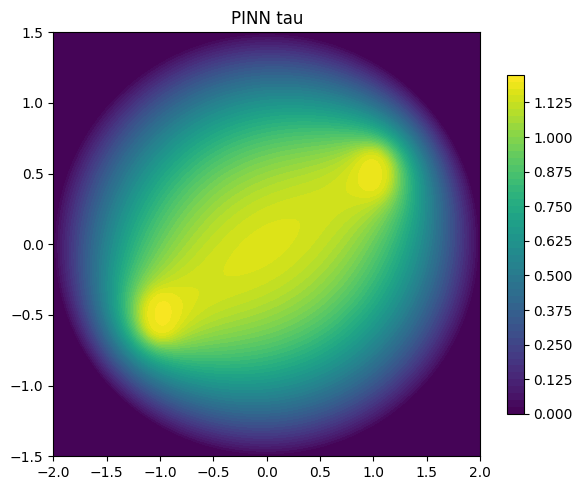}
					\subcaption*{a.1) Adapted: prediction}
				\end{subfigure}
				\begin{subfigure}{0.48\textwidth}
					\centering
					\includegraphics[width=\linewidth]{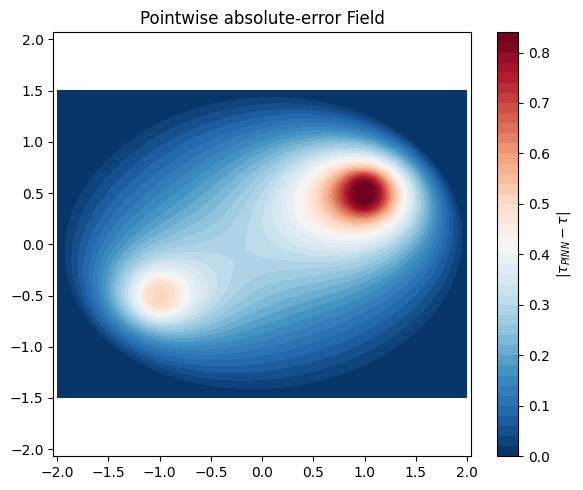}
					\subcaption*{a.2) Adapted: error}
				\end{subfigure}
				
				\vspace{0.5em}
				
				% --- Row b: Simple ---
				\begin{subfigure}{0.48\textwidth}
					\centering
					\includegraphics[width=\linewidth]{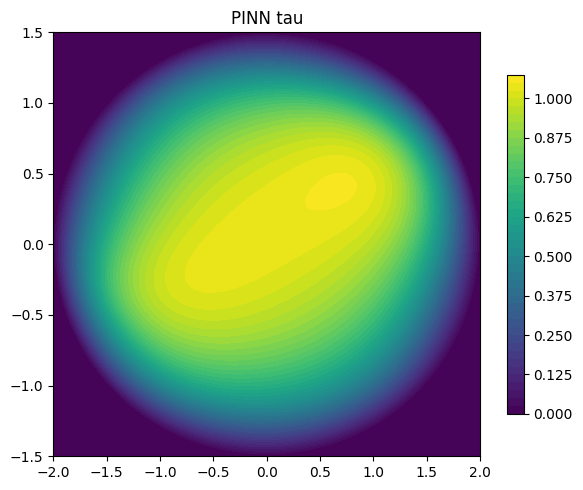}
					\subcaption*{b.1) Standard: prediction}
				\end{subfigure}
				\begin{subfigure}{0.48\textwidth}
					\centering
					\includegraphics[width=\linewidth]{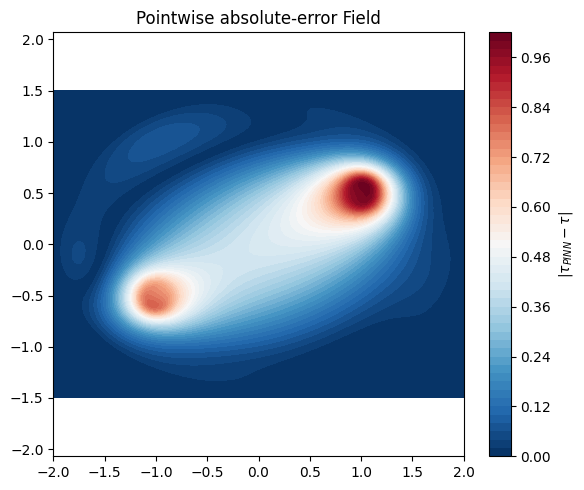}
					\subcaption*{b.2) Standard: error}
				\end{subfigure}
				
				\vspace{0.5em}
				
				% --- Row c: Variational ---
				\begin{subfigure}{0.48\textwidth}
					\centering
					\includegraphics[width=\linewidth]{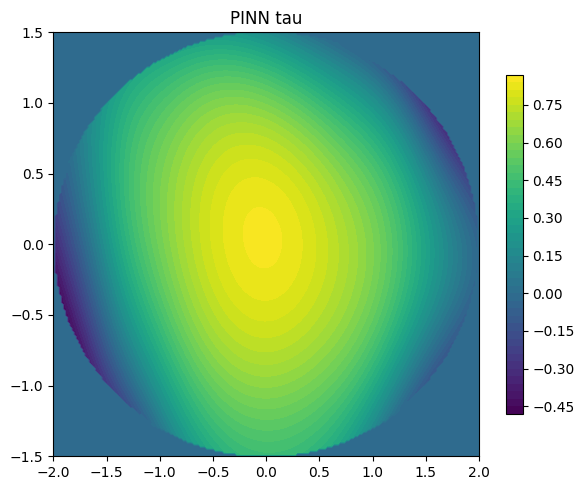}
					\subcaption*{c.1) Variational: prediction}
				\end{subfigure}
				\begin{subfigure}{0.48\textwidth}
					\centering
					\includegraphics[width=\linewidth]{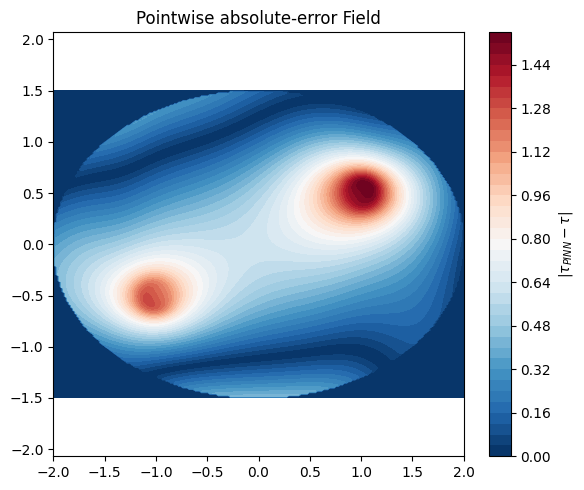}
					\subcaption*{c.2) Variational: error}
				\end{subfigure}
				
			\end{minipage}%
		}
		
		\caption{Predicted solutions (left column) and pointwise error fields (right column)
			for the adapted, standard, and variational PINNs on the double-well MET problem,
			using the FEM solution $\tau_{\mathrm{ref}}$ in Figure~\ref{fig:dw2d_true} as reference.}
		\label{fig:dw2d_methods_abc}
	\end{figure}
	
	% =========================
	% Double-well 2D: methods (over, under)
	% =========================
	\begin{figure}[htbp]
		\centering
		
		\resizebox{0.85\textwidth}{!}{%
			\begin{minipage}{\textwidth}
				\centering
				
				% --- Row d: Over ---
				\begin{subfigure}{0.48\textwidth}
					\centering
					\includegraphics[width=\linewidth]{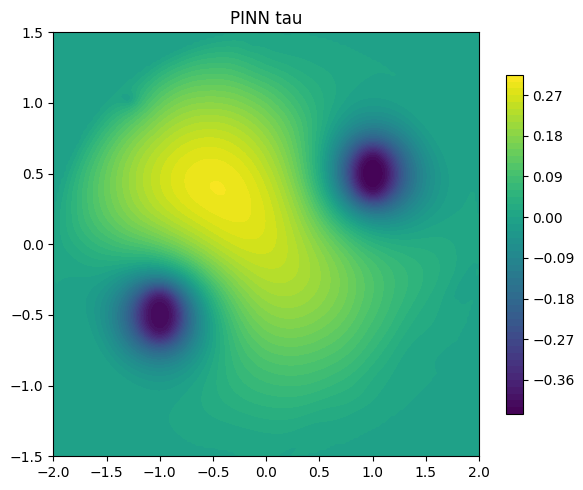}
					\subcaption*{d.1) Over-enforced: prediction}
				\end{subfigure}
				\begin{subfigure}{0.48\textwidth}
					\centering
					\includegraphics[width=\linewidth]{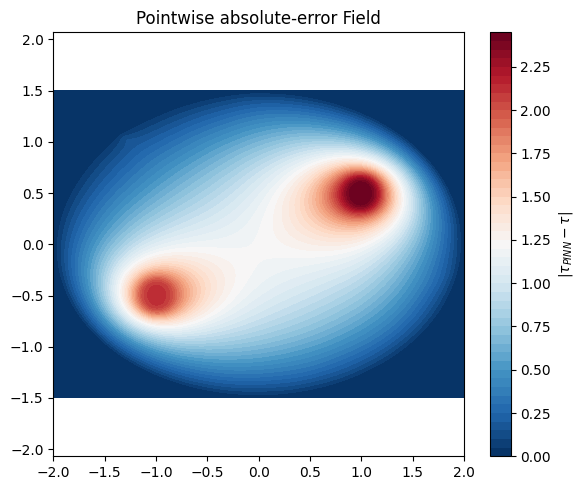}
					\subcaption*{d.2) Over-enforced: error}
				\end{subfigure}
				
				\vspace{0.5em}
				
				% --- Row e: Under ---
				\begin{subfigure}{0.48\textwidth}
					\centering
					\includegraphics[width=\linewidth]{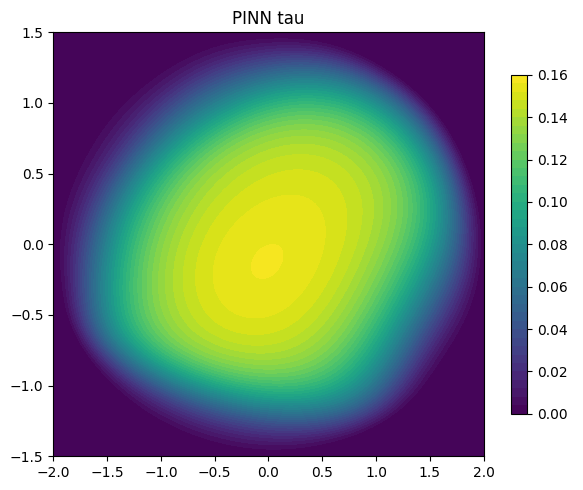}
					\subcaption*{e.1) Under-enforced: prediction}
				\end{subfigure}
				\begin{subfigure}{0.48\textwidth}
					\centering
					\includegraphics[width=\linewidth]{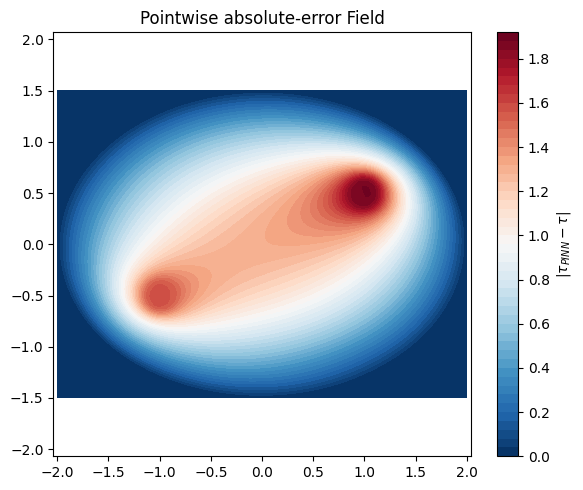}
					\subcaption*{e.2) Under-enforced: error}
				\end{subfigure}
				
			\end{minipage}%
		}
		
		\caption{Predicted solutions (left column) and pointwise error fields (right column)
			for the over- and under-enforced PINNs on the double-well MET problem,
			using the FEM solution $\tau_{\mathrm{ref}}$ in Figure~\ref{fig:dw2d_true} as reference.}
		\label{fig:dw2d_methods_de}
	\end{figure}
	
	\begin{table}[htbp]
		\centering
		\resizebox{\textwidth}{!}{\begin{tabular}{lccc}
			\toprule
			Method & $L^2_{\rm rel} (\|\tau\|_{L^2} = 9.538 \times 10^{-1})$ & $H^1_{\rm rel} (\|\tau\|_{H^1} = 1.834)$ & $H^2_{\rm rel} (\|\tau\|_{H^2} = 8.577)$ \\
			\midrule
			Standard & $2.953 \times 10^{-1}\pm1.200 \times 10^{-2}$ & $4.499 \times 10^{-1}\pm1.393 \times 10^{-2}$ & $6.530 \times 10^{-1}\pm2.366 \times 10^{-2}$ \\
			Variational & $5.639 \times 10^{-1}\pm1.100 \times 10^{-2}$ & $1.173\pm9.637 \times 10^{-2}$ & $1.197 \times 10^{1}\pm1.703$ \\
			Over-enforced & $4.857 \times 10^{-1}\pm3.981 \times 10^{-1}$ & $5.865 \times 10^{-1}\pm4.187 \times 10^{-1}$ & $8.943 \times 10^{-1}\pm2.767 \times 10^{-1}$ \\
			Under-enforced & $9.047 \times 10^{-1}\pm8.524 \times 10^{-3}$ & $9.202 \times 10^{-1}\pm9.625 \times 10^{-3}$ & $9.625 \times 10^{-1}\pm8.305 \times 10^{-3}$ \\
			\textbf{Adapted} & $\mathbf{2.643 \times 10^{-1}\pm3.512 \times 10^{-4}}$ & $\mathbf{3.233 \times 10^{-1}\pm8.386 \times 10^{-4}}$ & $\mathbf{4.031 \times 10^{-1}\pm1.300 \times 10^{-3}}$ \\
			\bottomrule
		\end{tabular}}
		\caption{Error metrics (mean $\pm$ standard deviation over 4 runs) for different PINN variants solving the Double-Well MET problem.}
	\end{table}
	
	\begin{table}[htbp]
		\centering
		\resizebox{\textwidth}{!}{\begin{tabular}{lccc}
			\toprule
			Method & PDE loss & Data loss & BC loss \\
			\midrule
			Standard & $3.580\pm5.859 \times 10^{-1}$ & $9.612 \times 10^{-3}\pm2.935 \times 10^{-3}$ & $1.466 \times 10^{-11}\pm8.607 \times 10^{-12}$ \\
			Variational & $\mathbf{1.059 \times 10^{-6}\pm6.009 \times 10^{-7}}$ & $1.050 \times 10^{-1}\pm9.377 \times 10^{-3}$ & $8.653 \times 10^{-18}\pm9.463 \times 10^{-18}$ \\
			Over-enforced & $6.912\pm5.865$ & $1.565 \times 10^{-1}\pm2.674 \times 10^{-1}$ & \textemdash \\
			Under-enforced & $1.088\pm2.784 \times 10^{-2}$ & $4.127 \times 10^{-1}\pm1.489 \times 10^{-2}$ & \textemdash \\
			\textbf{Adapted} & $2.316 \times 10^{-4}\pm7.213 \times 10^{-6}$ & $\mathbf{2.734 \times 10^{-3}\pm2.190 \times 10^{-4}}$ & \textemdash \\
			\bottomrule
		\end{tabular}}
		\caption{Training losses (mean $\pm$ standard deviation over 4 runs) for different PINN variants solving the Double-Well MET problem.}
	\end{table}
	
	Overall, the numerical experiments confirm that the adapted PINN formulation is the most accurate and robust across all test cases, consistently achieving the lowest relative errors and the smallest variability over independent runs. By contrast, the baseline and unadapted variants either yield significantly larger errors or exhibit pronounced run-to-run fluctuations, indicating a lack of stability. Most notably, the over- and under-enforced PINNs frequently fail \emph{catastrophically} to approximate the PDE solution, in agreement with Proposition~\ref{prop:quotient_regularity}, which shows that inappropriate choices of the multiplication function destroy the smoothness of the effective approximation target and thereby invalidate the convergence guarantees to the true solution.
	
	\section{Conclusion and future work}
	
	We have shown that residual-based PINNs for linear elliptic problems with Dirichlet boundary conditions admit $H^2(\Omega)$ a priori error bounds when the smooth distance approximation $\rho$ used in the boundary-enforced architecture is normalized to first order in the sense of Definition~\ref{def:order_m_normalization}. Such normalized smooth distance approximations can be constructed by a straightforward algorithm \citep{sukumar2022exact}, making the approach well suited for practitioners. For this class of architectures, we have established close-to-minimax-optimal $H^2(\Omega)$ a priori error bounds for tanh and $\ReQU$ activations, making explicit the dependence on the number of collocation points and the problem parameters, while rigorously tracking the role of $\rho$ throughout the analysis. Along the way, we have also derived new $W^{2,\infty}(\Omega)$ error bounds for shallow ReQU networks, together with new bounds on the VC dimensions of higher-order derivatives of ReQU and tanh networks. Our numerical experiments on mean escape time problems validate our theory and show clear and consistent improvement in the accuracy of the learned solutions when enforcing boundary conditions with smooth normalized distance functions.
  
	Several directions for future work naturally emerge from our analysis. One such direction is the study of first passage problems with mixed boundary conditions, such as the \textit{narrow escape problem} and escape problems with absorbing traps, where the mean escape time solves a singularly perturbed PDE with Dirichlet and Neumann conditions on different parts of the boundary \citep{schuss2007narrow, holcman2015stochastic}. Although exact enforcement of such mixed boundary conditions can already be built into PINN architectures \citep{sukumar2022exact}, deriving approximation-theoretic guarantees for these constructions remains open. A different but closely related direction concerns small-noise escape problems for Langevin diffusions. In that regime, WKB asymptotics indicate an $\varepsilon$-dependent singular perturbation structure, with boundary-layer behaviour near the absorbing part of the boundary and, away from it, leading-order behaviour on scales that are typically exponential in $\varepsilon^{-1}$ as $\varepsilon\to0^+$ \citep{freidlin2012random, Dupuis2019REbook}. This suggests looking for an $\varepsilon$-dependent extension of our construction which adapts not just to boundary behaviour, but to the whole boundary-layer of the solution as well. Finally, in these first passage problems, one is often interested not only in the \textit{mean} of the escape time, but in its \textit{probability distribution}. This leads to time-dependent parabolic problems, and it is therefore natural to ask how the present approach could be extended to that equally important setting.
   
   %\section*{Acknowledgement}
	%N. Tepakbong was supported by the Hong Kong PhD Fellowship Scheme, which also funded a research visit to The University of Sydney where part of this work was carried out. The work by J. Fan is partially supported by the Research Grants Council of Hong Kong [Project No. HKBU12302923 and HKBU12303024], and Guangdong and Hong Kong Universities “1+1+1” Joint Research Collaboration Scheme 2025A0505000007. X. Zhou acknowledges support by General Research Funds from the Research Grants Council of the Hong Kong Special Administrative Region, China (Project No. 11308121, 11318522, 11308323, 11304525).

\bibliographystyle{plainnat}
\bibliography{mean_escape_time_pinn.bib}

\appendix

\section{Complement on smooth distance approximations}
\label{sec:quotient_regularity}
\subsection{Proof of Proposition~\ref{prop:quotient_regularity}}

To prove Proposition \ref{prop:quotient_regularity}, we begin by showing that any zero-trace $C^{r+2}(\bar\Omega)$ function can be expressed---within a tubular neighborhood of $\partial \Omega$---as the product of the Euclidean distance-to-boundary function $d_{\partial \Omega}$ with a $C^{r}(\bar\Omega)$ function. Although this result is a straightforward consequence of standard PDE and geometric regularity theory, we have been unable to locate a reference for it.

\begin{lemma}
	\label{lemma:factorization}
	Assume that Assumption \eqref{eqn:assumption1_smooth_boundary} holds, so that $\Omega$ is a $C^{r+2}$ domain, and denote by $d_\paOm(x)$ the Euclidean distance between $x\in\R^d$ and $\paOm$. Then for any function $u\in C^{r+2}(\bar{\Omega})$ which vanishes on the boundary, there exist a constant $\delta > 0$, an open tubular neighborhood
	$U := \{ x \in \Omega : d_\paOm(x) < \delta \}$
	and a function $f \in C^{r}(\bar{U})$ such that
	\[
	u(x) = f(x) \, d_\paOm(x) \quad \text{for all } x \in U.
	\]
\end{lemma}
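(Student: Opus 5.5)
We will use the tubular-neighbourhood (normal) coordinates attached to the $C^{r+2}$ boundary. By \eqref{eqn:assumption1_smooth_boundary} and the classical regularity of the distance function (Gilbarg--Trudinger, Lemma 14.16), there is $\delta>0$ such that on $U=\{x\in\Omega: d_\paOm(x)<\delta\}$ each $x$ has a unique nearest point $\pi(x)\in\paOm$. Moreover $x=\pi(x)+d_\paOm(x)\,\bnu(\pi(x))$, and $d_\paOm\in C^{r+2}(\bar U)$. Consequently the map
\[
\Phi:\paOm\times[0,\delta)\to \bar U,\qquad \Phi(q,t)=q+t\,\bnu(q),
\]
is a $C^{r+1}$ diffeomorphism, since $\bnu\in C^{r+1}$. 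Its inverse is $x\mapsto(\pi(x),d_\paOm(x))$, and $\pi=\mathrm{id}-d_\paOm\nabla d_\paOm$ is $C^{r+1}$ on $\bar U$.

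In these coordinates we define $v(q,t):=u(\Phi(q,t))$. Then $v\in C^{r+1}(\paOm\times[0,\delta))$ and $v(q,0)=0$. We would like to use the Hadamard formula
\[
v(q,t)=t\int_0^1 \partial_t v(q,st)\,ds,
\]
but we do not transport $v$ itself, because we want to avoid losing a derivative through $\Phi$, which is only $C^{r+1}$. Instead we apply the Hadamard formula directly in $x$. For $x\in U$, set $q=\pi(x)$ and $t=d_\paOm(x)$, and use $u(q)=0$ along the segment $q+st\bnu(q)$, $s\in[0,1]$, which lies in $\bar U$. This gives
\[
u(x)=d_\paOm(x)\,f(x),\qquad f(x):=\int_0^1 \nabla u\big(\pi(x)+s\,d_\paOm(x)\,\bnu(\pi(x))\big)\cdot\bnu(\pi(x))\,ds .
\]
Writing $\bnu(\pi(x))=-\nabla d_\paOm(x)$ (with the paper's inward normal $\bnu$ this is $+\nabla d_\paOm$; only the sign matters), the integrand becomes $\nabla u\big(\pi(x)+s\,d_\paOm(x)\nabla d_\paOm(x)\big)\cdot \nabla d_\paOm(x)$.

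The next step is regularity. Here $\nabla u\in C^{r+1}(\bar\Omega)$, $\nabla d_\paOm\in C^{r+1}(\bar U)$, $d_\paOm\in C^{r+2}$ and $\pi\in C^{r+1}$. The inner argument $y_s(x)=\pi(x)+s\,d_\paOm(x)\nabla d_\paOm(x)$ is therefore $C^{r+1}$ in $x$, jointly continuously in $s$, and it stays in $\bar U$. Composition and products then give an integrand in $C^{r+1}(\bar U)$, with all $x$-derivatives up to order $r+1$ continuous in $(x,s)$ and bounded. Differentiating under the integral sign (dominated convergence on the compact $s$-interval) yields $f\in C^{r+1}(\bar U)\subset C^r(\bar U)$. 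This leaves one order of slack relative to the claimed $C^r$.

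The main obstacle is the geometric input: establishing $\delta>0$ with unique nearest-point projection and $d_\paOm\in C^{r+2}(\bar U)$ from a $C^{r+2,\kappa}$ boundary. For this we will invoke Gilbarg--Trudinger Lemma 14.16, whose proof uses the inverse function theorem on $\Phi$ with $\delta$ below the reciprocal of the maximal principal curvature. Two further points need checking. First, the segments $\{q+st\bnu(q)\}$ must lie in $\bar U\subset\bar\Omega$, which holds by construction of $U$. Second, the formula for $f$ must extend continuously up to $\paOm$, where $d_\paOm=0$ and $f=\partial_{\bnu}u$ there; the integral representation handles this automatically.
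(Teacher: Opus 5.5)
Your proposal is correct and is essentially the paper's argument: both use Gilbarg--Trudinger Lemma 14.16 to obtain the tubular neighbourhood with $d_{\paOm}\in C^{r+2}$ and $\pi,\bnu\in C^{r+1}$. Both also define $f$ by the Hadamard (fundamental theorem of calculus) integral $\int_0^1 \nabla u(\pi(x)+s\,d_{\paOm}(x)\bnu(\pi(x)))\cdot\bnu(\pi(x))\,ds$ along inward normal segments, which is literally the paper's $\tilde f\circ\Psi^{-1}$. The only difference is bookkeeping: the paper passes through $\tilde u=u\circ\Psi\in C^{r+1}$ and differentiates in $t$, losing an extra derivative and concluding only $C^r$, whereas you keep the integrand as $\nabla u\in C^{r+1}(\bar\Omega)$ composed with $C^{r+1}$ maps and so get the slightly stronger $f\in C^{r+1}(\bar U)$.
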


\begin{proof}
	Since $\paOm$ is $C^{r+2}$, it follows from \citep[Lemma 14.16]{gilbarg1977elliptic} that there exists $\delta>0$ such that the signed distance function
	\[
	d^*(x) :=
	\begin{cases}
		d_\paOm(x) & \text{if } x\in \Omega,\\
		-d_\paOm(x) & \text{if } x\in \R^d\setminus \Omega,
	\end{cases}
	\]
	is of class $C^{r+2}$ on the tubular neighborhood $\mathcal{N}_{\delta} := \{x : |d^*(x)|<\delta\}$, and the nearest-point projection
	\[
	\pi:\mathcal{N}_{\delta}\to \paOm,\quad x\mapsto \text{the unique point of }\paOm\text{ closest to }x,
	\]
	as well as the unit normal $\bnu$ on $\paOm$, extend to $C^{r+1}$ maps on $\mathcal{N}_{\delta}$. In particular, on the one-sided neighborhood $U:=\mathcal{N}_{\delta}\cap \Omega$,
	every $x$ admits the representation
	\begin{equation}
		\label{eqn:coord_change}
		x = \Psi(q,t) := q + t\,\bnu(q),\text{ where } q=\pi(x)\in \paOm,\ \ t=d(x)\in[0,\delta),
	\end{equation}
	and the map $\Psi:\paOm\times[0,\delta)\to \bar\Omega$ is a $C^{r+1}$ diffeomorphism onto its image.
	
	Now for $u\in C^{r+2}(\bar{\Omega})$ and $(q,t)\in \paOm\times[0,\delta)$, define
	$\tilde{u}(q,t) := u(\Psi(q,t)) = u\big(q+t\,\bnu(q)\big)$. Because $\Psi$ is $C^{r+1}$ and $u\in C^{r+2}(\bar\Omega)$, it follows that $\tilde{u}\in C^{r+1}(\paOm\times[0,\delta))$.
	For $t>0$, by the fundamental theorem of calculus in the $t$-variable we have
	\[
	\tilde{u}(q,t) \;=\; \int_0^t \partial_t \tilde{u}(q,s)\,ds \;=\; t \int_0^1 \partial_t \tilde{u}(q,\theta t)\,d\theta,
	\]
	and hence
	\[
	\tilde{f}(q,t) := \frac{\tilde{u}(q,t)}{t} \;=\; \int_0^1 \partial_t \tilde{u}(q,\theta t)\,d\theta.
	\]
	Define $\tilde{f}(q,0):=\partial_t \tilde{u}(q,0)$. Since $\partial_t \tilde{u}\in C^{r}(\paOm\times[0,\delta))$, it follows that $\tilde{f}\in C^{r}(\paOm\times[0,\delta))$.  
	
	Finally, define $f := \tilde{f} \circ \Psi^{-1}$ on the set $U = \mathcal{N}_\delta \cap \Omega$. Since $\Psi$ is a $C^{r+1}$ diffeomorphism, it follows that $f \in C^r(\bar U)$. Lastly, we have by construction that $f(x) = u(x) / d_{\paOm}(x)$ for all $x\in U$, hence the proof is complete.
\end{proof}

Equipped with Lemma \ref{lemma:factorization}, we can now proceed with the proof of Proposition \ref{prop:quotient_regularity}.

\begin{proof}[Proof of Proposition \ref{prop:quotient_regularity}]
	Under Assumptions \eqref{eqn:assumption1_smooth_boundary}---\eqref{eqn:assumption3_regular_coeffs}, we have that $\tau\in C^{r+2}(\bar{\Omega})$, hence by Lemma \ref{lemma:factorization}, there exist $\delta_1>0$ and $f_1\in C^r(\bar{\Omega})$ such that $\tau(x) = f_1(x) d_\paOm(x)$ for all $x\in \Omega\cap U_{\delta_1}$, where $U_{\delta_1}$ denotes the width $\delta_1$ one-sided tubular neighborhood of $\paOm$. Likewise, because $\rho\in C^{r+2}(\bar{\Omega})$, we can find $\delta_2>0$ and $f_2\in C^r(\bar{\Omega})$ such that $\rho(x) = f_2(x) d_\paOm(x)$ for all $x\in \Omega\cap U_{\delta_2}$. Note also that $\rho$ does not vanish anywhere in $\Omega$ by Definition \ref{def:smooth_distance}, hence the ratio $\tau/\rho$ is always well-defined and $C^{r+2}$-smooth away from the boundary $\paOm$. It is thus enough to prove our result to show that $f_2$ remains strictly positive in a tubular neighborhood of $\paOm$.
	
	To that end, we reproduce the change of coordinates argument from the proof of Lemma \ref{lemma:factorization}: denote by $\Psi:\paOm\times [0,\delta_2)$ the $C^{r+1}$ diffeomorphism defined as in \eqref{eqn:coord_change}, and let $\tilde{\rho}(q,t):=\rho(\psi(q,t))$. Applying the fundamental theorem of calculus in the normal direction, we again define
	\[
	\tilde{f}_2(q,t):= \begin{cases}
		\int_0^1 \partial_t\tilde{\rho}(q,\theta t) d\theta, &t>0\\
		 \partial_t\tilde{\rho}(q,0), &t=0,
	\end{cases} 
	\]
	which is a $C^r(\paOm\times[0,\delta_2))$ function. Furthermore, we have by definition $\partial_t\tilde{\rho}(q,0) = \partial_{\bnu} \rho = 1$. Hence by continuity, and by shrinking $\delta_2$ if necessary, we find that $f_2(x)\ge 1/2$ for all $x\in U_{\delta_2}$.  
	
	Finally, let $\delta := \delta_1\wedge\delta_2$. We have shown that $\tau/\rho$ is $C^{r+2}$-smooth in $\Omega$, and equal to $f_1/f_2$ on $U_\delta$, where both $f_1,f_2$ lie in $C^r(\bar{\Omega})$, and $f_2$ is uniformly bounded away from $0$. It thus follows that $\tau/\rho$ lies in $C^r(\bar{\Omega})$, as desired.
\end{proof}

\subsection{Proof of Proposition~\ref{prop:shapiro-normalization}}

\begin{proof}
	The case $m=1$ is immediate. Now assume $m\ge 2$ and for each $k=2,\dots,m$, set $a_k:=\partial_{\bnu}^k\rho_{k-1}\big|_{\paOm}$, as per Algorithm~\ref{alg:shapiro-normalization}. We first verify that $a_k\in C^r(\paOm)$ for each $k$: for $k=2$, this follows from $\rho_1\in C^{r+m}(\bar\Omega)$. Now let $3\le k\le m$ and assume that $a_j\in C^r(\paOm)$ for $2\le j\le k-1$. Since $\chi\equiv 1$ near $\paOm$ and $\tilde a_j=a_j\circ\pi$ is constant along normal rays, one has, for $q\in\paOm$ and $t\ge 0$ small,
	\[
	\rho_{k-1}(q+t\bnu(q))
	=
	\rho_1(q+t\bnu(q))
	-
	\sum_{j=2}^{k-1}\frac{1}{j!}\rho_1(q+t\bnu(q))^j a_j(q).
	\]
	Differentiating $k$ times at $t=0$ yields
	\[
	a_k(q)
	=
	\partial_{\bnu}^k\rho_1(q)
	-
	\sum_{j=2}^{k-1}\frac{1}{j!}\partial_{\bnu}^k(\rho_1^j)(q)\,a_j(q).
	\]
	Since $\rho_1\in C^{r+m}(\bar\Omega)$ and $k\le m$, the boundary coefficients $\partial_{\bnu}^k(\rho_1^j)\big|_{\paOm}$ belong to $C^r(\paOm)$. The induction hypothesis therefore gives $a_k\in C^r(\paOm)$. Hence $\tilde a_k=a_k\circ\pi\in C^r(U_\delta)$, because $\pi\in C^r(U_\delta)$. It follows that each correction term $\chi\rho_1^k\tilde a_k$ is $C^r$ on $U_\delta$. Since $\operatorname{supp}\chi\subset U_\delta$, the correction term already vanishes near $\partial U_\delta$. Hence the two definitions of $\rho_k$ agree there, and the recursion gives $\rho_k\in C^r(\bar\Omega)$ for $k=1,\dots,m$.
	
	We next compute the boundary normal derivatives of $\rho_k$: fix $q\in\paOm$ and define $\varphi_k(t):=\rho_k(q+t{\bnu}(q))$ for $t\ge 0$ small. Note that $\varphi_k^{(j)}(0)=\partial_{\bnu}^j\rho_k(q)$ for every $j\ge 0$. Since $\rho_1(q)=0$ and $\partial_{\bnu}\rho_1(q)=1$, one has $\varphi_1(t)=t+O(t^2)$ as $t\to 0^+$. Hence $\varphi_1(t)^k=t^k+O(t^{k+1})$, so $\bigl(\varphi_1^k\bigr)^{(j)}(0)=0$ for all $j<k$, and $\bigl(\varphi_1^k\bigr)^{(k)}(0)=k!$. Along the same normal ray, we have
	\[
	\varphi_k(t)
	=
	\varphi_{k-1}(t)
	-
	\frac{1}{k!}\varphi_1(t)^k a_k(q)
	=
	\varphi_{k-1}(t)
	-
	\frac{1}{k!}\varphi_1(t)^k\varphi_{k-1}^{(k)}(0).
	\]
	Since $\varphi_1(t)^k=t^k+O(t^{k+1})$, the added term has vanishing derivatives at $0$ up to order $k-1$, while its $k$th derivative at $0$ is exactly $a_k(q)=\varphi_{k-1}^{(k)}(0)$. Thus the correction leaves the derivatives of order $<k$ unchanged and cancels the derivative of order $k$. By induction on $k$ we therefore deduce that $\rho_m=0$ on $\paOm$, $\partial_{\bnu}\rho_m=1$ on $\paOm$, and $\partial_{\bnu}^j\rho_m=0$ on $\paOm$ for all $2\le j\le m$.
	
	It remains to prove positivity in $\Omega$: first observe that the coefficients $a_k$ depend only on boundary derivatives of $\rho_1$, and so are independent of $\delta$. After shrinking $\delta$ if necessary, we may assume $\sup_{U_\delta}\rho_1\le 1$. Set $B:=\sum_{k=2}^m \|a_k\|_{L^\infty(\paOm)}/k!$. Then, for $x\in U_\delta\cap\Omega$,
	\[
	\rho_m(x)
	=
	\rho_1(x)
	-
	\sum_{k=2}^m \frac{1}{k!}\chi(x)\rho_1(x)^k\tilde a_k(x)
	\ge
	\rho_1(x)-B\rho_1(x)^2.
	\]
	Because $\rho_1$ is continuous on $\bar\Omega$ and vanishes on $\paOm$, one has $\sup_{U_\delta}\rho_1\to 0$ as $\delta\to 0^+$. Choosing $\delta$ so that $B\sup_{U_\delta}\rho_1\le \frac12$, we obtain $\rho_m\ge \frac12\rho_1>0$ on $U_\delta\cap\Omega$. On $\Omega\setminus U_\delta$, the algorithm leaves the function unchanged, so $\rho_m=\rho_1>0$ there. Thus $\rho_m>0$ in $\Omega$, and the proof is thus complete.
\end{proof}

\section{Approximation error analysis}
\label{app:approximation_error}

%To estimate the approximation error of our ReQU FCNNs, we will use the ``standard machinery" first introduced in \citep{yarotsky2017error}: we partition the domain into small cubes, on which $\tau$ can be approximated by Taylor polynomials of appropriate order, and we use a partition of unity to extend each local estimate to a global one. We note that a general framework to build approximate partitions of unity and local Taylor polynomials, leading to estimates on the approximation error of Deep Neural Networks in higher order smoothness spaces, has been proposed in \citep{guhring2021approximation}, and successfully applied in \citep{deryck2021approximation} to that end. Our approach is similar, but the main difference is that we make use of the representation power of ReQU networks \citep{he2023expressivity,bo2020better} to construct exact partitions of unity, multiplication operator and local Taylor polynomials, leading to a fully explicit construction of the Neural Networks architecture and weights, with explicit bounds on their magnitude in terms of the problem's parameters.

We will now prove Theorem~\ref{thm:approx_requ} for approximation with shallow ReQU networks. The argument is based off the integral representation of infinitely wide neural networks \citep{barron1993universal}, which was refined to ReLU activation by \citet{bach2017breaking} and later extended to broader activations \citep{siegel2020approximation, siegel2022high}. In particular, our Theorem~\ref{thm:approx_requ} extends \citet{yang2025optimal}'s results to the case of higher order Sobolev smoothness. Before proceeding with the error analysis, we give the proof of Lemma \ref{lemma:product_estimate}.

\begin{proof}[Proof of Lemma \ref{lemma:product_estimate}]
	Let $u\in C^r(\Omega)$ and $v\in W^{r,\infty}(\Omega)$. By the generalized Leibniz rule for differentiation of products, we have
	\begingroup
	\allowdisplaybreaks
	\begin{align*}
		\|uv\|_{W^{r,\infty}(\Omega)} &:=  \max_{\|\alpha\|_1 \le r}\left\|\partial^\alpha (uv)\right\|_{L^{\infty}(\Omega)} = \max_{\|\alpha\|_1 \le  r}\left\|\sum_{\beta:\beta\le\alpha} \binom{\alpha}{\beta}(\partial^{\alpha-\beta} u) (\partial^{\beta}v)\right\|_{L^\infty(\Omega)}\\
		&\le\max_{\|\alpha\|_1\le r}\sum_{\beta:\beta\le\alpha}\binom{\alpha}{\beta}\|\partial^{\alpha-\beta} u\|_{L^\infty(\Omega)}\left\| \partial^{\beta}v\right\|_{L^\infty(\Omega)}  \\
		&\le  C_1 \max_{\|\alpha\|_1\le r}\sum_{\beta:\beta\le\alpha}\left\| \partial^{\beta}v\right\|_{L^\infty(\Omega)}\\
		&\le \left\|v\right\|_{W^{r,\infty}(\Omega)} \cdot C_1 \sum_{k=0}^r\sum_{\|\alpha\|_1 = k}\big|\{\beta:\|\beta\|_1 \le \alpha\}\big| \\
		&\le \left\|v\right\|_{W^{r,\infty}(\Omega)} \cdot C_1 \sum_{k=0}^r\binom{k+d-1}{k}(k+1)^{d^2}, \numberthis \label{eqn:last_ineq_lemma}
	\end{align*}
	\endgroup
	where the constant $C_1$ is defined as
	\[C_1 :=  \max_{\substack{\alpha\in\N^d\\ \|\alpha\|_1\le r,\beta\le\alpha}} \binom{\alpha}{\beta}\| u\|_{W^{r,\infty}(\Omega)}, \]
	and we used the fact that for any $k \in \N$,
	\[\big|\{\gamma\in\N^d:\|\gamma\|_1 \le k\}\big| = \binom{k+d}{k},\]
	together with the inequality
	\[\big|\{\gamma\in\N^d:\gamma\ \le \alpha\}\big| \le \prod_{i=1}^d (\alpha_i+1)^d \le (\|\alpha\|_1 +1)^{d^2}. \]
	The claimed result thus follows by applying the estimate \eqref{eqn:last_ineq_lemma} to $u\equiv\rho \in C^r(\Omega)$ and ${v\equiv f - g/\rho \in W^{r,\infty}(\Omega)}$.
\end{proof}

\subsection{Preliminaries}

In this subsection we recall the main notions from harmonic analysis on the sphere that will be used in the proof of the approximation rates and we introduce the variation spaces associated with shallow ReQU networks. Throughout we fix an integer $d\ge 2$. We write $\mathbb S^d\subset\R^{d+1}$ for the unit sphere endowed with its standard surface measure $\tau_d$ normalized by $\tau_d(\mathbb S^d)=1$ and we write $\mathbb B^d\subset\R^d$ for the closed unit ball.

\subsubsection*{Geometry and spectral theory on the sphere}

We start by recalling the spherical harmonic decomposition of $L^2(\mathbb S^d)$ and the associated spectral data of the Laplace--Beltrami operator. We refer the reader to \citep{dai2013approximation} and references within for a thorough overview on this topic.

\begin{definition}[Spherical harmonics and spectral decomposition]
	\label{def:spherical_harmonics}
	For $n\in\N$ let $\mathcal H_n$ denote the space of real-valued homogeneous harmonic polynomials of degree $n$ on $\R^{d+1}$ and let $Y_n$ denote the set of their restrictions to the unit sphere. The elements of $Y_n$ are called spherical harmonics of degree $n$. The Laplace--Beltrami operator $\Delta$ on $\mathbb S^d$ satisfies
	\[
	\Delta Y = -\lambda_n Y, \text{ where }
	\lambda_n := n(n+d-1) \text{ and }
	Y\in Y_n,
	\]
	so $Y_n$ is the eigenspace associated with $-\lambda_n$.
	
	The spaces $(Y_n)_{n\ge 0}$ are pairwise orthogonal in $L^2(\mathbb S^d,\tau_d)$ and they span $L^2(\mathbb S^d)$. We will denote by $\mathcal P_n:L^2(\mathbb S^d)\to Y_n$ the corresponding orthogonal projector. The dimension of $Y_n$ is equal to $1$ if $n=0$ and equal to
	\[
	N(d,n)
	= \frac{2n+d-1}{n} \binom{n+d-2}{d-1},
	\]
	whenever $n\ge 1$.
\end{definition}

We now formally define the space of $C^m$-smooth functions on the sphere $\mathbb S^d$.

\begin{definition}
	\label{def:Cm_sphere}
	For any sufficiently regular $u:\mathbb S^d\to \R$ we denote by $\|u\|_{C^m(\mathbb S^d)}$ the $C^m$-norm of $u$ defined via a finite smooth atlas. More concretely, fix a finite smooth atlas $(U_\alpha,\varphi_\alpha)_{\alpha=1}^L$ on $\mathbb S^d$ and a smooth partition of unity $(\chi_\alpha)_{\alpha=1}^L$ subordinate to this atlas.
	For $u\in C^m(\mathbb S^d)$ we set
	\[
	\|u\|_{C^m(\mathbb S^d)}
	:=
	\max_{1\le \alpha\le L}
	\max_{|\beta|\le m}
	\bigl\|
	\partial^\beta\bigl((\chi_\alpha u)\circ\varphi_\alpha^{-1}\bigr)
	\bigr\|_{L^\infty(\varphi_\alpha(U_\alpha))}.
	\]
	Different choices of atlas and partition of unity lead to equivalent norms. For the above choice there exists a constant $c_m\ge 1$, depending only on $d$ and $m$, such that
	\[
	c_m^{-1}
	\sum_{k=0}^m \|\nabla^k u\|_{L^\infty(\mathbb S^d)}
	\le
	\|u\|_{C^m(\mathbb S^d)}
	\le
	c_m
	\sum_{k=0}^m \|\nabla^k u\|_{L^\infty(\mathbb S^d)},
	\]
	where $\nabla$ is the Levi–Civita connection associated with the round metric \citep{taylor1996partial}.
\end{definition}

\subsubsection*{Convolution and spectral multipliers on $\mathbb S^d$}

We next recall the addition formula, the definition of spherical convolution and the Funk--Hecke formula.

\begin{lemma}[Addition formula and Gegenbauer polynomials]
	\label{lemma:addition_formula}
	For each $n\in\N_0$ fix an orthonormal basis $\{Y_{n,\ell}\}_{\ell=1}^{N(d,n)}$ of $Y_n$ in $L^2(\mathbb S^d,\tau_d)$. There exists a univariate polynomial $P_n:[-1,1]\to\R$ (the Gegenbauer polynomial of degree $n$) with $P_n(1)=1$ such that
	\[
	\sum_{\ell=1}^{N(d,n)} Y_{n,\ell}(u)\,Y_{n,\ell}(v)
	=
	N(d,n)\,P_n(u\cdot v), \text{ for all }
	u,v\in\mathbb S^d.
	\]
	In particular $|P_n(t)|\le 1$ for all $t\in[-1,1]$ and $n\in\N_0$ \citep[Theorem~1.2.6]{dai2013approximation}.
\end{lemma}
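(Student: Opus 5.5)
The approach is to study the reproducing kernel of $Y_n$,
\[
K_n(u,v):=\sum_{\ell=1}^{N(d,n)} Y_{n,\ell}(u)\,Y_{n,\ell}(v),\qquad u,v\in\mathbb S^d,
\]
and to derive everything from rotation invariance.

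First I would show that $K_n$ does not depend on the orthonormal basis. It is the integral kernel of the orthogonal projector $\mathcal P_n$, since $\mathcal P_n f(u)=\int_{\mathbb S^d} K_n(u,v)f(v)\,d\tau_d(v)$. Equivalently, a change of orthonormal basis is an orthogonal matrix acting on the vector $(Y_{n,\ell}(u))_\ell$, and such a change preserves the inner products $\sum_\ell Y_{n,\ell}(u)Y_{n,\ell}(v)$.

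Next, take any $R\in O(d+1)$. Composition $Y\mapsto Y\circ R$ maps homogeneous harmonic polynomials of degree $n$ to themselves, because the Laplacian commutes with orthogonal changes of variables. It also preserves $L^2(\mathbb S^d,\tau_d)$ inner products, since $\tau_d$ is rotation invariant. Hence $\{Y_{n,\ell}\circ R\}_\ell$ is again an orthonormal basis of $Y_n$. By basis independence, $K_n(Ru,Rv)=K_n(u,v)$.

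Since $O(d+1)$ acts transitively on pairs $(u,v)\in\mathbb S^d\times\mathbb S^d$ with a prescribed value of $u\cdot v$, we get $K_n(u,v)=F_n(u\cdot v)$ for some function $F_n:[-1,1]\to\R$.

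The step needing the most care is showing that $F_n$ is a polynomial of degree $n$. Fix $v=e_{d+1}$. The map $x\mapsto K_n(x,e_{d+1})$, with $x$ extended off the sphere through the harmonic polynomials, is a homogeneous harmonic polynomial of degree $n$ in $x\in\R^{d+1}$. It is invariant under the stabilizer $O(d)$ of $e_{d+1}$, acting on the first $d$ coordinates. An $O(d)$-invariant polynomial in $(x_1,\dots,x_d)$ is a polynomial in $x_1^2+\dots+x_d^2$. Therefore $K_n(x,e_{d+1})=\sum_j c_j\,x_{d+1}^{n-2j}(x_1^2+\dots+x_d^2)^j$. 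On the sphere $x_1^2+\dots+x_d^2=1-x_{d+1}^2$, so $F_n(t)=\sum_j c_j t^{n-2j}(1-t^2)^j$, which is a polynomial of degree at most $n$.

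For the normalization, note that $K_n(u,u)=F_n(1)$ is constant in $u$. Integrating over the sphere and using orthonormality together with $\tau_d(\mathbb S^d)=1$ gives
\[
F_n(1)=\int_{\mathbb S^d}K_n(u,u)\,d\tau_d(u)=\sum_\ell\|Y_{n,\ell}\|_{L^2}^2=N(d,n).
\]
Setting $P_n:=F_n/N(d,n)$ yields the addition formula with $P_n(1)=1$. That $P_n$ is, up to this normalization, the classical Gegenbauer polynomial $C_n^{(d-1)/2}$ follows by applying the Laplacian to the expression above. Alternatively, one can cite \citep{dai2013approximation}; the later arguments only use the properties just listed.

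Finally, the bound $|P_n|\le1$ follows from Cauchy--Schwarz in $\R^{N(d,n)}$:
\[
|K_n(u,v)|\le K_n(u,u)^{1/2}K_n(v,v)^{1/2}=N(d,n).
\]
Every $t\in[-1,1]$ equals $u\cdot v$ for some $u,v\in\mathbb S^d$, so $|P_n(t)|\le1$ on all of $[-1,1]$.
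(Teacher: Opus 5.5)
Your proof is correct, but the paper takes a different route: it gives no argument of its own and imports the addition formula from Theorem~1.2.6 of \citep{dai2013approximation}. That theorem identifies the reproducing kernel of $Y_n$ explicitly with a normalized Gegenbauer polynomial.

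You instead derive everything from symmetry:
\begin{itemize}
\item Basis independence of $K_n$, together with $O(d+1)$-invariance and two-point transitivity, gives $K_n(u,v)=F_n(u\cdot v)$.
\item The $O(d)$-invariance of the harmonic polynomial $x\mapsto K_n(x,e_{d+1})$ makes $F_n$ a polynomial. This invariance passes from the sphere to $\R^{d+1}$ by homogeneity, a one-line step worth stating.
\item Integrating the diagonal against the probability measure $\tau_d$ gives $F_n(1)=N(d,n)$.
\item Cauchy--Schwarz applied to the vectors $(Y_{n,\ell}(u))_\ell$ gives $|P_n|\le 1$.
\end{itemize}

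Your route gives a self-contained proof of every property the lemma asserts, with no special-function theory. It also shows that $P_n$ is uniquely determined by the addition formula, since $K_n$ is basis independent and $u\cdot v$ sweeps out all of $[-1,1]$.

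What the citation supplies, and your write-up only sketches via the Laplacian recurrence on the $c_j$, is the explicit identification $P_n=C_n^{(d-1)/2}/C_n^{(d-1)/2}(1)$. This identification ties $P_n$ to the weight $(1-t^2)^{(d-2)/2}$ in the Funk--Hecke lemma and to the asymptotics of $\hat\sigma(n)$ quoted later. By the uniqueness above, finishing that recurrence (or citing it once) is all that is needed.

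You also only show $\deg P_n\le n$. Exact degree $n$ follows because $x\mapsto F_n(x_{d+1})$ is a nonzero element of $Y_n$. If $\deg F_n<n$, it would be orthogonal to itself, since spherical harmonics of degree $n$ are orthogonal to restrictions of lower-degree polynomials. That contradicts $F_n(1)=N(d,n)\neq 0$.
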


The addition formula shows that the projector $\mathcal P_n$ is an integral operator whose kernel has the form $K_n(u,v)=N(d,n)P_n(u\cdot v)$, that is, it depends only on the inner product $u\cdot v$. This leads naturally to the following notion of convolution on $\mathbb S^d$.

\begin{definition}
	\label{def:convolution_sphere}
	Let $c_d>0$ be the normalizing constant
	\[
	c_d^{-1}
	=
	\int_{-1}^1 (1-t^2)^{(d-2)/2}\,dt
	\]
	and define the probability measure $\varrho$ on $[-1,1]$ with density $c_d(1-t^2)^{(d-2)/2}$. For $f\in L^1(\mathbb S^d,\tau_d)$ and a measurable function $g:[-1,1]\to\R$ with $g\in L^1([-1,1],\varrho)$ we define the convolution
	\[
	(f*g)(u)
	:=
	\int_{\mathbb S^d} f(v)\,g(u\cdot v)\,d\tau_d(v),
	\text{ for }
	u\in\mathbb S^d.
	\]
\end{definition}

We next recall a corollary of the classical Funk--Hecke formula, which describes how convolution with a kernel of the form $g(u\cdot v)$ acts on spherical harmonics \citep[Theorem~2.1.3]{dai2013approximation}.

\begin{lemma}
	\label{lemma:funk-hecke}
	Let $g\in L^1([-1,1],\varrho)$ and define coefficients
	\[
	\hat g(n)
	:=
	\frac{\omega_{d-1}}{\omega_d}
	\int_{-1}^1 g(t)\,P_n(t)\,(1-t^2)^{(d-2)/2}\,dt,
	\text{ for all } n\in\N_0,
	\]
	where $P_n$ is the Gegenbauer polynomial from Lemma~\ref{lemma:addition_formula} and $\omega_d$ denotes the surface area of $\mathbb S^d$. Then for every $f\in L^2(\mathbb S^d,\tau_d)$ and every $n\in\N_0$,
	\[
	\mathcal P_n(f*g)
	=
	\hat g(n)\,\mathcal P_n f.
	\]
\end{lemma}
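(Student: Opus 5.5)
The plan is to reduce the statement to the classical Funk--Hecke identity for a single spherical harmonic, namely
\[
\int_{\mathbb S^d} g(u\cdot v)\,Y(v)\,d\tau_d(v)=\hat g(n)\,Y(u),\qquad Y\in Y_n,\ u\in\mathbb S^d,
\]
and then pass to general $f\in L^2(\mathbb S^d)$ by duality.

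\textbf{Step 1: the convolution is a bounded self-adjoint operator.} I first check that $T_g f:=f*g$ is well defined and bounded on $L^2(\mathbb S^d,\tau_d)$, with norm at most $\|g\|_{L^1(\varrho)}$. The key tool is the slice (zonal integration) formula
\[
\int_{\mathbb S^d} h(u\cdot v)\,d\tau_d(v)=\int_{-1}^1 h\,d\varrho=c_d\int_{-1}^1 h(t)(1-t^2)^{(d-2)/2}\,dt,
\]
valid for every $u$ and every $h\in L^1(\varrho)$. It is obtained from polar coordinates around the axis $u$, and it also identifies $c_d=\omega_{d-1}/\omega_d$. Given this formula, Schur's test with the symmetric kernel $|g(u\cdot v)|$ gives the $L^2$ bound.

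Since the kernel is symmetric in $(u,v)$, Fubini shows that $T_g$ is self-adjoint. Hence for any orthonormal basis element $Y_{n,\ell}$ of $Y_n$,
\[
\langle f*g, Y_{n,\ell}\rangle=\langle f, Y_{n,\ell}*g\rangle .
\]
Consequently $\mathcal P_n(f*g)=\sum_\ell \langle f,Y_{n,\ell}*g\rangle Y_{n,\ell}$. The lemma therefore follows once the displayed identity $Y*g=\hat g(n)Y$ is established for $Y\in Y_n$.

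\textbf{Step 2: the Funk--Hecke identity.} This is the main step. Fix $u$ and let $H_u$ be the group of rotations of $\mathbb R^{d+1}$ fixing $u$, equipped with its Haar probability measure.

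The map $v\mapsto g(u\cdot v)$ is $H_u$-invariant, and so is $\tau_d$. Therefore $\int g(u\cdot v)Y(v)\,d\tau_d(v)=\int g(u\cdot v)\bar Y(v)\,d\tau_d(v)$, where $\bar Y(v):=\int_{H_u}Y(hv)\,dh$ is the $H_u$-average of $Y$.

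The space $Y_n$ is rotation invariant, so $\bar Y\in Y_n$ and $\bar Y$ is zonal about $u$. The zonal elements of $Y_n$ about $u$ form a one-dimensional space spanned by $v\mapsto P_n(u\cdot v)$. I would get this either from the addition formula (Lemma~\ref{lemma:addition_formula}), by projecting onto the reproducing kernel at $u$, or from the standard uniqueness of zonal harmonics. Evaluating at $v=u$ and using $P_n(1)=1$ identifies the coefficient, so that $\bar Y(v)=Y(u)P_n(u\cdot v)$.

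Applying the slice formula from Step 1 then gives
\[
\int g(u\cdot v)\,Y(v)\,d\tau_d(v)=Y(u)\,c_d\int_{-1}^1 g(t)P_n(t)(1-t^2)^{(d-2)/2}\,dt=\hat g(n)\,Y(u).
\]

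\textbf{Expected obstacle.} The delicate point is the one-dimensionality of the zonal subspace of $Y_n$, together with matching the normalizing constant $\omega_{d-1}/\omega_d$ to $c_d$. For the former, I would invoke \citep[Theorem~1.2.6 and Section~1.2]{dai2013approximation} rather than reprove it. The integrability requirements are mild: $Y$ is bounded and $g\in L^1(\varrho)$, so every integral above converges absolutely, which justifies the use of Fubini throughout.
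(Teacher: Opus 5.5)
Your argument is correct, but it differs from the paper, which gives no proof of its own and defers to \citep[Theorem~2.1.3]{dai2013approximation}.

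The standard textbook argument behind that citation runs as follows. One expands a polynomial $g$ in the $\varrho$-orthogonal polynomials $P_k$. By the addition formula, $g(u\cdot v)$ is then a finite combination of the reproducing kernels $N(d,k)P_k(u\cdot v)$ of the spaces $Y_k$. One then passes to general $g\in L^1(\varrho)$ by density, using $\bigl|\int_{\mathbb S^d} g(u\cdot v)Y(v)\,d\tau_d(v)\bigr|\le \|Y\|_{L^\infty}\|g\|_{L^1(\varrho)}$.

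Your route differs. You use Schur's test and the symmetry of the kernel to reduce to single harmonics, then average over the stabilizer $H_u$ and apply the slice formula. This treats every $g\in L^1(\varrho)$ at once, with no approximation step, and makes the identity $c_d=\omega_{d-1}/\omega_d$ transparent. Your constants are right: $\int_{\mathbb S^d}h(u\cdot v)\,d\tau_d(v)=\frac{\omega_{d-1}}{\omega_d}\int_{-1}^1 h(t)(1-t^2)^{(d-2)/2}\,dt$. The price is that all the content sits in the one-dimensionality of the $H_u$-invariant part of $Y_n$, which the expansion route bypasses.

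On that step, your first suggested justification does not work as stated. The reproducing property only tells you that $N(d,n)P_n(u\cdot\,)$ is an $H_u$-invariant element of $Y_n$ representing evaluation at $u$. It does not exclude a nonzero invariant $Z\in Y_n$ with $Z(u)=0$. Citing uniqueness of zonal harmonics is fine, but you can also close the step with tools you already have:
\begin{itemize}
\item For $d\ge2$, which the paper assumes in this section, $H_u$ acts transitively on each set $\{v:u\cdot v=t\}$. For $d=1$ you would need $O(2)$ rather than $SO(2)$.
\item Hence an invariant $Z\in Y_n$ equals $F(u\cdot v)$, where $F(t):=Z(tu+\sqrt{1-t^2}\,e)=Z(tu-\sqrt{1-t^2}\,e)$ for a fixed unit $e\perp u$. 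Averaging the two expressions kills the odd powers of $\sqrt{1-t^2}$, so $F$ is a polynomial of degree at most $n$.
\item $Y_n$ is orthogonal to restrictions of polynomials of degree less than $n$, so the slice formula gives $\int_{-1}^1 F\,q\,d\varrho=0$ for every such $q$.
\item The $\varrho$-orthogonal complement of the polynomials of degree less than $n$, inside those of degree at most $n$, is one-dimensional. It contains the nonzero $P_n$, because $P_n(u\cdot\,)\in Y_n$ by the addition formula.
\end{itemize}
Therefore $F=F(1)P_n=Z(u)P_n$, which is exactly the identity $\bar Y(v)=Y(u)P_n(u\cdot v)$ you need.
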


\subsubsection*{Moduli of smoothness on $\mathbb S^d$}

We now recall the notion of spherical translations and moduli of smoothness, which will allow us to quantify smoothness of functions on $\mathbb S^d$ in a way that interacts well with convolution.

\begin{definition}[Spherical translations and moduli of smoothness]
	\label{def:modulus_sphere}
	For $u\in\mathbb S^d$ let
	\[
	\mathbb S_u^\perp
	:=
	\{\, v\in\mathbb S^d : u\cdot v = 0 \,\}
	\cong \mathbb S^{d-1}.
	\]
	For $\theta\in[0,\pi]$ and $f\in L^1(\mathbb S^d,\tau_d)$ the spherical translation (or spherical mean) operator is defined for all $u\in\mathbb{S}^d$ by
	\[
	T_\theta f(u)
	:=
	\int_{\mathbb S_u^\perp} f\bigl(u\cos\theta + v\sin\theta\bigr)\,d\tau_{d-1}(v),
	\]
	where $\tau_{d-1}$ is the normalized surface measure on $\mathbb S_u^\perp$. For all $n\in\N_0$, it holds that
	\[
	\mathcal P_n(T_\theta f)
	=
	P_n(\cos\theta)\,\mathcal P_n f.
	\]
	
	Let $\alpha>0$ and $0<\theta<\pi$. We define the fractional difference operator $\Delta_\theta^\alpha$ by spectral calculus through
	\[
	\mathcal P_n(\Delta_\theta^\alpha f)
	:=
	\bigl(1-P_n(\cos\theta)\bigr)^{\alpha/2}\,\mathcal P_n f,
	\text{ for all }
	n\in\N_0.
	\]
	For $1\le p<\infty$ and $f\in L^p(\mathbb S^d)$, or for $p=\infty$ and $f\in C(\mathbb S^d)$, the $\alpha$-th order modulus of smoothness of $f$ is defined as
	\[
	\omega_\alpha(f,t)_p
	:=
	\sup_{0<\theta\le t} \|\Delta_\theta^\alpha f\|_{L^p(\mathbb S^d)},
	\text{ where } 0<t<\pi.
	\]
\end{definition}

We will make use of two consequences of the theory of spherical moduli of smoothness: Marchaud inequality and the power-law estimate that follows from it.

\begin{lemma}[Marchaud inequality on $\mathbb S^d$]
	\label{lemma:marchaud_sphere}
	Let $0<s<r$ and let $1\le p\le \infty$.
	There exists a constant $C>0$, depending only on $d$, $r$, $s$, and $p$, such that for every $f$ in the domain of $\omega_r(\cdot,\cdot)_p$ and every $0<t<\pi$,
	\[
	\omega_s(f,t)_p
	\le
	C\, t^s\int_t^\pi \frac{\omega_r(f,u)_p}{u^{s+1}}\,du.
	\]
\end{lemma}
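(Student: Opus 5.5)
The plan is to pass through $K$-functionals and best polynomial approximation on $\mathbb S^d$, in the standard way for the Marchaud inequality in the Euclidean and circle settings. First, recall (as an assumed input from the theory in \citep{dai2013approximation}) the equivalence $\omega_\alpha(f,t)_p \asymp K_\alpha(f,t^\alpha)_p := \inf_g \{\|f-g\|_p + t^\alpha\|(-\Delta)^{\alpha/2}g\|_p\}$. This follows from the spectral definition of $\Delta_\theta^\alpha$ together with $1-P_n(\cos\theta)\asymp \min\{1,n^2\theta^2\}$. Second, recall that it is also equivalent to the realization $\|f-V_Nf\|_p + N^{-\alpha}\|(-\Delta)^{\alpha/2}V_Nf\|_p$ with $N\asymp 1/t$, where $V_N$ is a de la Vallée Poussin-type smooth spectral multiplier of $f$. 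Such a multiplier has kernel bounded uniformly in $L^1$, so it is bounded on every $L^p$ including $p=1,\infty$.

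Next, fix $t$ and let $2^{-J-1}<t\le 2^{-J}$. Write $f = (f - V_{2^J}f) + \sum_{j=0}^{J}(V_{2^j}f - V_{2^{j-1}}f)$, with $V_{2^{-1}}f:=\mathcal P_0 f$, which is annihilated by $\Delta^s_\theta$. Since $\Delta_\theta^s$ is a uniformly $L^p$-bounded multiplier, the tail term is bounded by $C\|f-V_{2^J}f\|_p\le C\,E_{2^J}(f)_p \le C\,\omega_r(f,t)_p$ by the Jackson inequality. For each dyadic block $g_j := V_{2^j}f-V_{2^{j-1}}f$, a polynomial of degree at most $2^{j+1}$, use the Bernstein-type estimate $\|\Delta_\theta^s g_j\|_p\le C(\theta 2^j)^s \|g_j\|_p$. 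This holds because $(1-P_n(\cos\theta))^{s/2}\lesssim (n\theta)^s$ and the multiplier restricted to degree $\le 2^{j+1}$ is $L^p$-bounded after normalization. Then bound $\|g_j\|_p\le C E_{2^{j-1}}(f)_p \le C\omega_r(f,2^{-j})_p$ by Jackson again.

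Summing gives $\omega_s(f,t)_p \le C\,\omega_r(f,t)_p + C t^s\sum_{j\le J} 2^{js}\omega_r(f,2^{-j})_p$. Using monotonicity of $u\mapsto\omega_r(f,u)_p$, compare each dyadic summand with $\int_{2^{-j}}^{2^{-j+1}} \omega_r(f,u)_p u^{-s-1}\,du$. Finally absorb the first term, since $\omega_r(f,t)_p \le C t^s\int_t^{2t}\omega_r(f,u)_pu^{-s-1}du$ (take $2t\le\pi$, and handle $t$ near $\pi$ trivially because both sides are then comparable to $\|f\|_p$ up to constants).

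I expect the main obstacle to be the Bernstein inequality for the fractional operator $\Delta_\theta^s$ at $p=1,\infty$. Spectral multipliers are not automatically bounded there. To handle this, I would factor $(1-P_n(\cos\theta))^{s/2} = (n\theta)^s\cdot m_\theta(n)$ on $n\le 2^{j+1}$, where $m_\theta$ is smooth in $n/2^{j}$ with controlled finite differences. I would then invoke the Cesàro/kernel-smoothness multiplier criterion on $\mathbb S^d$ (Bonami–Clerc type, as in \citep{dai2013approximation}) to get uniform $L^1$ kernel bounds. The remaining ingredients, the Jackson and Bernstein inequalities, are taken from that reference.
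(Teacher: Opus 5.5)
Your dyadic Jackson--Bernstein argument is the standard proof of a Marchaud inequality. The paper gives no argument of its own and only cites \citep[Theorem~10.6.1]{dai2013approximation}. For $0<t\le\pi/2$ your argument goes through, up to harmless index shifts. The genuine gap is your final step, where you claim that for $t$ near $\pi$ ``both sides are comparable to $\|f\|_p$''. This is false, and no argument can repair it, because the inequality as stated fails on that range. The right-hand side degenerates as $t\to\pi^-$, since the interval of integration shrinks to a point.

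Concretely, take a nonzero spherical harmonic $Y$ of degree one. Since $P_1(x)=x$, the spectral definition gives $\Delta_\theta^\alpha Y=(1-\cos\theta)^{\alpha/2}Y$. Hence $\omega_\alpha(Y,t)_p=(1-\cos t)^{\alpha/2}\|Y\|_p$ for $0<t<\pi$, and as $t\to\pi^-$,
\[
C\,t^s\int_t^\pi\frac{\omega_r(Y,u)_p}{u^{s+1}}\,du\le C\,2^{r/2}\|Y\|_p\,\frac{\pi-t}{t}\longrightarrow 0,
\qquad\text{while}\qquad
\omega_s(Y,t)_p\longrightarrow 2^{s/2}\|Y\|_p .
\]
Both sides also annihilate constants, so at best they could be comparable to $\|f-\mathcal P_0 f\|_p$, and even that only for $t$ bounded away from $\pi$. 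What your argument actually proves uses the absorption
\[
\omega_r(f,t)_p\le\frac{s}{1-2^{-s}}\,t^s\int_t^{2t}\omega_r(f,u)_p\,u^{-s-1}\,du,
\]
which is valid exactly when $2t\le\pi$. It therefore gives the inequality for $0<t\le\pi/2$, or for $0<t\le t_0<\pi$ with $C$ depending on $t_0$. That is the usual form of a Marchaud inequality and the only (small-$t$) regime used later in the paper. You should restrict the statement accordingly rather than claim the endpoint is trivial.

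Two smaller remarks. First, your ``main obstacle'' is already covered by the inputs you assume in your first paragraph. For a polynomial $g$ of degree at most $n$, the equivalence $\omega_s\asymp K_s$, with $g$ itself chosen in the infimum, gives $\|\Delta_\theta^s g\|_p\le\omega_s(g,\theta)_p\le C\theta^s\|(-\Delta)^{s/2}g\|_p$. The fractional Bernstein inequality $\|(-\Delta)^{s/2}g\|_p\le Cn^s\|g\|_p$, valid for $1\le p\le\infty$ in \citep{dai2013approximation}, then finishes the estimate. Likewise, uniform $L^p$-boundedness of $\Delta_\theta^s$ for all $1\le p\le\infty$ follows from the absolutely convergent expansion $\Delta_\theta^s=(I-T_\theta)^{s/2}=\sum_{k\ge 0}\binom{s/2}{k}(-1)^kT_\theta^k$, since $T_\theta$ is an $L^p$ contraction.

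Second, Jackson gives $\|f-V_{2^J}f\|_p\le C\omega_r(f,2^{-J})_p$ with $2^{-J}\ge t$, not $C\omega_r(f,t)_p$. Similarly it gives only $\|g_j\|_p\le C\omega_r(f,2^{1-j})_p$. You therefore need the doubling property $\omega_r(f,2u)_p\le C\omega_r(f,u)_p$ (again from the $K$-functional equivalence) or a shift of the dyadic indices.
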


\begin{proof}
	See \citep[Theorem~10.6.1]{dai2013approximation}
\end{proof}

\begin{corollary}
	\label{cor:marchaud_power_sphere}
	Let $0<s<r$ and let $1\le p\le\infty$.
	Suppose there exists $A>0$ such that $\omega_r(f,u)_p\le A u^\alpha$ for all $0<u<\pi$ and some $\alpha\in(0,r]$.
	Then there exists $C>0$, depending only on $d$, $r$, $s$, $p$ and $\alpha$, such that for all $0<t<\pi$,
	\[
	\omega_s(f,t)_p
	\le
	C A
	\begin{cases}
		t^\alpha, & \alpha<s,\\
		t^s\log\!\bigl(e/t\bigr), & \alpha=s,\\
		t^s, & \alpha>s.
	\end{cases}
	\]
\end{corollary}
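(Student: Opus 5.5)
We plug the power-law hypothesis directly into the Marchaud inequality of Lemma~\ref{lemma:marchaud_sphere} and evaluate the resulting one-dimensional integral in the three regimes $\alpha<s$, $\alpha=s$, $\alpha>s$. Concretely, since $\omega_r(f,u)_p\le A u^\alpha$ for all $0<u<\pi$, Lemma~\ref{lemma:marchaud_sphere} gives, for every $0<t<\pi$,
\[
\omega_s(f,t)_p \le C\,t^s\int_t^\pi \frac{A u^\alpha}{u^{s+1}}\,du = C A\, t^s \int_t^\pi u^{\alpha-s-1}\,du,
\]
with $C$ depending only on $d,r,s,p$. So the whole proof reduces to bounding $I(t):=\int_t^\pi u^{\alpha-s-1}\,du$ in each case. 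The constants produced must depend only on $d,r,s,p,\alpha$.

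\emph{Case $\alpha<s$.} The exponent $\alpha-s-1<-1$, so
\[
I(t)=\frac{t^{\alpha-s}-\pi^{\alpha-s}}{s-\alpha}\le \frac{t^{\alpha-s}}{s-\alpha},
\]
hence $\omega_s(f,t)_p\le \frac{C}{s-\alpha}A\,t^\alpha$.

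\emph{Case $\alpha=s$.} Here $I(t)=\log(\pi/t)\le \log(e/t)\cdot\log\pi$. More simply, $\log(\pi/t)\le \log(e/t)+\log(\pi/e)$, and $\log(e/t)\ge \log(e/\pi)$ is bounded below by a positive quantity only if $t$ is not close to $\pi$. The cleanest route is to note $\log(\pi/t)\le \log(e/t)$, which holds since $\pi>e$ is false in the wrong direction. Indeed $\pi>e$, so instead I would write $\log(\pi/t)=\log(e/t)+\log(\pi/e)$ and use $\log(e/t)\ge 1-\log\pi+1$. Since $\log(e/t)>\log(e/\pi)$, which is negative, a different lower bound is needed. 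The fix is to observe that $\log(\pi/t)\le c\log(e/t)$ with $c=\sup_{0<t<\pi}\log(\pi/t)/\log(e/t)$. This supremum is finite because the ratio tends to $1$ as $t\to0$, and on $[e,\pi)$ the numerator is at most $\log(\pi/e)$ while the denominator is negative. So I will split: for $t\le 1$, $\log(e/t)\ge1$ and $\log(\pi/t)\le \log\pi+\log(1/t)\le \log\pi\cdot\log(e/t)+\log(e/t)$. For $1<t<\pi$, I bound $I(t)\le \log\pi$ and absorb it into $t^s$ rather than the logarithm.

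This last point is the only delicate step: the stated right-hand side $t^s\log(e/t)$ is nonpositive for $t\ge e$, so the claim must be understood for $t$ in a range where $\log(e/t)>0$, or with $\log(e/t)$ replaced by $\max\{1,\log(e/t)\}$. I would state the proof for $0<t\le 1$ and remark that the extension to $1<t<\pi$ is harmless because moduli are monotone in $t$. By that monotonicity, $\omega_s(f,t)_p\le \omega_s(f,\pi)_p$, which can be handled by the trivial bound; this is only relevant for small $t$ in applications.

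\emph{Case $\alpha>s$.} The exponent $\alpha-s-1>-1$, so
\[
I(t)\le \int_0^\pi u^{\alpha-s-1}\,du=\frac{\pi^{\alpha-s}}{\alpha-s},
\]
giving $\omega_s(f,t)_p\le C\frac{\pi^{\alpha-s}}{\alpha-s}A\,t^s$.

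Collecting the three cases yields the corollary, with the final constant equal to the Marchaud constant times $\max\{1/(s-\alpha),\ 1+\log\pi,\ \pi^{\alpha-s}/(\alpha-s)\}$ as appropriate. The expected obstacle is only the logarithmic case near $t\approx\pi$, handled as above.
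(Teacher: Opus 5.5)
Your approach is the paper's. Its entire proof is ``combine Lemma~\ref{lemma:marchaud_sphere} with $\omega_r(f,u)_p\le Au^\alpha$ and evaluate the integral,'' and your computations of $\int_t^\pi u^{\alpha-s-1}\,du$ for $\alpha<s$ and $\alpha>s$ are correct. In the critical case $\alpha=s$ you caught something the paper's one-liner glosses over. The integral equals $\log(\pi/t)$, and since $\pi>e$ the stated right-hand side $CAt^s\log(e/t)$ is $\le 0$ on $[e,\pi)$.

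The statement is in fact false there, not merely delicate. Take $f(u)=u_1$, a degree-one harmonic, for which $P_1(t)=t$. Then $\omega_\beta(f,u)_p=(1-\cos u)^{\beta/2}\|f\|_p$ for all $\beta>0$ and $0<u<\pi$. Since $(1-\cos u)^{r/2}\le 2^{-r/2}u^r\le 2^{-r/2}\pi^{r-s}u^s$, the hypothesis holds with $\alpha=s$. But $\omega_s(f,t)_p\ge (1-\cos 1)^{s/2}\|f\|_p>0$ for $t\ge 1$, while $t^s\log(e/t)\to 0$ as $t\uparrow e$. So for any constant $C$ the claim fails near $t=e$ and on all of $[e,\pi)$.

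This means several of your intermediate claims in that case must go.
\begin{itemize}
\item The inequality $\log(\pi/t)\le \log\pi\,\log(e/t)$ fails for every $t\in(1,\pi)$.
\item The supremum $\sup_{0<t<\pi}\log(\pi/t)/\log(e/t)$ is $+\infty$, since the ratio blows up as $t\uparrow e$. On $(e,\pi)$ no positive constant can work at all.
\item Monotonicity of $\omega_s(f,\cdot)_p$ does not make $1<t<\pi$ ``harmless.'' An upper bound by $\omega_s(f,\pi)_p$ is useless against a right-hand side that vanishes at $t=e$, and the lower bound monotonicity gives is exactly what breaks the claim.
\end{itemize}

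What does work is the reformulation you floated. You can restrict to $0<t\le 1$, where your bound $\log(\pi/t)\le(1+\log\pi)\log(e/t)$ is correct. Alternatively, replace $\log(e/t)$ by $\max\{1,\log(e/t)\}$, or more cleanly by $\log(e\pi/t)$; the latter dominates $\log(\pi/t)$ on all of $(0,\pi)$ with constant $1$ and needs no case split.

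Of these, the small-$t$ restriction is the safest. The same example shows that the quoted Marchaud inequality itself cannot hold as $t\to\pi$: its right-hand side tends to $0$ while $\omega_s(f,t)_p\to 2^{s/2}\|f\|_p$. Only small $t$ matters in applications anyway.
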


\begin{proof}
	Combine Lemma~\ref{lemma:marchaud_sphere} with the bound $\omega_r(f,u)_p\le A u^\alpha$ and evaluate the resulting one-dimensional integral.
\end{proof}

On the Euclidean side we will work with Hölder spaces on the unit ball.

\begin{definition}[Hölder spaces on $\R^d$ and on $\mathbb B^d$]
	\label{def:holder_spaces}
	Let $m\in\N_0$ and $\kappa\in(0,1]$.
	The Hölder space $C^{m,\kappa}(\R^d)$ consists of all functions $f\in C^m(\R^d)$ such that the seminorm
	\[
	|f|_{C^{m,\kappa}(\R^d)}
	:=
	\max_{|\gamma|=m}
	\sup_{x\neq y}
	\frac{|\partial^\gamma f(x)-\partial^\gamma f(y)|}{\|x-y\|_2^\kappa}
	\]
	is finite. The norm on $C^{m,\kappa}(\R^d)$ is
	\[
	\|f\|_{C^{m,\kappa}(\R^d)}
	:=
	\max\Bigl\{
	\max_{|\gamma|\le m} \|\partial^\gamma f\|_{L^\infty(\R^d)},
	\,|f|_{C^{m,\kappa}(\R^d)}
	\Bigr\}.
	\]
	We define $C^{m,\kappa}(\mathbb B^d)$ as the space of restrictions to $\mathbb B^d$ of functions in $C^{m,\kappa}(\R^d)$, equipped with the norm
	\[
	\|f\|_{C^{m,\kappa}(\mathbb B^d)}
	:=
	\inf\bigl\{
	\|g\|_{C^{m,\kappa}(\R^d)} :
	g\in C^{m,\kappa}(\R^d),\ g=f\text{ on }\mathbb B^d
	\bigr\}.
	\]
	For $\alpha>0$ we set $a:=\lceil \alpha\rceil-1$ and $\kappa:=\alpha-a\in(0,1]$.
	We denote the unit ball of $C^{a,\kappa}(\mathbb B^d)$ by
	\[
	\mathcal H^\alpha
	:=
	\bigl\{
	f\in C^{a,\kappa}(\mathbb B^d)
	:
	\|f\|_{C^{a,\kappa}(\mathbb B^d)}\le 1
	\bigr\}.
	\]
\end{definition}

\subsubsection*{Variation spaces for shallow ReQU networks}

For the remaining of this section, we denote the ReQU activation as $\sigma :t \mapsto \max\{0,t\}^2$. We now introduce the associated variation spaces on $\mathbb B^d$ and $\mathbb S^d$ respectively. These spaces may be seen as infinite-width limits of one-hidden-layer ReQU networks with an $\ell_1$ constraint on their weights \citep{bach2017breaking}.

\begin{definition}[ReQU variation space in the ball]
	\label{def:ReQU_activation_variation}
	Let $\mu$ be a finite signed Radon measure on $\mathbb S^d$. For $x\in\mathbb B^d$ we consider the integral representation
	\[
	f(x)
	=
	\int_{\mathbb S^d}
	\sigma\bigl((x^\top,1)\cdot v\bigr)\,d\mu(v).
	\]
	We define the variation norm of $f$ by
	\[
	\gamma(f)
	:=
	\inf
	\Bigl\{
	\|\mu\|_{\mathrm{TV}}:
	f(x)=\textstyle\int_{\mathbb S^d} \sigma\bigl((x^\top,1)\cdot v\bigr)\,d\mu(v)
	\text{ for all }x\in\mathbb B^d
	\Bigr\},
	\]
	where the infimum runs over all finite signed Radon measures $\mu$ on $\mathbb S^d$ and $\|\mu\|_{\mathrm{TV}}$ denotes the total variation of $\mu$. For $M>0$ we define the ReQU variation space on $\mathbb B^d$ by
	\[
	\mathcal V_\sigma(M)
	:=
	\{\, f:\mathbb B^d\to \R : \gamma(f)\le M \,\}.
	\]
\end{definition}

We also introduce the corresponding spherical ReQU variation space, which will be the main ingredient in our error analysis.

\begin{definition}[Spherical ReQU variation space]
	\label{def:spherical_variation_space}
	Let $\mu$ be a finite signed Radon measure on $\mathbb S^d$. For $u\in\mathbb S^d$ we set
	\[
	g(u)
	=
	\int_{\mathbb S^d} \sigma(u\cdot v)\,d\mu(v).
	\]
	The variation norm of $g$ is
	\[
	\gamma(g)
	:=
	\inf
	\Bigl\{
	\|\mu\|_{\mathrm{TV}}:
	g(u)=\textstyle\int_{\mathbb S^d} \sigma(u\cdot v)\,d\mu(v)
	\text{ for all }u\in\mathbb S^d
	\Bigr\}.
	\]
	For $M>0$ we define the ReQU variation space on $\mathbb S^d$ by
	\[
	\mathcal G_\sigma(M)
	:=
	\{\, g:\mathbb S^d\to\R : \gamma(g)\le M \,\}.
	\]
	If $\mu$ is absolutely continuous with respect to $\tau_d$ with density $\phi\in L^1(\mathbb S^d,\tau_d)$ then
	\[
	g(u)
	=
	\int_{\mathbb S^d} \sigma(u\cdot v)\,\phi(v)\,d\tau_d(v)
	=
	(\phi*\sigma)(u),
	\]
	where the convolution is that of Definition~\ref{def:convolution_sphere}.
\end{definition}

Several formulations of variation spaces for shallow networks appear in the literature \citep{savarese2019infinite, ongie2020function, siegel2020approximation}. For our purposes, the representation above is the most convenient.

\subsection{Approximation error of ReQU variation space}

Having defined the necessary objects, we are now ready to carry out our error analysis for the variation space associated with shallow ReQU networks. We begin by introducing a transfer operator that allows us to move between functions on the unit sphere and functions on the unit ball. Throughout this subsection we keep the dimension $d\ge 1$ fixed and we work with the sphere $\mathbb S^d\subset\R^{d+1}$ and the ball $\mathbb B^d\subset\R^d$.

\subsubsection*{Transfer between the sphere and the ball}

\begin{definition}[Transfer operator from the sphere to the ball]
	\label{def:transfer_operator_T}
	Let
	\[
	\Omega
	:=
	\bigl\{
	u=(u_1,\dots,u_{d+1})\in\mathbb S^d : u_{d+1}\ge 2^{-1/2}
	\bigr\}
	\subset\mathbb S^d
	\]
	be the upper spherical cap. For $x\in\mathbb B^d$ we define
	\[
	\Phi(x)
	:=
	\frac{1}{\sqrt{1+\|x\|_2^2}}
	\begin{pmatrix}
		x\\
		1
	\end{pmatrix}
	\in\Omega, \text{ and }
	\psi(x)
	:=
	1+\|x\|_2^2.
	\]
	For a bounded function $g:\mathbb S^d\to\R$ we define the transfer operator $\mathcal T$ by
	\[
	(\mathcal T g)(x)
	:=
	\psi(x)\,g(\Phi(x)),
	\text{ for all }
	x\in\mathbb B^d.
	\]
\end{definition}

Observe that the map $\Phi$ is a smooth embedding of $\mathbb B^d$ into $\Omega\subseteq\mathbb{S}^d$, while the factor $\psi$ is here to compensate for the homogeneity of the ReQU activation. As a direct consequence of Leibniz rule, we have that $\mathcal T$ preserves smoothness:

\begin{lemma}
	\label{lemma:transfer_sobolev_T}
	Let $r\in\N_0$. There exists a constant $C_{r,d}>0$ such that for every $g\in C^r(\mathbb S^d)$,
	\[
	\|\mathcal T g\|_{W^{r,\infty}(\mathbb B^d)}
	\le
	C_{r,d}\,\|g\|_{C^r(\mathbb S^d)}.
	\]
\end{lemma}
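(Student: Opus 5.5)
The plan is to write $\mathcal T g=\psi\cdot(g\circ\Phi)$ and bound the two factors separately. The Leibniz estimate already used in the proof of Lemma~\ref{lemma:product_estimate} (applied with $u\equiv\psi$, $v\equiv g\circ\Phi$) will then give
\[
\|\mathcal T g\|_{W^{r,\infty}(\mathbb B^d)}\le C\,\|\psi\|_{W^{r,\infty}(\mathbb B^d)}\,\|g\circ\Phi\|_{W^{r,\infty}(\mathbb B^d)} .
\]
Since $\psi(x)=1+\|x\|_2^2$ is a fixed polynomial, $\|\psi\|_{W^{r,\infty}(\mathbb B^d)}\le 3$ is a constant. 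The statement therefore reduces to the chain-rule estimate $\|g\circ\Phi\|_{W^{r,\infty}(\mathbb B^d)}\le C_{r,d}\|g\|_{C^r(\mathbb S^d)}$.

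For this reduction I will use that $\Phi$ maps $\mathbb B^d$ into the compact cap $\{u_{d+1}\ge 2^{-1/2}\}$, which lies strictly inside the open hemisphere $\{u_{d+1}>0\}$. On that hemisphere I will use a single chart, the gnomonic chart
\[
\varphi(u)=(u_1/u_{d+1},\dots,u_d/u_{d+1}),
\]
whose inverse on $\mathbb B^d$ is exactly $\Phi$. In this chart, $g\circ\Phi$ is simply the coordinate expression $g\circ\varphi^{-1}$ restricted to the compact set $\mathbb B^d$. Its Euclidean derivatives of order at most $r$ are therefore bounded by the $C^r$ norm of $g$ computed in the chart $(\{u_{d+1}>0\},\varphi)$ over a compact subset.

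The only technical step, and the one I expect to take the most care, is to compare this with the atlas-based norm of Definition~\ref{def:Cm_sphere}. That definition fixes a particular atlas and partition of unity, and it states that different choices give equivalent norms; it also gives the equivalence with $\sum_k\|\nabla^k g\|_{L^\infty}$. I will invoke that equivalence. The argument is to add the gnomonic chart to the fixed atlas, or alternatively to note that each transition map $\varphi\circ\varphi_\alpha^{-1}$ is smooth with derivatives bounded on the relevant compact sets. Then, by Faà di Bruno's formula, $\max_{|\beta|\le r}\|\partial^\beta(g\circ\Phi)\|_{L^\infty(\mathbb B^d)}$ is bounded by a constant, depending only on $d$ and $r$, times $\|g\|_{C^r(\mathbb S^d)}$.

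A more direct route would be to extend $g$ $0$-homogeneously to the cone and apply Faà di Bruno to the smooth map $x\mapsto(x,1)/\sqrt{1+\|x\|^2}$, whose derivatives up to order $r$ are bounded on $\mathbb B^d$. Either way, combining the chain-rule bound with the Leibniz bound yields the lemma with a constant depending only on $r$ and $d$.
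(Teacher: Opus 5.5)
Your proposal is correct and follows essentially the same route as the paper. You use Leibniz to peel off the fixed polynomial factor $\psi$, the chain rule for $g\circ\Phi$, and the observation that the cap $\{u_{d+1}\ge 2^{-1/2}\}$ lies in a single chart, so its local $C^r$ norm is dominated by the atlas norm of Definition~\ref{def:Cm_sphere}. Identifying $\Phi$ as the inverse gnomonic chart makes explicit the chart the paper leaves implicit; in the norm comparison, the transition map you actually differentiate is $\varphi_\alpha\circ\varphi^{-1}$ rather than $\varphi\circ\varphi_\alpha^{-1}$, but both are smooth with bounded derivatives on the relevant compact sets.
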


\begin{proof}
	Since $\psi$ and $\Phi$ extend smoothly to $\overline{\mathbb B^d}$, all of their derivatives of order at most $r$ are bounded on $\mathbb B^d$. By Leibniz' rule and the chain rule, there exists a constant $A_{r,d}>0$ such that
	\[
	\|\mathcal T g\|_{W^{r,\infty}(\mathbb B^d)}
	=
	\|\psi\, g\circ \Phi\|_{W^{r,\infty}(\mathbb B^d)}
	\le
	A_{r,d}\,\|g\|_{C^r(\Omega)}.
	\]
	Since $\Omega$ is contained in a single smooth chart of $\mathbb S^d$, its local $C^r$-norm is controlled by the global $C^r(\mathbb S^d)$-norm. Hence
	\[
	\|\mathcal T g\|_{W^{r,\infty}(\mathbb B^d)}
	\le
	C_{r,d}\,\|g\|_{C^r(\mathbb S^d)}.
	\]
\end{proof}

We now introduce two properties of the operator $\mathcal T$ which will be useful for our later analysis. The argument below largely follows that of \citep[Proposition 3.3]{yang2025optimal}.

\begin{proposition}
	\label{prop:transfer_variation_ball_sphere}
	Recall the variation spaces $\mathcal{V}_\sigma$ and $\mathcal{G}_\sigma$ respectively defined in Definition~\ref{def:ReQU_activation_variation} and \ref{def:spherical_variation_space}. The following hold:
	\begin{enumerate}[label=(\roman*),leftmargin=1.5em]
		\item For every $M>0$ and every $g\in\mathcal G_\sigma(M)$ the function $\mathcal T g$ belongs to $\mathcal V_\sigma(M)$ and satisfies {$\gamma(\mathcal T g)\le \gamma(g)\le M$}.
		\item Let $\alpha>0$ and set $a:=\lceil \alpha\rceil-1$ and $\kappa:=\alpha-a\in(0,1]$. There exists a constant $C_\alpha>0$ such that for every $h\in\mathcal H^\alpha$ there exists an odd function $\tilde h\in C^{a,\kappa}(\mathbb S^d)$ such that $\mathcal T \tilde h = h$ on $\mathbb B^d$, and $\|\tilde h\|_{C^{a,\kappa}(\mathbb S^d)} \le C_\alpha$.
	\end{enumerate}
\end{proposition}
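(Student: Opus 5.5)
For part (i), I would use the positive $2$-homogeneity of the ReQU activation, $\sigma(\lambda t)=\lambda^2\sigma(t)$ for $\lambda>0$. Fix $g\in\mathcal G_\sigma(M)$, and take any representing measure $\mu$ with $g(u)=\int_{\mathbb S^d}\sigma(u\cdot v)\,d\mu(v)$. For $x\in\mathbb B^d$ we have $\Phi(x)=(x^\top,1)^\top/\sqrt{\psi(x)}$, so for every $v$,
\[
\psi(x)\,\sigma\bigl(\Phi(x)\cdot v\bigr)=\sigma\bigl((x^\top,1)\cdot v\bigr).
\]
Integrating against $\mu$ gives
\[
(\mathcal T g)(x)=\int_{\mathbb S^d}\sigma\bigl((x^\top,1)\cdot v\bigr)\,d\mu(v),
\]
so the same $\mu$ represents $\mathcal Tg$ in the sense of Definition~\ref{def:ReQU_activation_variation}. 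Hence $\gamma(\mathcal Tg)\le\|\mu\|_{\mathrm{TV}}$, and taking the infimum over $\mu$ yields $\gamma(\mathcal Tg)\le\gamma(g)\le M$.

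For part (ii), I would invert the transfer on the cap $\Omega$ and then extend oddly. The map $\Phi:\mathbb B^d\to\Omega$ is a smooth diffeomorphism. Its inverse is $\Phi^{-1}(u)=(u_1,\dots,u_d)/u_{d+1}$, and $\psi(\Phi^{-1}(u))=u_{d+1}^{-2}$; both are smooth on the larger region $\{u_{d+1}>1/2\}$. On $\Omega$ I therefore set
\[
\tilde h_0(u):=u_{d+1}^{2}\,h\!\left(\frac{(u_1,\dots,u_d)}{u_{d+1}}\right),
\]
which forces $\mathcal T\tilde h_0=h$ on $\mathbb B^d$.

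To obtain a globally defined odd function, I first extend $h$ to some $H\in C^{a,\kappa}(\R^d)$ with $\|H\|_{C^{a,\kappa}(\R^d)}\le 2$, which is possible by the definition of $\mathcal H^\alpha$ as restrictions. I then define
\[
\tilde h_+(u):=\eta(u_{d+1})\,u_{d+1}^2\,H\!\left(\frac{(u_1,\dots,u_d)}{u_{d+1}}\right),
\]
where $\eta$ is a smooth cutoff equal to $1$ on $[2^{-1/2},1]$ and vanishing on $(-\infty,1/2]$. This function is supported in $\{u_{d+1}\ge 1/2\}$, so it can be oddified by
\[
\tilde h(u):=\tilde h_+(u)-\tilde h_+(-u).
\]
The two pieces have disjoint supports, so $\tilde h=\tilde h_+$ on $\Omega$. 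It follows that $\tilde h$ is odd and that $\mathcal T\tilde h=h$ on $\mathbb B^d$.

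It remains to bound $\|\tilde h\|_{C^{a,\kappa}(\mathbb S^d)}$, which is the only step requiring care. On the support of $\tilde h_+$, the map $u\mapsto (u_1,\dots,u_d)/u_{d+1}$ is smooth with all derivatives bounded, since $u_{d+1}\ge 1/2$; the prefactor $\eta(u_{d+1})u_{d+1}^2$ is likewise smooth. By the chain and Leibniz rules applied in charts, the derivatives of $\tilde h_+$ up to order $a$ are bounded by constants times $\|H\|_{C^{a,\kappa}}$. For the Hölder seminorm of the top-order derivatives, composition of a $C^{a,\kappa}$ function with a smooth map is again $C^{a,\kappa}$, and products of lower-order derivatives of $H$ with smooth factors are Lipschitz, hence $\kappa$-Hölder on a compact set. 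I would carry this out in the finite atlas of Definition~\ref{def:Cm_sphere}, extended with the Hölder seminorm. Combining these estimates with $\|H\|_{C^{a,\kappa}(\R^d)}\le 2$ gives $\|\tilde h\|_{C^{a,\kappa}(\mathbb S^d)}\le C_\alpha$, with $C_\alpha$ depending only on $\alpha$ and $d$, as required.
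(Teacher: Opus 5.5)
Your proof is correct. Part (i) is the paper's argument verbatim, and you additionally take the infimum over representing measures to obtain $\gamma(\mathcal T g)\le\gamma(g)$, a step the paper leaves implicit.

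For part (ii), you and the paper share the inversion formula $u_{d+1}^2\,H(u'/u_{d+1})$ on the cap, but you globalize it differently. The paper defines this function only on $\Omega$ and reflects it antipodally to get an odd function on $\Omega\cup(-\Omega)$. It then invokes a Whitney-type linear extension operator for H\"older spaces to produce some $\bar h\in C^{a,\kappa}(\mathbb S^d)$, and finally takes the odd part $(\bar h(u)-\bar h(-u))/2$.

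You instead use the fact that membership in $\mathcal H^\alpha$ already supplies a global extension $H\in C^{a,\kappa}(\R^d)$ with norm at most $2$. So the same formula makes sense on the whole open upper hemisphere. The cutoff $\eta(u_{d+1})$ then gives a function on all of $\mathbb S^d$ supported in $\{u_{d+1}\ge 1/2\}$. Because it and its antipodal reflection have disjoint supports, $\tilde h_+(u)-\tilde h_+(-u)$ is odd and coincides with $\tilde h_+$ on $\Omega$, with no averaging needed.

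What your route buys is self-containedness. Its only analytic input is a routine fact: composing a $C^{a,\kappa}$ function with a smooth map whose derivatives are bounded keeps it in $C^{a,\kappa}$ with controlled norm, and so does multiplying by smooth factors. Here the map is $u\mapsto u'/u_{d+1}$ on $\{u_{d+1}\ge 1/2\}$, whose image lies in the ball of radius $\sqrt3$. Hence the constant $C_\alpha$ is explicit in terms of $\eta$ and the atlas.

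The paper's version evaluates the formula only on the cap, where $u'/u_{d+1}$ stays in $\mathbb B^d$. It pays for this by citing an extension theorem for H\"older functions from the closed set $\Omega\cup(-\Omega)$ to the sphere. Both routes reach the same conclusion with constants depending only on $\alpha$ and $d$.
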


\begin{proof}
	(i) Let $g\in\mathcal G_\sigma(M)$. By Definition~\ref{def:spherical_variation_space} there exists a finite signed Radon measure $\mu$ on $\mathbb S^d$ with $\|\mu\|_{\mathrm{TV}}\le M$ such that $g(u) = \int_{\mathbb S^d} \sigma(u\cdot v)\,d\mu(v)$ for all
	$u\in\mathbb S^d$. Now, let $x\in\mathbb B^d$. By definition, we have
	\[
	(\mathcal T g)(x)
	=
	\psi(x)\,g(\Phi(x))
	=
	\psi(x)
	\int_{\mathbb S^d} \sigma\bigl(\Phi(x)\cdot v\bigr)\,d\mu(v).
	\]
	Using the fact that $\psi(x)=1+\|x\|_2^2$ and homogeneity of $\sigma$, we get by direct computation that
	\[
	(\mathcal T g)(x)
	=
	\int_{\mathbb S^d} \sigma\bigl((x^\top,1)\cdot v\bigr)\,d\mu(v),
	\text{ for all } x\in\mathbb B^d.
	\]
	Thus $\mathcal T g$ admits a ReQU variation representation on $\mathbb B^d$ with the same measure $\mu$. By Definition~\ref{def:ReQU_activation_variation}, we thus conclude that $\gamma(\mathcal T g) \le	\|\mu\|_{\mathrm{TV}} \le M$, which proves claim (i).
	
	(ii) Let $h\in\mathcal H^\alpha$ and set $a:=\lceil \alpha\rceil-1$ and $\kappa:=\alpha-a$. By Definition~\ref{def:holder_spaces} there exists $\hat h\in C^{a,\kappa}(\R^d)$ with $\hat h=h$ on $\mathbb B^d$ and $\|\hat h\|_{C^{a,\kappa}(\R^d)}\le 2$. We first define a function $\tilde h_+$ on the spherical cap $\Omega$ by
	\[
	\tilde h_+(u)
	:=
	u_{d+1}^2\,\hat h\!\left(\frac{u'}{u_{d+1}}\right),
	\text{ where }
	u=(u',u_{d+1})\in\Omega,
	\]
	and $u':=(u_1,\dots,u_d)^\top$ denotes the canonical projection of $u$ on $\mathbb R^d$. Since for $x\in\mathbb B^d$ we have $\Phi(x) = (1+\|x\|_2^2)^{-1/2} (x,1)^\top$, we immediately have, reusing these same notations, that
	\[
	\Phi(x)' = \frac{x}{\sqrt{1+\|x\|_2^2}}, \ \
	\Phi(x)_{d+1} = \frac{1}{\sqrt{1+\|x\|_2^2}}, \text{ and }
	\frac{\Phi(x)'}{\Phi(x)_{d+1}} = x.
	\]
	We therefore get that
	\[
	(\mathcal T \tilde h_+)(x)
	=
	\psi(x)\,\tilde h_+(\Phi(x))
	=
	(1+\|x\|_2^2)\,\frac{1}{1+\|x\|_2^2} h(x)
	=
	h(x),
	\text{ for all } x\in\mathbb B^d.
	\]
	
	Now, since the map $u\mapsto u'/u_{d+1}$ is smooth on $\Omega$, and $\hat h\in C^{a,\kappa}(\R^d)$ with bounded norm, repeated application of the chain rule shows that $\tilde h_+\in C^{a,\kappa}(\Omega)$ with
	\[
	\|\tilde h_+\|_{C^{a,\kappa}(\Omega)}
	\le
	C_\alpha'
	\|\hat h\|_{C^{a,\kappa}(\R^d)}
	\le
	2 C_\alpha',
	\]
	for a constant $C_\alpha'$ depending only on $\alpha$ and $d$. Now, let $-\Omega:=\{ -u : u\in\Omega\}$ and define $\tilde h_-$ on $-\Omega$ by $\tilde h_-(u):=-\tilde h_+(-u)$. Set $U:=\Omega\cup(-\Omega)$ and define $\tilde h_0$ on $U$ by $\tilde h_0=\tilde h_+$ on $\Omega$ and $\tilde h_0=\tilde h_-$ on $-\Omega$.
	Then $\tilde h_0(-u)=-\tilde h_0(u)$ for all $u\in U$, and $\tilde h_0\in C^{a,\kappa}(U)$ with
	\[
	\|\tilde h_0\|_{C^{a,\kappa}(U)}
	\le
	C_\alpha'\|\hat h\|_{C^{a,\kappa}(\R^d)}.
	\]
	
	We can extend $\tilde h_0$ from $U$ to a function $\bar h\in C^{a,\kappa}(\mathbb S^d)$ by a linear extension operator for Hölder spaces on smooth domains \citep[Section~4.4]{fefferman2006whitney, taylor1996partial}. Finally, we define $\tilde h:u\mapsto\bigl(\bar h(u)-\bar h(-u)\bigr)/2$ on $\mathbb{S}^d$ as the odd part of $\bar{h}$. It then immediately follows that $\tilde{h}$ is odd, and $\tilde h=\tilde h_0$ on $U$. In particular $\mathcal T\tilde h=h$ on $\mathbb B^d$ since $\Phi(\mathbb B^d)\subset\Omega$. Since the mapping $\bar{h}\mapsto\tilde{h}$ is bounded, and the $C^{a,\kappa}(\mathbb S^d)$ norm of $\hat{h}$ is uniformly bounded, the norm estimate in claim (ii) follows, our proof is thus complete.
\end{proof}

In the remainder of this subsection we fix $\alpha>0$ and set $a:=\lceil \alpha\rceil-1$ and $\kappa:=\alpha-a\in(0,1]$. We consider functions $h\in\mathcal H^\alpha$ and associated lifts $\tilde h$ given by Proposition~\ref{prop:transfer_variation_ball_sphere}(ii). We begin by a lemma which quantifies the spherical smoothness of $\tilde h$ in terms of the moduli of smoothness from Definition~\ref{def:modulus_sphere}.

\begin{lemma}[Modulus of smoothness of the lifted function]
	\label{lemma:lift_modulus_base}
	Let $\alpha>0$ and let $s_\ast\in\N$ be the smallest integer such that $\alpha\le 2s_\ast$. There exist positive constants $C_{\alpha},t_0>0$ such that for every $h\in\mathcal H^\alpha$ and every lift $\tilde h$ given by Proposition~\ref{prop:transfer_variation_ball_sphere} we have
	\[
	\omega_{2s_\ast}(\tilde h,t)_\infty
	\le
	C_\alpha\, t^\alpha,
	\text{ for all } 0<t<t_0.
	\]
\end{lemma}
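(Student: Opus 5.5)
We reduce the claim to a Jackson-type estimate for $\tilde h\in C^{a,\kappa}(\mathbb S^d)$ by exploiting the spectral definition of $\Delta_\theta^{2s_\ast}$. The key observation is that for an even integer order $2s$, the multiplier $(1-P_n(\cos\theta))^{s}$ is a genuine polynomial in the operator $I-T_\theta$. So $\Delta_\theta^{2s_\ast}\tilde h=(I-T_\theta)^{s_\ast}\tilde h$, an iterated spherical-mean difference which can be estimated pointwise in physical space, with no fractional spectral calculus. The goal is the bound $\|(I-T_\theta)^{s_\ast}\tilde h\|_{L^\infty(\mathbb S^d)}\le C\theta^\alpha\|\tilde h\|_{C^{a,\kappa}(\mathbb S^d)}$ for $0<\theta<t_0$. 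Taking the supremum over $\theta\le t$ and using $\|\tilde h\|_{C^{a,\kappa}}\le C_\alpha$ from Proposition~\ref{prop:transfer_variation_ball_sphere}(ii) then gives the lemma.

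\textbf{Step 1 (localization).} Fix $u\in\mathbb S^d$ and $t_0$ small, for instance below the injectivity radius. Then $(I-T_\theta)^{s_\ast}\tilde h(u)$ depends only on the values of $\tilde h$ on the geodesic ball of radius $s_\ast\theta$ around $u$. In exponential (normal) coordinates centred at $u$, $T_\theta$ averages $\tilde h$ over the geodesic sphere of radius $\theta$ with respect to a rotation-invariant probability measure. Iterating, $(I-T_\theta)^{s_\ast}\tilde h(u)=\int \tilde h\,d\nu_{u,\theta}$, where $\nu_{u,\theta}$ is a signed measure of total mass $2^{s_\ast}$ supported in that ball.

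\textbf{Step 2 (moment cancellation).} We Taylor-expand $\tilde h$ in normal coordinates at $u$ to order $a$, with a remainder of size $C|y|^{\alpha}\|\tilde h\|_{C^{a,\kappa}}$. The remainder integrates against $\nu_{u,\theta}$ to give $O(\theta^\alpha)$. It remains to show that $\nu_{u,\theta}$ annihilates, up to $O(\theta^{\alpha})$ errors, the polynomials of degree $\le a<\alpha\le 2s_\ast$. For the flat analogue this is clear: radial averaging kills odd moments, and $(I-T_\theta)$ acts on a polynomial of degree $k$ as a differential operator in powers of $\theta^2\Delta$ that lowers the degree by $2$. Hence $(I-T_\theta)^{s_\ast}$ kills every polynomial of degree $<2s_\ast$.

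\textbf{Step 3 (the main obstacle).} On the sphere the curvature spoils exact annihilation. I would handle this spectrally rather than geometrically. $T_\theta$ is a function of the Laplace--Beltrami operator, with multiplier $P_n(\cos\theta)$. From the small-$\theta$ expansion $P_n(\cos\theta)=\sum_k c_k(\theta)\lambda_n^k$ with $c_k(\theta)=O(\theta^{2k})$, we get $I-T_\theta=\theta^2 A(\theta)\Delta$ in the sense of formal expansions. Applied to the Taylor polynomial (in smooth coordinates, extended smoothly), each factor then costs $\theta^2$ times two derivatives. Terms requiring more than $a$ derivatives are replaced by the Hölder remainder estimate.

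Alternatively, and more robustly, I would use the standard equivalence between spherical moduli of smoothness and $K$-functionals together with the Jackson estimate, citing \citep[Chapter~10]{dai2013approximation}: $\omega_{2s}(f,t)_\infty\asymp\inf_g\{\|f-g\|_\infty+t^{2s}\|(-\Delta)^{s}g\|_\infty\}$. Take $g$ to be a near-best polynomial approximant of degree $n\sim 1/t$. Jackson's theorem for $C^{a,\kappa}(\mathbb S^d)$ gives $\|f-g\|_\infty\lesssim n^{-\alpha}$, and Bernstein's inequality combined with a dyadic telescoping sum gives $\|(-\Delta)^{s}g\|\lesssim n^{2s-\alpha}$, using $\alpha\le 2s_\ast$. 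Both terms are then $\lesssim t^\alpha$, which yields the lemma. This second route is the one I would actually commit to, keeping the Taylor argument as intuition. Its only delicate point is the boundary case $\alpha=2s_\ast$ (so $\kappa=1$). There I would check that the telescoping sum $\sum_{j\le\log n}2^{j(2s-\alpha)}$ is dominated by its top term, since $2s-\alpha>0$ whenever $\alpha<2s_\ast$. When $\alpha=2s_\ast$ the sum would produce a logarithm. In that case I would instead use the direct Lipschitz bound on $\nabla^{a}\tilde h$: $\|(-\Delta)^{s_\ast}$ of the de la Vallée Poussin mean$\|\lesssim\|\tilde h\|_{C^{2s_\ast-1,1}}$. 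This avoids any logarithm.
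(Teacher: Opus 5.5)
Your argument is correct, but it is not what the paper does. The paper gives no proof of its own and simply defers to \citep[Proposition~3.3]{yang2025optimal}.

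Your committed (second) route shows the lemma is a generic fact about H\"older functions on $\mathbb S^d$. It uses nothing about $\tilde h$ except the bound $\|\tilde h\|_{C^{a,\kappa}(\mathbb S^d)}\le C_\alpha$ from Proposition~\ref{prop:transfer_variation_ball_sphere}(ii); neither oddness nor the cap construction enters. This buys three things: the lemma holds for any lift satisfying that bound, the estimate holds for all $0<t<\pi$ (so $t_0$ is superfluous), and you never need to check that the lift built here matches the one in \citep{yang2025optimal}. Your observation $\Delta_\theta^{2s_\ast}=(I-T_\theta)^{s_\ast}$ is what makes the argument clean. Only the elementary half of the $K$-functional equivalence is needed, namely $\|(I-T_\theta)^{s}g\|_\infty\le C\theta^{2s}\|(-\Delta)^{s}g\|_\infty$ for spherical polynomials $g$.

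The one point needing care is which Jackson theorem you invoke. Write $E_n(f)_\infty$ for the error of best uniform approximation by spherical polynomials of degree at most $n$. The form usually quoted on the sphere, $E_n(f)_\infty\le C\,\omega_{2s}(f,n^{-1})_\infty$, is the one the paper uses in Lemma~\ref{lemma:laplacian_smooth_approx}, and it would make your argument circular. You need a derivative-based version. One option is Ragozin's theorem, $E_n(f)_\infty\lesssim n^{-a}\,\omega(\nabla^a f,n^{-1})\lesssim n^{-\alpha}\|f\|_{C^{a,\kappa}(\mathbb S^d)}$. Another is to extend $\tilde h$ to a $C^{a,\kappa}$ function near $\mathbb S^d$ in $\R^{d+1}$, approximate by algebraic polynomials on a cube, and restrict.

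With either of these, your telescoping/Bernstein step is fine. So is your endpoint case $\alpha=2s_\ast$; there you could even take $g=\tilde h$, since $\tilde h\in C^{2s_\ast-1,1}$ gives $(-\Delta)^{s_\ast}\tilde h\in L^\infty$. Steps~1--3 are only heuristic as written, since Step~3 manipulates a formal expansion of $T_\theta$ in powers of $\theta^2\Delta$. Because your proof does not rest on them, this is harmless.

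A useful by-product: $L_m$ reproduces spherical polynomials of degree at most $m$, and $\sup_m\sup_u\int_{\mathbb S^d}|L_m(u\cdot v)|\,d\tau_d(v)<\infty$. Hence your H\"older-form Jackson estimate directly gives $\|\tilde h-\tilde h*L_m\|_\infty\lesssim m^{-\alpha}$. That is exactly Lemma~\ref{lemma:laplacian_smooth_approx}, obtained without passing through the modulus of smoothness at all.
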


\begin{proof}
	See \citep[Proposition 3.3]{yang2025optimal}.
\end{proof}

\subsubsection*{Approximation of lifted functions by filtered spherical polynomials}

We now introduce the zonal kernels used to approximate functions on the sphere.

\begin{definition}[Spectral cutoff kernels]
	\label{def:spectral_cutoff_kernel}
	Let $\eta\in C^\infty([0,\infty))$ satisfy $\eta(t)=1$ for $0\le t\le 1$ and $\eta(t)=0$ for $t\ge 2$. For each integer $m\ge 1$ we define the zonal kernel $L_m:[-1,1]\to\R$ by
	\[
	L_m(t)
	:=
	\sum_{n=0}^\infty \eta\!\left(\frac{n}{m}\right) N(d,n)\,P_n(t),
	\text{ for all } t\in[-1,1],
	\]
	where $N(d,n)$ and $P_n$ are as in Definition~\ref{def:spherical_harmonics} and Lemma~\ref{lemma:addition_formula}. Since $\eta(n/m)=0$ for $n\ge 2m$ the sum is finite. For a continuous function $f$ on $\mathbb S^d$, if we set $g_m:=f*L_m$, then $g_m$ is a spherical polynomial of degree at most $2m-1$ and depends linearly on $f$.
\end{definition}

We have the following quantitative error bounds for approximation with spherical polynomials.

\begin{lemma}
	\label{lemma:laplacian_smooth_approx}
	Let $\alpha>0$, $h\in\mathcal H^\alpha$, and let $\tilde h$ be the associated lift from Proposition~\ref{prop:transfer_variation_ball_sphere}. For each integer $m\ge 1$ define $g_m:=\tilde h*L_m$ as in Definition~\ref{def:spectral_cutoff_kernel}. Then there exists a constant $C_\alpha>0$, independent of $h$ and $m$, such that
	\[
	\|\tilde h-g_m\|_{L^\infty(\mathbb S^d)}
	\le
	C_\alpha\, m^{-\alpha}.
	\]
\end{lemma}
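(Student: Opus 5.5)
The plan is the standard Jackson-type argument for near-best approximation operators on the sphere. The first step is to observe that $L_m$ is a filtered (de la Vallée Poussin type) kernel: since $\eta\equiv1$ on $[0,1]$, Lemma~\ref{lemma:funk-hecke} together with the addition formula (Lemma~\ref{lemma:addition_formula}) gives
\[
\mathcal P_n(f*L_m)=\eta(n/m)\,\mathcal P_n f.
\]
Two consequences follow.

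\begin{itemize}
\item The operator $f\mapsto f*L_m$ reproduces every spherical polynomial of degree at most $m$.
\item It is uniformly bounded on $C(\mathbb S^d)$. Indeed $\sup_m\|L_m\|_{L^1([-1,1],\varrho)}<\infty$ for smooth $\eta$; this is the classical kernel estimate $|L_m(\cos\theta)|\lesssim m^d(1+m\theta)^{-k}$ for every $k$, found in \citep{dai2013approximation}, and I take it from there.
\end{itemize}

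The usual Lebesgue-constant argument then gives
\[
\|\tilde h-g_m\|_{L^\infty}\le \bigl(1+\sup_m\|f\mapsto f*L_m\|\bigr)\,E_m(\tilde h)_\infty,
\]
where $E_m$ denotes the error of best uniform approximation by polynomials of degree at most $m$.

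The second step is to bound $E_m(\tilde h)_\infty$ by a modulus of smoothness. I would invoke the Jackson inequality on the sphere \citep{dai2013approximation}, which in terms of the moduli of Definition~\ref{def:modulus_sphere} reads
\[
E_m(f)_\infty\le C\,\omega_{\beta}(f,m^{-1})_\infty \quad\text{for any fixed } \beta>0.
\]
Taking $\beta=2s_\ast$ and combining with Lemma~\ref{lemma:lift_modulus_base} gives, for $m^{-1}<t_0$,
\[
E_m(\tilde h)_\infty\le C\,\omega_{2s_\ast}(\tilde h,m^{-1})_\infty\le C\,C_\alpha m^{-\alpha},
\]
with constants independent of $h$. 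Lemma~\ref{lemma:lift_modulus_base} is exactly what makes this uniform over $\mathcal H^\alpha$.

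The finitely many $m$ with $m^{-1}\ge t_0$ are handled crudely:
\[
\|\tilde h-g_m\|_\infty\le(1+\sup_m\|L_m\|_{L^1})\,\|\tilde h\|_\infty\le C\,C_\alpha,
\]
using the uniform bound from Proposition~\ref{prop:transfer_variation_ball_sphere}(ii). Since $m^{-\alpha}\ge t_0^{\alpha}$ in this range, the bound is of the required form after enlarging the constant.

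The main obstacle is the uniform $L^1$ bound on $L_m$, equivalently the Jackson estimate phrased with the spectral moduli $\Delta_\theta^\alpha$. If citing it directly proves awkward, my fallback is a direct route. I would write
\[
\tilde h-g_m=\sum_n\bigl(1-\eta(n/m)\bigr)\mathcal P_n\tilde h.
\]
The multiplier $\bigl(1-\eta(n/m)\bigr)/\bigl(1-P_n(\cos(1/m))\bigr)^{s_\ast}$ is smooth in $n/m$ and supported on $n\ge m$, where $1-P_n(\cos(1/m))\gtrsim 1$. By the same kernel-smoothness estimate this multiplier defines a uniformly bounded operator on $C(\mathbb S^d)$. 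Applying it to $\Delta^{2s_\ast}_{1/m}\tilde h$ then yields
\[
\|\tilde h-g_m\|_\infty\lesssim\|\Delta_{1/m}^{2s_\ast}\tilde h\|_\infty\le\omega_{2s_\ast}(\tilde h,1/m)_\infty,
\]
and the conclusion follows as before.
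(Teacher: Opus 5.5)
Your main route is correct and is essentially the paper's. The paper cites \citep[Theorems~10.3.2 and 10.4.1]{dai2013approximation} directly for $\|f-f*L_m\|_{L^\infty(\mathbb S^d)}\le C\,\omega_{2s_\ast}(f,m^{-1})_\infty$, which is exactly your near-best-approximation property of $f\mapsto f*L_m$ combined with the Jackson inequality, and then applies Lemma~\ref{lemma:lift_modulus_base}; your separate treatment of the finitely many $m$ with $m^{-1}\ge t_0$ fills a small step the paper skips, since it applies that lemma at $t=m^{-1}$ for every $m\ge 1$ although the lemma is only stated for $t<t_0$ (your fallback is unnecessary, and would need more than the kernel estimate for $L_m$, because $(1-\eta(n/m))(1-P_n(\cos(1/m)))^{-s_\ast}$ is neither compactly supported nor a function of $n/m$ alone).
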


\begin{proof}
	Let $s_\ast\in\N$ be the smallest integer such that $\alpha\le 2s_\ast$. By Lemma~\ref{lemma:lift_modulus_base}, there exists a constant $C'_\alpha>0$ such that
	\[
	\omega_{2s_\ast}(\tilde h,t)_\infty
	\le
	C'_\alpha\, t^\alpha,
	\text{ for all } 0<t<t_0.
	\]
	By \citep[Theorem~10.3.2]{dai2013approximation} together with \citep[Theorem~10.4.1]{dai2013approximation}, applied with $r=2s_\ast$ and $p=\infty$, there exists a constant $C_{2s_\ast}>0$ such that for every $f\in C(\mathbb S^d)$ and every integer $m\ge 1$,
	\[
	\|f-f*L_m\|_{L^\infty(\mathbb S^d)}
	\le
	C_{2s_\ast}\,\omega_{2s_\ast}(f,m^{-1})_\infty.
	\]
	Applying this with $f=\tilde h$ gives
	\[
	\|\tilde h-g_m\|_{L^\infty(\mathbb S^d)}
	=
	\|\tilde h-\tilde h*L_m\|_{L^\infty(\mathbb S^d)}
	\le
	C_{2s_\ast}\,\omega_{2s_\ast}(\tilde h,m^{-1})_\infty
	\le
	C_\alpha\, m^{-\alpha},
	\]
	as desired.
\end{proof}

We next upgrade the $L^\infty$ approximation of $\tilde h$ by $g_m$ to $C^q$-approximation on the sphere.

\begin{proposition}
	\label{prop:cq_smooth_approx}
	Let $\alpha>0$ and set $a:=\lceil \alpha\rceil-1$. Let $h\in\mathcal H^\alpha$ and let $\tilde h$ be the associated function from Proposition~\ref{prop:transfer_variation_ball_sphere}(ii). For each integer $m\ge 1$, define $g_m:=\tilde h*L_m$ as in Definition~\ref{def:spectral_cutoff_kernel}. Then for every integer $q$ with $0\le q\le a$ there exists a constant $C_{\alpha,q}>0$, independent of $h$ and $m$, such that
	\[
	\|\tilde h-g_m\|_{C^q(\mathbb S^d)}
	\le
	C_{\alpha,q}\,m^{-(\alpha-q)}.
	\]
\end{proposition}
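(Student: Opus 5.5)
We would prove the $C^q$ bound through a Littlewood--Paley (dyadic) decomposition of $\tilde h-g_m$ combined with a Bernstein inequality for spherical polynomials. The $L^\infty$ rate comes from Lemma~\ref{lemma:laplacian_smooth_approx}, and each derivative then costs exactly one power of the degree.

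\textbf{Step 1: dyadic telescoping.} For $j\ge0$ set $m_j:=2^jm$. Since $\tilde h\in C^{a,\kappa}(\mathbb S^d)\subset C(\mathbb S^d)$, Lemma~\ref{lemma:laplacian_smooth_approx} gives $g_{m_j}\to\tilde h$ uniformly. We therefore write
\[
\tilde h-g_m=\sum_{j\ge0}\bigl(g_{m_{j+1}}-g_{m_j}\bigr).
\]
Each term is a spherical polynomial of degree at most $4m_j$, and by the triangle inequality and Lemma~\ref{lemma:laplacian_smooth_approx},
\[
\|g_{m_{j+1}}-g_{m_j}\|_{L^\infty(\mathbb S^d)}\le \|\tilde h-g_{m_{j+1}}\|_{L^\infty}+\|\tilde h-g_{m_j}\|_{L^\infty}\le 2C_\alpha (2^jm)^{-\alpha}.
\]

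\textbf{Step 2: Bernstein inequality.} We invoke the spherical Bernstein inequality \citep{dai2013approximation}: for a spherical polynomial $P$ of degree at most $n$ and each $k\ge 0$, $\|\nabla^kP\|_{L^\infty(\mathbb S^d)}\le C_{k,d}\,n^k\|P\|_{L^\infty(\mathbb S^d)}$. This can be stated either for the covariant derivatives, or equivalently for iterated tangential derivatives $D_{ij}=x_i\partial_j-x_j\partial_i$, which generate all derivatives in any chart. By the norm equivalence in Definition~\ref{def:Cm_sphere}, applied to the $j$-th block,
\[
\|g_{m_{j+1}}-g_{m_j}\|_{C^q(\mathbb S^d)}\le c_q\sum_{k\le q}C_{k,d}(4m_j)^k\cdot 2C_\alpha m_j^{-\alpha}\lesssim (2^jm)^{-(\alpha-q)}.
\]

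\textbf{Step 3: summation.} When $q<\alpha$ the geometric series $\sum_j 2^{-j(\alpha-q)}$ converges. The series of $C^q$-norms therefore converges absolutely, the telescoped sum converges in $C^q$ to a limit that must equal $\tilde h-g_m$, and we obtain $\|\tilde h-g_m\|_{C^q}\le C_{\alpha,q}m^{-(\alpha-q)}$. Since $q\le a=\lceil\alpha\rceil-1<\alpha$, this covers every case in the statement.

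\textbf{Main obstacle.} The delicate point is justifying the Bernstein inequality with an explicit $n^k$ growth in the chart-based $C^q$ norm. We would first try iterating the first-order Bernstein inequality for the operators $D_{ij}$, which map spherical polynomials of degree $\le n$ to themselves. If that proves awkward, the fallback is to take the $C^q$ bound directly from the Jackson--Bernstein characterizations in \citep[Ch.~4 and~10]{dai2013approximation}. A secondary check is that the constant in Lemma~\ref{lemma:laplacian_smooth_approx} is uniform in $m$, including the regime $m^{-1}\ge t_0$. There the $L^\infty$ bound is trivial, since $\|\tilde h\|_\infty\le C_\alpha$ and the kernels $L_m$ have uniformly bounded $L^1$ norm.
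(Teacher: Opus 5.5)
Your proposal is correct and follows essentially the same route as the paper: you telescope $\tilde h-g_m$ dyadically into spherical-polynomial blocks, bound each block in $L^\infty$ via Lemma~\ref{lemma:laplacian_smooth_approx}, pay a factor $n^q$ per block through the spherical Bernstein inequality, and sum a geometric series using $q<\alpha$. The only differences are cosmetic: you telescope along $2^jm$ directly, whereas the paper telescopes along $2^k$ starting from $2^N\le m<2^{N+1}$ and handles $g_m-g_{2^N}$ separately; your remark on the regime $m^{-1}\ge t_0$ also makes explicit a uniformity point that the paper leaves implicit.
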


\begin{proof}
	The case $q=0$ is exactly Lemma~\ref{lemma:laplacian_smooth_approx}. Assume now that $q\ge 1$, and choose $N\in\N_0$ such that $2^N\le m<2^{N+1}$. By the Bernstein inequality for spherical polynomials \citep[Lemma 4.2.4]{dai2013approximation}, there exists a constant $B_{q,d}>0$ such that every spherical polynomial $P$ of degree at most $n$ satisfies
	\[
	\|P\|_{C^q(\mathbb S^d)}
	\le
	B_{q,d}\,(1+n)^q \|P\|_{L^\infty(\mathbb S^d)}.
	\]
	Since $g_m-g_{2^N}$ is a spherical polynomial of degree at most $2m-1$, we obtain
	\[
	\|g_m-g_{2^N}\|_{C^q(\mathbb S^d)}
	\le
	C_{q,d}\,m^q \|g_m-g_{2^N}\|_{L^\infty(\mathbb S^d)}.
	\]
	Using Lemma~\ref{lemma:laplacian_smooth_approx},
	\[
	\|g_m-g_{2^N}\|_{L^\infty(\mathbb S^d)}
	\le
	\|\tilde h-g_m\|_{L^\infty(\mathbb S^d)}
	+
	\|\tilde h-g_{2^N}\|_{L^\infty(\mathbb S^d)}
	\le
	C_\alpha\bigl(m^{-\alpha}+2^{-N\alpha}\bigr).
	\]
	Since $m<2^{N+1}$, we thus get that $\|g_m-g_{2^N}\|_{C^q(\mathbb S^d)}\le C_{\alpha,q}\,m^{-(\alpha-q)}$.
	
	Again by Lemma~\ref{lemma:laplacian_smooth_approx}, the sequence $(g_{2^k})_{k\ge 0}$ converges uniformly to $\tilde h$, and {$\tilde h-g_{2^N} = \sum_{k=N}^\infty \bigl(g_{2^{k+1}}-g_{2^k}\bigr)$} in $C^0(\mathbb S^d)$. For each $k\ge N$, the difference $g_{2^{k+1}}-g_{2^k}$ is a spherical polynomial of degree at most $2^{k+2}-1$. Therefore Lemma~\ref{lemma:laplacian_smooth_approx} gives
	\[
	\|g_{2^{k+1}}-g_{2^k}\|_{L^\infty(\mathbb S^d)}
	\le
	\|\tilde h-g_{2^{k+1}}\|_{L^\infty(\mathbb S^d)}
	+
	\|\tilde h-g_{2^k}\|_{L^\infty(\mathbb S^d)}
	\le
	C_\alpha\,2^{-\alpha k}.
	\]
	and by Bernstein's inequality
	\[
	\|g_{2^{k+1}}-g_{2^k}\|_{C^q(\mathbb S^d)}
	\le
	C_{q,d}\,2^{qk}\|g_{2^{k+1}}-g_{2^k}\|_{L^\infty(\mathbb S^d)}
	\le
	C_{\alpha,q}\,2^{-(\alpha-q)k}.
	\]
	Since $q\le a<\alpha$, the series $\sum_{k=N}^\infty 2^{-(\alpha-q)k}$ converges. Hence the above identity also holds in $C^q(\mathbb S^d)$, and
	\[
	\|\tilde h-g_{2^N}\|_{C^q(\mathbb S^d)}
	\le
	C_{\alpha,q}\sum_{k=N}^\infty 2^{-(\alpha-q)k}
	\le
	C_{\alpha,q}\,m^{-(\alpha-q)}.
	\]
	Combining this estimate with the inequality $\|g_m-g_{2^N}\|_{C^q(\mathbb S^d)}\le C_{\alpha,q}\,m^{-(\alpha-q)}$ from above proves the claim.
\end{proof}

It now remains to estimate the variation norm of the filtered polynomials $g_m$. We first construct a representing density for $g_m$ from its spherical harmonic coefficients, and then estimate its $L^2$ norm either via a Sobolev bound for $(-\Delta)^{\alpha/2}g_m$, or via another dyadic shell argument, depending on the value of $\alpha$.

\begin{proposition}
	\label{prop:variation_of_gm}
	Let $\alpha>0$, set $\alpha_c:=\frac{d+5}{2}$, and let $h\in\mathcal H^\alpha$ with lift $\tilde h$.
	For $m\ge1$ define $g_m:=\tilde h*L_m$.
	There exist constants $C_1,C_2>0$ (depending only on $d$ and $\alpha$) such that:
	\begin{enumerate}[label=(\roman*),leftmargin=1.5em]
		\item If $\alpha>\alpha_c$ or if $\alpha=\alpha_c$ is an even integer, then
		\[
		\sup_{m\ge1}\gamma(g_m)\le C_1.
		\]
		\item If $\alpha=\alpha_c$ is not an even integer, then for all $m\ge2$,
		\[
		\gamma(g_m)\le C_2\sqrt{\log m}.
		\]
		\item If $\alpha<\alpha_c$, then for all $m\ge1$,
		\[
		\gamma(g_m)\le C_2\,m^{\alpha_c-\alpha}.
		\]
	\end{enumerate}
\end{proposition}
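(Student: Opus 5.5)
We represent $g_m$ as a spherical convolution $g_m=\phi_m*\sigma$ with an explicit density $\phi_m$. We then bound $\gamma(g_m)\le\|\phi_m\|_{L^1(\mathbb S^d,\tau_d)}\le\|\phi_m\|_{L^2(\mathbb S^d,\tau_d)}$, which holds because $\tau_d$ is a probability measure, and estimate the $L^2$ norm spectrally.

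\textbf{Construction of the density.} By Lemma~\ref{lemma:funk-hecke}, $\mathcal P_n(\phi*\sigma)=\hat\sigma(n)\mathcal P_n\phi$. We therefore want to set $\mathcal P_n\phi_m:=\mathcal P_n g_m/\hat\sigma(n)$ for $n<2m$ and $0$ otherwise. This requires $\hat\sigma(n)\neq0$ whenever $\mathcal P_n g_m\neq 0$.

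For the ReQU (ReLU$^2$), the classical computation of \citet{bach2017breaking} gives the following:
\begin{itemize}
\item $\hat\sigma(n)=0$ for even $n\ge 4$;
\item $\hat\sigma(n)\neq0$ for $n\in\{0,1,2\}$ and for all odd $n$;
\item the two-sided asymptotics $|\hat\sigma(n)|\asymp n^{-(d+5)/2}=n^{-\alpha_c}$ hold for odd $n\to\infty$.
\end{itemize}
This is exactly why Proposition~\ref{prop:transfer_variation_ball_sphere}(ii) produced an \emph{odd} lift $\tilde h$. Indeed, $\mathcal P_n\tilde h=0$ for even $n$, and $\mathcal P_n g_m=\eta(n/m)\mathcal P_n\tilde h$ by Lemma~\ref{lemma:funk-hecke} and the addition formula. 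So only odd degrees appear, and $\phi_m$ is a well-defined spherical polynomial. Parseval then gives, using $|\eta|\le1$,
\[
\gamma(g_m)^2\le\|\phi_m\|_{L^2}^2=\sum_{n<2m,\ n \text{ odd}}\frac{\eta(n/m)^2\|\mathcal P_n\tilde h\|_{L^2}^2}{\hat\sigma(n)^2}\lesssim\sum_{n<2m,\ n\text{ odd}} n^{2\alpha_c}\|\mathcal P_n\tilde h\|_{L^2}^2 .
\]

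\textbf{Dyadic shells for cases (ii) and (iii).} We group degrees into shells $2^k\le n<2^{k+1}$. On such a shell the multiplier $1-\eta(n/2^{k-2})$ equals $1$. Hence, by orthogonality and Lemma~\ref{lemma:laplacian_smooth_approx},
\[
\sum_{2^k\le n<2^{k+1}}\|\mathcal P_n\tilde h\|_{L^2}^2\le\|\tilde h-g_{2^{k-2}}\|_{L^2}^2\le\|\tilde h-g_{2^{k-2}}\|_{L^\infty}^2\lesssim 2^{-2k\alpha}.
\]
Summing over $k\le\log_2(2m)$ gives
\[
\gamma(g_m)^2\lesssim\sum_{k\le \log_2 m+1}2^{2k(\alpha_c-\alpha)}.
\]
This sum is $O(1)$ if $\alpha>\alpha_c$, $O(\log m)$ if $\alpha=\alpha_c$, and $O(m^{2(\alpha_c-\alpha)})$ if $\alpha<\alpha_c$. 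Taking square roots yields (i) for $\alpha>\alpha_c$, (ii) and (iii). The finitely many low degrees $n\le 2$ contribute a constant, since $\|\tilde h\|_{C^{a,\kappa}}\le C_\alpha$.

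\textbf{Borderline even case $\alpha=\alpha_c\in2\N$.} Here the shell argument loses a logarithm, so we instead use a Sobolev bound. We have $a=\alpha-1$ and $\kappa=1$, so $\tilde h\in C^{\alpha-1,1}(\mathbb S^d)=W^{\alpha,\infty}(\mathbb S^d)$ with norm $\le C_\alpha$. Since $\alpha/2$ is an integer, $(-\Delta)^{\alpha/2}$ is a differential operator of order $\alpha$. Hence
\[
\sum_n\lambda_n^{\alpha}\|\mathcal P_n\tilde h\|_{L^2}^2=\|(-\Delta)^{\alpha/2}\tilde h\|_{L^2}^2\lesssim\|\tilde h\|_{W^{\alpha,\infty}}^2\le C_\alpha^2 .
\]
Since $\lambda_n^\alpha\asymp n^{2\alpha}=n^{2\alpha_c}$, this bounds the sum uniformly in $m$, which gives (i).

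\textbf{Main obstacle.} The delicate step is the precise Funk--Hecke analysis of $\hat\sigma(n)$ for $\sigma(t)=\max\{0,t\}^2$. It is needed both for the exact vanishing pattern (even $n\ge4$ only) and for a \emph{lower} bound $|\hat\sigma(n)|\gtrsim n^{-(d+5)/2}$ on odd $n$, uniform in $n$. I would first try to derive it from Bach's closed-form expression via Rodrigues' formula and repeated integration by parts, which gives a ratio of Gamma functions. I would then apply Stirling's formula, taking care with the normalization $P_n(1)=1$ used in Lemma~\ref{lemma:addition_formula}.
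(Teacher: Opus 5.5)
Your proposal is correct and follows essentially the same route as the paper. The shared steps are: an odd lift so that only odd degrees appear; the density $\phi_m$ obtained by dividing by $\hat\sigma(n)$; the bound $\gamma(g_m)\le\|\phi_m\|_{L^2}$ combined with Parseval and $|\hat\sigma(n)|\asymp n^{-(d+5)/2}$; dyadic shells controlled by Lemma~\ref{lemma:laplacian_smooth_approx} for cases (i) with $\alpha>\alpha_c$, (ii) and (iii); and a $(-\Delta)^{\alpha_c/2}$ Sobolev bound in the even critical case.

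The only deviation is minor. In the even critical case you bound $\|(-\Delta)^{\alpha/2}\tilde h\|_{L^2}$ directly, using $\|\mathcal P_n g_m\|\le\|\mathcal P_n\tilde h\|$. This spares the paper's detour through $\|(-\Delta)^{r/2}g_m\|_{L^\infty}$ and the uniform $L^1$ bound on the kernels $L_m$.
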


\begin{proof}
	First note that by Proposition~\ref{prop:transfer_variation_ball_sphere}(ii) we may assume that $\tilde h$ is odd, meaning $\tilde h(-u)=-\tilde h(u)$ for all $u\in\mathbb S^d$. Since $g_m$ is obtained from $\tilde h$ by a spectral multiplier, $g_m$ is odd as well. Therefore $\mathcal P_n g_m=0$ for all even $n$. Now, for each $m\ge 1$, define a spherical polynomial $\phi_m$ by
	\[
	\mathcal P_n \phi_m
	:=
	\begin{cases}
		\hat\sigma(n)^{-1}\,\mathcal P_n g_m, & \text{ if } n \text{ is odd and } 1\le n\le 2m-1,\\
		0, & \text{ otherwise.}
	\end{cases}
	\]
	Since $g_m$ has degree at most $2m-1$, the same is true for $\phi_m$. By Lemma~\ref{lemma:funk-hecke}, convolution with $\sigma$ acts on degree-$n$ harmonics by multiplication with $\hat\sigma(n)$, hence
	\[
	\mathcal P_n(\phi_m*\sigma)=\hat\sigma(n)\mathcal P_n\phi_m=\mathcal P_n g_m
	\text{ for all } n\in\N_0.
	\]
	Therefore, we have that $g_m=\phi_m*\sigma$ for all $m\ge 1$. In particular, $g_m$ admits a representing measure that is absolutely continuous with respect to $\tau_d$, with density $\phi_m$. Thus
	\[
	\gamma(g_m)\le \|\phi_m\|_{L^1(\mathbb S^d)}\le \|\phi_m\|_{L^2(\mathbb S^d)}.
	\]
	By orthogonality of the spherical harmonic decomposition,
	\[
	\|\phi_m\|_{L^2(\mathbb S^d)}^2
	=
	\sum_{n=0}^{2m-1} \|\mathcal P_n\phi_m\|_{L^2(\mathbb S^d)}^2
	=
	\sum_{n=0}^{2m-1} |\hat\sigma(n)|^{-2}\,\|\mathcal P_n g_m\|_{L^2(\mathbb S^d)}^2.
	\]
	
	On the other hand, the harmonic coefficients $\hat\sigma(n)$ of the ReQU activation vanish for all even $n\ge 4$ and otherwise satisfy $|\hat\sigma(n)|\asymp n^{-(d+5)/2}$ on the non-vanishing odd indices \citep{bach2017breaking, yang2025optimal}. There therefore exists a constant $C_d>0$ such that
	\[
	\gamma(g_m)^2
	\le
	C_d
	\sum_{n=0}^{2m-1}
	(1+n)^{d+5}\,\|\mathcal P_n g_m\|_{L^2(\mathbb S^d)}^2.
	\]
	Furthermore, note that since $\mathcal P_n g_m=\eta\!\left(\frac{n}{m}\right)\mathcal P_n\tilde h$ and $|\eta|\le 1$, we have $\|\mathcal P_n g_m\|_{L^2(\mathbb S^d)} \le \|\mathcal P_n\tilde h\|_{L^2(\mathbb S^d)}$. Therefore we have the variation norm estimate
	\[
	\gamma(g_m)^2
	\le
	C_d
	\sum_{n=0}^{2m-1}
	(1+n)^{d+5}\,\|\mathcal P_n \tilde h\|_{L^2(\mathbb S^d)}^2.
	\]
	
	We now split cases depending on the value of $\alpha$: we first treat the case $\alpha=\alpha_c$ with $r:=\alpha_c\in 2\N$. By Proposition~\ref{prop:transfer_variation_ball_sphere}(ii), the lift $\tilde h$ belongs to $C^{r-1,1}(\mathbb S^d)$ with norm bounded independently of $h$. Hence $\tilde h\in W^{r,\infty}(\mathbb S^d)$, and since $(-\Delta)^{r/2}$ is a differential operator of order $r$ on $\mathbb S^d$, there exists a constant $C_{r,d}>0$ such that $ \|(-\Delta)^{r/2}\tilde h\|_{L^\infty(\mathbb S^d)}\le C_{r,d}$. Moreover, by \citep[Theorem~2.6.3, property~(2)]{dai2013approximation}, smooth filtered kernels satisfy the uniform integrability bound
	\[
	\sup_{u\in\mathbb S^d}\int_{\mathbb S^d}|L_m(u\cdot v)|\,d\tau_d(v)\le C_d
	\]
	for all $m\ge1$. Since convolution commutes with $(-\Delta)^{r/2}$, we have that $(-\Delta)^{r/2}g_m=((-\Delta)^{r/2}\tilde h)*L_m$,
	and therefore
	\[
	\|(-\Delta)^{r/2}g_m\|_{L^\infty(\mathbb S^d)}
	\le
	C_d\,\|(-\Delta)^{r/2}\tilde h\|_{L^\infty(\mathbb S^d)}
	\le C_{r,d}'.
	\]
	In particular, $\|(-\Delta)^{r/2}g_m\|_{L^2(\mathbb S^d)}\le C_{r,d}'$. Using $\lambda_n=n(n+d-1)\asymp (1+n)^2$ and $2r=d+5$, we obtain
	\begin{multline*}
		\gamma(g_m)^2
		\le
		C_d'
		\sum_{n=0}^{2m-1}(1+n)^{d+5}\|\mathcal P_n g_m\|_{L^2(\mathbb S^d)}^2
		\le
		C_d''
		\sum_{n=0}^{2m-1}\lambda_n^r\|\mathcal P_n g_m\|_{L^2(\mathbb S^d)}^2\\
		=
		C_d''\|(-\Delta)^{r/2}g_m\|_{L^2(\mathbb S^d)}^2
		\le
		C_d''',
	\end{multline*}
	which proves part (i) in the critical even case. We now treat the remaining cases by a dyadic decomposition: let $K_m\in\N_0$ be such that $2^{K_m}\le 2m < 2^{K_m+1}$.	Grouping the sum into dyadic shells gives
	\[
	\gamma(g_m)^2
	\le
	C_d
	\sum_{k=0}^{K_m}
	2^{(d+5)k}
	\sum_{n=2^k}^{2^{k+1}-1}
	\|\mathcal P_n \tilde h\|_{L^2(\mathbb S^d)}^2.
	\]
	
	We estimate the shell $k=0$ directly. Since this shell consists only of $n=1$, Proposition~\ref{prop:transfer_variation_ball_sphere}(ii) gives a uniform $L^\infty$ bound on $\tilde h$, so $\|\tilde h\|_{L^2(\mathbb S^d)}\le \|\tilde h\|_{L^\infty(\mathbb S^d)}\le C_\alpha$, and hence $\sum_{n=0}^{1}	\|\mathcal P_n \tilde h\|_{L^2(\mathbb S^d)}^2 \le \|\tilde h\|_{L^2(\mathbb S^d)}^2 \le C_\alpha^2$.
	
	Now fix $k\ge 1$. The polynomial $g_{2^{k-1}}=\tilde h*L_{2^{k-1}}$ has degree at most $2^k-1$, so by orthogonality we get $ \sum_{n=2^k}^{\infty}\|\mathcal P_n \tilde h\|_{L^2(\mathbb S^d)}^2	\le \|\tilde h-g_{2^{k-1}}\|_{L^2(\mathbb S^d)}^2$.	Using $\|u\|_{L^2(\mathbb S^d)}\le \|u\|_{L^\infty(\mathbb S^d)}$ and Lemma~\ref{lemma:laplacian_smooth_approx}, we obtain
	\[
	\sum_{n=2^k}^{2^{k+1}-1}
	\|\mathcal P_n \tilde h\|_{L^2(\mathbb S^d)}^2
	\le
	\|\tilde h-g_{2^{k-1}}\|_{L^2(\mathbb S^d)}^2
	\le
	C_{\alpha,d}\,2^{-2\alpha(k-1)}
	\le
	C_{\alpha,d}'\,2^{-2\alpha k}.
	\]
	
	Combining the shell bounds, we get
	\[
	\gamma(g_m)^2
	\le
	C_{\alpha,d}''
	\left(
	1+
	\sum_{k=1}^{K_m} 2^{(d+5-2\alpha)k}
	\right).
	\]
	Recalling that $d+5=2\alpha_c$, this becomes $ \gamma(g_m)^2 \le C_{\alpha,d}'' \left( 1+ \sum_{k=1}^{K_m} 2^{2(\alpha_c-\alpha)k}\right)$. If $\alpha>\alpha_c$, the geometric series is uniformly bounded in $m$, which proves part (i). If $\alpha=\alpha_c$ and $\alpha_c\notin 2\N$, then $
	\gamma(g_m)^2
	\le
	C_{\alpha,d}'''(1+K_m)
	\le
	C_{\alpha,d}^{(4)}(1+\log m)$, so
	\[
	\gamma(g_m)\le C_{\alpha,d}^{(5)}\sqrt{\log m}
	\text{ for all } m\ge 2,
	\]
	which proves part (ii). If $\alpha<\alpha_c$, then $\sum_{k=1}^{K_m} 2^{2(\alpha_c-\alpha)k} \le C_{\alpha,d}\,2^{2(\alpha_c-\alpha)K_m} \le C_{\alpha,d}'\,m^{2(\alpha_c-\alpha)}$,	and therefore
	\[
	\gamma(g_m)^2
	\le
	C_{\alpha,d}''\,m^{2(\alpha_c-\alpha)}.
	\]
	Taking square roots proves part (iii), and thus the proof is complete.
\end{proof}

\subsubsection*{Approximation rates for the ReQU variation space in $W^{q,\infty}$}

We are now ready to state the main approximation result for the infinite-width ReQU variation space in Sobolev norms on the ball.

\begin{theorem}
	\label{thm:ReQU_variation_Wrinfty}
	Fix $d\in\N$ and $\sigma(t)=\max\{0,t\}^2$. Let $\alpha>0$ and set $a:=\lceil \alpha\rceil-1$.
	Let $q$ be a non-negative integer such that $q\le a$, and define the critical index $\alpha_c:=(d+5)/2$. Then there exist constants $c,C>0$, depending only on $d,\alpha, q$, such that for all sufficiently large $M$:

	\begin{enumerate}[label=(\alph*),align=left]
		\item[\textnormal{(supercritical)}]
		If $\alpha>\alpha_c$ or if $\alpha=\alpha_c$ is an even integer, then there exists $M_0<\infty$ such that $\mathcal H^\alpha \subseteq \mathcal V_\sigma(M_0)$.
		
		\item[\textnormal{(critical)}]
		If $\alpha=\alpha_c$ is not an even integer, then
		\[
		\sup_{h\in\mathcal H^\alpha}\ \inf_{f\in\mathcal V_\sigma(M)}
		\|h-f\|_{W^{q,\infty}(\mathbb B^d)}
		\le
		C\exp\!\bigl(-c\,M^2\bigr).
		\]
		
		\item[\textnormal{(subcritical)}]
		If $\alpha<\alpha_c$, then
		\[
		\sup_{h\in\mathcal H^\alpha}\ \inf_{f\in\mathcal V_\sigma(M)}
		\|h-f\|_{W^{q,\infty}(\mathbb B^d)}
		\le
		C\,M^{-\,\frac{\alpha - q}{\alpha_c-\alpha}}.
		\]
	\end{enumerate}
\end{theorem}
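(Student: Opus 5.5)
The plan is to take the filtered spherical polynomial $g_m=\tilde h*L_m$ as the candidate element of the variation space and transfer it to the ball with $\mathcal T$. Fix $h\in\mathcal H^\alpha$ and let $\tilde h$ be its odd lift from Proposition~\ref{prop:transfer_variation_ball_sphere}(ii). Set $f_m:=\mathcal T g_m$. By Proposition~\ref{prop:transfer_variation_ball_sphere}(i), $\gamma(f_m)\le\gamma(g_m)$, and Proposition~\ref{prop:variation_of_gm} bounds the latter. Since $\mathcal T\tilde h=h$ on $\mathbb B^d$, linearity of $\mathcal T$ together with Lemma~\ref{lemma:transfer_sobolev_T} and Proposition~\ref{prop:cq_smooth_approx} give
\[
\|h-f_m\|_{W^{q,\infty}(\mathbb B^d)}=\|\mathcal T(\tilde h-g_m)\|_{W^{q,\infty}(\mathbb B^d)}\le C_{q,d}\|\tilde h-g_m\|_{C^q(\mathbb S^d)}\le C\,m^{-(\alpha-q)},
\]
uniformly in $h$ and $m$. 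The theorem then follows by trading $m$ against the variation budget $M$ in each regime.

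\emph{Supercritical case.} By Proposition~\ref{prop:variation_of_gm}(i), $\gamma(f_m)\le C_1$ for all $m$, while $f_m\to h$ uniformly on $\mathbb B^d$. I would pass to the limit in the representing measures. The densities $\phi_m$ have $L^2$ norms bounded by the proof of Proposition~\ref{prop:variation_of_gm}, so the measures $\phi_m\,d\tau_d$ have uniformly bounded total variation. Banach--Alaoglu then gives a weak-$*$ limit $\mu$ with $\|\mu\|_{\mathrm{TV}}\le C_1$. Since $v\mapsto\sigma((x^\top,1)\cdot v)$ is continuous on $\mathbb S^d$, we get $h(x)=\int\sigma((x^\top,1)\cdot v)\,d\mu(v)$ pointwise. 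Hence $h\in\mathcal V_\sigma(M_0)$ with $M_0:=C_1$.

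\emph{Critical case.} Here $\gamma(f_m)\le C_2\sqrt{\log m}$. Choose $m$ as the largest integer with $C_2\sqrt{\log m}\le M$, so $\log m\asymp M^2/C_2^2$. The error is then $C m^{-(\alpha-q)}=C\exp(-(\alpha-q)\log m)\le C\exp(-cM^2)$, using $q\le a<\alpha$.

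\emph{Subcritical case.} Here $\gamma(f_m)\le C_2 m^{\alpha_c-\alpha}$. Take $m\asymp (M/C_2)^{1/(\alpha_c-\alpha)}$, the largest such integer. The error becomes $C m^{-(\alpha-q)}\le C' M^{-(\alpha-q)/(\alpha_c-\alpha)}$. The phrase "sufficiently large $M$" ensures $m\ge1$, or $m\ge2$ in the critical case.

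The main obstacle I expect is the supercritical limit argument. One must check that the weak-$*$ limit reproduces $h$ on all of $\mathbb B^d$, which relies on the continuity of the ReQU kernel in $v$. One must also check that the bound is uniform over $h\in\mathcal H^\alpha$. Both follow because every constant in Propositions~\ref{prop:variation_of_gm} and~\ref{prop:cq_smooth_approx} depends only on $d$, $\alpha$ and $q$. The other two cases are pure parameter balancing.
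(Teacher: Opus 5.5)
Your proposal is correct and follows the paper's proof essentially step for step. It uses the same candidate $f_m=\mathcal T(\tilde h*L_m)$, the same $W^{q,\infty}$ error bound $m^{-(\alpha-q)}$ from Lemma~\ref{lemma:transfer_sobolev_T} and Proposition~\ref{prop:cq_smooth_approx}, and the same balancing of $m$ against $M$ in the critical and subcritical regimes. The only cosmetic difference is in the supercritical case: you take the weak-$*$ limit of $\phi_m\,d\tau_d$ directly against the ball kernel $\sigma((x^\top,1)\cdot v)$, whereas the paper takes the limit on the sphere to get $\tilde h\in\mathcal G_\sigma(A)$ and then applies Proposition~\ref{prop:transfer_variation_ball_sphere}(i); both routes are equally valid.
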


\begin{proof}
	Let $h\in\mathcal H^\alpha$ and let $\tilde h$ be the associated lift from Proposition~\ref{prop:transfer_variation_ball_sphere}(ii). For each integer $m\ge 1$ define $g_m:=\tilde h*L_m$ as in Definition~\ref{def:spectral_cutoff_kernel}, and set $f_m:=\mathcal T g_m$. We first estimate the approximation error in $W^{q,\infty}(\mathbb B^d)$. Since $q\le a$, Proposition~\ref{prop:cq_smooth_approx} applies. Together with Lemma~\ref{lemma:transfer_sobolev_T}, this gives
	\[
	\|h-f_m\|_{W^{q,\infty}(\mathbb B^d)}
	=
	\|\mathcal T(\tilde h-g_m)\|_{W^{q,\infty}(\mathbb B^d)}
	\le
	C_{q,d}\,\|\tilde h-g_m\|_{C^q(\mathbb S^d)}
	\le
	B_{\alpha,q,d}\,m^{-(\alpha - q)}.
	\]
	
	We next control the variation norm: since $f_m=\mathcal T g_m$, Proposition~\ref{prop:transfer_variation_ball_sphere}(i) gives $\gamma(f_m)\le \gamma(g_m)$. We now split by cases, beginning with the supercritical regime: assume that $\alpha>\alpha_c$, or that $\alpha=\alpha_c$ and $\alpha_c$ is an even integer. By Lemma~\ref{prop:variation_of_gm}(i), there exists $A_{\alpha,d}>0$ such that $\gamma(g_m)\le A_{\alpha,d}
	\text{ for all } m\ge 1$. For each $m\ge 1$, choose a representing measure $\mu_m$ for $g_m$ such that $\|\mu_m\|_{\mathrm{TV}}\le A_{\alpha,d}+1/m$. By Banach--Alaoglu, after passing to a subsequence that we do not relabel, there exists a finite signed Radon measure $\mu$ on $\mathbb S^d$ such that $\mu_m$ converges to $\mu$ in the weak-$\ast$ topology and
	\[
	\|\mu\|_{\mathrm{TV}}
	\le
	\liminf_{m\to\infty}\|\mu_m\|_{\mathrm{TV}}
	\le
	A_{\alpha,d}.
	\]
	By Lemma~\ref{lemma:laplacian_smooth_approx}, one has $g_m\to\tilde h$ uniformly on $\mathbb S^d$. For each fixed $u\in\mathbb S^d$, the function $v\mapsto \sigma(u\cdot v)$ is continuous and bounded, hence
	\[
	\tilde h(u)
	=
	\lim_{m\to\infty} g_m(u)
	=
	\lim_{m\to\infty}\int_{\mathbb S^d}\sigma(u\cdot v)\,d\mu_m(v)
	=
	\int_{\mathbb S^d}\sigma(u\cdot v)\,d\mu(v).
	\]
	Thus $\tilde h\in\mathcal G_\sigma(A_{\alpha,d})$. Proposition~\ref{prop:transfer_variation_ball_sphere}(i) now yields $h=\mathcal T\tilde h\in\mathcal V_\sigma(A_{\alpha,d})$. Taking $M_0:=A_{\alpha,d}$ therefore proves the supercritical case.
	
	We next consider the critical regime, where $\alpha=\alpha_c$ and $\alpha_c$ is not an even integer. By Lemma~\ref{prop:variation_of_gm}(ii), there exists $A_{\alpha,d}>0$ such that $\gamma(g_m)\le A_{\alpha,d}\sqrt{\log m} \text{ for all } m\ge 2$. Now, fix $M>0$ large enough so that $\exp\left(M^2/(4A_{\alpha,d}^2)\right)\ge 2$, and define $ m(M):= \left\lfloor \exp\left(M^2/(4A_{\alpha,d}^2)\right) \right\rfloor$. Then $m(M)\ge 2$, and since $\log m(M)\le M^2/(4A_{\alpha,d}^2)$,
	\[
	\gamma\bigl(f_{m(M)}\bigr)
	\le
	\gamma\bigl(g_{m(M)}\bigr)
	\le
	A_{\alpha,d}\sqrt{\log m(M)}
	\le
	\frac{M}{2}
	\le
	M.
	\]
	Hence $f_{m(M)}\in\mathcal V_\sigma(M)$. Note also that for all sufficiently large $M$ one has $m(M)\ge \exp\!\left(M^2/(4A_{\alpha,d}^2)\right)/2$, so the approximation bound gives
	\[
	\|h-f_{m(M)}\|_{W^{q,\infty}(\mathbb B^d)}
	\le
	B_{\alpha,q,d}\,m(M)^{-(\alpha - q)}
	\le
	B_{\alpha,q,d}\,2^{(\alpha - q)}
	\exp\!\left(-\frac{\alpha - q}{4A_{\alpha,d}^2}M^2\right).
	\]
	Therefore there exist constants $c,C>0$, depending only on $d,\alpha,q$, such that
	\[
	\inf_{f\in\mathcal V_\sigma(M)}
	\|h-f\|_{W^{q,\infty}(\mathbb B^d)}
	\le
	C\exp\bigl(-c\,M^2\bigr)
	\]
	for all sufficiently large $M$. Since the constants are uniform in $h\in\mathcal H^\alpha$, taking the supremum over $h$ proves the critical case.
	
	Finally, assume that $\alpha<\alpha_c$. By Lemma~\ref{prop:variation_of_gm}(iii), there exists $A_{\alpha,d}>0$ such that $\gamma(g_m)\le A_{\alpha,d}\,m^{\alpha_c-\alpha} \text{ for all } m\ge 1$. Now, fix again $M>0$ large enough so that $\left(M/(2A_{\alpha,d})\right)^{1/(\alpha_c-\alpha)}\ge 2$, and define $m(M):=\left\lfloor \left(M/(2A_{\alpha,d})\right)^{1/(\alpha_c-\alpha)}\right\rfloor$. Then $m(M)\ge 1$, and
	\[
	\gamma\bigl(f_{m(M)}\bigr)
	\le
	\gamma\bigl(g_{m(M)}\bigr)
	\le
	A_{\alpha,d}\,m(M)^{\alpha_c-\alpha}
	\le
	A_{\alpha,d}
	\left(\frac{M}{2A_{\alpha,d}}\right)
	=
	\frac{M}{2}
	\le
	M.
	\]
	Hence $f_{m(M)}\in\mathcal V_\sigma(M)$. Since $m(M)\ge x/2$ whenever $x\ge 2$ and $m(M)=\lfloor x\rfloor$, we also have $	2m(M)\ge	\left({M}/({2A_{\alpha,d}})\right)^{1/(\alpha_c-\alpha)}$ for all sufficiently large $M$. Therefore
	\[
	\|h-f_{m(M)}\|_{W^{q,\infty}(\mathbb B^d)}
	\le
	B_{\alpha,q,d}\,m(M)^{-(\alpha - q)}
	\le
	C\,M^{-\,\frac{\alpha - q}{\alpha_c-\alpha}}
	\]
	with a constant $C>0$ depending only on $d,\alpha,q$. Taking the infimum over $f\in\mathcal V_\sigma(M)$ and then the supremum over $h\in\mathcal H^\alpha$ proves the subcritical case.
\end{proof}

\subsection{From variation space to finite-width networks}
\label{subsec:finite_width_networks}

In this subsection we pass from the infinite-width approximation results for the ReQU variation space established in Theorem~\ref{thm:ReQU_variation_Wrinfty} to approximation guarantees for actual finite-width shallow networks. The key point is that the same discrete measure that approximates the representing measure of a target function in the variation space automatically approximates all derivatives up to order two, so that the sampling argument can be formulated in the Banach space $W^{2,\infty}(\mathbb B^d)$ instead of only $L^\infty(\mathbb B^d)$.

\begin{definition}[Finite-width ReQU networks]
	\label{def:finite_width_ReQU}
	Fix $d\in\N$ and let $\sigma:t\mapsto\max\{0,t\}^2$ be the ReQU activation. Let $N\in\N$ and $M>0$. We consider the subclass of fully connected networks from Definition~\ref{def:fcnn_hypothesis_space} given by $L=1$, $\|\mathbf{m}\|_\infty\le N$ and $\|c\|_\infty\le M$, with a fixed inner-layer bound $B_0>0$ that depends only on $d$. In other words, we take $\Sigma^\sigma_{\mathbf{m}_{1:L},M}(B_0)$ with $L=1$ and $m_1\le N$ as the class of shallow ReQU networks with width at most $N$ and outer coefficients bounded by $M$. Each such network can be written in the form
	\[
	f_N(x)=\sum_{j=1}^{m_1} a_j\,\sigma(w_j^\top x + b_j),
	\]
	where $m_1\le N$, the inner parameters satisfy $\|w_j\|_\infty\le B_0$ and $|b_j|\le B_0$, and the outer coefficients satisfy $|a_j|\le M$ for $1\le j\le m_1$. The constant $B_0$ is chosen so that, for every $v\in\mathbb S^d$, the feature map used in Definition~\ref{def:spherical_variation_space} is realized by some $(w,b)$ with $\|w\|_\infty\le B_0$ and $|b|\le B_0$.
\end{definition}

By Definitions~\ref{def:ReQU_activation_variation} and \ref{def:spherical_variation_space}, every function $f\in\mathcal V_\sigma(M)$ admits a representation
\[
f(x)
=
\int_{\mathbb S^d} \sigma\bigl((x^\top,1)\cdot v\bigr)\,d\mu(v),
\text{ for all } x\in\mathbb B^d,
\]
for some signed Radon measure $\mu$ on $\mathbb S^d$ with $\|\mu\|_{\mathrm{TV}}\le M$. Replacing $\mu$ by a discrete measure $\mu_N = \sum_{j=1}^{m_1} a_j\,\delta_{v_j}$ with at most $m_1\le N$ atoms produces a one-hidden-layer network of the form in Definition~\ref{def:finite_width_ReQU}. The total variation of $\mu_N$ is $\|\mu_N\|_{\mathrm{TV}} = \sum_{j=1}^{m_1}|a_j|$, so that $\|\mu_N\|_{\mathrm{TV}}\le M$ implies $|a_j|\le M$ for every $j$ and therefore $f_N\in\Sigma^\sigma_{\mathbf{m}_{1:L},M}(B_0)$.

We next state the result that allows us to approximate a given function in the ReQU variation space by a finite-width one-hidden-layer network with control in the $W^{2,\infty}$-norm.

\begin{lemma}
	\label{lemma:sampling_Wrinfty}
	Fix $d\in\N$ and the ReQU activation $\sigma:t\mapsto\max\{0,t\}^2$. There exists a constant $C_d>0$ such that the following holds. For every $M>0$ and every $f\in\mathcal V_\sigma(M)$ in the sense of Definition~\ref{def:ReQU_activation_variation}, there exists a shallow ReQU network $f_N$ of the form described in Definition~\ref{def:finite_width_ReQU}, with width at most $N$ and outer coefficient bound at most $M$, such that
	\[
	\|f-f_N\|_{W^{2,\infty}(\mathbb B^d)}
	\le
	C_d\,M\,N^{-\theta_d}, \text{ where } 
	\theta_d = \frac{1}{2} + \frac{1}{2d}.
	\]
\end{lemma}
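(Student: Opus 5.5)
Writing $\sigma_v(x):=\sigma((x^\top,1)\cdot v)$, the plan is to reduce the $W^{2,\infty}$ statement to three $L^\infty$ sampling problems, one for each derivative order, and solve them simultaneously with a single discrete measure. The derivatives of the dictionary element are
\[
\partial_i \sigma_v(x)=2\,v_i\,\max\{0,(x^\top,1)\cdot v\},\qquad \partial_{ij}\sigma_v(x)=2\,v_iv_j\,\ind_{\{(x^\top,1)\cdot v>0\}}.
\]
So $\partial^\beta f=\int \partial^\beta\sigma_v\,d\mu(v)$ for $\|\beta\|_1\le2$, by differentiating under the integral: the integrands are bounded uniformly on $\mathbb B^d\times\mathbb S^d$ and $\mu$ is finite. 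The same identity holds for $f_N=\int\sigma_v\,d\mu_N$. It therefore suffices to find $\mu_N$ with at most $N$ atoms and $\|\mu_N\|_{\mathrm{TV}}\le M$ such that
\[
\sup_{x\in\mathbb B^d}\Bigl|\int g_\beta(x,v)\,d(\mu-\mu_N)(v)\Bigr|\lesssim M N^{-\theta_d}
\]
for the dictionaries $g_\beta\in\{\sigma_v,\ \partial_i\sigma_v,\ \partial_{ij}\sigma_v\}$. These are, respectively, ReQU-, ReLU- and Heaviside-type ridge functions with bounded multipliers $v_i,v_j$.

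For the rate, I would decompose $\mu=\mu^+-\mu^-$ and normalize to probability measures; it is enough to treat one probability measure $\pi$ and set $M=\|\mu\|_{\mathrm{TV}}$. The approach is the stratified (jittered) sampling argument of Makovoz/Bach, as used in \citep{yang2025optimal} and \citet{siegel2022high}.
\begin{enumerate}
\item Partition $\mathbb S^d$ into $N$ cells of $\pi$-mass $1/N$ and diameter $\lesssim N^{-1/d}$.
\item Draw one point per cell, independently and according to $\pi$ restricted to that cell, so the sampled measure is unbiased.
\item For a fixed $x$, each cell contributes a centered variable of size $\lesssim N^{-1}\,\mathrm{osc}_{\text{cell}}\,g_\beta(x,\cdot)$.
\item For $\sigma_v$ and $\partial_i\sigma_v$, which are Lipschitz in $v$, the oscillation is $\lesssim N^{-1/d}$ for every cell. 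This gives variance $\lesssim N^{-1}N^{-2/d}$, i.e. deviations of order $N^{-1/2-1/d}$, which beats $\theta_d$.
\item A chaining/union bound over a net of $x\in\mathbb B^d$ (only $\log N$ cost) then gives a uniform bound, and existence of a good realization follows.
\end{enumerate}

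The Heaviside term $\ind_{\{(x^\top,1)\cdot v>0\}}$ is the main obstacle. Its oscillation on a cell is $O(1)$, but only on cells meeting the hyperplane $\{v:(x^\top,1)\cdot v=0\}$, of which there are $\lesssim N^{(d-1)/d}$. The variance is then $\lesssim N^{-2}N^{(d-1)/d}=N^{-1-1/d}$, so the typical deviation is $N^{-1/2-1/(2d)}=N^{-\theta_d}$. This is exactly the stated exponent and explains why $\theta_d$ is dictated by second derivatives.

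Making this uniform in $x$ requires one of two routes. The first is a VC/chaining bound for the class of half-space indicators restricted to the $\lesssim N^{(d-1)/d}$ active cells, where Bernstein's inequality with variance proxy $N^{-1-1/d}$ gives $N^{-\theta_d}\sqrt{\log N}$. The second is a discrepancy argument for half-spaces, which could remove the logarithm. I would try Bernstein plus a finite net first, using that the class has VC dimension $d+1$ so there are only polynomially many distinct sign patterns on the samples. If a residual $\sqrt{\log N}$ remains, it is harmless here, since it is absorbed into Theorem~\ref{thm:approx_requ}'s $\log(N)^{1/2}$ factor.

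Two further points need care. The cells must simultaneously have small diameter and equal $\pi$-mass for a non-uniform $\pi$; I would take equal-area cells for $|\mu|$ after an atom-splitting argument, as in \citep{siegel2022high}. Finally, on the discrete side each atom yields $a_j\sigma(w_j^\top x+b_j)$ with $(w_j,b_j)=v_j$, so $\|w_j\|_\infty,|b_j|\le1\le B_0$ and $\sum_j|a_j|\le M$, hence $|a_j|\le M$. Combining the three derivative orders through the $\max$ in the $W^{2,\infty}$ norm gives the claim with $C_d$ depending only on $d$.
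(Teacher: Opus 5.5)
The paper does not prove this lemma from scratch. It reads it off from \citep[Theorem~3]{siegel2025optimal} with $k=m=2$, which gives exactly the stated log-free rate for an arbitrary representing measure. Your reduction to three uniform sampling problems for $\sigma_v$, $\partial_i\sigma_v$, $\partial_{ij}\sigma_v$, with one common $\mu_N$, is sound. The sampling step, however, breaks at exactly the point where that theorem is nontrivial.

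\textbf{The partition step.} Your Heaviside bound needs cells of $\pi$-mass $\approx 1/N$ such that every great subsphere $\{v:(x^\top,1)\cdot v=0\}$ crosses only $O(N^{1-1/d})$ of them. You derive this crossing count from diameters and areas, which is legitimate only when $\pi$ is roughly uniform.
\begin{itemize}
\item For general $\pi$, step~1 is false. Take $\pi=(1-\epsilon)\delta_{v_0}+\epsilon\,\tau_d$ with $\epsilon=N^{-1/2}$. Any cell of diameter $\lesssim N^{-1/d}$ not containing $v_0$ has $\pi$-mass $\lesssim \epsilon/N<1/N$, and atom splitting cannot change this.
\item The standard Makovoz fix does not rescue the Heaviside term. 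That fix uses small equal-area cells $R_j$ with $\lceil N\pi(R_j)\rceil$ samples each, and it does handle the Lipschitz terms. Write $H_x:=\{v:(x^\top,1)\cdot v>0\}$ and let $p_j$ be the $\pi$-proportion of $R_j$ lying in $H_x$. The Heaviside variance at $x$ is then $\frac1N\sum_{j:\,R_j\cap\partial H_x\neq\emptyset}\pi(R_j)\,p_j(1-p_j)$.
\item Now let $|\mu|$ sit on small blobs placed in pairs on either side of one great subsphere $\partial H_{x_0}$, one pair in each cell it crosses. Then this variance is $\asymp 1/N$ at $x_0$, i.e.\ a deviation of order $N^{-1/2}$, not $N^{-\theta_d}$.
\end{itemize}
The missing ingredient is a measure-adapted partition into (nearly) equal-mass classes with crossing number $O(N^{1-1/d})$, as in Matou\v{s}ek's partition theorem underlying the halfspace-discrepancy bound. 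Such partitions give no diameter control. Your oscillation argument for $\sigma_v$ and $\partial_i\sigma_v$ would then have to be redone, for instance by writing $t_+^k$ as a superposition of thresholds $\ind_{\{t>s\}}$ and controlling crossings of spherical caps, because one $\mu_N$ must serve all three derivative orders.

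\textbf{The logarithm.} Even granting such a partition, Bernstein plus a union bound gives $C_dMN^{-\theta_d}\sqrt{\log N}$, not the stated $C_dMN^{-\theta_d}$. The union bound also needs a symmetrization step: the centering term $\int\ind_{\{(x^\top,1)\cdot v>0\}}\,d\pi(v)$ varies continuously in $x$ and is not fixed by the sign pattern on the sample. Removing the logarithm requires a discrepancy-type argument (partial colouring combined with chaining), which is what Siegel's theorem supplies; you only mention it as an option. You are right that a $\sqrt{\log N}$ loss would be absorbed by the $\log(N)^{1/2}$ factor in Theorem~\ref{thm:approx_requ}, so a repaired version of your argument would suffice downstream. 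As written, though, it does not establish the lemma as stated.
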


\begin{proof}
	This is an immediate consequence of Theorem~3 in \citep{siegel2025optimal}. Their dictionary $\mathbb P_2^d$ is the collection of ReQU ridge functions of the form $x\mapsto\sigma(\omega\cdot x + b)$ on $\mathbb B^d$ with $\omega$ on the unit sphere and $b$ in a bounded interval. Its variation ball $\mathcal K_1(\mathbb P_2^d)$ coincides, up to an absolute constant that is absorbed into $C_d$, with the ReQU variation ball $\mathcal V_\sigma(M)$ from Definition~\ref{def:ReQU_activation_variation}. Specifically, applying \citep[Theorem~3]{siegel2025optimal} with $k=2$, $m=2$ and $\Omega=\mathbb B^d$ yields the stated rate in the $W^{2,\infty}$-norm, with the width parameter $N$ equal to the number of dictionary atoms. The realization of these atoms by networks in Definition~\ref{def:finite_width_ReQU} is ensured by the choice of $B_0$.
\end{proof}

%We now combine Lemma~\ref{lemma:sampling_Wrinfty} with the infinite-width approximation result of Theorem~\ref{thm:ReQU_variation_Wrinfty}. We use Theorem~\ref{thm:ReQU_variation_Wrinfty} with $r'=2$ to approximate a target function $h\in\mathcal H^\alpha$ by an element $f$ of the variation ball $\mathcal V_\sigma(M)$ with a controllable $W^{2,\infty}$ error, and then apply Lemma~\ref{lemma:sampling_Wrinfty} to approximate $f$ by a finite-width network $f_N$. Optimizing over the intermediate parameter $M$ yields approximation rates in $W^{2,\infty}$ for the finite-width model that match the infinite-width rates in the subcritical regime and saturate at the Barron-type rate in the critical and supercritical regimes.

Having established approximation error bounds of smooth functions by the variation space in Theorem~\ref{thm:ReQU_variation_Wrinfty}, and approximation bounds of variation space elements by finite-width networks in Lemma~\ref{lemma:sampling_Wrinfty}, all that is left is combining these results to prove our final bound. We simply recall a well-known fact to relate Sobolev and Hölder spaces before proceeding:

\begin{lemma}
	\label{lemma:Winfty_to_Cr-1,1}
	Let $r\in\N$ with $r\ge 1$ and let $u\in W^{r,\infty}(\R^d)$. Then $u|_{\mathbb B^d}$ has a representative in $C^{r-1,1}(\mathbb B^d)$ and there exists a constant $C_{d,r}$.
	\[
	\|u\|_{C^{r-1,1}(\mathbb B^d)}
	\le
	C_{d,r}\,\|u\|_{W^{r,\infty}(\R^d)}.
	\]
\end{lemma}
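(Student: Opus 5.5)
The plan is to prove the two halves separately: first that $u|_{\mathbb B^d}$ has a $C^{r-1}$ representative, then that $\partial^\gamma u$ is Lipschitz for every $|\gamma|=r-1$. The norm bound then follows by using $u$ itself as the extension in the infimum defining $\|\cdot\|_{C^{r-1,1}(\mathbb B^d)}$ (Definition~\ref{def:holder_spaces}). Since $u$ is already defined on all of $\R^d$, no extension operator is needed.

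\textbf{Base case $r=1$.} We have $u\in W^{1,\infty}(\R^d)$, and the claim is the classical fact that $W^{1,\infty}(\R^d)$ coincides with the bounded Lipschitz functions. Concretely, I would mollify: set $u_\varepsilon:=u*\eta_\varepsilon$. Then $u_\varepsilon$ is smooth and $\|\nabla u_\varepsilon\|_{L^\infty}\le\|\nabla u\|_{L^\infty}$. The mean value theorem along the segment $[x,y]$, which is legitimate because $\R^d$ is convex, gives
\[|u_\varepsilon(x)-u_\varepsilon(y)|\le \sqrt d\,\|u\|_{W^{1,\infty}}\|x-y\|_2.\]
The family $(u_\varepsilon)$ is therefore uniformly bounded and equi-Lipschitz. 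Since $u_\varepsilon\to u$ a.e., Arzelà--Ascoli produces a Lipschitz representative with the same constant and with sup-norm at most $\|u\|_{L^\infty}$.

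\textbf{Regularity of the lower derivatives.} For general $r$, apply the base case to each weak derivative $\partial^\gamma u$ with $|\gamma|\le r-1$; each lies in $W^{1,\infty}(\R^d)$, so each has a Lipschitz continuous representative. It remains to check that these continuous representatives are the classical derivatives of one another, so that $u\in C^{r-1}$. I would argue by induction on $|\gamma|$. If $v$ and $\partial_i v$ are both continuous, the weak identity $\int v\,\partial_i\phi=-\int \partial_i v\,\phi$ combined with mollification shows
\[v(x+te_i)-v(x)=\int_0^t\partial_i v(x+se_i)\,ds.\]
Hence $\partial_i v$ is the classical partial derivative, and continuity of all the partials gives $C^1$.

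\textbf{Norm bound.} Collecting the estimates, every $\|\partial^\gamma u\|_{L^\infty}$ with $|\gamma|\le r-1$ is bounded by $\|u\|_{W^{r,\infty}}$. For $|\gamma|=r-1$, the Lipschitz seminorm of $\partial^\gamma u$ is bounded by $\sqrt d\,\|u\|_{W^{r,\infty}}$. This gives $\|u\|_{C^{r-1,1}(\R^d)}\le\sqrt d\,\|u\|_{W^{r,\infty}(\R^d)}$, and restricting to $\mathbb B^d$ yields the claim with $C_{d,r}=\sqrt d$.

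The only step needing real care is identifying the continuous representatives with classical derivatives in the inductive step. This is routine via the fundamental theorem of calculus along coordinate lines; alternatively, one can simply cite the standard reference \citep[Section~7]{gilbarg1977elliptic}.
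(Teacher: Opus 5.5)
Your proof is correct. The paper gives no argument of its own for this lemma and simply points to \citep[Section 5.6.3]{evans2022partial}, i.e.\ to textbook Sobolev embedding theory. What you write is a self-contained derivation of exactly the cited fact. You use mollification and the mean value theorem to show that $W^{1,\infty}(\R^d)$ functions have bounded Lipschitz representatives. You then use the fundamental theorem of calculus along coordinate lines to identify the continuous representatives of $\partial^\gamma u$, $|\gamma|\le r-1$, with classical derivatives.

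The citation is shorter, but it needs some implicit translation. The textbook statements are on bounded $C^1$ domains, use the intrinsic H\"older norm on $\bar U$, and carry unspecified domain-dependent constants. By contrast, Definition~\ref{def:holder_spaces} defines $\|\cdot\|_{C^{r-1,1}(\mathbb B^d)}$ as an infimum over extensions to $\R^d$.

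Your route avoids that translation. Since $u$ is already defined on all of $\R^d$, you bound $\|u\|_{C^{r-1,1}(\R^d)}$ directly and use $u$ itself as the competitor in the infimum. This lands exactly in the paper's norm, gives the Lipschitz ($\kappa=1$) bound directly, and produces the explicit constant $C_{d,r}=\sqrt d$, independent of $r$.
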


\begin{proof}
	See, e.g., \citep[Section 5.6.3]{evans2022partial}.
\end{proof}

\begin{proof}[Proof of Theorem~\ref{thm:approx_requ}]
	Let $u\in W^{r,\infty}(\R^d)$. By Lemma~\ref{lemma:Winfty_to_Cr-1,1}, the restriction $h:=u|_{\mathbb B^d}$ belongs to $C^{r-1,1}(\mathbb B^d)$ and satisfies $\|h\|_{C^{r-1,1}(\mathbb B^d)}\le C_{d,r}\|u\|_{W^{r,\infty}(\R^d)}$. In particular, $h\in \mathcal H^{r}(\mathbb B^d)$ (up to a rescaling which we omit by slight abuse of notation). Now set $\alpha:=r$, fix $N\ge 2$, and let $M>0$ to be specified later. Theorem~\ref{thm:ReQU_variation_Wrinfty} (with $q=2$) provides, for each such $M$, a function $f\in\mathcal V_\sigma(M)$ that approximates $h$ in $W^{2,\infty}(\mathbb B^d)$ with a rate depending on the regime determined by $r$.
	
	In the subcritical regime $\alpha<\alpha_c$, Theorem~\ref{thm:ReQU_variation_Wrinfty} yields
	\[
	\|h-f\|_{W^{2,\infty}(\mathbb B^d)}
	\le
	C_{d,\alpha}\,M^{-\,\frac{s}{\alpha_c-\alpha}}\,\|u\|_{W^{r,\infty}(\R^d)},
	\]
	for all $M$ large enough with $s=\alpha-2$, $\alpha_c=(d+5)/2$. Applying Lemma~\ref{lemma:sampling_Wrinfty} to $f$ with width parameter $N^d$ gives a one-hidden-layer network $f_N$ with width at most $N^d$ and outer coefficient bound at most $M$ such that
	\[
	\|f-f_N\|_{W^{2,\infty}(\mathbb B^d)}
	\le
	C_d\,M\,N^{-d\theta_d},
	\text{ where }
	\theta_d = \frac{1}{2}+\frac{1}{2d}.
	\]
	The triangle inequality gives
	\[
	\|h-f_N\|_{W^{2,\infty}(\mathbb B^d)}
	\le	C_{d,\alpha} \, M^{-\,\frac{s}{\alpha_c-\alpha}} \, \|u\|_{W^{r,\infty}(\R^d)}
	+
	C_d\,M\,N^{-d\theta_d}.
	\]
	Optimizing the right-hand side over $M$ gives $M$ of order $N^{\alpha_c - \alpha}$, which leads to the rate $N^{-(\alpha-2)}$ and the corresponding choice $M_N(h)$ stated in the theorem. This proves \eqref{eqn:approx_requ_1} with $\gamma(r)=r-2$ in the subcritical regime.
	
	In the critical regime $\alpha=\alpha_c$ that is not covered by the supercritical inclusion, Theorem~\ref{thm:ReQU_variation_Wrinfty} gives an infinite-width error bounded by $C_{d,\alpha}\exp(-c_{\alpha} M^2)\|u\|_{W^{r,\infty}(\R^d)}$. Combining this with Lemma~\ref{lemma:sampling_Wrinfty} and choosing $M$ growing logarithmically with $N$ yields
	\[
	\|h-f_N\|_{W^{2,\infty}(\mathbb B^d)}
	\le
	C'_{d,\alpha}\,N^{-d\theta_d}\,\|u\|_{W^{r,\infty}(\R^d)}
	\]
	up to logarithmic factors. This corresponds to $\gamma(r)=(d+1)/2$ in the critical regime.
	
	In the supercritical regime of Theorem~\ref{thm:ReQU_variation_Wrinfty}, namely when $\alpha>\alpha_c$ or when $\alpha=\alpha_c$ is an even integer, every $h\in\mathcal H^\alpha$ belongs to a fixed variation ball $\mathcal V_\sigma(M_0)$, with $M_0$ independent of $h$, and the infinite-width approximation error in $W^{2,\infty}(\mathbb B^d)$ is zero once $M\ge M_0$. Applying Lemma~\ref{lemma:sampling_Wrinfty} with $M=M_0$ yields
	\[
	\|h-f_N\|_{W^{2,\infty}(\mathbb B^d)}
	\le
	C_d\,M_0\,N^{-d\theta_d}\,\|u\|_{W^{r,\infty}(\R^d)},
	\]
	which again gives \eqref{eqn:approx_requ_1} with $\gamma(r)=(d+1)/2$.
	
	Putting the three cases together, and recalling that $\theta_d = 1/2 + 1/(2d)$, we obtain the stated expression of $\gamma(r)$ and the corresponding bounds on $M_N(h)$. The networks constructed in Lemma~\ref{lemma:sampling_Wrinfty} belong to the class in Definition~\ref{def:finite_width_ReQU}, so that the hypothesis space $\F_{NN}$ in the theorem is a particular choice of such networks with width at most $N^d$ and outer coefficient bound at most $M_N(h)$. This completes the proof.
\end{proof}

\section{VC dimension bounds}
\label{sec:vcdim_bounds}

In this section, we prove the VC dimension bounds stated in Theorem~\ref{thm:vcdim_bounds}. A technicality we need to overcome is the presence of the multiplicative factor $\rho$, which induces extra terms in the derivatives expressions, and therefore makes VC dimension analysis of the Neural Network derivatives alone insufficient. We begin by addressing this technicality in Lemma~\ref{lem:rho_invariance} below.

\subsection{Invariance under multiplication by parameter-independent factor}

The boundary-enforced hypothesis spaces of Definition~\ref{def:boundary_pinns_hyp_space} are obtained by multiplying a network by a fixed function $\rho$ which does not depend on the parameter vector $\btheta$. The next lemma makes precise why this operation does not affect any of the VC dimension bounds we derive in this section. The definition of Pfaffian chains and functions will be given in Section~\ref{sec:pfaffian}.

\begin{lemma}
	\label{lem:rho_invariance}
	Let $r\in\N^+$, let $\Omega\subseteq\R^d$ be open, and let $\rho\in C^r(\bar\Omega)$ be independent of $\btheta\in\R^p$.
	Let $\F$ be any class of $C^r$ functions on $\Omega$ parametrized by $\btheta$, and define
	\[
	\rho\F := \{\,x\mapsto \rho(x)\phi(x,\btheta)\,:\,\phi(\cdot,\btheta)\in\F\,\}.
	\]
	Then for every multi-index $\alpha\in\N^d$ with $\|\alpha\|_1\le r$ and every $\phi\in\F$,
	\begin{equation}
		\label{eqn:leibniz_general}
		\partial^\alpha(\rho\phi)(x,\btheta)
		=
		\sum_{\beta:\,\beta\le\alpha}\binom{\alpha}{\beta}\,(\partial^{\alpha-\beta}\rho)(x)\,\partial^\beta\phi(x,\btheta).
	\end{equation}
	In particular, for every fixed $x_0\in\bar\Omega$, the map $\btheta\mapsto \partial^\alpha(\rho\phi)(x_0,\btheta)$ is a finite linear combination (with coefficients depending only on $\rho$ and $x_0$) of the maps $\btheta\mapsto \partial^\beta\phi(x_0,\btheta)$ with $\beta\le\alpha$.
	
	Moreover, the following closure properties hold:
	\begin{itemize}
		\item If each $\btheta\mapsto \partial^\beta\phi(x_0,\btheta)$ is a polynomial in $\btheta$ of degree at most $D_\beta$, then $\btheta\mapsto \partial^\alpha(\rho\phi)(x_0,\btheta)$ is a polynomial of degree at most $ \max_{\beta\le\alpha}D_\beta$.
		\item If each $\btheta\mapsto \partial^\beta\phi(x_0,\btheta)$ is Pfaffian with respect to a common Pfaffian chain of length $\ell$ and degrees $(\gamma,\kappa_\beta)$, then $\btheta\mapsto \partial^\alpha(\rho\phi)(x_0,\btheta)$ is Pfaffian with respect to the same chain, of degrees $(\gamma,\ \max_{\beta\le\alpha}\kappa_\beta)$.
	\end{itemize}
\end{lemma}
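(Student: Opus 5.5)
The plan is to prove the Leibniz identity \eqref{eqn:leibniz_general} first and then read off the two closure properties from the resulting linear structure. For the identity, I would argue by induction on $\|\alpha\|_1$. The case $\alpha=0$ is trivial. For the inductive step, write $\alpha=\alpha'+e_i$ and apply the one-variable product rule $\partial_i(\rho\,\partial^{\beta}\phi)=(\partial_i\rho)\partial^\beta\phi+\rho\,\partial_i\partial^\beta\phi$ to each term of the expansion for $\alpha'$. Then regroup using Pascal's rule $\binom{\alpha'}{\beta-e_i}+\binom{\alpha'}{\beta}=\binom{\alpha}{\beta}$ coordinatewise. Differentiability is not an issue, since $\rho\in C^r(\bar\Omega)$, each $\phi(\cdot,\btheta)\in C^r$, and $\|\alpha\|_1\le r$; all derivatives are classical. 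Note that $\btheta$ is only a parameter here, so differentiation acts in $x$ alone.

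Next I fix $x_0\in\bar\Omega$, where derivatives at boundary points are understood as limits, which exist by $C^r(\bar\Omega)$. The identity then reads
\[
\partial^\alpha(\rho\phi)(x_0,\btheta)=\sum_{\beta\le\alpha}c_\beta\,\partial^\beta\phi(x_0,\btheta),\qquad c_\beta:=\binom{\alpha}{\beta}(\partial^{\alpha-\beta}\rho)(x_0)\in\R.
\]
The coefficients $c_\beta$ depend only on $\rho$, $x_0$ and $\alpha$, and not on $\btheta$. This gives the ``in particular'' statement directly.

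The closure properties are then immediate. In the polynomial case, a real linear combination of polynomials of degrees $D_\beta$ has degree at most $\max_\beta D_\beta$, and terms with $c_\beta=0$ only lower the degree. In the Pfaffian case, suppose $\partial^\beta\phi(x_0,\btheta)=P_\beta(\btheta,f_1(\btheta),\dots,f_\ell(\btheta))$ with $\deg P_\beta\le\kappa_\beta$ over a common chain $(f_1,\dots,f_\ell)$ of degree $\gamma$. Then $\sum_\beta c_\beta P_\beta$ is again a polynomial in $(\btheta,f_1,\dots,f_\ell)$ of degree at most $\max_\beta\kappa_\beta$, and the chain is untouched, so its length $\ell$ and degree $\gamma$ are unchanged.

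The only point needing care is the Pfaffian bookkeeping: whether the formal definition in Section~\ref{sec:pfaffian} permits real coefficients and measures degree in the joint variables. I expect it does, as in the standard Khovanskii setting. No step should be a genuine obstacle, since the Pascal-rule regrouping is routine.
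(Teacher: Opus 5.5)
Your proposal is correct and follows the paper's own argument: cite (or, in your case, prove by induction with Pascal's rule) the Leibniz formula, note that fixing $x_0$ makes each coefficient $\binom{\alpha}{\beta}(\partial^{\alpha-\beta}\rho)(x_0)$ a $\btheta$-independent constant, and conclude from the closure of polynomials, and of Pfaffian functions over a fixed chain, under constant-coefficient linear combinations, which does not raise the relevant degree. The only blemish is notational: in your inductive step the product rule should be applied to $(\partial^{\alpha'-\beta}\rho)\,\partial^\beta\phi$ rather than to $\rho\,\partial^\beta\phi$; the regrouping $\binom{\alpha'}{\beta-e_i}+\binom{\alpha'}{\beta}=\binom{\alpha}{\beta}$ then goes through exactly as you describe.
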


\begin{proof}
	Equation \eqref{eqn:leibniz_general} is the Leibniz rule. Fixing $x_0$ makes each factor $(\partial^{\alpha-\beta}\rho)(x_0)$ a constant independent of $\btheta$, hence the stated linear-combination property. The polynomial and Pfaffian statements follow immediately from the fact that both classes are closed under addition and multiplication by constants, and that these operations do not increase the relevant degree parameters (polynomial degree in $\btheta$, or Pfaffian $\beta$-degree).
\end{proof}

%\begin{remark}[Consequences for VC bounds]
%	\label{rem:rho_vc}
%	All VC dimension upper bounds proved below are obtained by controlling, for fixed sample points $x_j$, the complexity (polynomial degree / Pfaffian format) of the parameter-space functions
%	$\btheta\mapsto \partial^\gamma\phi(x_j,\btheta)$ and then applying Theorems~\ref{thm:karpinsky97} and/or \ref{thm:gabrielov}.
%	By Lemma~\ref{lem:rho_invariance}, replacing $\phi$ by $\rho\phi$ does not increase these complexity parameters (uniformly in $x_j$), hence the same VC upper bounds carry over verbatim to $\partial^\alpha(\rho\F)$, for any $\|\alpha\|_1\le r$.
%	
%	In particular, since the constraint $\max_{\|\alpha\|_1\le 2}\|\partial^\alpha f\|_{L^\infty}\le Q$ in \eqref{eqn:boundary_fcnn} only \emph{shrinks} the class, any VC upper bound obtained for $\rho\,\Sigma^\sigma_{\mathbf m_{1:L},M}(B)$ (or $\rho\,\Sigma^\sigma_{\mathbf m_{1:L}}(\infty)$) also holds for $\F(\rho,L,\mathbf m,\sigma,Q,B,M)$.
%\end{remark}
%

Because both the polynomial degree and respectively the Pfaffian chain degree control the VC dimension of FNNs with ReQU and respectively tanh activation, Lemma~\ref{lem:rho_invariance} allows us to directly translate VC dimension bounds on FNN derivatives hypothesis spaces to VC-dimension bounds on derivatives of boundary-enforced FNNs hypothesis spaces, as we will now see.

\subsection{ReQU FNNs}

We aim to derive bounds on the VC dimension of the hypothesis spaces defined by $\alpha$-th order partial derivatives of fully connected neural networks (FNNs) with ReQU activations, where $\alpha \in \mathbb{N}^d$ and $\|\alpha\|\leq 2$. This problem is challenging because the compositional structure of FNNs leads to highly intricate derivative expressions, making it difficult to apply standard VC dimension bounding techniques. Although \citep{yang2023nearly} obtained optimal VC dimension and pseudodimension bounds for FNNs with ReLU activations, and \citep{lei2025solving} established VC dimension bounds for higher-order derivatives of deep convolutional ReLU networks composed with a shallow ReLU$^k$ network, these results do not directly extend to the ReQU setting. We therefore develop in this section novel bounds tailored specifically for FNNs with ReQU activations and their derivatives.

To estimate the VC dimension of FNNs with piecewise polynomial activation, we first need these essential Lemmas:

\begin{lemma}[\citep{bartlett1999neural}, Theorem 8.3]
	\label{lemma:vc_dim_poly}
	Let $p_1, \ldots, p_M$ be polynomials in $W\le M$ variables with degree at most $D$. Define the sign function as $\sgn(x):= \ind_{x>0}$, and let
	\[K := \left|\left\{(\sgn(p_1(\theta)),\ldots, \sgn(p_M(\theta)))\mid \theta\in \R^W\right\}\right|\]
	be the number of sign vectors that can be realized by $p_1,\ldots,p_M$. We have $K\le 2(2eMD/W)^W$.
\end{lemma}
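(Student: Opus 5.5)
The plan is the classical route of Warren and Milnor--Thom, as in \citep[Chapter~8]{bartlett1999neural}. The idea is to turn the count of realizable sign vectors into a count of connected components of the complement of a union of real algebraic hypersurfaces in $\R^W$, and then bound that count by a Bezout-type argument.

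\textbf{Step 1 (reduction to a strict, generic arrangement).} Since there are at most $2^M$ sign vectors, only finitely many are realized; say $K$ of them. Pick one witness $\theta_1,\ldots,\theta_K\in\R^W$ for each. Set $\delta:=\min\{p_i(\theta_j): p_i(\theta_j)>0\}>0$ and take any $0<\varepsilon<\delta$. Then for every $i,j$ we have
\[
\sgn(p_i(\theta_j))=1\iff p_i(\theta_j)-\varepsilon>0,
\]
and each $p_i(\theta_j)-\varepsilon\neq 0$. Hence the points $\theta_j$ lie in the open set $\R^W\setminus\bigcup_i\{p_i=\varepsilon\}$, and any two of them with different sign vectors lie in different connected components, because $p_i-\varepsilon$ cannot change sign along a path avoiding its zero set. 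So $K$ is at most the number of connected components of this complement.

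By Sard's theorem, applied to the maps $\theta\mapsto(p_{i_1}(\theta),\ldots,p_{i_k}(\theta))$, almost every $\varepsilon\in(0,\delta)$ is a regular value for every subfamily. We fix such an $\varepsilon$. Then the hypersurfaces $\{p_i=\varepsilon\}$ are in general position: any $k\le W$ of them meet transversally, and no $W+1$ of them have a common point.

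\textbf{Step 2 (component count).} Here I would invoke Warren's bound, in the form of \citep[Lemma~8.?/Theorem~8.?]{bartlett1999neural}. For $M$ polynomials of degree at most $D$ in $W$ variables in general position, the number of connected components of the complement of their zero sets is at most
\[
2\sum_{k=0}^{W}\binom{M}{k}(2D)^{k}.
\]
The argument goes as follows. Each bounded component contains a local extremum of a generic linear functional restricted to some intersection $\bigcap_{i\in S}\{p_i=\varepsilon\}$ with $|S|\le W$. Such points are isolated solutions of polynomial systems, and by Milnor--Thom/Bezout there are at most of order $(2D)^{|S|}$ of them for each $S$. Unbounded components are handled by intersecting with a large sphere, which accounts for the leading factor $2$.

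\textbf{Step 3 (arithmetic).} Using $W\le M$ and the standard estimate $\sum_{k\le W}\binom{M}{k}\le (eM/W)^W$, weighted by $(2D)^k\le (2D)^W$ (we may assume $D\ge1$), we get
\[
K\le 2\,(2D)^W\Big(\frac{eM}{W}\Big)^W=2\Big(\frac{2eMD}{W}\Big)^W .
\]

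The main obstacle is Step 2. It requires Morse-theoretic and Bezout control of critical points on the generic intersections. I would cite Warren/Milnor--Thom rather than reprove it, taking care only that the general-position hypothesis is indeed secured by the Sard perturbation of Step 1.
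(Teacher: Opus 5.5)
The paper gives no proof of this lemma; it quotes Anthony--Bartlett, and your outline is the Warren-type argument behind that citation. Two of your steps do not hold as written, though both can be repaired and the final bound survives.

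\textbf{Step~1: the general-position claim fails with a common shift.} Sard's theorem only says that almost every point of $\R^{k}$ is a regular value of $F_S:\theta\mapsto(p_i(\theta))_{i\in S}$. The diagonal $\{(\varepsilon,\dots,\varepsilon)\}$ is a null set in $\R^k$ for $k\ge 2$, so nothing follows about it. The claim is in fact false.
\begin{itemize}
\item For $W=2$, $p_1=\theta_1$, $p_2=\theta_1+\theta_2^2$, the curves $\{p_1=\varepsilon\}$ and $\{p_2=\varepsilon\}$ are tangent at $(\varepsilon,0)$ for \emph{every} $\varepsilon$.
\item For $p_1=\theta_1$, $p_2=\theta_2$, $p_3=(\theta_1+\theta_2)/2$, all three $\varepsilon$-level sets pass through $(\varepsilon,\varepsilon)$.
\end{itemize}
The repair, as in the standard treatment, is to shift each polynomial independently. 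By Sard applied to every $F_S$, the set of $(\varepsilon_1,\dots,\varepsilon_M)\in\R^M$ for which some $(\varepsilon_i)_{i\in S}$ is a critical value of $F_S$ is Lebesgue-null. So you can choose $(\varepsilon_i)_i\in(0,\delta)^M$ outside it. Your sign argument then goes through verbatim with $p_i-\varepsilon_i$. At most $W$ level sets meet transversally, and for $|S|>W$ regularity forces $\bigcap_{i\in S}\{p_i=\varepsilon_i\}=\emptyset$.

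\textbf{Step~2: the component bound you quote is not Warren's, and its heuristic is wrong.} The number of critical points of a generic linear functional on $\bigcap_{i\in S}\{p_i=\varepsilon_i\}$ is not $O((2D)^{|S|})$; it depends on $W$. Take the curve $\{\prod_{i=1}^n(\theta_1-i)^2+\prod_{j=1}^n(\theta_2-j)^2=\varepsilon\}$ for small $\varepsilon>0$. It has degree $D=2n$ and consists of $n^2=D^2/4$ small ovals, so it carries at least $D^2/2$ such critical points. With $M=1$ its complement has $n^2+1$ components, which exceeds your $2(1+2D)$ once $n\ge 9$.

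What is true is the following, with $Z_i$ the level sets and $\mathrm{cc}$ the number of connected components:
\begin{itemize}
\item For hypersurfaces $Z_i$ with regular intersections, the complement satisfies $\mathrm{cc}(\R^W\setminus\bigcup_i Z_i)\le\sum_{|S|\le W}\mathrm{cc}\bigl(\bigcap_{i\in S}Z_i\bigr)$, reading the empty intersection as $\R^W$.
\item Each such intersection of degree-$\le D$ level sets has at most $2(2D)^W$ components. This bound depends on $W$, not on $|S|$.
\end{itemize}
Together these give $K\le 2(2D)^W\sum_{k=0}^{W}\binom{M}{k}$. Your Step~3 replaces $(2D)^k$ by $(2D)^W$ anyway, so $K\le 2(2eMD/W)^W$ still follows once this form is cited.

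Finally, $D\ge 1$ is an implicit hypothesis of the lemma, not a harmless reduction: for $D=0$ one has $K=1$ while the right-hand side is $0$.
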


\begin{lemma}[\citep{bartlett2019nearly}, Lemma 18]
	\label{lemma:vc_dim_poly_2} Suppose that $2^m\le 2^t(mq/w)^w$ for some $q\ge16$ and $m\ge w\ge t\ge 0$. Then, $m\le t + w\log_2(2q\log_2 q)$.
\end{lemma}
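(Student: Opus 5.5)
The plan is to take logarithms and reduce the claim to a one-variable inequality in the ratio $y:=m/w$. The degenerate case $w=0$ is handled first. There, with the convention $(\cdot)^0=1$, the hypothesis reads $2^m\le 2^t$, so $m\le t$, which is the claim. Assume from now on $w\ge 1$. Taking $\log_2$ of the hypothesis gives
\[
m \le t + w\log_2\!\Bigl(\frac{mq}{w}\Bigr) = t + w\log_2(qy).
\]
It is therefore enough to show $\log_2(qy)\le \log_2(2q\log_2 q)$, that is, $y\le 2\log_2 q$. Substituting this into the display then yields exactly $m\le t+w\log_2(2q\log_2 q)$.

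To bound $y$, I divide the logarithmic inequality by $w$ and use $t\le w$, so that $t/w\le 1$. This gives
\[
y \le 1+\log_2(qy) = \log_2(2qy).
\]
Define $g(y):=y-\log_2(2qy)$ for $y>0$. Then $g'(y)=1-\frac{1}{y\ln 2}$, which is positive for $y>1/\ln 2\approx 1.44$, so $g$ is increasing on $[1/\ln 2,\infty)$. The constraint derived above says exactly that $g(y)\le 0$.

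The key step, and the only place where $q\ge 16$ is used, is to check that $g(2\log_2 q)\ge 0$. Write $L:=\log_2 q\ge 4$. Then
\[
g(2L)=2L-\log_2(4qL)=L-2-\log_2 L .
\]
The function $L\mapsto L-\log_2 L$ is increasing for $L\ge 1/\ln 2$, and at $L=4$ it equals $2$. Hence $g(2L)\ge 0$ for all $L\ge 4$, with equality at $q=16$.

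Now suppose, for contradiction, that $y>2L$. Since $2L\ge 8>1/\ln 2$, both $y$ and $2L$ lie in the region where $g$ is increasing, so $g(y)>g(2L)\ge 0$. This contradicts $g(y)\le 0$. Therefore $y\le 2\log_2 q$, and plugging this back into the logarithmic form of the hypothesis gives the claimed bound.

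I expect no real obstacle. The only points needing care are the monotonicity region of $g$ (covered because $2\log_2 q\ge 8$) and the boundary case $q=16$, where the inequality $g(2L)\ge 0$ is tight. The hypothesis $m\ge w$ is not strictly needed for the argument, beyond ensuring that $y\ge 1$ is meaningful.
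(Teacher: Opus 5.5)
Your proof is correct. Taking logarithms and using $t\le w$ reduces the claim to $y=m/w\le 2\log_2 q$, and the monotonicity of $g(y)=y-\log_2(2qy)$ on $[1/\ln 2,\infty)$ together with the check $g(2\log_2 q)=\log_2 q-2-\log_2\log_2 q\ge 0$ (tight at $q=16$) settles this cleanly. The paper gives no proof of its own and simply imports the lemma from \citep{bartlett2019nearly}, so your argument serves as a complete, self-contained verification; note that it only uses $w>0$, so it also covers non-integer $w$.
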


Before starting our analysis, we begin by recalling the bound on the VC dimension of standard ReQU FNNs spaces, which follows directly from \citep[Theorem~7]{bartlett2019nearly}:

\begin{proposition}
	Let $\sigma:x\mapsto \max\{0,x\}^2$ be the ReQU activation function, and denote by $\mathcal{F} = \Sigma^\sigma_{\mathbf{m}_{1:L}}(\infty)$ the hypothesis space of ReQU FNNs with no restriction on their weights, as per Definition \ref{def:fcnn_hypothesis_space}. We have the estimate
	\[\VCdim(\mathcal{F}) = \bigO\left(\size L\big[L + \log_2(L\|\bm\|_\infty) \big]\right),\]
	where $\size = \sum_{i=1}^L m_i(m_{i-1}+1)$ is the number of parameters in the architecture.
\end{proposition}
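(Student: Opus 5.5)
We will prove the estimate by reproducing the argument of \citet{bartlett2019nearly} for piecewise polynomial activations, now with degree-two pieces. Fix points $x_1,\dots,x_m\in\R^d$ and view each output $f(x_j,\btheta)$ as a function of the parameter vector $\btheta\in\R^{\size}$. The aim is to bound the number $K$ of sign patterns $(\sgn f(x_1,\btheta),\dots,\sgn f(x_m,\btheta))$ that can be realized as $\btheta$ varies. We will construct, layer by layer, a sequence of successively refined partitions $\partition_0,\dots,\partition_L$ of parameter space. On each cell of $\partition_{i}$, every hidden unit of layers $1,\dots,i$, evaluated at every $x_j$, should have a fixed activation pattern (either $z>0$ or $z\le0$). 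As a consequence, on each such cell the preactivations of layer $i+1$ will be polynomials in $\btheta$.

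For the induction we track the degree. Each layer first applies an affine map, which multiplies the degree of the incoming polynomial by one and then adds one for the new weights; it then applies $\sigma$, which squares. On a fixed cell, a unit of layer $i$ is therefore a polynomial in $\btheta$ of degree at most $D_i\le 2(D_{i-1}+1)$, which gives $D_i=\bigO(2^i)$. To refine a cell of $\partition_{i-1}$, we apply Lemma~\ref{lemma:vc_dim_poly} to the $m\,m_i$ polynomials given by the layer-$i$ preactivations. These involve at most $W_i:=\sum_{\ell\le i}m_\ell(m_{\ell-1}+1)$ variables, so each cell splits into at most $2\bigl(2e\,m\,m_i D_i/W_i\bigr)^{W_i}$ pieces. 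At the end, the output on each final cell is a polynomial of degree $D_L$, and one more application of the same lemma bounds the number of sign patterns on that cell.

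Multiplying these bounds over the layers, we obtain
\[
K\le\prod_{i=1}^{L}2\Bigl(\tfrac{2e\,m\,m_iD_i}{W_i}\Bigr)^{W_i}.
\]
Taking logarithms and using the weighted AM--GM inequality in the same way as \citet{bartlett2019nearly}, this gives
\[
K\le 2^{L}\Bigl(\tfrac{c\,m\,L\,\|\bm\|_\infty 2^{L}}{\bar W}\Bigr)^{\bar W},\qquad \bar W:=\sum_i W_i\le L\size .
\]
Shattering $m$ points requires $2^m\le K$. We then apply Lemma~\ref{lemma:vc_dim_poly_2} with $t=L$, $w=\bar W$, and $q\asymp L\|\bm\|_\infty 2^L$. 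This yields
\[
m\le L+L\size\log_2\bigl(2q\log_2 q\bigr)=\bigO\bigl(\size L[L+\log_2(L\|\bm\|_\infty)]\bigr),
\]
where the additional $L$ inside the bracket comes from the $\log_2 2^L$ term. This is the claimed estimate.

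The main obstacle is the degree bookkeeping. Unlike the ReLU case, where the degree grows only linearly in the depth, squaring makes the degree grow like $2^L$. We must check that this growth costs only an additive factor $L$ inside the logarithm, rather than entering multiplicatively. Lemma~\ref{lemma:vc_dim_poly} depends on the degree only through $\log D$, so this should work. The other point to verify is the hypothesis $W\le M$ of Lemma~\ref{lemma:vc_dim_poly}; if it fails, we can simply enlarge $m$ or treat that case trivially.
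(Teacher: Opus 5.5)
Your proposal is correct and follows essentially the paper's route: the paper simply invokes Theorem~7 of \citet{bartlett2019nearly} for piecewise-polynomial activations (here two pieces of degree at most two), and your layer-by-layer partition argument with Lemmas~\ref{lemma:vc_dim_poly} and~\ref{lemma:vc_dim_poly_2} is exactly the proof of that theorem specialized to ReQU, with the $2^L$ degree growth producing the extra additive $L$ inside the bracket. The remaining bookkeeping is harmless at the $\bigO$ level: there are really $L+1$ sign-counting factors (the $L$ refinements plus the final output count, and the output weights $c$ add $m_L\le\size$ parameters), and both side conditions, $W\le M$ in Lemma~\ref{lemma:vc_dim_poly} and $m\ge w$ in Lemma~\ref{lemma:vc_dim_poly_2}, follow once you dispose of the trivial case $m<\bar W\le L\size$.
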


\subsubsection{First order derivatives}

For convenience, denote $\sigma_2 :x\mapsto \max\{0,x\}^2$ the ReQU activation, and $\sigma_1:x\mapsto 2\max\{0,x\}$ its derivative. Recall that an element $\phi\in \Sigma^{\sigma_2}_{\mathbf{m}_{1:L}}(\infty)$ can be written as 
\[\phi:\bx\mapsto \bW_{L+1}\sigma_2(\bW_L\sigma_2(\cdots \sigma_2(\bW_1 \bx + \bb_1) \cdots) +\bb_L) + b_{L+1}.\]
For any $1\le i\le d$, the partial derivative with respect to the $i$-th coordinate can thus be written as
\begin{equation}
	\label{eqn:first_requ}
	\begin{split}
		\partial_i\phi:\bx\mapsto \bW_{L+1}\sigma_1(\bW_L\sigma_2(\cdots \sigma_2(\bW_1 \bx + \bb_1) \cdots) +\bb_L)\\
		\cdot \bW_L\sigma_1(\cdots \sigma_2(\bW_1 \bx + \bb_1) \cdots)\\ 
		\cdots \bW_2\sigma_1(\bW_1\bx + \bb_1)(\bW_1)_i,
	\end{split}
\end{equation}
where $(\bW)_i$ denotes the $i$-th column of $\bW$. Note that for fixed $\bx\in\R^d$, $\partial_i\phi$ can be seen as a function of the parameters (weights and biases), in which case we can denote it $\partial_i \phi(\bx, \cdot)$.  

Following an approach similar to \citep{bartlett2019nearly,yang2023nearly}, our goal is to find a partition $\partition$ of the parameter space $\R^\size$ such that, for fixed $x_1,\ldots,x_M$, and any $P\in\partition$, the functions $\partial_i\phi(x_1,\cdot),\ldots, \partial_i\phi(x_M,\cdot)$ are polynomials on $P$. Indeed if we denote
\[K = \left|\left\{(\sgn(\partial_i\phi(x_1,\theta)),\ldots, \sgn(\partial_i\phi(x_M,\theta)))\mid \theta\in \R^\size\right\}\right|,\]
we have
\[K \le \sum_{P\in \partition} \left|\left\{(\sgn(\partial_i\phi(x_1,\theta)),\ldots, \sgn(\partial_i\phi(x_M,\theta)))\mid \theta\in P \right\}\right|.\]
Therefore, by applying Lemmas \ref{lemma:vc_dim_poly} and \ref{lemma:vc_dim_poly_2}, we can deduce from the cardinality of the partition $\partition$ a bound on the VC dimension. We begin by defining a family of functions with respect to the parameter $\btheta\in\R^\size$:
\begin{equation}
	 \label{eqn:func_family_1}
	 \begin{split}
	 	\Func_0 &:= \{(\bW_1)_i, \bW_1\bx + \bb_1\} \\
	 	\Func_1 &:= \{(\bW_1)_i, \bW_2\sigma_1(\bW_1\bx + \bb_1), \bW_2\sigma_2(\bW_1\bx + \bb_1) + \bb_2\} \\
	 	\Func_2 &:= \{(\bW_1)_i, \bW_2\sigma_1(\bW_1\bx + \bb_1), \\
	 	&\qquad \bW_3\sigma_1(\bW_2\sigma_2(\bW_1\bx + \bb_1) + \bb_2),\bW_3\sigma_2(\bW_2\sigma_2(\bW_1\bx + \bb_1) + \bb_2) + \bb_3 \}, \\
	 	\vdots\\
	 	\Func_L &:= \{(\bW_1)_i, \bW_2\sigma_1(\bW_1\bx + \bb_1),\ldots,\bW_{L+1}\sigma_1(\bW_L\sigma_2(\cdots \sigma_2(\bW_1 \bx + \bb_1) \cdots) +\bb_L)\}. 
	 \end{split}
\end{equation}

We will build the partition recursively, through a sequence $\partition_0, \partition_1,\ldots,\partition_L$ which is successively refined over each layer. These successive refinements will satisfy the following properties:
\begin{enumerate}[font=\bfseries]
	\item $|\partition_0|=1$, and for all $1\le\ell\le L$, 
	\begin{equation}
		\label{eqn:partition_ineq_1}
		\frac{|\partition_\ell|}{|\partition_{\ell-1}|} \le 2\left( \frac{2eM m_\ell(2^{\ell+1} - 1)}{\sum_{i = 1}^\ell m_i(m_{i-1} + 1)} \right)^{\sum_{i = 1}^\ell m_i(m_{i-1} + 1)}. 
	\end{equation} \label{itm:one}
	\item For all $0\le \ell\le L-1$, and all $P\in\partition_\ell$, each element $f\in \Func_\ell$, when restricted to $\btheta\in P$, is a polynomial function in $\sum_{i = 1}^\ell m_i(m_{i-1} + 1)$ variables of degree at most $2^{\ell+1} - 1$.\label{itm:two}
	\item If we denote by $\{f_\ell^{(L)}\}_{0\le\ell\le L+1}$ the elements of $\Func_L$, then for all $P\in\partition_{L}$, the restriction to $\btheta\in P$ of $f_\ell^{(L)}$ is a polynomial in $\sum_{i = 1}^\ell m_i(m_{i-1} + 1)$ variables of degree at most $2^{\ell+1} - 1$. \label{itm:three}
\end{enumerate}

We begin by defining $\partition_0:=\{\R^\size\}$, which clearly satisfies items \ref{itm:one} and \ref{itm:two} above, as both $\btheta\mapsto (\bW_1)_i$ and $\btheta\mapsto \bW_1\bx + \bb_1 $ are affine in $\bW_1, \bb_1$.  

For the induction step, assume that $\partition_0,\ldots,\partition_{\ell-1}$ satisfying items \ref{itm:one} and \ref{itm:two} have already been defined, so that our task is now to construct $\partition_\ell$. For integers $1\le h\le m_\ell$, $1\le j\le M$, and $P\in\partition_{\ell-1}$, denote by $p_{h,\bx_j, P}(\btheta)$ the restriction to $\btheta\in P$ of the function describing the input of the $h$-th unit in the $\ell$-th layer, in response to $\bx_j$. By induction hypothesis, this is a polynomial in $\sum_{i = 1}^{\ell-1} m_i(m_{i-1} + 1)$ variables of degree at most $2^{\ell+1} - 1$. By Lemma \ref{lemma:vc_dim_poly}, the collection of polynomials 
\[\left\{
p_{h,\bx_j, P}(\btheta) \mid 1\le h\le m_\ell, 1\le j\le M
 \right\}\]
 can realize at most  
 \[\Pi := 2\left( \frac{2eMm_\ell(2^{\ell+1} - 1)}{\sum_{i = 1}^\ell m_i(m_{i-1} + 1)} \right)^{\sum_{i = 1}^\ell m_i(m_{i-1} + 1)}\]
 distinct sign patterns as $\btheta$ varies in $\R^\size$. We can thus partition the parameter space $\R^\size$ into $\Pi$ many regions, such that all of these polynomials have constant sign withing each region. We can intersect all the elements of this partition with $P$ to obtain a partition of $P$ into $\Pi$ subregions over which each polynomial has constant sign. By performing this operation for all $P\in\partition_{\ell-1}$, we thus obtain our desired partition $\partition_\ell$, which clearly satisfies inequality \eqref{eqn:partition_ineq_1}.  
 
 We now check that $\partition_\ell$ satisfies item \ref{itm:two}. To that end, first observe that if we label $f_0^{(\ell)}, f_1^{(\ell)},\ldots, f_{\ell + 1}^{(\ell)}$ the elements of $\Func_\ell$, we have that $f_i^{(\ell)}\in\Func_{\ell-1}$ for all $0\le i\le \ell-1$. Hence, because the partition $\partition_\ell$ is finer than $\partition_{\ell-1}$, it follows simply by induction hypothesis that for all $P\in\partition_\ell$, and $i\le\ell-1$, $f_i(\btheta)$ is a polynomial in $\sum_{i = 1}^{\ell-1} m_i(m_{i-1} + 1)$ variables and of degree at most $2^\ell -1$ when restricted to $\btheta\in P$.  
 
 Secondly, note that $f_\ell^{(\ell)}$ and $f_{\ell + 1}^{(\ell)}$ are obtained by composing a function of the form $\btheta\mapsto \bW_{\ell+1}\sigma_{12} \bx + \bb_{\ell + 1}$ with $f_\ell^{(\ell -1)}$. Since we've already established that $f_\ell^{(\ell -1)}$ is a polynomial in $\sum_{i = 1}^{\ell-1} m_i(m_{i-1} + 1)$ variables, of degree at most $2^\ell -1$, and of fixed sign as $\btheta$ varies in $P\in\partition_\ell$, it follows that $f_\ell^{(\ell)}$ and $f_{\ell + 1}^{(\ell)}$ are also polynomials in $\sum_{i = 1}^\ell m_i(m_{i-1} + 1)$ variables and of degree at most $2^{\ell+1} -1$, which implies that item \ref{itm:two} is satisfied.  
 
 Likewise, observe that in the last step, all the $L$ first elements $f_0^{(L)},\ldots,f_{L-1}^{(L)}$ of $\Func_L$ respectively correspond to last element of $\Func_0,\ldots \Func_{L-1}$, which as we've established for all $0\le \ell\le L-1$ are polynomials in $\sum_{i = 1}^\ell m_i(m_{i-1} + 1)$ variables of degree at most $2^{\ell+1} - 1$ when restricted to $P\in\partition_{L}$. The last two elements of $\Func_L$ being obtained by composing functions of the form $\btheta\mapsto \bW_{\ell+1}\sigma_{12} \bx + \bb_{\ell + 1}$ with $f_{L}^{(L-1)}$, we conclude that item \ref{itm:three} is also satisfied, hence the partitions $\partition_0,\ldots,\partition_L$ satisfy all the desired properties.  
 
 We are now ready to bound the VC dimension of ReQU network derivatives. First, note that by equation \eqref{eqn:first_requ}, $\partial_i\phi$ is exactly equal to the product of elements of all the elements of $\Func_L$. This implies that for all $P\in\partition_L$, $\partial_i\phi(\bx,\btheta)$ is a polynomial in $\sum_{i = 1}^L m_i(m_{i-1} + 1)$ variables and of degree at most $\sum_{\ell = 0}^{L} (2^\ell - 1)\le 2^{L+2} -1$ when restricted to $\btheta\in P$. Hence for all $P\in\partition_L$, we have
 \begin{equation*}
 	\begin{aligned}
 	K_L &:= \left|\Big\{(\sgn(\partial_i\phi(\bx_1,\btheta)),\ldots, \sgn(\partial_i\phi(\bx_M,\btheta)))\mid \btheta\in P \Big\}\right| \\
 		&\le 2\left( \frac{2eMm_\ell(2^{L+2} - 1)}{\sum_{i = 1}^L m_i(m_{i-1} + 1)} \right)^{\sum_{i = 1}^L m_i(m_{i-1} + 1)}.
 	\end{aligned}
 \end{equation*}
 
 On the other hand, we can recursively apply inequality \eqref{eqn:partition_ineq_1} to find:
 \[|\partition_{L}| \le \prod_{\ell=1}^{L} 2\left( \frac{2eM m_\ell(2^{\ell+1} - 1)}{\sum_{i = 1}^\ell m_i(m_{i-1} + 1)} \right)^{\sum_{i = 1}^\ell m_i(m_{i-1} + 1)}. \]
 
 It follows that if we denote
 \[\Pi_{\partial_i\F}(M):= \sum_{P\in \partition_L} \left|\Big\{(\sgn(\partial_i\phi(\bx_1,\btheta)),\ldots, \sgn(\partial_i\phi(\bx_M,\btheta)))\mid \btheta\in P \Big\}\right|, \]
 we get
 \begin{align*}
 	\Pi_{\partial_i\F}(M) &\le K_{L}\prod_{\ell=1}^L 2\left( \frac{2eM m_\ell(2^{\ell+1} - 1)}{\sum_{i = 1}^\ell m_i(m_{i-1} + 1)} \right)^{\sum_{i = 1}^\ell m_i(m_{i-1} + 1)}\\
 	&\le \prod_{\ell=1}^{L+1} 2\left( \frac{2eM m_\ell(2^{\ell+1} - 1)}{\sum_{i = 1}^\ell m_i(m_{i-1} + 1)} \right)^{\sum_{i = 1}^\ell m_i(m_{i-1} + 1)}\\
 	&\le 2^{L+1} \left(\frac{2eM \sum_{\ell=1}^{L+1}m_\ell(2^{\ell+1} - 1)}{\sum_{\ell=1}^{L+1}\sum_{i = 1}^\ell m_i(m_{i-1} + 1)}\right)^{\sum_{\ell=1}^{L+1}\sum_{i = 1}^\ell m_i(m_{i-1} + 1)}\\
 	&\le 2^{L+1} \left(\frac{4eM \|\bm\|_\infty2^{L+2}}{(L+1)\size}\right)^{(L+1)\size},
 \end{align*}
 
 where we used the weighted AM-GM inequality in the second line. From the definition of the VC dimension, it follows that
 \[2^{\VCdim(\partial_i\F)} = \Pi_{\partial_i\F}(\VCdim(\partial_i\F)) \le 2^{L+1} \left(\frac{4e\VCdim(\partial_i\F) \|\bm\|_\infty2^{L+2}}{(L+1)\size}\right)^{(L+1)\size}.\]
 
 Hence, by Lemma \ref{lemma:vc_dim_poly_2}, we conclude that
 \begin{align*}
 	\VCdim(\partial_i\F) &\le  L+1 + (L+1)\size\log_2\left(2^{L+5}e\|\bm\|_\infty\log_2\left[2^{L+4}e\|\bm\|_\infty\right]\right)\\
 	&= \bigO\left(L^2\size \log_2\left[L\|\bm\|_\infty\log_2\|\bm\|_\infty\right]\right) .
 \end{align*}

\subsubsection{Second order derivatives}

Again for convenience, denote $\sigma_2 :x\mapsto \max\{0,x\}^2$ the ReQU activation, $\sigma_1:x\mapsto 2\max\{0,x\}$ its derivative, and $\sigma_0:x\mapsto 2\sgn(x) = 2\ind_{x>0}$ its second derivative. Recall from Equation \eqref{eqn:first_requ} that an element of $\partial_i\F$ is of the form
\[\partial_i\phi:\bx\mapsto \prod_{\ell=0}^{L+1} f_\ell^{(L)}(\bx),\]
where $f_\ell^{(L)}$ is the $\ell$-th element of $\Func_L$ as defined in \eqref{eqn:func_family_1}. Therefore, a repeated application of the product rule yields that for all $1\le i,j\le d$, an element of $\partial_{ij}\F$ is of the form
\begin{equation}
	\label{eqn:second_requ}
	\partial_{ij}\phi :\bx \mapsto \sum_{\ell=0}^{L+1} (\partial_j f_\ell^{(L)}(\bx))\cdot\prod_{\substack{0\le k\le L+1 \\
			 k\neq \ell}} f_k^{(L)}(\bx),
\end{equation}
where
\[\partial_j f_\ell^{(L)}(\bx) = \begin{cases}
	0, \ \ &\ell = 0, \\
	\left[\prod_{k=1}^L \bW_{k+1}\sigma_0(f_k^{k-1}(\bx))\right]\cdot (\bW_1)_j \ \ &1\le \ell\le L+1,
\end{cases}\]

and again, $f_m^{(n)}$ is the $m$-th element of $\Func_n$ as defined in \eqref{eqn:func_family_1}. We will proceed similarly as before. Denote for all $1\le\ell\le L+1$ $g_\ell(\bx):= \partial_j f_\ell^{(L)}(\bx) $, which we notice is a polynomial of degree $1$ in the parameters, due to $\sigma_0$ being (piecewise) constant, and define the associated function families
\begin{equation}
	\label{eqn:func_family_2}
	\Func_k^{(\ell)} :=\begin{cases}
		 \Func_k,\ \ &0\le k \le L-1, \\
		 \{g_\ell(\bx)\} \cup  \Func_k,\ \ &k = L.
	\end{cases}
\end{equation}

Note that $\partial_{ij}\phi$ is equal to the sum for $1\le\ell\le L+1$ of the products of the elements of $\Func_\ell^{(\ell)}$. In particular, by choosing the sequence of partitions $\partition_0,\partition_1,\ldots,\partition_{L}$ which satisfy conditions \ref{itm:one}, \ref{itm:two} and \ref{itm:three} exactly identical as in the previous subsection, we have that for all $P\in\partition_{L}$, the restriction to $\btheta \in P$ of $\partial_{ij} \phi(\bx,\btheta)$ is a polynomial in $\sum_{i = 1}^L m_i(m_{i-1} + 1)$ variables and of degree at most $\sum_{\ell = 0}^L 2^{\ell+1} -1\le 2^{L+2} - 1$. We can thus reproduce the same calculations to deduce that
\[\VCdim(\partial_{ij}\F) = \bigO\left(L^2\size \log_2\left[L\|\bm\|_\infty\log_2\|\bm\|_\infty\right]\right). \]

By Lemma~\ref{lem:rho_invariance}, the same VC-dimension bounds apply to boundary-enforced ReQU FNNs, hence the proof is complete.\qed

\subsection{tanh FNNs}

We now turn our attention to the case of FNNs with tanh activation. The hyperbolic tangent not being a piecewise polynomial, there is no way to extend the analysis in the last section to this case. We thus begin by introducing the class of so-called \textit{Pfaffian functions} \citep{khovanskiui1991fewnomials}, together with related results which will be crucial to carry our analysis.

\subsubsection{VC dimension bounds for Pfaffian functions}
\label{sec:pfaffian}

\begin{definition}[Pfaffian Functions]
	\label{def:pfaffian_func}
	A \emph{Pfaffian chain} of order $\ell \ge 1$ and degree $\gamma \ge 1$, in an open domain $U \subseteq \R^d$, is a sequence of real-valued analytic functions $f_1, f_2, \dots, f_\ell$
	over $U$, satisfying the differential equations
	\begin{equation}
		\label{eqn:pfaffian_eq_diff}
		\frac{\partial f_j(\bx) }{\partial x_i}= g_{ij}\Bigl(\bx, f_1(\bx), \dots, f_j(\bx)\Bigr) \, \quad 1 \le j \le \ell,
	\end{equation}
	where the functions $g_{ij}(\bx, y_1, \dots, y_j)$ are polynomials in $\mathbf{x}\in U$ and $y_1,\dots,y_j \in \R$ of degree at most $\gamma$. A function
	\[
	f:\bx\mapsto P\Bigl(\mathbf{x}, f_1(\bx), \dots, f_\ell(\bx)\Bigr),
	\]
	where $P(\bx, y_1, \dots, y_\ell)$ is a polynomial of degree not exceeding $\beta$, is called a Pfaffian function of format $(\gamma, \beta, \ell)$. We might alternatively say that $f$ is Pfaffian of order $\ell$ and degree $(\gamma,\beta)$.  
\end{definition}

Pfaffian functions constitute an extremely broad class which includes most analytic functions one may encounter in practical applications \citep{khovanskiui1991fewnomials}. For instance, sigmoidal activation functions such as the arctangent $\arctan$, the logistic sigmoid $\operatorname{sig}$, and the hyperbolic tangent $\tanh$ are Pfaffian functions, with 
\[
\format(\arctan) = (3,1,2),\quad \format(\operatorname{sig}) = (2,1,1), \quad \format(\tanh) = (2,1,1).
\]

Pfaffian functions were first introduced in \citep{khovanskiui1991fewnomials} to generalize Bézout's theorem to bound the number of solutions to systems of polynomial equations. The theory of Pfaffian function has been applied to bounding the VC dimension of FNNs using Pfaffian activation functions first in the seminal work \citep{karpinski1997polynomial}, and more recently for other architectures such as Graph Neural Networks and Recurrent Neural Networks \citep{dinverno2025vc,scarselli2018vapnik}.

Following the framework proposed in \citep{karpinski1997polynomial} we see a classifier as an object described by a logical formula constructed from Pfaffian equations: concretely, for some integer  $\Bar{s}\ge1$, we let $ \phi_1, \dots, \phi_{\Bar{s}} $ be a collection of $ C^{\infty} $ functions from $ \R^{d+p} $ to $ \mathbb{R} $, and we suppose that for each $\bx\in\mathbb{R}^{d}, \btheta\in\mathbb{R}^{p},$ there is a logical expression $\Phi(\bx,\btheta)$ built using the operators \textit{and} and \textit{or} from simpler conditions (atomic predicates) of the form
$ \phi_i(\bx,\btheta)=0 $ and $ \phi_i(\bx,\btheta)>0$, which describes our classifier. In the case of a tanh FNN, the decision rule is fully specified by the logical formula $\phi_1(\bx,\btheta)\ge0$, where we take $\Bar{s}=1$ and $\phi_1$ is taken as the network's output. With this formalism, the prediction is then given by $1$ if the formula is true and $0$ otherwise.

We then define the hypothesis space $\F_\Phi:=\{\bx\mapsto\Phi(\bx,\btheta)\mid\btheta\in\R^p\}$ in the usual manner, and likewise its VC dimension $\VCdim(\F_\Phi)$ as the maximum possible cardinality of a set shattered by $\F_\Phi$. As it turns out, $\VCdim(\F_\Phi)$ can be bounded by studying the topological properties of the inverse image—in the parameter space—of the functions $\phi_i$ \citep{karpinski1997polynomial}. More precisely, for integers $1\le u,z\leq p$, and $\bx_1, \dots, \bx_z$ arbitrary points in $\mathbb{R}^{d}$, define the mapping $\mathbf{T}:\mathbb{R}^{p}\to\mathbb{R}^{u}, u\leq p$,
by
\begin{equation}
	\label{eqn:TPaff}
	\mathbf{T}(\btheta)=[\Bar{\phi}_1(\btheta),\dots,\Bar{\phi}_{u}(\btheta)],
\end{equation}
where each $\Bar{\phi}_r(\btheta)$ is obtained by evaluating one of the functions $\phi_i$ at one of the points $\bx_j$. That is, for each $1\le r\le u$, there exist indices $i$ and $j$ such that $\Bar{\phi}_r(\btheta)=\phi_i(\bx_j,\btheta)$. Then, the following theorem holds:

\begin{theorem}[\citep{karpinski1997polynomial}, Theorem 2]
	\label{thm:karpinsky97}
	Assume that there exists a positive integer $B$ which uniformly bounds the number of connected components of the inverse image $\mathbf{T}^{-1}([\epsilon_1,\dots,\epsilon_{u}])$, for any $1\le u,z\le p$, any choice of points $\bx_1,\ldots\bx_z$, and any $[\epsilon_1,\dots,\epsilon_{u}]$ which is a regular value of $\mathbf{T}$. Then, the following VC dimension bound on $\F_\Phi$ holds:
	\[\VCdim(\F_\Phi)\leq 2\log B+p\Bigl(16+2\log\Bar{s}\Bigr).
	\]
\end{theorem}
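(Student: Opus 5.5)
The plan is to reduce shattering to counting the connected components cut out by finitely many level sets in parameter space, and then to use the assumed bound $B$ on the components of regular fibres of $\mathbf{T}$. Suppose $\bx_1,\ldots,\bx_m$ are shattered by $\F_\Phi$. Then there are $2^m$ parameter vectors $\btheta^{(1)},\ldots,\btheta^{(2^m)}$ realizing all dichotomies. Each value $\Phi(\bx_j,\btheta)$ depends only on the sign vector $\bigl(\sgn_{0}\phi_i(\bx_j,\btheta)\bigr)_{i\le\bar s,\,j\le m}\in\{-,0,+\}^{\bar s m}$. Hence $2^m$ is at most the number of such sign conditions that are realized, and the whole task is to bound that number by roughly $B\,(C\bar s m/p)^p$.

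First I remove the equalities and the non-strict conditions by perturbation. Only finitely many witnesses $\btheta^{(k)}$ are involved, so I choose $\epsilon>0$ smaller than every nonzero value $|\phi_i(\bx_j,\btheta^{(k)})|$. Then the atomic predicate ``$\phi=0$'' holds at a witness exactly when $-\epsilon<\phi<\epsilon$, and ``$\phi>0$'' holds exactly when $\phi>\epsilon$. By Sard's theorem applied to each of the finitely many maps $\mathbf{T}$ obtained by selecting $u\le p$ of the $N:=\bar s m$ functions $\bar\phi_r(\btheta)=\phi_i(\bx_j,\btheta)$, I may also choose $\epsilon$ so that every vector with entries in $\{\pm\epsilon\}$ is a regular value of each such $\mathbf{T}$. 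The $2^m$ witnesses then lie in pairwise distinct connected components of
\[
\R^p\setminus\bigcup_{r=1}^{N}\bigl(\{\bar\phi_r=\epsilon\}\cup\{\bar\phi_r=-\epsilon\}\bigr),
\]
so it suffices to count these components.

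This counting is the main step, and I expect it to be the main obstacle. I will use the Milnor--Warren argument. Each connected component of the complement of $2N$ regular hypersurfaces is charged to a connected component of some intersection $\{\bar\phi_{r_1}=\pm\epsilon,\ldots,\bar\phi_{r_u}=\pm\epsilon\}$ with $u\le p$, taking $u=0$ to count components not touched by any hypersurface. The standard device is the following. Intersect with a large ball, or add the function $\|\btheta\|^2$ to handle unbounded components. On the closure of a component, take a point that minimizes a generic linear function. At that point only at most $p$ hypersurfaces can be active, because the values involved are regular. The point is a local extremum on a component of that fibre, so the map from components of the complement to pairs (subset of hypersurfaces, fibre component) is injective up to a bounded factor. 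The delicate parts are the genericity needed to keep at most $p$ constraints active, and the treatment of unbounded components. The injectivity itself is by now a standard argument, and I would follow \citep{karpinski1997polynomial} closely there. By hypothesis, each fibre $\mathbf{T}^{-1}(\epsilon_1,\ldots,\epsilon_u)$ has at most $B$ components. Summing over all choices of subsets and signs gives
\[
2^m\;\le\;\sum_{u=0}^{p}\binom{2N}{u}\,B\;\le\;B\Bigl(\frac{2e\bar s m}{p}\Bigr)^{p},
\]
possibly with an extra constant power of $2$ absorbed in the final constant.

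Finally I solve this inequality for $m$. Taking logarithms gives $m\le \log B+p\log(2e\bar s)+p\log(m/p)$. If $m\le 16p$ the claim is immediate. Otherwise, use $p\log(m/p)\le m/2+p\cdot c$ for a suitable absolute constant $c$, which follows from the elementary inequality $\log t\le t/(2\cdot 8)+c'$ applied with $t=m/p$. Rearranging then gives $m\le 2\log B+p\bigl(16+2\log\bar s\bigr)$, as claimed.
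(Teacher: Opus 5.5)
The paper gives no proof of this statement; it imports it from \citep{karpinski1997polynomial}. Your overall architecture is the standard one: shattering gives $2^m$ realized sign conditions, you perturb to $2N$ level hypersurfaces, count components of the complement, and solve $2^m\le B(2e\bar s m/p)^p$. The reduction to sign conditions and the final arithmetic are fine.

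\textbf{The central counting step fails as you describe it.} Charging a component $C$ of the complement to the pair consisting of the active set $S$ and the component of the fibre $Z_S$ containing the minimizer of a generic linear function $\ell$ on $\bar C$ is \emph{not} injective up to a bounded factor. Take $p=2$, $Z_1=\{\theta_2=0\}$ and $Z_2=\{\theta_2=q(\theta_1)\}$ with $q(t)=\prod_{j=1}^{2k}(t-j)$. Both hypersurfaces are connected and they meet transversally. Yet each of the $k-1$ bounded lenses $\{0<\theta_2<q(\theta_1)\}$ attains the minimum of $\ell(\btheta)=-\theta_2+\eta\theta_1$ at an interior point of an arc of $Z_2$ off $Z_1$. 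So all of them are charged to the single pair $(\{2\},Z_2)$; another generic $\ell$ just moves the problem to the lenses below $Z_1$. The true count survives here only because $Z_1\cap Z_2$ has $2k$ components, which your charging never uses. The Milnor--Warren extremal-point argument really bounds components by the number of \emph{critical points} of $\ell$ on the fibres, which B\'ezout controls for polynomials. A bound $B$ on the number of connected components of regular fibres says nothing about that. Intersecting with a large ball has the same defect: the sphere's intersections with the fibres are not covered by $B$.

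\textbf{What works under exactly this hypothesis is an induction on the number of hypersurfaces.} Let $Z$ be a closed regular level hypersurface of a manifold $M$, and let $b_0$ denote the number of connected components. Then $b_0(M\setminus Z)\le b_0(M)+b_0(Z)$. To see this, fix a component $A$ of $M$ and form the graph whose vertices are the components of $A\setminus Z$ and whose edges are the components of $Z\cap A$, each edge joining the components containing its two collar sides. This graph is connected, so it has at most one more vertex than edges.

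Apply this with $M=\R^p\setminus(Z_1\cup\dots\cup Z_k)$ and $Z=Z_{k+1}\cap M$. Then recurse inside the manifold $Z_{k+1}$ with the hypersurfaces $Z_i\cap Z_{k+1}$, which are regular there by joint regularity. This gives $b_0(\R^p\setminus\bigcup_i Z_i)\le\sum_S b_0(Z_S)\le B\sum_{u=0}^{p}2^u\binom{N}{u}$. No compactness or genericity is needed beyond transversality. Each $Z_S$ with $|S|\le p$ involves at most $p$ sample points, so the hypothesis with $z\le p$ applies.

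\textbf{A second, smaller repair concerns the perturbation.} You cannot pick a common $\epsilon$ by Sard. For $u\ge 2$ the points $(\pm\epsilon,\dots,\pm\epsilon)$ form a null subset of $\R^u$, on which Sard gives no information. For instance, with $\bar\phi_1=\theta_1$ and $\bar\phi_2=\theta_1+\theta_2^2$, the value $(\epsilon,\epsilon)$ is critical for every $\epsilon$. Instead use independent thresholds $\epsilon_r^{\pm}$ chosen outside a null subset of $(0,\epsilon_0)^{2N}$: apply Sard to each selection and take the product with the remaining coordinates. The hypothesis permits this, since it covers arbitrary regular values $[\epsilon_1,\dots,\epsilon_u]$.
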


Theorem~\ref{thm:karpinsky97} thus provides a bound on the VC dimension of $\F_\Phi$ in terms of the number $p$ of parameters, the total number $\Bar{s}$ of atomic functions $\phi_i$, and the constant $B$ bounding the number of connected components of $\mathbf{T}^{-1}([\epsilon_1,\dots,\epsilon_{u}])$. For this result to be useful however, one crucially needs a concrete estimate on the quantity $B$. This is where the following bound for systems of Pfaffian equations comes into play:

\begin{theorem}[\citep{gabrielov2004complexity}, Corollary 3.3]
	\label{thm:gabrielov}
	Consider a system of equations 
	\[
	\Bar{q}_1(\btheta)=0,\quad \dots,\quad \Bar{q}_k(\btheta)=0,
	\]
	where each $\Bar{q}_i$, for $1 \leq i \leq k$, is a Pfaffian function defined on a domain $G \subseteq \mathbb{R}^{p}$. Assume that these functions share a common Pfaffian chain of length $\ell$ and have maximum degrees $(\gamma, \beta)$. Then, the number of connected components of the set 
	\[
	\{\btheta \in G \mid \Bar{q}_1(\btheta)=0, \dots, \Bar{q}_k(\btheta)=0\}
	\]
	is bounded by
	\[
	2^{\frac{\ell(\ell-1)}{2}+1} (\gamma + 2\beta-1)^{p-1}\Bigl((2p-1)(\gamma+\beta)-2p+2\Bigr)^{\ell}.
	\]
\end{theorem}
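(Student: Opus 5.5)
The plan is to reduce the count to a Khovanskii-type Bézout bound for nondegenerate Pfaffian systems. That bound states that a system of $p$ Pfaffian equations in $p$ unknowns, sharing a chain of length $\ell$ and degree $\gamma$, with polynomial degrees $\beta_1,\dots,\beta_p$, has at most
\[
2^{\ell(\ell-1)/2}\,\beta_1\cdots\beta_p\Bigl(\textstyle\sum_i\beta_i - p + 1 + \min(p,\ell)\,\gamma\Bigr)^{\ell}
\]
nondegenerate solutions \citep{khovanskiui1991fewnomials}. The connected components of $V:=\{\btheta\in G:\bar q_1=\dots=\bar q_k=0\}$ will be counted by attaching to each one a critical point of a generic Morse function on a nearby smooth hypersurface. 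The number of such critical points will then be bounded by applying Khovanskii's estimate to the Lagrange system.

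\textbf{Step 1: reduce to one equation.} Replace the system by the single equation $q:=\sum_i \bar q_i^2=0$, which has the same zero set. Since $q$ is a polynomial in $\btheta$ and the chain functions, it is Pfaffian with the same chain and degree at most $2\beta$. This is where the $2\beta$ in the bound comes from.

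\textbf{Step 2: perturb to a smooth compact hypersurface.} Intersect with a large closed ball, chosen so that every component of $V$ (if there are finitely many) meets its interior. Take a small regular value $\varepsilon>0$ of $q$, which exists by Sard's theorem. Each component of $V$ inside the ball is then contained in a distinct component of $\{q\le\varepsilon\}$, whose boundary is a compact smooth piece of the hypersurface $\{q=\varepsilon\}$. It suffices to bound the number of components of $\{q=\varepsilon\}$. The doubling that handles the two sides of the level set and the truncation by the ball should account for the leading factor $2^{\ell(\ell-1)/2+1}$ rather than $2^{\ell(\ell-1)/2}$.

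\textbf{Step 3: count critical points.} On each compact component of $\{q=\varepsilon\}$, a generic linear function $\lambda\cdot\btheta$ attains a maximum. That maximum is a critical point, characterized by $q=\varepsilon$ together with the $p-1$ equations $\lambda_j\,\partial_i q-\lambda_i\,\partial_j q=0$, obtained by eliminating the Lagrange multiplier in a fixed coordinate. By the chain relations \eqref{eqn:pfaffian_eq_diff}, each $\partial_i q$ is Pfaffian in the same chain with degree at most $2\beta+\gamma-1$. For generic $\lambda$ and $\varepsilon$ the solutions are nondegenerate. Plugging the degrees $2\beta$ (once) and $2\beta+\gamma-1$ ($p-1$ times) into Khovanskii's bound yields the factor $(\gamma+2\beta-1)^{p-1}$ times a term of the form $\bigl((2p-1)(\gamma+\beta)-2p+2\bigr)^{\ell}$. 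The exact constants, for instance the $2p$ versus $\beta$-normalisation, come from bookkeeping in the sum $\sum_i\beta_i-p+1+\min(p,\ell)\gamma$, and I expect these to match Gabrielov–Vorobjov's stated form after simplification.

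The main obstacle is Step 2, namely making the deformation rigorous on a possibly noncompact domain $G$. This involves three things: showing that every component of $V$ survives as a distinct component of $\{q\le\varepsilon\}$ as $\varepsilon\to0$; handling components that escape to infinity or to $\partial G$; and showing that genericity of $\lambda$ and $\varepsilon$ indeed gives nondegenerate critical points. I would first try the standard device of adding the auxiliary term $\delta\|\btheta\|^2$ and a limiting argument as in \citep{gabrielov2004complexity}, and accept a factor $2$ loss from it.
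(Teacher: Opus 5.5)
The paper gives no proof of this statement; it only cites \citep{gabrielov2004complexity}. Your route (sum of squares, a regular level set, maxima of a height function, then Khovanskii's Bézout bound for the Lagrange system) is the standard one for such corollaries.

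\textbf{The constant in Step 3 does not come out, and it cannot.} Khovanskii's bound multiplies in \emph{all} the degrees. With one equation of degree $2\beta$ and $p-1$ equations of degree $\gamma+2\beta-1$, it gives $2^{\ell(\ell-1)/2}\cdot 2\beta\,(\gamma+2\beta-1)^{p-1}\bigl(\min(p,\ell)\gamma+(p-1)\gamma+2p\beta-2p+2\bigr)^{\ell}$.
\begin{itemize}
\item The $+1$ in the exponent of $2$ is the $2$ in $\deg(q-\varepsilon)=2\beta$. It is not a topological doubling coming from Step 2.
\item A factor $\beta$ is left over that has nowhere to go.
\item For $\ell\ge p$, the bracket exceeds $(2p-1)(\gamma+\beta)-2p+2$ by exactly $\beta$.
\end{itemize}
So your ``I expect these to match'' is false. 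No sharper argument can fix this, because the displayed bound is itself false. Take $p=\ell=\gamma=1$, $G=\R$, the chain $f_1(\theta)=e^{\theta}$ (so $f_1'=f_1$), and $\beta=3$. The ten functions $\theta^j e^{k\theta}$ with $j+k\le 3$ are linearly independent. Hence for any nine prescribed points there is a nonzero $\bar q_1(\theta)=P(\theta,e^{\theta})$ with $\deg P\le 3$ vanishing at all of them. Being analytic, it has isolated zeros, so its zero set has at least $9$ components. The statement allows at most $2\cdot 4=8$. The cited corollary carries the prefactor $2^{\ell(\ell-1)/2+1}\beta$, so the $\beta$ was evidently lost in transcription. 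With the bracket your computation produces, that corrected bound is exactly what your plan proves. Downstream this is harmless: in Proposition~\ref{prop:karpinski_vcdim} and its derivative version, $B$ enters only through $\log B$, where the correction is dominated by the $\ell(\ell-1)/2$ term.

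\textbf{Your Step 2 worry is more than technical.} For an arbitrary domain, no finite bound can hold. Take $G=\{\btheta\in\R^2:\theta_2>\sin\theta_1\}$, which is open and connected, the chain $f_1(\btheta)=\theta_1$, and $\bar q_1(\btheta)=\theta_2$. The zero set in $G$ is $\{(\theta_1,0):\sin\theta_1<0\}$, which has infinitely many components. So you must restrict $G$, for example to $G=\R^p$, the only case the paper uses.

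On $\R^p$ your $\delta\|\btheta\|^2$ device works with no loss, once you make two repairs.
\begin{itemize}
\item Replace the circular ``if there are finitely many'' by fixing arbitrary finitely many components $C_j$, with points $x_j\in C_j\cap B_R$.
\item Separate the components of the compact set $K=V\cap\bar B_{2R}$ containing the $x_j$ by clopen pieces. Take $\varepsilon$ below $\min q$ on $\bar B_{2R}$ minus a neighborhood of $K$ whose parts around different pieces are disjoint. Set $\delta=\varepsilon/(2R^2)$, and pick a regular value $\varepsilon'\in(\varepsilon/2,\varepsilon)$ of $\phi=q+\delta\|\btheta\|^2$.
\end{itemize}
Then $W=\{\phi\le\varepsilon'\}$ is compact and lies in $\bar B_{2R}$. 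It contains the $x_j$ in distinct components, and each component has nonempty smooth boundary, on which a generic height function has a nondegenerate maximum. Since $\beta\ge1$, $\phi-\varepsilon'$ still has degree $2\beta$ and each $\partial_i\phi$ has degree $\gamma+2\beta-1$. The count above therefore applies unchanged, with no extra factor of $2$; using minima as well, you could even halve it.
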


Theorem \ref{thm:gabrielov} is in fact a Corollary of an earlier result from \citep{khovanskiui1991fewnomials}. Combining Theorems \ref{thm:karpinsky97} and \ref{thm:gabrielov}, together with knowledge on the Pfaffian format of the $\tanh$ function, we can deduce the following VC dimension bounds for tanh FNNs:
\begin{proposition}[\citep{karpinski1997polynomial}, Theorem 7]
	\label{prop:karpinski_vcdim}
	Let $\sigma:x\mapsto \tanh(x)$ be the tanh activation function, and denote by $\F = \Sigma^\sigma_{\mathbf{m}_{1:L}}(\infty)$ the hypothesis space of tanh FNNs with no restriction on their weights, as per Definition \ref{def:fcnn_hypothesis_space}. We have the estimate
	\[\VCdim(\F) = \bigO\left(L^2\|\bm\|_\infty^2\size^2\right),\]
	where $\size = \sum_{i=1}^L m_i(m_{i-1}+1)$ is the number of parameters in the architecture.
\end{proposition}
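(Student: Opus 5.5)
We follow the Karpinski--Macintyre scheme: express the classifier $\sgn(\phi(\bx,\btheta))$, for a tanh network $\phi$ of depth $L$ and widths $\bm$, as a single atomic predicate ($\bar s=1$). We then bound the number of connected components of the fibres of the map $\mathbf T$ from \eqref{eqn:TPaff} using Theorem~\ref{thm:gabrielov}, and conclude with Theorem~\ref{thm:karpinsky97}.

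The first step is to exhibit, for fixed sample points $\bx_1,\dots,\bx_z$ with $z\le\size$, a common Pfaffian chain in the parameter variable $\btheta\in\R^\size$. For each point $\bx_j$ and each hidden unit $(\ell,h)$ we take $f_{j,\ell,h}(\btheta):=\tanh(a_{j,\ell,h}(\btheta))$, where $a_{j,\ell,h}$ is the preactivation. Since $\tanh'=1-\tanh^2$, the chain rule gives
\[
\partial_{\theta_k} f_{j,\ell,h}=(1-f_{j,\ell,h}^2)\,\partial_{\theta_k}a_{j,\ell,h}.
\]
Here $a_{j,\ell,h}=\sum_{h'}(A_\ell)_{hh'}f_{j,\ell-1,h'}+(b_\ell)_h$, so $\partial_{\theta_k}a_{j,\ell,h}$ is either one of the earlier chain functions (if $\theta_k$ is a weight of layer $\ell$) or a linear combination of parameters times derivatives of earlier chain functions. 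Expanding these recursively produces polynomials whose degree grows with the depth. To keep the degree bounded, the cleaner choice is to order the chain by layers and note that each derivative is a polynomial in $\btheta$ and in the previous chain elements of degree $O(L)$. The chain length is $\ell_{\mathrm{ch}}=z\sum_\ell m_\ell\le \size\, L\|\bm\|_\infty$, the chain degree is $\gamma=O(L)$, and the output $\phi(\bx_j,\btheta)=c\cdot f_{j,L,\cdot}$ has degree $\beta=2$ in $(\btheta,f)$.

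Next we apply Theorem~\ref{thm:gabrielov} to the system $\mathbf T(\btheta)=\epsilon$, which consists of $u\le \size$ Pfaffian equations with $p=\size$ variables and the chain above. This yields
\[
\log B\lesssim \ell_{\mathrm{ch}}^2+\size\log(L)+\ell_{\mathrm{ch}}\log(\size L).
\]
The dominant term is $\ell_{\mathrm{ch}}^2/2\lesssim \size^2L^2\|\bm\|_\infty^2$. Plugging this into Theorem~\ref{thm:karpinsky97} gives
\[
\VCdim(\F)\le 2\log B+16\size=O\bigl(L^2\|\bm\|_\infty^2\size^2\bigr).
\]

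The main obstacle is the first step: verifying rigorously that every partial derivative $\partial_{\theta_k}f_{j,\ell,h}$ is a polynomial of the required form in $\btheta$ and the chain functions up to index $(j,\ell,h)$, and that its degree stays $O(L)$ rather than blowing up. I would handle this by induction on the layer index. The key inputs are the closure of polynomials under the product rule and the Riccati identity $\tanh'=1-\tanh^2$, which lets every derivative of a $\tanh$ be rewritten in terms of $\tanh$ itself and not introduce new functions. The remaining bookkeeping of constants is routine, and since the statement is an $\bigO$ bound, only the quadratic dependence on the chain length matters.
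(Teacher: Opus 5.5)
Your proposal is correct and follows essentially the same route as the paper. It builds a layer-ordered $\tanh$ Pfaffian chain of degree $O(L)$ (via $\tanh'=1-\tanh^2$), concatenates it over at most $\size$ sample points to get length $O(\size L\|\bm\|_\infty)$, and then applies Gabrielov's component bound and the Karpinski--Macintyre theorem with $\bar s=1$, where the $\ell_{\mathrm{ch}}^2/2$ term dominates.

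The differences are cosmetic. The paper builds the chain jointly in $(\bx,\btheta)$ and includes the preactivations $g_i^{(\ell)}$, which doubles the chain length but lets it reuse the chain for the later input-derivative bounds; your chain lives directly in $\btheta$. Your $\beta=2$ for the bilinear output $c\cdot h_L$ is, if anything, more careful than the paper's $\beta=1$.
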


Because it will be a helpful reference for later, we give a proof of Proposition \ref{prop:karpinski_vcdim} below, which is merely a modern and slightly more fleshed-out presentation of the original argument in \citep{karpinski1997polynomial}.

\begin{proof}
	Let $\phi\in \F$. We start by building a Pfaffian chain to which it belongs. To do so, we proceed iteratively, layer by layer: in the first layer, we have $m_1$ neurons, hence for $1 \le i\le m_1$, let
	\[g_i^{(1)}(\bx,\btheta) := (\bW_1\bx)_i + (\bb_1)_i, \quad h_i^{(1)}(\bx,\btheta):= \sigma(g_i^{(1)}(\bx,\btheta)), \]
	where $(\bW)_i$ is the $i$-th column of $\bW$, and $(\bb_1)_i$ the $i$-th coordinate of $\bb$. Now, for the next step, let $2\le \ell \le L$ and assume that we have already defined $g_i^{(k)}$ and $h_i^{(k)}$ for all $1\le i\le m_k$ and $1\le k\le \ell-1$, and let for all $1\le i\le m_\ell$:
	\begin{equation}
		\label{eqn:tanh_recurrence}
		g_i^{(\ell)}(\bx,\btheta) := \sum_{j=1}^{m_{\ell - 1}} (\bW_\ell)_{ij}\  h_j^{(\ell-1)}(\bx,\btheta) +  (\bb_\ell)_i, \quad h_i^{(\ell)}(\bx,\btheta):= \sigma(g_i^{(\ell)}(\bx,\btheta)).
	\end{equation}
	Fix $\phi\in\F$ and consider it as a real-analytic function on $U:=\R^{d+p}$ with variables $(\bx,\btheta)$.
	Define the ordered list of functions
	\[
	\mathcal{C}(\bx,\btheta)
	:=
	\Bigl(g_i^{(\ell)}(\bx,\btheta),\,h_i^{(\ell)}(\bx,\btheta)\Bigr)_{1\le \ell\le L,\ 1\le i\le m_\ell},
	\]
	where $h_i^{(\ell)}=\sigma(g_i^{(\ell)})$ and the $g_i^{(\ell)}$ are given by \eqref{eqn:tanh_recurrence}. Its length is
	\[
	\ell_0 := 2\sum_{\ell=1}^L m_\ell \le 2L\|\bm\|_\infty.
	\]
	
	We claim that $\mathcal{C}$ is a Pfaffian chain on $U$ of degree $\gamma:=3L$.
	In particular, the network output $\phi$ is a Pfaffian function of format $(\gamma,\beta,\ell_0)$ with $\beta=1$. Indeed, let $v$ be any coordinate among $(x_1,\dots,x_d,\theta_1,\dots,\theta_p)$.
	For $\ell=1$, $g_i^{(1)}(\bx,\btheta)=(W_1\bx)_i+(b_1)_i$ is affine in $(\bx,\btheta)$, hence $\partial_v g_i^{(1)}$ is a polynomial of degree at most $1$ in $(\bx,\btheta)$.
	Moreover,
	\[
	\partial_v h_i^{(1)}=(1-(h_i^{(1)})^2)\,\partial_v g_i^{(1)},
	\]
	so $\partial_v h_i^{(1)}$ is a polynomial in $(\bx,\btheta,h_i^{(1)})$ of degree at most $3$. Now, assume inductively that for all layers $1\le k \le \ell-1$ the derivatives $\partial_v h_j^{(k)}$ are polynomials in $(\bx,\btheta)$ and earlier chain elements of degree at most $3k$.
	Since
	\[
	g_i^{(\ell)}=\sum_{j=1}^{m_{\ell-1}} (W_\ell)_{ij}h_j^{(\ell-1)}+(b_\ell)_i,
	\]
	we have
	\[
	\partial_v g_i^{(\ell)}
	=
	\sum_{j=1}^{m_{\ell-1}} \bigl(\partial_v (W_\ell)_{ij}\bigr) h_j^{(\ell-1)}
	+
	\sum_{j=1}^{m_{\ell-1}} (W_\ell)_{ij}\,\partial_v h_j^{(\ell-1)}
	+\partial_v(b_\ell)_i.
	\]
	Here $\partial_v(W_\ell)_{ij}$ and $\partial_v(b_\ell)_i$ are either $0$ or $1$.
	Thus $\partial_v g_i^{(\ell)}$ is a polynomial in $(\bx,\btheta)$ and earlier chain elements of degree at most $1+3(\ell-1)=3\ell-2$.
	Finally,
	\[
	\partial_v h_i^{(\ell)}=(1-(h_i^{(\ell)})^2)\,\partial_v g_i^{(\ell)},
	\]
	so $\partial_v h_i^{(\ell)}$ is polynomial in $(\bx,\btheta)$ and chain elements up to $h_i^{(\ell)}$ of degree at most $(3\ell-2)+2=3\ell$.
	This shows that the chain is of degree $\gamma=3L$, as claimed.
	
	We now bound, uniformly over $1\le u,z\le p$ (with $p=\size$), the number of connected components of
	$\mathbf{T}^{-1}([\epsilon_1,\dots,\epsilon_u])$ for regular values, where $\mathbf{T}$ is defined in \eqref{eqn:TPaff}.
	Fix $z$ points $\bx_1,\dots,\bx_z$ and define $\bar\phi_r(\btheta)=\phi(\bx_{j_r},\btheta)$ for $r=1,\dots,u$. As we've just established, $(\bx,\btheta)\mapsto \phi(\bx,\btheta)$ is Pfaffian on $\R^{d+p}$ with format $(\gamma,1,\ell_0)$.
	Freezing $\bx=\bx_j$ preserves the Pfaffian structure, so each $\bar\phi_r:\R^p\to\R$ is Pfaffian of format $(\gamma,1,\ell_0)$. Concatenating the $z$ chains corresponding to the $z$ evaluation points yields a common Pfaffian chain for
	$\bar\phi_1,\dots,\bar\phi_u$ of length
	\[
	\ell := z\,\ell_0 \le 2z\sum_{\ell'=1}^L m_{\ell'} \le 2pL\|\bm\|_\infty.
	\]
	Since translating by constants does not change the format, we may apply Theorem~\ref{thm:gabrielov} to the system
	$\bar\phi_1(\btheta)-\epsilon_1=0,\dots,\bar\phi_u(\btheta)-\epsilon_u=0$ to obtain a bound
	\[
	B(z)
	\le
	2^{\frac{\ell(\ell-1)}{2}+1}\,(\gamma+1)^{p-1}\,
	\Bigl((2p-1)(\gamma+1)-2p+2\Bigr)^{\ell}.
	\]
	The right-hand side is nondecreasing in $\ell$, hence in $z$, and it does not depend on the particular choice of points nor on the regular value.
	Therefore, taking $z=p$ yields a single constant $B:=B(p)$ that satisfies the uniformity requirement in Theorem~\ref{thm:karpinsky97} for all $1\le u,z\le p$.
	Applying Theorem~\ref{thm:karpinsky97} with $\bar s=1$ concludes the proof.
\end{proof}

\subsubsection{Higher order derivatives}

We now extend the above argument to mixed derivatives of arbitrary order. Let $r\ge 1$ and $\alpha\in\N^d$ be a multi-index with $\|\alpha\|_1\le r$. Recall the layerwise quantities $g_i^{(\ell)}(\bx,\btheta)$ and $h_i^{(\ell)}(\bx,\btheta)=\sigma(g_i^{(\ell)}(\bx,\btheta))$ defined by \eqref{eqn:tanh_recurrence}, where $\sigma=\tanh$. We begin by a simple lemma showing that Pfaffian functions are closed under differentiation with explicit control on their derivatives' format.

\begin{lemma}
	\label{lem:pfaff_diff}
	Let $f$ be Pfaffian on an open set $U\subseteq\R^n$ with format $(\gamma,\beta,\ell)$, given by a Pfaffian chain $f_1,\dots,f_\ell$ of degree $\gamma$ and a polynomial representation
	$f(\bx)=P(\bx,f_1(\bx),\dots,f_\ell(\bx))$ with $\deg P\le \beta$.
	Then for each coordinate $x_i$, the partial derivative $\partial_{x_i} f$ is Pfaffian with respect to the same chain and has format $(\gamma,\gamma+\beta-1,\ell)$.
\end{lemma}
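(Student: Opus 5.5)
We prove the lemma directly by the chain rule applied to the polynomial representation. Write $f(\bx)=P(\bx,f_1(\bx),\dots,f_\ell(\bx))$, where $P(\bx,y_1,\dots,y_\ell)$ is a polynomial of total degree at most $\beta$. Differentiating with respect to $x_i$ gives
\[
\partial_{x_i} f(\bx) \;=\; \frac{\partial P}{\partial x_i}\bigl(\bx,f_1(\bx),\dots,f_\ell(\bx)\bigr) \;+\; \sum_{j=1}^{\ell} \frac{\partial P}{\partial y_j}\bigl(\bx,f_1(\bx),\dots,f_\ell(\bx)\bigr)\,\partial_{x_i} f_j(\bx).
\]
By the defining differential equations \eqref{eqn:pfaffian_eq_diff} of the chain, we substitute $\partial_{x_i} f_j(\bx) = g_{ij}(\bx,f_1(\bx),\dots,f_j(\bx))$, where $g_{ij}$ is a polynomial of degree at most $\gamma$.

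This yields $\partial_{x_i} f(\bx)=\tilde P(\bx,f_1(\bx),\dots,f_\ell(\bx))$ with
\[
\tilde P(\bx,\mathbf y) := \partial_{x_i}P(\bx,\mathbf y) + \sum_{j=1}^{\ell} \partial_{y_j}P(\bx,\mathbf y)\, g_{ij}(\bx,y_1,\dots,y_j).
\]
This is a polynomial in $(\bx,\mathbf y)$, so $\partial_{x_i} f$ is expressed over the \emph{same} Pfaffian chain $f_1,\dots,f_\ell$. The chain itself is untouched, so its length $\ell$ and degree $\gamma$ are unchanged.

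It remains to bound $\deg\tilde P$. The first term $\partial_{x_i}P$ has degree at most $\beta-1$. Each summand $\partial_{y_j}P\cdot g_{ij}$ has degree at most $(\beta-1)+\gamma$. Hence $\deg\tilde P\le \max\{\beta-1,\ \gamma+\beta-1\}=\gamma+\beta-1$, because $\gamma\ge1$. Therefore $\partial_{x_i} f$ is Pfaffian of format $(\gamma,\gamma+\beta-1,\ell)$, as claimed.

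There is no real obstacle; the only care needed is bookkeeping. First, the degree of $P$ is the total degree in $(\bx,\mathbf y)$, matching Definition~\ref{def:pfaffian_func}. Second, when $\beta=0$, $f$ is constant and its derivative is $0$, which trivially has the stated format. Iterating the lemma $\|\alpha\|_1$ times then gives, for later use, format $(\gamma,\beta+\|\alpha\|_1(\gamma-1),\ell)$ for $\partial^\alpha f$.
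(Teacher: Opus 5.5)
Your proposal is correct and follows the same route as the paper. Both proofs differentiate $P(x,f_1(x),\dots,f_\ell(x))$ by the chain rule, substitute $\partial_{x_i} f_j = g_{ij}(x,f_1,\dots,f_j)$ from the chain equations, and count the degree of the resulting polynomial as at most $(\beta-1)+\gamma$. The difference is only that you write out the degree bookkeeping, which the paper's one-line proof leaves implicit.
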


\begin{proof}
	Follows by differentiating $P(\bx,f_1(\bx),\dots,f_\ell(\bx))$ and noting that each $\partial_{x_i} f_j$ is a polynomial in $(\bx,f_1,\dots,f_j)$ of degree at most $\gamma$.
\end{proof}

Lemma~\ref{lem:pfaff_diff} together with the analysis developed in the preceding section yields the following proposition:

\begin{proposition}[Input-derivatives of tanh FNNs are Pfaffian]
	\label{prop:higher_order_tanh_pfaff}
	Fix $r\ge 0$ and a multi-index $\alpha\in\N^d$ with $\|\alpha\|_1\le r$. Let $\phi\in\F$ and view $(\bx,\btheta)\mapsto \phi(\bx,\btheta)$ as an analytic function on $\R^{d+p}$.
	Then $(\bx,\btheta)\mapsto \partial_\bx^\alpha \phi(\bx,\btheta)$ is Pfaffian on $\R^{d+p}$ with respect to the same chain $\mathcal{C}$, of order $\ell_0\le 2L\|\bm\|_\infty$ and degrees $(\gamma,\beta_r)$, where
	\[
	\gamma:=3L,
	\text{ and }
	\beta_r:=1+r(\gamma-1).
	\]
\end{proposition}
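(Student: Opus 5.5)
The plan is to argue by induction on $\|\alpha\|_1$, using Lemma~\ref{lem:pfaff_diff} as the induction step and the proof of Proposition~\ref{prop:karpinski_vcdim} as the base case. That proof shows that the ordered list $\mathcal{C}=(g_i^{(\ell)},h_i^{(\ell)})_{\ell,i}$ is a Pfaffian chain on $\R^{d+p}$ with the following properties: its length is $\ell_0=2\sum_\ell m_\ell\le 2L\|\bm\|_\infty$, and its degree is $\gamma=3L$. The derivatives of the chain elements are taken with respect to \emph{every} coordinate $v\in\{x_1,\dots,x_d,\theta_1,\dots,\theta_p\}$, so in particular with respect to the input variables $x_i$. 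The output $\phi=c\cdot h^{(L)}$ is a polynomial in $(\bx,\btheta,\mathcal{C})$ whose degree is small: it is bilinear in the output weights $c$ and the last-layer chain elements. To match the claimed format $(\gamma,\beta_0)$ with $\beta_0=1$, I would absorb it either by treating $c$ as already accounted for, or by appending $\phi$ itself through the affine $g$-type representation. If this bookkeeping costs a constant in $\beta$, that only shifts constants. This settles $\|\alpha\|_1=0$.

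For the inductive step, suppose $\|\alpha\|_1=k<r$ and that $\partial^\alpha_\bx\phi$ is Pfaffian with respect to $\mathcal{C}$ with format $(\gamma,\beta_k,\ell_0)$, where $\beta_k=1+k(\gamma-1)$. Write $\alpha'=\alpha+e_i$. Lemma~\ref{lem:pfaff_diff}, applied on $U=\R^{d+p}$ with the coordinate $x_i$, shows that $\partial^{\alpha'}_\bx\phi=\partial_{x_i}\partial^\alpha_\bx\phi$ is Pfaffian with respect to the \emph{same} chain $\mathcal{C}$. Its format is $(\gamma,\gamma+\beta_k-1,\ell_0)$, and $\gamma+\beta_k-1=1+(k+1)(\gamma-1)=\beta_{k+1}$. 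Since $\beta_k$ is nondecreasing in $k$, every $\alpha$ with $\|\alpha\|_1\le r$ yields degree at most $\beta_r$. The chain is unchanged throughout, so the order stays at $\ell_0\le 2L\|\bm\|_\infty$.

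The only genuinely delicate point is whether Lemma~\ref{lem:pfaff_diff} applies with respect to the input coordinates. It does, because the chain relations \eqref{eqn:pfaffian_eq_diff} were established for all variables of $\R^{d+p}$, including $x_i$. The key identity is $\partial_{x_i}h=(1-h^2)\partial_{x_i}g$, which keeps every new derivative polynomial in the existing chain elements. Consequently no new chain functions, such as $\sigma'$ or $\sigma''$ evaluated at the preactivations, need to be appended: all higher derivatives of $\tanh$ are polynomials in $\tanh$. I expect no other obstacle. The remaining issues are only verifying the base-case degree and noting that the polynomial $P$ is allowed to depend on $(\bx,\btheta)$.
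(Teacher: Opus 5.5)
Your argument is the paper's proof: take the chain $\mathcal{C}$ from the proof of Proposition~\ref{prop:karpinski_vcdim} as the base case, then apply Lemma~\ref{lem:pfaff_diff} $\|\alpha\|_1$ times in the input coordinates, using the recursion $\beta_{k+1}=\beta_k+\gamma-1$.

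The base-case point you hesitate on is a real issue, and the paper's proof does not resolve it either: it simply asserts $\beta=1$ for $\phi$. The output weights $c$ are coordinates of $\btheta$, and no element of $\mathcal{C}$ depends on them. Hence any degree-one expression in $(\bx,\btheta,\mathcal{C})$ has constant $\partial_{c_1}$. But $\partial_{c_1}\phi=h^{(L)}_1$ is not constant, so $\phi=\sum_j c_j h^{(L)}_j$ has degree $\beta_0=2$ over $\mathcal{C}$, not $1$.

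Neither of your patches recovers the stated constant. \emph{Treating $c$ as already accounted for} is not an argument. Appending $c\cdot h^{(L)}$ to the chain changes $\mathcal{C}$, and it raises the chain degree to $3L+1$, because $\partial_v(c\cdot h^{(L)})$ contains the terms $c_j\,\partial_v h^{(L)}_j$.

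So what your induction (and the paper's) actually proves is $\beta_r=2+r(\gamma-1)$, not $1+r(\gamma-1)$. This off-by-one does no harm downstream: $\beta_r$ enters Theorem~\ref{thm:gabrielov}, and hence the VC-dimension bound, only through logarithmic factors.
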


\begin{proof}
	By repeating the argument used to prove Proposition~\ref{prop:karpinski_vcdim}, $\phi$ is Pfaffian of format $(\gamma,1,\ell_0)$.
	Applying Lemma~\ref{lem:pfaff_diff} iteratively $\|\alpha\|_1$ times with respect to the input variables then yields the claim.
\end{proof}

We are now ready to prove the claimed VC-dimension bound: fix $\alpha\in\N^d$ with $\|\alpha\|_1\le r$ and consider $\partial_\bx^\alpha\F:=\{\bx\mapsto \partial_\bx^\alpha\phi(\bx,\btheta):\btheta\in\R^p\}$.
We repeat the proof of Proposition~\ref{prop:karpinski_vcdim} with $\bar\phi_r(\btheta):=(\partial_\bx^\alpha\phi)(\bx_j,\btheta)$. By Proposition~\ref{prop:higher_order_tanh_pfaff}, each $\bar\phi_r$ is Pfaffian with degrees $(\gamma,\beta_r)$, and after concatenation over $z\le p$ sample points we obtain a common chain of length $\ell\le 2pL\|\bm\|_\infty$. Applying Theorems~\ref{thm:gabrielov} and \ref{thm:karpinsky97} thus yields
\[
\VCdim(\partial_\bx^\alpha\F)=\bigO\!\left(L^2\|\bm\|_\infty^2\size^2\right),
\]
where the dependence on $r$ only enters through $\beta_r$ inside logarithmic factors. By Lemma~\ref{lem:rho_invariance}, the same bound holds for the boundary-enforced tanh networks as well, as desired.\qed

\end{document}